\documentclass[12pt,reqno]{amsart}

\usepackage[T1]{fontenc}
\usepackage{microtype}
\usepackage[margin=1in]{geometry}

\usepackage{amsmath,amssymb,amsthm,mathtools}
\usepackage{graphicx}
\usepackage{tikz}
\usepackage{tikz-cd}
\usepackage{enumitem}
\usepackage{float}
\usepackage{comment}
\usepackage{url}
\usepackage{cite}

\usepackage[bookmarksopen,bookmarksdepth=3]{hyperref}
\usepackage[capitalize,noabbrev]{cleveref}

\makeatletter
\let\cref@old@refstepcounter@optarg\refstepcounter@optarg
\def\refstepcounter@optarg[#1]#2{%
  \cref@old@refstepcounter@optarg[#1]{#2}%
  \global\let\cref@lastlabeltype\@tempa
  \xdef\cref@lastlabelcounter{#2}%
}
\@ifundefined{firstaid@cref@updatelabeldata}{}{%
  \renewcommand\firstaid@cref@updatelabeldata[1]{%
    \cref@constructprefix{#1}{\cref@result}%
    \edef\@tempb{#1}%
    \@ifundefined{cref@lastlabelcounter}{\let\@tempc\@empty}{\let\@tempc\cref@lastlabelcounter}%
    \ifx\@tempc\@tempb
      \let\@tempa\cref@lastlabeltype
    \else
      \@ifundefined{cref@#1@alias}%
        {\def\@tempa{#1}}%
        {\def\@tempa{\csname cref@#1@alias\endcsname}}%
    \fi
    \protected@xdef\cref@gcurrentlabel@temp{%
      [\@tempa][\arabic{#1}][\cref@result]%
      \csname p@#1\endcsname\csname the#1\endcsname}%
    \aftergroup\firstaid@cref@smugglelabel}%
}
\makeatother

\usepackage{cjhebrew}
\newcommand{\nun}{\text{\upshape\cjRL{n|}}}
\allowdisplaybreaks[1]
\newtheorem{prop}{Proposition}[section]
\newtheorem{thm}[prop]{Theorem}
\newtheorem{lm}[prop]{Lemma}
\newtheorem{cor}[prop]{Corollary}

\theoremstyle{definition}
\newtheorem{dfn}[prop]{Definition}

\theoremstyle{remark}
\newtheorem{rem}[prop]{Remark}
\newtheorem{ex}[prop]{Example}
\newtheorem{notn}[prop]{Notation}
\newtheorem{assum}[prop]{Assumption}

\crefname{prop}{Proposition}{Propositions}
\crefname{thm}{Theorem}{Theorems}
\crefname{lm}{Lemma}{Lemmas}
\crefname{cor}{Corollary}{Corollaries}
\crefname{conj}{Conjecture}{Conjectures}
\crefname{dfn}{Definition}{Definitions}
\crefname{rem}{Remark}{Remarks}
\crefname{ex}{Example}{Examples}
\crefname{notn}{Notation}{Notations}
\crefname{assum}{Assumption}{Assumptions}

\newlist{axioms}{enumerate}{1}
\crefname{axiomsi}{Axiom}{Axioms}
\Crefname{axiomsi}{Axiom}{Axioms}

\DeclareMathOperator{\Cone}{Cone}

\DeclareMathOperator{\Ker}{Ker}
\DeclareMathOperator{\Coker}{Coker}
\DeclareMathOperator{\End}{End}
\DeclareMathOperator{\Id}{Id}

\DeclareMathOperator{\tr}{tr}
\DeclareMathOperator{\GW}{GW}
\DeclareMathOperator{\ogw}{OGW}

\newcommand{\R}{\mathbb{R}}
\newcommand{\C}{\mathbb{C}}
\newcommand{\Z}{\mathbb{Z}}

\newcommand{\F}{\mathbb{F}}
\newcommand{\kk}{\Bbbk}

\newcommand{\CP}{\mathbb{C}P}

\newcommand{\M}{\mathcal{M}}
\newcommand{\J}{\mathcal{J}}
\newcommand{\mI}{\mathcal{I}}

\newcommand{\q}{\mathfrak{q}}
\newcommand{\m}{\mathfrak{m}}

\newcommand{\qop}[3]{\mathfrak{q}_{#1,#2}^{#3}}

\title{Open WDVV equations for non-commutative families of bounding cochains}
\author[E. Kosloff]{Elad Kosloff}
\address{Institute of Mathematics, The Hebrew University of Jerusalem}
\email{eladkosloff@gmail.com}
\keywords{relative quantum cohomology, open Gromov--Witten invariant,
  Lagrangian Floer theory, non-commutative family, enhanced superpotential,
  open WDVV, quantum character}
\subjclass[2020]{53D45, 53D37 (Primary) 14N35, 14N10, 53D12 (Secondary)}
\date{\today}

\begin{document}

\begin{abstract}
We derive the noncommutative open WDVV equations for the open Gromov--Witten superpotential introduced in joint work of the author and Solomon, extending the Solomon--Tukachinsky equations to noncommutative differential graded algebras. The resulting recursive formulas facilitate the computation of open Gromov--Witten invariants with multiple boundary point constraints for even-dimensional Lagrangians.
We apply the open WDVV equations to compute all open Gromov--Witten invariants for the real loci of even-dimensional complex quadrics of dimension greater than two. These computations show that open Gromov--Witten invariants with multiple boundary point constraints can be nontrivial in arbitrary dimensions.

We also construct a character for a natural subalgebra of big quantum cohomology associated to a Lagrangian endowed with a point-like bounding cochain. The character is in the spirit of the Kodaira--Spencer map of Fukaya--Oh--Ohta--Ono. Applying the character construction to the even-dimensional quadric illustrates how point-like bounding cochains refine the information about closed quantum cohomology obtained from weak bounding cochains of Floer theory.
Open Gromov--Witten invariants with three boundary point constraints determine the two extensions of the character from the subalgebra to big quantum cohomology.
\end{abstract}

\maketitle

\vspace{-2.5em}

\tableofcontents
\section{Introduction}
\label{sec:introduction}

\subsection{Overview}\label{subsec:intro-overview}

In their work on the enumeration of rational plane curves,
Kontsevich and Manin~\cite{kontsevich1994gromov} illustrate how curve counts
can be determined by algebraic structures. For a closed symplectic manifold
$(X, \omega)$, the genus-zero Gromov--Witten potential $\Phi$ encodes counts of rational
curves in $X$, and its third derivatives define the quantum product.
Associativity of that product can be recast as a system of formal partial
differential equations for $\Phi$, the WDVV equations. For $X=\mathbb{CP}^2$,
these equations yield a recursion determining the numbers of degree-$d$
rational curves through $3d-1$ generic points from the fact that there is a
unique line through two distinct points.

Open Gromov--Witten theory seeks analogous counts of pseudoholomorphic disks
with boundary on a Lagrangian submanifold
$L\subset X$~\cite{liu2002moduli,solomon2006intersection,cho2008counting,fukaya2011counting,georgieva2016open,solomon2016point}.
Solomon and Tukachinsky~\cite{solomon2016point} define such counts as the
coefficients of a superpotential $\Omega$, in which a formal variable $s$
records the boundary point constraints. Together with the closed potential
$\Phi$, it satisfies formal partial differential equations, the open WDVV
equations~\cite{solomon2019relative}. Analogously to the closed case, these
equations yield a recursion determining the counts for
$(\mathbb{CP}^n,\mathbb{RP}^n)$ with $n$ odd from the fact that there are
exactly two oriented real lines through two distinct points of
$\mathbb{RP}^n$.

The main difficulty in defining open invariants is disk bubbling. The
relevant moduli spaces have codimension-one boundary strata.
Thus, disk counts change as the auxiliary data of the construction vary in
one-parameter families. Similar bubbling phenomena appear in Floer theory,
where they obstruct the Floer differential from squaring to zero.
Fukaya~\cite{fukaya1993morse} showed that bubbling gives rise to $A_\infty$
relations between operators $\mathfrak q_k$ that count disks with $k$
boundary inputs. In their study of Floer theory,
Fukaya--Oh--Ohta--Ono~\cite{fukaya2009lagrangian1,fukaya2009lagrangian2}
introduced bounding cochains, solutions $b$ of
$\sum_k\mathfrak q_k(b,\ldots,b)=c\cdot1_L$ in the resulting $A_\infty$
algebra. Solomon and Tukachinsky~\cite{solomon2016point} construct $\Omega$
from \emph{point-like bounding cochains}, families $b$ of bounding cochains
parameterized by the formal variable $s$ of degree $1-\dim L$, and prove
that these exist and are unique up to gauge equivalence where their
construction applies. The family $b$ plays two roles: it serves as a
boundary point constraint, and it cancels bubbling contributions.

If $\dim L$ is even, then $s$ has odd degree, and graded
commutativity forces $s^2=0$. Hence, $\Omega$ cannot record counts with more
than one boundary point constraint, such as disks through three points of
$L$. Point-like bounding cochains over noncommutative coefficients are constructed in joint work with
Solomon~\cite{kosloff2025pointlikenoncommutativef}. In this paper, we establish
the open WDVV equations for $A_\infty$ algebras over noncommutative
differential graded algebras. For the even-dimensional quadric, the
one-boundary-point count identifies the Fukaya-category summand containing
the real locus, while counts with several boundary point constraints
reveal finer structure in the corresponding summand of quantum cohomology.

We also define a map on a subalgebra of big quantum cohomology
using open Gromov--Witten invariants, and use the open WDVV equations
to show that it is a character.
Write $\Omega_k(t)$ for the generating function of open Gromov--Witten
invariants with $k$ boundary point constraints, where the bulk coordinates
$t$ record the interior constraints.
For a bulk class $u$, the derivative $\partial_u\Omega_1$ counts disks
with one more interior constraint, through a cycle Poincar\'e dual
to $u$.
Under the hypotheses of Theorem~\ref{thm:intro-boundary-character}, the character is
\[
 \mathcal L(u)=\partial_u\Omega_1(t).
\]
Heuristically, $\mathcal L$ is an evaluation at the critical point of
the Kodaira--Spencer map of Fukaya--Oh--Ohta--Ono~\cite{fukaya2016mirror,cholee2024kodaira}
which carries quantum
cohomology into the Jacobian ring of a Landau--Ginzburg potential. The
point-like bounding cochain plays the role of the critical point.
The Euler element $E$ of Dubrovin~\cite{dubrovin1996geometry}
is $c_1(TX)$ corrected by degree-weighted bulk coordinates, and
$\mathcal L(E)$ plays the role of the critical value.
At zero bulk, $E=c_1(TX)$, and $\mathcal L(c_1(TX))$ is the
Novikov-weighted count of Maslov-two disks through a point of $L$.

We apply the open WDVV theory with noncommutative coefficients developed in
this paper to the real quadric. Let $X\subset\CP^{n+1}$ be the quadric
\[z_0^2+\cdots+z_n^2-z_{n+1}^2=0\]
with $n>2$ even, and let $X(\R)\simeq S^n$ denote its real locus.
As in the odd-dimensional case~\cite{hugtenburg2024examples}, the open WDVV
equations give a recursion that determines all open Gromov--Witten invariants
of $(X,X(\R))$ from the genus-zero closed invariants of $X$, computed
in~\cite{beauville1995quantum}, and one open disk count in degree one.
Moreover, given two independent open counts in degree one, the recursion
recovers all genus-zero closed Gromov--Witten invariants of $X$.

The character these invariants produce agrees with what is already known. There
are no Maslov-two disks, so $\mathcal L(c_1(TX))=0$ at zero bulk. Zero is the
eigenvalue of quantum multiplication by $c_1(TX)$ on the summand of $QH^*(X)$
where it acts nilpotently, and $L$ split-generates the corresponding summand
of the Fukaya category~\cite{smith2012pencils}.

The boundary character has two extensions to the full big quantum cohomology
algebra. They agree on the Euler
element, and at zero bulk this is their common value on $c_1(TX)$, so an
eigenvalue does not tell them apart. They differ on the Poincaré dual of the
Lagrangian, $p=\mathrm{PD}[X(\R)]$, where they take opposite values, determined by
\[
 \mathcal L(p\star p)=2\Omega_3(t).
\]
The invariants on the right count disks with three boundary point constraints.
These are the invariants a graded-commutative coefficient algebra cannot record.

Defining quantum-cohomology characters may require extending the coefficient
ring. The toric mirror picture inspired by the Strominger--Yau--Zaslow (SYZ)
conjecture~\cite{strominger1996mirror} interprets these characters as evaluations
at critical points of a disk potential~\cite{auroux2007mirror}, giving a geometric
meaning to the added coefficients.
For $\mathbb{CP}^2$, the small quantum relation $h^{\star 3}=q$ yields three
characters after adjoining $q^{1/3}$ and the cube roots of unity to
$\mathbb{Q}[[q]]$.
The Clifford-torus picture gives a geometric interpretation. Roots of unity
occur as critical holonomies, while disk areas account for fractional Novikov
powers~\cite{auroux2007mirror,cho2004holomorphic,cho2006floer}. The holonomies provide a geometric
interpretation of the complex phase that is less apparent from operations
defined by counts of spheres with no obvious preferred direction.

The geometric role of coefficient extensions is also illustrated by
Haney's work on open Gromov--Witten invariants of
Lagrangians in the quintic, where algebraic coefficients arise from
local-system holonomies~\cite{haney2024open}.
To make this role explicit in our setting, we formulate the construction
axiomatically, separating the structure maps and
their identities from the coefficient extensions used in applications, and
adjoin coefficients in stages. For the quadric, the square root is
adjoined only in Section~\ref{subsec:even-quadric-boundary-character}, after the relevant
invariants have been computed.

Our character construction does not require a prior computation of a Floer
group. The open WDVV equations give the character of Theorem~\ref{thm:open-character}
when $\Omega_2(t)$ is identically zero after restricting to the bulk
variables used in the character construction.
Floer-theoretic conclusions follow from the same point-like family.
The condition $\Omega_2(t)=0$ implies
$HF\big(L,\mathfrak{q}_1^{\gamma_W,b|_{s=0}};R^0_W\big)\neq 0$,
while $\Omega_2(t)$ always
annihilates that group. Thus, in examples where
$\Omega_2(0)\ne 0$, such as the Chiang Lagrangian or real projective
space in $\mathbb{CP}^3$, Floer cohomology is trivial over the Novikov field at
zero bulk. In even dimensions, the vanishing is automatic once a point-like
bounding cochain exists, so existence already implies Floer nonvanishing. In odd dimensions, the same
identities suggest interesting applications, discussed for the Chiang Lagrangian
in Section~\ref{subsec:arithmetic-theory-open-gromov}.

\begin{figure}[tbp]
\centering
\definecolor{domefill}{RGB}{215,228,242}
\definecolor{domeedge}{RGB}{43,58,75}
\definecolor{geoblue}{RGB}{31,111,214}
\definecolor{mkU}{RGB}{191,129,5}
\definecolor{mkA}{RGB}{210,60,60}
\definecolor{mkP}{RGB}{122,75,183}
\definecolor{slabfill}{RGB}{224,229,225}
\definecolor{slabedge}{RGB}{200,207,202}
\definecolor{spherecol}{RGB}{184,146,158}
\newcommand{\dome}[4]{%
 \begin{scope}
  \path[clip] (#1-#2,0) arc[start angle=180,end angle=0,radius=#2]
    arc[start angle=0,end angle=-180,x radius=#2,y radius=0.26*#2] -- cycle;
  \fill[#3] (#1-#2,-#2) rectangle (#1+#2,#2);
  \shade[inner color=white,outer color=#3,opacity=0.9]
    (#1-0.3*#2,0.55*#2) circle (1.25*#2);
 \end{scope}
 \draw[#4,dashed,opacity=0.5] (#1-#2,0) arc[start angle=180,end angle=0,x radius=#2,y radius=0.26*#2];
 \draw[#4,thick] (#1-#2,0) arc[start angle=180,end angle=0,radius=#2];
 \draw[#4,thick] (#1+#2,0) arc[start angle=0,end angle=-180,x radius=#2,y radius=0.26*#2];}
\newcommand{\slab}[1]{%
 \filldraw[fill=slabfill,draw=slabedge,rounded corners=2pt]
  (-#1+0.45,0.65) -- (#1+0.45,0.65) -- (#1-0.45,-0.65) -- (-#1-0.45,-0.65) -- cycle;
 \node[font=\small\itshape,text=black!55] at (#1-0.2,-0.4) {$L$};}
\newcommand{\mk}[3]{\node[circle,fill=#1,draw=white,line width=0.6pt,inner sep=1.7pt,label={[font=\footnotesize,text=#1!80!black]#2}] at (#3) {};}
\newcommand{\nd}[1]{\node[circle,fill=black,draw=white,line width=0.6pt,inner sep=1.5pt] at (#1) {};}
\begin{tikzpicture}[geo/.style={geoblue,line width=1.6pt,line cap=round}]
 \begin{scope}[xshift=-5.6cm]
  \slab{2.5}
  \dome{-1.1}{1.1}{domefill}{domeedge}
  \dome{1.1}{1.1}{domefill}{domeedge}
  \draw[geo] (-2.2,0) arc[start angle=180,end angle=0,radius=1.1];
  \mk{mkP}{left:pt}{-2.2,0}
  \mk{mkU}{above:$u$}{-1.65,0.953}
  \mk{mkA}{above:$\alpha$}{1.476,1.034}
  \nd{0,0}
  \node at (0,-1.4) {$\Omega_1(u)\,\Omega_1(\alpha)$};
  \node at (0,2.9) {\textbf{disk splitting}};
 \end{scope}
 \begin{scope}
  \slab{2.5}
  \dome{0}{1.8}{domefill}{domeedge}
  \shade[ball color=spherecol!45!white] (1.543,1.839) circle (0.62);
  \draw[domeedge] (1.543,1.839) circle (0.62);
  \mk{mkP}{left:pt}{-1.8,0}
  \nd{1.157,1.379}
  \mk{mkU}{left:$u$}{1.38,1.98}
  \mk{mkA}{right:$\alpha$}{1.78,1.85}
  \node at (0,-1.4) {$-\,\Omega_1(u\star\alpha)$};
  \node at (0,2.9) {\textbf{sphere bubble}};
 \end{scope}
 \begin{scope}[xshift=5.6cm]
  \slab{2.5}
  \dome{0.6}{1.5}{domefill}{domeedge}
  \dome{-1.45}{0.55}{mkP!25}{mkP}
  \draw[geo] (-0.9,0) arc[start angle=180,end angle=60,radius=1.5];
  \mk{mkP}{left:pt}{-2.0,0}
  \nd{-0.9,0}
  \mk{mkU}{above:$u$}{-0.26,1.229}
  \mk{mkA}{above:$\alpha$}{1.35,1.299}
  \node[text=mkP!80!black,font=\small] at (-1.45,1.15) {$\Omega_2(t)$};
  \node[text=mkP!80!black] at (0,-1.4) {$+\,\Omega_2(t)\,(\cdots)$};
  \node at (0,2.9) {\textbf{boundary bubble}};
 \end{scope}
\end{tikzpicture}
\caption{The proof of the character theorem can be understood quite faithfully
in geometric terms, even though the algebraic machinery hides this. One analyzes
the boundary strata of the geodesic-constrained moduli space of Solomon and
Tukachinsky~\cite{solomon2016point}, here the moduli space of disks with one
boundary point constraint and two interior markings, with the geodesic constraint
running from the boundary point through $u$ to $\alpha$. The figure shows its three kinds of boundary
strata, and below each is its contribution to the relation that gives the
character theorem.}
\label{fig:intro-character-strata}
\end{figure}

The quantum-cohomological information obtained here has parallels in
the literature. For a Lagrangian $L$ with nontrivial Floer cohomology over
the Novikov field, the open--closed map has nonzero image in quantum
cohomology~\cite[Proposition 18.1]{abouzaid2026quantum}.
If an idempotent $e$ in quantum cohomology has nonzero image under the
closed--open map $QH^*(X)\to HF^*(L)$, then $L$ is heavy with respect to
$e$~\cite[Theorem 1.6]{fukaya2019spectral}.
For Floer-nontrivial Lagrangians,
potential and curvature values also satisfy spectral constraints
\cite{auroux2007mirror,sheridan2016fano,yuan2025family,castronovo2024curved}.
The phenomenon behind the two quadric characters also appears categorically. The
decomposition of the Fukaya category by eigenvalues of quantum multiplication by
$c_1$ can be coarser than its decomposition by quantum
idempotents~\cite{evanslekili2019generating}. For the even quadric, the Floer
cohomology of the spherical real locus is semisimple over
$\mathbb{C}$~\cite{smith2012pencils}.
Related constructions include Fukaya-categorical decompositions of
quantum cohomology~\cite{sanda2021computation,smith2025summands}
and a relative cyclic open--closed map to
$S^1$-equivariant relative quantum homology
\cite{hugtenburg2024open,hirschihugtenburg2026open}.

Our technical starting point is the relative quantum cohomology of
Solomon and Tukachinsky~\cite{solomon2019relative}, whose cone complex,
relative potential and open WDVV equations we extend to noncommutative
coefficients.

The paper is organized as follows. Section~\ref{sec:open-closed-framework} recasts the
geometric setup in axiomatic form. Section~\ref{sec:structure-maps} states the axioms
for the structure maps and shows that the differential-form operations of
Solomon--Tukachinsky satisfy them. Section~\ref{sec:extension-of-scalars} constructs the
coefficient rings and the notion of a bounding pair.
Section~\ref{sec:tensor-relative-potentials} lets derivations of the coefficient rings act on
the cone complex, builds the tensor and relative potentials on it, and defines the enhanced
superpotential.
Section~\ref{sec:noncommutative-relative-quantum} derives the noncommutative open WDVV equations,
constructs the boundary character, computes its value on the Euler element, and draws the
consequences for Floer cohomology. Section~\ref{sec:even-quadric-background} treats the even
quadric. Appendix~\ref{app:scalar-extension} treats filtrations and scalar extension,
Appendix~\ref{subsec:bg-deformations} covers deformations by interior and boundary
insertions, and Appendix~\ref{app:tensor-potential-lemmas} proves degree and gauge
invariance of the tensor potential.
Appendix~\ref{app:standard-arguments} explains why Solomon--Tukachinsky's
proofs of closedness of the relative potential and flatness of the relative
quantum connection remain valid in our setting.

\subsection{Geometric setting and notation}
\label{subsec:intro-geometric-background}
\subsubsection{Differential forms and curve classes}
\label{subsubsec:intro-curve-classes}
Let $(X,\omega)$ be a closed symplectic manifold of dimension $2n$,
where $n>1$, and let $L\subset X$ be a compact, connected, oriented
Lagrangian with a fixed relative spin structure $\mathfrak{s}$.  Fix an $\omega$-tame
almost complex structure $J$.  Write $i:L\hookrightarrow X$ for the
inclusion and $A^*(X)$ and $A^*(L)$ for differential forms
with real coefficients.  The pairings are
\[
 \langle\eta,\xi\rangle_X=(-1)^{|\xi|}\int_X\eta\wedge\xi,
 \qquad
 \langle a,b\rangle_L=(-1)^{|b|}\int_La\wedge b.
\]
Here $|\eta|$ denotes the degree of a homogeneous element.  The
pairings descend to cohomology.

Symplectic area and Maslov index define homomorphisms
\[
 \omega:H_2(X,L;\Z)\longrightarrow\R,\qquad
 \mu_L:H_2(X,L;\Z)\longrightarrow\Z.
\]
Following \cite{solomon2019relative}, fix a possibly trivial subgroup
$S_L\subset\ker\omega\cap\ker\mu_L$ and set
\[
 \Pi:=H_2(X,L;\Z)/S_L,\qquad
 \varpi:H_2(X;\Z)\longrightarrow\Pi,
\]
where $\varpi$ denotes throughout the natural map to relative homology
followed by the quotient.  Both area and Maslov index descend to $\Pi$,
retaining the notation $\omega$ and $\mu_L$.  Since $L$ is oriented,
$\mu_L$ takes even values.  Write
\[
 \beta_0:=0\in\Pi,\qquad A_0:=0\in H_2(X;\Z).
\]

\subsubsection{Coefficients}
\label{subsubsec:intro-coefficients}
Let $T^\beta$ be a formal symbol for $\beta\in\Pi$, with
$T^{\beta_1}T^{\beta_2}=T^{\beta_1+\beta_2}$ and
$|T^\beta|=\mu_L(\beta)$.  Define the Novikov rings
\[
\begin{aligned}
 \Lambda&:=\left\{\sum_{i=0}^\infty a_iT^{\beta_i}\ \middle|\
 a_i\in\R,\ \beta_i\in\Pi,\ \omega(\beta_i)\ge0,\
 \omega(\beta_i)=0\Longrightarrow\beta_i=\beta_0,\
 \lim_{i\to\infty}\omega(\beta_i)=\infty\right\},\\
 \Lambda_c&:=\left\{\sum_{i=0}^\infty a_iT^{\beta_i}\in\Lambda\ \middle|\
 \beta_i\in\operatorname{im}\varpi\right\}.
\end{aligned}
\]
Define the positive-area ideals
\[
 \Lambda_+:=\left\{\sum_{i=0}^\infty a_iT^{\beta_i}\in\Lambda\ \middle|\
 \omega(\beta_i)>0\ \forall i\right\},\qquad
 \Lambda_{c,+}:=\Lambda_c\cap\Lambda_+.
\]

For a finite-dimensional graded vector space $V$ with homogeneous
basis $v_1,\ldots,v_N$, write
\[
 \R[[V]]=\R[[t_1,\ldots,t_N]],\qquad |t_j|=2-|v_j|,
 \qquad Q_V=\Lambda_c\otimes\R[[V]].
\]
The variables $t_j$ are graded commutative.  Let $W$ be another
finite-dimensional graded real vector space.  Put
\[
 R_W^0=\Lambda\otimes\R[[W]],\qquad
 R_W=R_W^0\otimes\R\langle s\rangle,
 \qquad |s|=1-n.
\]
The factor $\R\langle s\rangle$ is the free associative algebra on
$s$; no relation $s^2=0$ is imposed when $s$ is odd.

Let $w_1,\ldots,w_N$ be a homogeneous basis of $W$, with coordinates
$t_i$ as above.  Let $u=(u_W,x_u)$, where
$u_W=\sum_i f_iw_i\in W\otimes Q_W$ and $x_u\in R_W$.
Suppose $f_i$ and $x_u$ are homogeneous of degrees $|t_i|+r$ and
$|s|+r$, respectively, for some $r\in\Z$.  Define the degree-$r$
derivation $\partial_u$ of $R_W$ by
\[
 \partial_u(t_i)=f_i,\qquad \partial_u(s)=x_u,
 \qquad \partial_u(T^\beta)=0.
\]
For $v\in W\otimes Q_W$, write $\partial_v=\partial_{(v,0)}$.  The same
formulas define $\partial_v$ on $Q_V$ for $v\in V\otimes Q_V$.
Write $\partial_s:=\partial_{(0,1)}$.

Let $\mathfrak m_V=(t_1,\ldots,t_N)\subset\R[[V]]$ be the maximal ideal,
and define $\mathfrak m_W$ similarly.  Set
\begin{equation}\label{eq:positive-deformation-ideals}
 \mI_{Q_V}=\Lambda_{c,+}Q_V+\mathfrak m_VQ_V,
 \qquad
 \mI_{R_W}=\Lambda_+R_W+\mathfrak m_WR_W.
\end{equation}
Tensor products are completed as specified in
Appendix~\ref{app:scalar-extension}.

Write $\mathcal Z(R_W)$ for the graded center and define the graded
$\mathcal Z(R_W)$-module
\[
 HH_0(R_W)=R_W/[R_W,R_W]_{\mathrm{gr}},\qquad
 [a,b]_{\mathrm{gr}}=ab-(-1)^{|a||b|}ba.
\]
Write $[a]$ for the class of $a$, with action $z\cdot[a]=[za]$.
When $s$ is odd, $[s^2]=0$ but $s^2\cdot[s]=[s^3]$.

\subsubsection{Moduli spaces and operations}
\label{subsubsec:intro-operations}
The geometric input for open WDVV consists of disk, sphere, and
geodesic operations.  We use the differential-form construction
of~\cite{solomon2016differential,solomon2019relative}, in which the
disk operations determine a family of $A_\infty$ structures on
$A^*(L)\otimes\Lambda$ in the framework
of~\cite{fukaya2010cyclic,fukaya2009lagrangian1}.
For $\beta\in\Pi$, write
$\M_{k+1,l}(\beta)=\M_{k+1,l}(\beta;J)$ for the moduli space of
stable genus-zero $J$-holomorphic maps to $(X,L)$ in class $\beta$,
with one boundary component, $k+1$ cyclically ordered boundary
marked points, and $l$ interior marked points.  The evaluation maps are
\[
 \operatorname{ev}^{b,\beta}_j:\M_{k+1,l}(\beta)\longrightarrow L
 \quad(0\le j\le k),\qquad
 \operatorname{ev}^{i,\beta}_j:\M_{k+1,l}(\beta)\longrightarrow X
 \quad(1\le j\le l).
\]
Similarly, let $\M_{l+1}(A)$ be the moduli space of genus-zero stable
maps to $X$ in class $A\in H_2(X;\Z)$ with $l+1$ marked points, and
write $\operatorname{ev}^A_j:\M_{l+1}(A)\to X$ for its evaluation maps.

To define the operations by fiber integration, we require regularity
of all these stable maps, smooth orbifold structures on the moduli
spaces (with corners in the disk case), and proper submersions
$\operatorname{ev}^{b,\beta}_0$ and $\operatorname{ev}^A_0$.
The sphere spaces carry their usual orientations; the disk spaces
are oriented by the fixed relative spin structure $\mathfrak{s}$
\cite[Chapter~8]{fukaya2009lagrangian2}.
We impose the corresponding conditions on the geodesic and
one-parameter moduli spaces used in~\cite{solomon2019relative}.
Fix a connected component $\J$ of the space of $\omega$-tame
almost complex structures obeying these conditions; all choices
of $J$ are made within this component.

Integration along the fibers defines the disk operations
\[
 \q_{k,l}: A^*(L;\Lambda)^{\otimes k}\otimes A^*(X)^{\otimes l}
 \longrightarrow A^*(L;\Lambda),
\]
given by
\[
\begin{aligned}
 &\q_{k,l}(a_1,\ldots,a_k;\eta_1,\ldots,\eta_l)\\
 &\quad=\delta_{k,1}\delta_{l,0}\,d a_1
 +(-1)^{\epsilon(a)}\sum_{\beta\in\Pi}T^\beta
 (\operatorname{ev}^{b,\beta}_0)_*
 \left(
 \bigwedge_{j=1}^l(\operatorname{ev}^{i,\beta}_j)^*\eta_j
 \wedge
 \bigwedge_{j=1}^k(\operatorname{ev}^{b,\beta}_j)^*a_j
 \right),
\end{aligned}
\]
where $\epsilon(a)=1+\sum_{j=1}^kj(|a_j|+1)$.
The sphere operations are
\[
 \q_{\emptyset,l}(\eta_1,\ldots,\eta_l)
 =\sum_A(-1)^{w_{\mathfrak{s}}(A)}T^{\varpi(A)}(\operatorname{ev}^A_0)_*
 \left(\bigwedge_{j=1}^l(\operatorname{ev}^A_j)^*\eta_j\right),
\]
where $w_{\mathfrak{s}}\in H^2(X;\Z/2)$ is the class determined by the relative
spin structure $\mathfrak{s}$, with $i^*w_{\mathfrak{s}}=w_2(TL)$.  Finally,
$\q_{-1,l}=\sum_\beta T^\beta\q_{-1,l}^\beta$ is the scalar disk
operation, obtained by integrating the interior constraints over
the moduli space with no boundary marked points. 
As in~\cite{kosloff2025pointlikenoncommutativef}, we extend the Fukaya
operations to non-commutative coefficients.  The extensions of all
the operations used here to the completed coefficient modules are
specified in Appendix~\ref{app:scalar-extension}.

The even-dimensional quadric with real locus $L\simeq S^n$ studied
in~\cref{sec:even-quadric-background} satisfies these regularity
conditions with its standard symplectic and complex structures,
by the homogeneous-space construction
of~\cite[Example~1.5 and Remark~1.6]{solomon2016differential}.
The geodesic conditions follow from the same group actions,
as explained in~\cite[Section~1.3.12]{solomon2019relative}.
Beyond this regular setting, the analogous construction uses
compatible virtual fundamental chains in place of ordinary fiber
integration.  Kuranishi constructions
are developed in
\cite{abouzaid2021complex,abouzaid2024gromov,fukaya2010cyclic,
fukaya2011counting,fukaya2009lagrangian1,fukaya2009lagrangian2,
fukaya2010toric1,fukaya2011toric2,fukaya2016mirror,
fukaya2017antisymplectic,fukaya2019spectral,fukaya2020kuranishi,
fukaya2024construction,fukayaono1999arnold,hirschihugtenburg2025field};
with such compatible chains, the constructions extend beyond the
regular setting, subject to the remaining hypotheses, and $\J$
may be the full space of $\omega$-tame almost complex structures.
Hirschi and Hugtenburg~\cite{hirschihugtenburg2026open} apply their
theory to construct differential-form operators
satisfying the structure-map axioms in~\cref{dfn:structure-maps}.
The polyfold methods
of~\cite{hofer2007fredholm1,hofer2010integration,hofer2009fredholm3,
hofer2009fredholm2,liwehrheim2014ainfinity} offer another prospective
approach to this extension.

\subsubsection{Bounding pairs}
\label{subsubsec:intro-bounding-pairs}
Define the trace map and its kernel by
\[
 \mathfrak i_\R:A^*(X)\longrightarrow\R[-n],\qquad
 \mathfrak i_\R(\eta)=\langle1,i^*\eta\rangle_L,
 \qquad
 \ker\mathfrak i_\R=\left\{\eta:\int_Li^*\eta=0\right\}.
\]
Here $M[j]^d=M^{d+j}$ denotes the grading shift.  The kernel is a
subcomplex.  Its cohomology supplies the relative bulk classes, with
the inclusion-induced map
\[
 \rho:H^*(\ker\mathfrak i_\R)\longrightarrow H^*(X;\R).
\]
Let $W\subset H^*(\ker\mathfrak i_\R)$ be a finite-dimensional graded
subspace.  Choose closed representatives of the basis $w_i$ of $W$,
denoted by the same symbols, and set
\[
 \gamma_W=\Gamma_W=\sum_i t_iw_i
 \in\bigl(\mI_{Q_W}(\ker\mathfrak i_\R\otimes Q_W)\bigr)^2.
\]
For a closed bulk form $\gamma$ of degree two, the deformed
$A_\infty$ operations are
\[
 \mathfrak m_k^\gamma(a_1,\ldots,a_k)
 =\sum_{l\ge0}\frac1{l!}
 \q_{k,l}(a_1,\ldots,a_k;\gamma^{\otimes l}).
\]
A \emph{bounding pair over $W$} is a pair $(\gamma_W,b)$ with
\[
 b\in\bigl(((s)+\mI_{R_W})(A^*(L)\otimes R_W)\bigr)^1
\]
such that the Maurer--Cartan equation holds:
\begin{equation}\label{eq:intro-maurer-cartan}
 \sum_{k\ge0}\mathfrak m_k^{\gamma_W}(b^{\otimes k})
 =\sum_{k,l\ge0}\frac1{l!}
 \q_{k,l}(b^{\otimes k};\gamma_W^{\otimes l})
 =c\cdot1_L,
 \qquad c\in\mI_{R_W}\cap\mathcal Z(R_W),\quad |c|=2.
\end{equation}
Following \cite[Definition~1.1]{solomon2019relative}, the pair is
\emph{separated} if $\langle1,b\rangle_L$ is independent of the bulk
coordinates.  It is \emph{point-like} if
\begin{equation}\label{eq:point-like-bounding-pair}
 \langle1,b\rangle_L=-s.
\end{equation}
Thus every point-like pair is separated.  The existence of a bounding
pair is an additional hypothesis; the point-like families for the
even-dimensional Lagrangian spheres considered below are supplied
by~\cite{kosloff2025pointlikenoncommutativef}.

\subsubsection{The closed potential and quantum product}
\label{subsubsec:intro-closed-potential}
Let $U\subset H^*(X;\R)$ be an even graded subspace.  Choose closed
forms $\Delta_j$ whose cohomology classes form a homogeneous basis of
$U$, and set
\[
 \gamma_U=\sum_j t_j\Delta_j,
\]
where $t_j$ are the corresponding formal coordinates.
Define the closed Gromov--Witten potential by
\begin{equation}\label{eq:closed-potential}
 \Phi_U=\sum_{l\ge0}\frac1{(l+1)!}
 \langle\q_{\emptyset,l}(\gamma_U^{\otimes l}),\gamma_U\rangle_X
 \in Q_U.
\end{equation}
Its coefficients are the genus-zero closed Gromov--Witten invariants,
with the relative-spin signs included in $\q_{\emptyset,l}$.
The big quantum product is
\[
 a\star_U b
 =\sum_{l\ge0}\frac1{l!}
 [\q_{\emptyset,l+2}(a,b,\gamma_U^{\otimes l})].
\]
For constant classes $a,b,d\in U$, the quantum product and closed
potential satisfy \cite[Sections~1.2.6--1.2.7]{solomon2019relative}
\[
 \langle a\star_U b,d\rangle_X
 =\langle a,b\star_U d\rangle_X
 =\partial_a\partial_b\partial_d\Phi_U.
\]
\subsection{Results}
\label{subsec:intro-results}

\subsubsection{The relative and enhanced potentials}
\label{subsubsec:intro-potentials}
The scalar disk potential must be combined with sphere contributions
to account for disks whose boundary collapses.  Following
\cite{solomon2019relative}, this combination is naturally a class in
a cone complex.  Extend $\mathfrak i_\R$ to
$\mathfrak i:A^*(X)\otimes Q_W\to HH_0(R_W)[-n]$ and set
\begin{equation}\label{eq:intro-cone}
 \Cone{\mathfrak i}
 =(A^*(X)\otimes Q_W)\oplus HH_0(R_W)[-n-1],\qquad
 d_{\Cone}(\eta,\xi)=(d\eta,\mathfrak i(\eta)).
\end{equation}
For a bounding pair $(\gamma_W,b)$, put
\[
 \q_{\emptyset,0}^{\gamma_W}
 =\sum_{l\ge0}\frac1{l!}\q_{\emptyset,l}(\gamma_W^{\otimes l}),
\]
\[
 \q_{-1,0}^{\gamma_W,b}
 =\sum_{k,l\ge0}\frac{\langle
      \q_{k,l}(b^{\otimes k};\gamma_W^{\otimes l}),b\rangle_L}
      {l!(k+1)}
  +\sum_{l\ge0}\frac1{l!}\q_{-1,l}(\gamma_W^{\otimes l}).
\]
The pair
$\psi=(\q_{\emptyset,0}^{\gamma_W},-\q_{-1,0}^{\gamma_W,b})$
is closed in the cone.  Its class
\[
 \Psi=[\psi]\in H^*(\Cone{\mathfrak i})
\]
is the \emph{relative potential}.  

Write $\Ker\mathfrak i$ and $\Coker\mathfrak i$ for the kernel and
cokernel of the induced map on cohomology.  The cone gives the exact
sequence
\begin{equation}\label{eq:cone-i-exact-sequence}
 0\longrightarrow\Coker\mathfrak i
 \xrightarrow{\ \overline x\ }H^*(\Cone{\mathfrak i})
 \xrightarrow{\ \pi\ }\Ker\mathfrak i\longrightarrow0,
 \qquad
 \overline x([r])=[(0,-r)],\quad \pi[(\eta,\xi)]=[\eta].
\end{equation}
To specify a splitting, consider also the exact sequence before
extending scalars,
\[
 0\longrightarrow\Coker\mathfrak i_\R
 \xrightarrow{\ \overline y\ }H^*(\ker\mathfrak i_\R)
 \xrightarrow{\ \rho\ }\Ker\mathfrak i_\R\longrightarrow0,
\]
where $\overline y$ is the connecting map and the common degree shifts
are suppressed.  Fix a graded $\R$-linear left inverse $P_\R$ of $\overline y$.
Let
\[
 P:H^*(\Cone{\mathfrak i})\longrightarrow\Coker\mathfrak i,
 \qquad
 P\circ\overline x=\Id,
\]
be the graded $Q_W$-linear splitting constructed from $P_\R$ in
\cref{subsec:enhanced-superpotential}.
Define the \emph{enhanced superpotential} by
\[
 \overline\Omega=P\Psi\in\Coker\mathfrak i.
\]
The cokernel is
\[
 \Coker\mathfrak i\simeq
 \begin{cases}
 HH_0(R_W),&[L]=0\text{ in }H_n(X;\R),\\
 HH_0(R_W)/Q_W,&[L]\ne0\text{ in }H_n(X;\R),
 \end{cases}
\]
with the degree shift suppressed.  In the second case,
$\Coker\mathfrak i_\R=0$ and $P$ is canonical.  For a fixed splitting,
both potentials depend only on the gauge class of the bounding pair.

For a point-like pair, define the enhanced open invariants by
\begin{equation}\label{eq:intro-open-invariants}
 \ogw_{\beta,k}(A_1,\ldots,A_l)
 =k!\,[T^\beta s^k]\left.
 \partial_{A_l}\cdots\partial_{A_1}\overline\Omega\right|_{t=0},
 \qquad A_j\in W.
\end{equation}
Thus $k$ is the number of boundary point constraints and the $A_j$
are the interior constraints. The non-commutative invariants retain the
degree, unit, zero-energy, and divisor axioms
of~\cite{solomon2019relative}; see~\cref{prop:enhanced-ogw-axioms}.

\subsubsection{Non-commutative open WDVV equations}
\label{subsubsec:intro-open-wdvv}
To state the equations, assume:
\begin{enumerate}[label=\textup{(A.\arabic*)}]
 \item $U\subset H^{\mathrm{even}}(X;\R)$ contains the unit,
 $U\otimes Q_U$ is closed under $\star_U$, and the restricted
 Poincar\'e pairing is nondegenerate.
 \item $W=\rho^{-1}(U)$ is even, and $(\gamma_W,b)$ is a
 separated bounding pair over $W$.
\end{enumerate}
The map $\rho$ induces a coefficient homomorphism
$\rho^*:Q_U\to Q_W$.  Denote the pulled-back quantum product by
$\star=\rho^*(\star_U)$.  Set
\[
 U_L=U\cap\operatorname{Im}\rho,\qquad W'=\ker(P_\R|_W).
\]
Then $\rho:W'\to U_L$ is an isomorphism.  Let $\kappa:W\to W$ be
the projection onto $W'$ along $\ker(\rho|_W)$.  Its pullback
$\kappa^*$ sets the complementary bulk coordinates to zero.

Choose a basis $\{\Gamma_l\}_{l\in I_L}$ of $W'$ and extend
$\Delta_l=\rho(\Gamma_l)$ to a basis
$\{\Delta_l\}_{l\in I_U}$ of $U$, with $I_L\subset I_U$.  Put
\[
 g_{lm}=\int_X\Delta_l\smile\Delta_m,\qquad
 (g^{lm})=(g_{lm})^{-1}.
\]
For $u=(u_W,x_u)\in(W\otimes Q_W)\oplus R_W$, write
$\overline u=\rho(u_W)$, and
write $\overline w=\rho(w)$ for $w\in W$.  For constant $a,b,d\in U$, abbreviate
$\Phi_{abd}=\rho^*(\partial_a\partial_b\partial_d\Phi_U)$ and
extend this tensor $Q_W$-trilinearly.  For such $u$ and a constant
$w\in W$, write
$\overline\Omega_{uw}=\partial_u\partial_w\overline\Omega$.

\begin{thm}[Non-commutative open WDVV]
\label{thm:intro-open-wdvv}
Let $u,v\in(W\otimes Q_W)\oplus R_W$ be such that
$\partial_u$ and $\partial_v$ are homogeneous derivations, and let
$w\in W$.  Then the following identity holds in $\Coker\mathfrak i$:
\begin{equation}\label{eq:intro-open-wdvv}
\begin{aligned}
 &\sum_{\substack{l\in I_L\\m\in I_U}}
    \overline\Omega_{u\Gamma_l}g^{lm}
    \Phi_{\Delta_m\overline w\overline v}
    -(\partial_uc)\cdot\overline\Omega_{vw}\\
 &\quad=(-1)^{|\partial_u||\partial_v|}
  \left(\sum_{\substack{l\in I_U\\m\in I_L}}
    \Phi_{\overline u\overline w\Delta_l}g^{lm}
    \overline\Omega_{v\Gamma_m}
    -(\partial_vc)\cdot\overline\Omega_{uw}\right).
\end{aligned}
\end{equation}
Here $c\in\mathcal Z(R_W)$ is the coefficient in~\eqref{eq:intro-maurer-cartan}.
Multiplication by $\partial_uc$ and $\partial_vc$ is well defined on
$\Coker\mathfrak i$ by~\cref{lm:mc-coefficient-actions-coker}.
\end{thm}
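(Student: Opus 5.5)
The plan follows the Solomon--Tukachinsky proof of open WDVV in \cite{solomon2019relative}: first package the relative potential into a flat relative quantum connection on the cone, then read the identity off as the vanishing of its curvature, expanded in coordinates. The steps, in order, are as follows.

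First, for each homogeneous derivation $\partial_u$, define the relative quantum product $u\circ_\Psi(\cdot)$ on $H^*(\Cone{\mathfrak i})$ as the second derivative of $\Psi$. Concretely, $\partial_u\partial_w\Psi$ is viewed as an element of the cone cohomology and is combined with the closed part $\overline u\star\overline w$. Then form the connection $\nabla_u=\partial_u+u\circ_\Psi$ on $H^*(\Cone{\mathfrak i})$.

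Appendix~\ref{app:standard-arguments} gives two facts: $\Psi$ is closed, and $\nabla$ is flat, so that
\[
\nabla_u\nabla_v-(-1)^{|\partial_u||\partial_v|}\nabla_v\nabla_u=\nabla_{[\partial_u,\partial_v]}.
\]
Both statements still hold for noncommutative $R_W$. The only new ingredient is that $\partial_u$ and $\partial_v$ may now act on $s$ via $x_u,x_v$. Differentiating the Maurer--Cartan equation~\eqref{eq:intro-maurer-cartan} then produces terms $(\partial_uc)\cdot 1_L$ from the curvature. These terms are central, and by \cref{lm:mc-coefficient-actions-coker} they act on $HH_0(R_W)$, hence on $\Coker\mathfrak i$.

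Second, apply the flatness identity to the section $\Gamma_0$ corresponding to $w$ and expand. The derivative terms $\partial_u\partial_v$ cancel against the commutator term, using graded symmetry of mixed partials of $\overline\Omega$ as derivations on $HH_0$. What remains is the associativity defect of the relative product. Next, compose with $P$ and use the exact sequence~\eqref{eq:cone-i-exact-sequence}. The closed component $\pi(\cdot)$ recovers ordinary WDVV for $\star$, which holds by (A.1). The cokernel component gives \eqref{eq:intro-open-wdvv}.

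Third, to obtain the explicit sums, expand the intermediate $\pi$-part $\overline u\star\overline w$ in the basis $\{\Delta_m\}$ using $g^{lm}$. Only indices $l\in I_L$ contribute to the open term. The reason is that $W=\rho^{-1}(U)$ and $\rho|_{W'}\colon W'\to U_L$ is an isomorphism, so the lift of $\Delta_l$ to $W'$ is $\Gamma_l$ exactly when $l\in I_L$. Components outside $U_L$ lie outside $\operatorname{Im}\rho$, so they pair to zero against the open part. Components in $\ker(\rho|_W)$ are absorbed by $P$, because $W'=\ker P_\R|_W$.

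The main obstacle is the second step, specifically tracking signs and side conventions with noncommutative coefficients. In the cone, $\Psi$ takes values in $HH_0(R_W)$, which is only a $\mathcal Z(R_W)$-module. So I must check that the terms $\overline\Omega_{u\Gamma_l}g^{lm}\Phi_{\Delta_m\overline w\overline v}$ make sense. They do, since the $\Phi$ factors lie in $Q_W\subset\mathcal Z(R_W)$. I must also check that $\partial_u$ descends to $HH_0$, which holds because derivations preserve graded commutators. Finally, the $(\partial_uc)$ terms arise precisely from differentiating $\q^{\gamma_W,b}_{-1,0}$ in $s$. For that I would first verify the identity
\[
\partial_u\q_{-1,0}^{\gamma_W,b}\equiv\langle \partial_u b,\ \cdot\ \rangle\text{-type expression}
\]
modulo $[R_W,R_W]$, using cyclic symmetry of $\q_{k,l}$ and the trace property of $HH_0$. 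This replaces the commutative manipulation in \cite{solomon2019relative}.
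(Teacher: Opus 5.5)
\textbf{Overall structure.} Your skeleton matches the paper's proof. By \cref{prop:tensor-potential-derivative-identity}, flatness of $\nabla_\delta=\delta+\delta\nun^{\gamma_W,b}$ on cohomology is equivalent to $T_uT_v=(-1)^{|\partial_u||\partial_v|}T_vT_u$ for $T_\upsilon=H^*(\partial_\upsilon\nun^{\gamma_W,b})$. The derivative terms cancel against $\nabla_{[\partial_u,\partial_v]}$ exactly, so no symmetry of mixed partials is needed; such symmetry would fail anyway for non-constant $u,v$. The identity is then the $P$-component of this relation applied to $a(w)$.

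\textbf{The gap.} Your operator $u\circ_\Psi$, described as ``the second derivative of $\Psi$ combined with $\overline u\star\overline w$'', is only specified on the lifts $a(w)$. As a result you misplace the origin of the $(\partial_uc)$-terms. The flatness you cite (\cref{thm:flatness-relation}) is a statement about $\partial_u\nun^{\gamma_W,b}$, where $\nun^{\gamma_W,b}(\eta,\xi)=(\q_{\emptyset,1}^{\gamma_W}(\eta),-\q_{-1,1}^{\gamma_W,b}(\eta)-c\cdot\xi)$. So your operator must be $H^*(\partial_u\nun^{\gamma_W,b})$ on all of $H^*(\Cone{\mathfrak i})$. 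On $a(w)$ it agrees with $\partial_u\partial_w\Psi$, because separatedness gives $\partial_w\Psi=\nun^{\gamma_W,b}(a(w))$. On the summand $\operatorname{Im}\overline x$, however, the $-c\cdot\xi$ term gives $(\partial_u\nun^{\gamma_W,b})(\overline x(r))=\overline x(-(\partial_uc)\cdot r)$. This is well defined because $c$ is central of degree $2$ and $\partial_uc$ is central (\cref{lm:mc-coefficient-actions-coker}). Since $T_v(a(w))=\overline x(\partial_v\partial_w\overline\Omega)+\sigma(\overline w\star\overline v)$ has a nonzero $\overline x$-component, this cokernel action is exactly what produces $-(\partial_uc)\cdot\overline\Omega_{vw}$ in $PT_uT_v(a(w))$. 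Without it you cannot compute that composite at all.

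\textbf{Your proposed mechanism does not produce these terms.} You attribute them to differentiating the Maurer--Cartan equation, or to differentiating $\q_{-1,0}^{\gamma_W,b}$ in $s$ through $x_u$. Neither is the source.
\begin{itemize}
\item The $(\partial_uc)$-terms are present for purely bulk $u,v$ with $x_u=x_v=0$; they are what later give $\mathcal L(u)=-\partial_uc_0$.
\item The cyclic-symmetry identity you plan to verify is \cref{lm:relative-potential-derivative}. There the $b$-insertions collapse to $c\cdot\delta\langle1,b\rangle_L$, which vanishes in the constant bulk direction $w$. Its role is only to give $\partial_w\Psi=\nun^{\gamma_W,b}(a(w))$.
\end{itemize}
The genuinely non-commutative point is different. $\overline\Omega$ lives in a quotient of $HH_0(R_W)$, where two classes cannot be multiplied. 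So you cannot trade $-\partial_uc$ for a derivative of $\overline\Omega$ and multiply, as in the commutative theory. The central action of $\partial_uc$ on $\Coker\mathfrak i$ replaces that product. Moreover, the theorem assumes only separatedness, so $\partial_s\overline\Omega=[-c]$ is not available anyway.

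\textbf{Two smaller repairs.}
\begin{itemize}
\item Passing from $P(\partial_v\partial_w\Psi)$ to $\partial_v\partial_w\overline\Omega$ requires $P\circ\delta=\delta\circ P$ (\cref{lm:splitting-commutes-derivations}).
\item The restriction to $m\in I_L$ is not a pairing-to-zero argument. Rather, $\pi T_v(a(w))=\overline w\star\overline v$ lies in $\mathcal A=U_L\otimes Q_W$ (\cref{cor:liftable-quantum-subalgebra}), so its $\Delta$-expansion involves only $I_L$. It lifts through the section $\sigma(\Delta_m)=a(\Gamma_m)$, which satisfies $P\sigma=0$. This is why the coefficient $\sum_{l\in I_U}\Phi_{\overline v\,\overline w\,\Delta_l}g^{lm}$ multiplies $PT_u(a(\Gamma_m))=\overline\Omega_{u\Gamma_m}$.
\end{itemize}
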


The proof is given in~\cref{subsec:open-wdvv}.
As in~\cite{solomon2019relative}, the equations follow from a tensor
potential on the cone.  In the present conventions it is
\begin{equation}\label{eq:tensor-potential-definition}
 \nun(\eta,\xi)=
 \bigl(\q_{\emptyset,1}^{\gamma_W}(\eta),
       -\q_{-1,1}^{\gamma_W,b}(\eta)-c\cdot\xi\bigr).
\end{equation}

\begin{thm}[Boundary Euler identity]
\label{thm:intro-boundary-euler}
Let $(\gamma_W,b)$ be a point-like bounding pair with Maurer--Cartan
coefficient $c$ as in~\eqref{eq:intro-maurer-cartan}.  Let
$\mathcal E_s=\partial_{(0,s)}$.  Then
\[
 \mathcal E_s\overline\Omega=[-cs]
 \quad\text{in }\Coker\mathfrak i.
\]
\end{thm}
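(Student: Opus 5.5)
The plan is to reduce the identity to a general derivative formula for the relative potential and then evaluate that formula on the point-like pair.

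\emph{Step 1: the derivative formula.} For a derivation $\partial_u$ with $u=(u_W,x_u)$, I expect
\[
 \partial_u\psi\simeq\nun(\partial_u\gamma_W)+\bigl(0,\,-[\langle\partial_ub,\,c\cdot 1_L\rangle_L]\bigr)
\]
up to a $d_{\Cone}$-exact term and signs. This is the noncommutative analogue of the Solomon--Tukachinsky formula for $\partial\Psi$. Here $\mathcal E_s=\partial_{(0,s)}$ has $u_W=0$, so $\partial_u\gamma_W=0$ and $\partial_u$ acts only through $s$. The sphere component $\q_{\emptyset,0}^{\gamma_W}$ does not involve $s$, so it is killed. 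The whole contribution then sits in the $HH_0$ component, namely $-\mathcal E_s\q_{-1,0}^{\gamma_W,b}$.

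\emph{Step 2: differentiate the cyclic sum.} I would differentiate the first sum in $\q_{-1,0}^{\gamma_W,b}$ term by term; the scalar term $\q_{-1,l}(\gamma_W^{\otimes l})$ is $s$-independent and dies. Applying the derivation to $b^{\otimes k}\otimes b$ gives $k+1$ terms, each with $\mathcal E_sb$ in one slot. Cyclic symmetry of $\langle\q_{k,l}(\cdots),\cdot\rangle_L$ rotates $\mathcal E_sb$ into the last slot. In the noncommutative setting this rotation holds only modulo graded commutators of coefficients, and that is exactly why the identity is stated in $HH_0(R_W)$. The $k+1$ rotated terms cancel the factor $1/(k+1)$. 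Using the Maurer--Cartan equation \eqref{eq:intro-maurer-cartan}, I get
\[
 \mathcal E_s\q_{-1,0}^{\gamma_W,b}=\bigl[\langle c\cdot1_L,\mathcal E_sb\rangle_L\bigr]
 =\pm\bigl[c\,\langle1,\mathcal E_sb\rangle_L\bigr].
\]
Here $c$ is central, so moving it past coefficients is harmless in $HH_0$. The delicate points are the $d$-term ($k=1$, $\beta=0$), which is handled by Stokes up to an exact term, and the signs from the degree $|s|=1-n$.

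\emph{Step 3: use point-likeness.} The pairing $\langle1,\cdot\rangle_L$ is $R_W$-linear on the appropriate side, so
\[
 \langle1,\mathcal E_sb\rangle_L=\mathcal E_s\langle1,b\rangle_L=\mathcal E_s(-s)=-s.
\]
Hence the cone class $\mathcal E_s\Psi$ has representative $(0,\mp[cs])$, with the sign fixed as in Step 2. The $\mathcal E_s$-action commutes with $P$ because $P$ is $Q_W$-linear and built compatibly with derivations in \cref{subsec:enhanced-superpotential}. Then $P\overline x=\Id$ and $\overline x([r])=[(0,-r)]$ give $\mathcal E_s\overline\Omega=[-cs]$.

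I expect the main obstacle to be the sign and ordering bookkeeping in Step 2. The steps to check are that cyclic rotation in the noncommutative, odd-$s$ setting is valid only in $HH_0$, and that the combined Koszul signs produce exactly $-cs$ rather than $+cs$ or $-sc$ (the latter agree anyway since $c$ is central). I would first verify the signs on the lowest-order terms, $k=0,1$ with $\beta=\beta_0$, to pin down the convention.
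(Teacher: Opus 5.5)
Your proposal is correct and is essentially the paper's proof: the paper proves the exact cochain identity $\delta\psi=\nun^{\gamma,b}(\delta\gamma,\delta\langle1,b\rangle_L)$ in \cref{lm:relative-potential-derivative} by your Step~2 (differentiate the $k+1$ copies of $b$, rotate with cyclic symmetry, cancel $1/(k+1)$, apply the Maurer--Cartan equation). It then concludes via point-likeness, \cref{lm:splitting-commutes-derivations}, and $P\overline x=\Id$.

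The sign you leave open is immediate, because $|\mathcal E_s|=0$ and $c$ sits in the left slot: $\mathcal E_s\q_{-1,0}^{\gamma_W,b}=\langle c\cdot1_L,\mathcal E_sb\rangle_L=c\,\mathcal E_s\langle1,b\rangle_L=-cs$. Hence $\mathcal E_s\psi=(0,cs)$, which represents $\overline x([-cs])$. Keep this slot order: your Step~1 version with $\partial_ub$ in the first slot has the opposite sign.
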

This result follows from~\cref{thm:enhanced-superpotential-axioms}.
This identity expresses the Maurer--Cartan coefficient $c$ in terms of
coefficients of the potential and is used in both the
character construction and the recursions below.

\subsubsection{A character of quantum cohomology}
\label{subsubsec:intro-character}
Following Solomon and Tukachinsky~\cite{solomon2019relative}, we
identify a subalgebra of quantum cohomology
(\cref{cor:liftable-quantum-subalgebra}):
\[
 \mathcal A=U_L\otimes Q_W\subset(U\otimes Q_W,\star).
\]
For $u\in\mathcal A$, use its unique lift in $W'\otimes Q_W$ to
define $\partial_u$.  Write $c_0=c|_{s=0}$.  The boundary Euler
identity gives
\begin{equation}\label{eq:one-boundary-superpotential}
 [\overline\Omega]_{s^1}=-c_0[s],
\end{equation}
so applying the bulk derivations to $-c_0$ gives generating functions
for the invariants with one boundary constraint.

\begin{thm}[Boundary character]
\label{thm:intro-boundary-character}
Suppose the bounding pair is point-like and satisfies
\[
 \kappa^*\bigl((\partial_sc)|_{s=0}\bigr)=0.
\]
Then the map
\[
 \mathcal L:(\mathcal A,\star)\longrightarrow\kappa^*R_W^0,
 \qquad \mathcal L(u)=-\kappa^*(\partial_uc_0)
\]
is a degree-zero character.  More explicitly,
\[
 \mathcal L(1)=1,\qquad
 \mathcal L(u\star v)=\mathcal L(u)\mathcal L(v),\qquad
 \mathcal L(qu)=\kappa^*(q)\mathcal L(u)\quad(q\in Q_W).
\]
\end{thm}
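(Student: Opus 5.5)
The plan is to specialize the non-commutative open WDVV identity of \cref{thm:intro-open-wdvv} to bulk derivations coming from $\mathcal A$, extract the coefficient of $[s]$, set $s=0$, and then apply $\kappa^*$.

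\textbf{Step 1: reduce to the $s^1$ coefficient.} For $u\in\mathcal A$, let $\partial_u$ be defined by the lift in $W'\otimes Q_W$, so that $x_u=0$. By the boundary Euler identity in the form \eqref{eq:one-boundary-superpotential}, the $s^1$ part of $\overline\Omega$ is $-c_0[s]$. Since bulk derivations preserve $s$-degree, for $u,w$ bulk we get
\[
 [\overline\Omega_{uw}]_{s^1}=-(\partial_u\partial_w c_0)[s].
\]
For the $s^0$ part, I will check via the Cone description, or via the unit and degree axioms of \cref{prop:enhanced-ogw-axioms}, that it is killed in $\Coker\mathfrak i$ after differentiation. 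Alternatively, it contributes only to the $s^0$ component, which I do not use.

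\textbf{Step 2: apply open WDVV.} In \cref{thm:intro-open-wdvv}, take $u$ bulk in $\mathcal A$, take $v=(0,1)$ so that $\partial_v=\partial_s$, and take $w\in W'$. Then $\overline v=0$, so the $\Phi$ term on the left vanishes. On the right, $\overline\Omega_{v\Gamma_m}=\partial_s\partial_{\Gamma_m}\overline\Omega$. Its $s^0$ coefficient equals $-\partial_{\Gamma_m}c_0$ times the class of $1$. Here I must be careful: $\partial_s[s]=[1]$, and the left $s$-factor ordering matters, but $[\cdot]$ lives in $HH_0$, which is commutative enough at this order. The left side gives $-(\partial_uc)\,\partial_s\overline\Omega_w$. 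Its $s^0$ part is $(\partial_uc_0)(\partial_wc_0)$, up to a sign coming from $\partial_s\partial_w\overline\Omega$. The right side gives
\[
 \sum_{l,m}\Phi_{\overline u\overline w\Delta_l}g^{lm}\bigl(-\partial_{\Gamma_m}c_0\bigr)-(\partial_sc)\,\overline\Omega_{uw}.
\]
After $\kappa^*$ and $s=0$, the last term vanishes by the hypothesis $\kappa^*((\partial_sc)|_{s=0})=0$. The first term is $\mathcal L$ applied to the $U_L$-part of $\overline u\star\overline w$; the $\Delta_l$ with $l\notin I_L$ must pair to zero against the $\partial_{\Gamma_m}$ index structure, since $m\in I_L$. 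This gives
\[
 \mathcal L(u)\mathcal L(w)=\mathcal L(u\star w)
\]
up to a global sign, which I will fix by computing the case $u=1$.

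\textbf{Step 3: unit and linearity.} The unit lifts to the unit in $W'$, and the unit axiom of \cref{prop:enhanced-ogw-axioms} gives $\partial_1\overline\Omega=$ (the $[s]$ term), so that $-\partial_1c_0=1$. This fixes $\mathcal L(1)=1$ and the sign in Step 2. $Q_W$-linearity is immediate: $\partial_{qu}=q\,\partial_u$ for derivations, and $\kappa^*$ is a ring map. That $\kappa^*\partial_u c_0$ depends only on $\kappa^*$ of the coefficients follows because $\partial_u$ for $u\in W'$ commutes with $\kappa^*$. Degree zero follows from $|c|=2$ and $|t_i|=2-|w_i|$.

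\textbf{Main obstacle.} The hardest step is Step 2's bookkeeping. Specifically, I need to verify that extracting the $s^0$ coefficient of $\partial_s(\cdots)$ inside $HH_0(R_W)$ modulo $Q_W$ (or the cokernel) is well defined and faithful. When $[L]\neq0$, the cokernel quotients by $Q_W$, which could kill exactly the $s^0$ terms. If so, I would instead extract the $[s]$ coefficient by choosing $v$ bulk and $u=(0,s)$, i.e.\ $\partial_u=\mathcal E_s$, and using $\mathcal E_s\overline\Omega=[-cs]$ directly. This keeps everything at $s$-degree one, which survives in $\Coker\mathfrak i$.
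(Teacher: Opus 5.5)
Your Step 2, which you present as the main argument, has a genuine gap. It is the one you suspect in your last paragraph. Extracting the $s^0$ coefficient in $\Coker\mathfrak i$ only gives an identity in $(\Coker\mathfrak i)_{s^0}=R_W^0/\operatorname{Im}\mathfrak i$. Your computation itself is sound: with $\partial_s\partial_w\overline\Omega=[-\partial_wc]$ the identity reads $[(\partial_uc)(\partial_wc)]=-[\partial_{u\star w}c]-(\partial_sc)\cdot\overline\Omega_{uw}$, and the sign is already correct. But what it proves is only $\mathcal L(u)\mathcal L(w)\equiv\mathcal L(u\star w)$ modulo $\kappa^*\operatorname{Im}\mathfrak i$. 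That is the full statement only when $\mathfrak i$ vanishes on cohomology.

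If $[L]\neq0$ in the de~Rham model, then $\operatorname{Im}\mathfrak i=Q_W$. For even-dimensional spheres, the motivating case, $\mathcal L$ is itself $Q_W$-valued by \cref{lm:boundary-primitive-products}, so the congruence carries no information. For the quadric, $(\Coker\mathfrak i)_{s^0}=0$ by \eqref{eq:even-quadric-cyclic-coefficients}, so Step 2 is vacuous there. Your Step 1 remark about the $s^0$ part is beside the point for the same reason.

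Your fallback is correct, and it is exactly the paper's proof, so make it the argument and drop Step 2.

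\begin{itemize}
\item The paper runs it through the block form \cref{thm:structural-open-wdvv} with the pair $(\mathcal E_s,u)$; this is \cref{lm:boundary-euler-block}. This avoids the $\Phi g^{lm}$ bookkeeping you were worried about: $\overline{\mathcal E_s}=0$ gives $m_{\overline{\mathcal E_s}}=0$, and the quantum product enters directly as $m_u(\alpha)=\alpha\star u$.
\item It then applies $\kappa^*$ and takes $s$-degree one. There $q\mapsto q[s]$ is injective on $R_W^0$ by \cref{lm:s-degree-one-detector}, because $\operatorname{Im}\mathfrak i$ lies in $s$-degree zero.
\item In this version the hypothesis kills a different term from the one in your Step 2. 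It kills $(\mathcal E_sc)\cdot j_u(\alpha)$, which in your notation is $(\mathcal E_sc)\cdot\overline\Omega_{vw}$. Its $s^1$ part is $[s]c$ times an $s^0$ part.
\item The remaining $s^1$ parts give directly $-\kappa^*(\partial_{w\star v}c_0)=\kappa^*(\partial_vc_0)\,\kappa^*(\partial_wc_0)$, so there is no global sign to fix.
\item The normalization is $j_{\mathcal E_s}(1_X)=[s]$ (\cref{lm:boundary-euler-unit}), which comes from the unit equation, as in your Step 3.
\end{itemize}
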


This is~\cref{thm:open-character}.  If $L$ is monotone, that is,
$\omega$ is a positive multiple of $\mu_L$ on $\Pi$, and
$c_1(TX)\in U$, the character also determines its value on the
Euler element:
\begin{equation}\label{eq:euler-element}
 E=c_1(TX)+\sum_{m\in I_L}
       \left(1-\frac{|\Delta_m|}{2}\right)t_m\Delta_m,
 \qquad \mathcal L(E)=-\kappa^*c_0.
\end{equation}
Here $t_m$ is dual to $\Gamma_m$.  The Euler identity is proved
in~\cref{thm:euler-character}.

\subsubsection{Even-dimensional Lagrangian spheres}
Now suppose that $L$ is monotone, $n$ is even, and
$H^*(L;\R)\simeq H^*(S^n;\R)$.  This includes every even-dimensional
Lagrangian sphere in a monotone symplectic manifold, such as a
vanishing cycle of a nodal degeneration.  Take
$U=H^{\mathrm{even}}(X;\R)$ and $p=\operatorname{PD}[L]$.  A point-like
bounding pair $(\gamma_W,b)$ over $W$
exists~\cite{kosloff2025pointlikenoncommutativef}.  Since $R_W^0$ is
even and $|s|$ is odd, $c$ has only even powers of $s$, so
\cref{thm:intro-boundary-character} applies with no further
hypothesis.

Here $U_L=p^\perp$ (\cref{lm:liftable-orthogonal-submodule}) and
$\langle p,p\rangle_X=2(-1)^{n/2}\ne0$
by~\cite[Lemma~5.6]{hugtenburg2024examples}, so
$U\otimes Q_W=\mathcal A\oplus Q_Wp$ and $\kappa^*=\Id$.  Extend the
basis of $U_L$ by $\Delta_L=p$, and let
\[
 D=\mathcal L\bigl(p\star p-g^{LL}\Phi_{ppp}p\bigr),\qquad
 Q'_W=Q_W[\xi]/\bigl(\xi^2-(g^{LL}\Phi_{ppp})^2-4D\bigr),
 \quad |\xi|=n.
\]

\begin{thm}[Characters from even-dimensional spheres]
\label{thm:intro-monotone-sphere-character}
The character $\mathcal L$ extends to two characters
\[
 \mathcal L_\pm:(U\otimes Q_W,\star)\longrightarrow Q'_W,
 \qquad
 \mathcal L_\pm(p)=\tfrac12\bigl(g^{LL}\Phi_{ppp}\pm\xi\bigr).
\]
They agree on the Euler element, where both equal the generating
function of invariants with one boundary constraint:
\[
 \mathcal L_\pm(E)=-c_0
 =\sum_{\substack{\beta\in\Pi\\ l\ge0}}\frac{T^\beta}{l!}\,
   \ogw_{\beta,1}(\Gamma_W^{\otimes l}),
\]
with $\ogw_{\beta,1}$ extended $Q_W$-multilinearly.
\end{thm}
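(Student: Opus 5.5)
The plan is to treat $U\otimes Q_W=\mathcal A\oplus Q_Wp$ as a rank-one extension of the subalgebra $\mathcal A$ and to extend the character $\mathcal L$ of \cref{thm:intro-boundary-character} by solving a quadratic equation for the value on $p$. Here $\kappa^*=\Id$, so $\mathcal L:\mathcal A\to R_W^0$. Since $|s|$ is odd, $c$ involves only even powers of $s$, so the hypothesis of \cref{thm:intro-boundary-character} holds.

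First I will record the multiplicative structure. For $a\in\mathcal A=p^\perp$, the product $a\star p$ decomposes as $a\star p=\alpha(a)+\lambda(a)p$ with $\alpha(a)\in\mathcal A$ and $\lambda(a)\in Q_W$. Pairing with $p$ and using the Frobenius property gives
\[
\lambda(a)=g^{LL}\langle a\star p,p\rangle_X=g^{LL}\langle a,p\star p\rangle_X .
\]
Similarly,
\[
p\star p=\bigl(p\star p-g^{LL}\Phi_{ppp}p\bigr)+g^{LL}\Phi_{ppp}p,
\]
where the first summand lies in $\mathcal A$, because $\langle p\star p,p\rangle_X=\Phi_{ppp}$ and $g^{LL}=1/\langle p,p\rangle_X$. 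Put $\sigma=g^{LL}\Phi_{ppp}$.

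Any extension $\mathcal L_\pm$ must send $p$ to a root $x$ of
\[
x^2=\sigma x+D,
\]
since applying the extension to the decomposition of $p\star p$ forces this. The roots are $x=\tfrac12(\sigma\pm\xi)$, with $\xi^2=\sigma^2+4D$, which is exactly how $Q'_W$ is defined. So I will set $\mathcal L_\pm(a+qp)=\mathcal L(a)+q x_\pm$ for $a\in\mathcal A$ and $q\in Q_W$.

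Multiplicativity then reduces to three checks:
\begin{itemize}
\item $\mathcal A\star\mathcal A$, which is \cref{thm:intro-boundary-character};
\item $p\star p$, which holds by the choice of $x$;
\item the mixed products $a\star p$, where I need $\mathcal L(\alpha(a))+\lambda(a)x=\mathcal L(a)x$ for both roots.
\end{itemize}

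The mixed identity is the main obstacle. Because it must hold for both roots, and $1,\xi$ are independent over $Q_W$, it splits into two identities in $\mathcal A$-terms:
\[
\lambda(a)=\mathcal L(a),\qquad \mathcal L(\alpha(a))=0 .
\]
I expect to have to check that these are the actual content, rather than an artifact of requiring both roots. Associativity gives a consistency check: expanding $(a\star p)\star p=a\star(p\star p)$ and applying $\mathcal L$ yields $\mathcal L(\alpha(a)\star p)+\dots$, which is circular without further input. So the plan is to use the open WDVV equation \cref{thm:intro-open-wdvv} with $u=a$, $v=s$ (or $\partial_s$), and $w$ a relative class (or its lift) paired with $p$, and to extract the coefficient of $[s]$.

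In that equation, terms with $\partial_s c$ vanish at $s^0$. Terms involving $\overline\Omega_{s\Gamma}$ give derivatives of $-c_0$ via \eqref{eq:one-boundary-superpotential}, that is, values of $\mathcal L$. The sum over $m\in I_U$ includes $m=L$, which produces the $g^{LL}$ coefficient, i.e.\ $\lambda$. I would first try to derive $\lambda(a)=\mathcal L(a)$ from the $[s^1]$-part, possibly using the vanishing $\Omega_2=0$ that holds in even dimensions. If that identity alone comes out, I would instead look for the single-root version. In that case I would define $\mathcal L_\pm$ through the natural second extension given by the involution $x\mapsto\sigma-x$, and verify the mixed products with the relation at order $s^2$ together with \cref{thm:intro-boundary-euler}.

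Finally, for the Euler element, note that $c_1(TX)$ is orthogonal to $p$: $\langle c_1,p\rangle_X=\int_L c_1=0$ for $n>2$ by degree, and for monotone $L$ one can argue via the Maslov class. Hence $E\in\mathcal A$, so $\mathcal L_\pm(E)=\mathcal L(E)=-c_0$ by \eqref{eq:euler-element}. The expansion in $\ogw_{\beta,1}$ follows from \eqref{eq:intro-open-invariants} together with \eqref{eq:one-boundary-superpotential}, by Taylor expansion in $t$.
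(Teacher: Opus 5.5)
Your reduction is sound as far as it goes. Splitting $U\otimes Q_W=\mathcal A\oplus Q_Wp$, forcing $x^2=\sigma x+D$ on the value at $p$, and the Euler-element argument ($E\in\mathcal A$, then \eqref{eq:euler-element} and $[\overline\Omega]_{s^1}=-c_0[s]$) all match the paper. You also correctly see that everything hinges on the mixed identities $\alpha(a)=0$ and $\lambda(a)=\mathcal L(a)$. But you have no working proof of either.

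The first needs no open input. For $v\in\mathcal A$ one has $\langle p\star a,v\rangle_X=\langle p,a\star v\rangle_X=0$, since $\mathcal A$ is a subalgebra and $\mathcal A=p^\perp$. Nondegeneracy then gives $p\star a\in Q_Wp$, as in \cref{lm:liftable-orthogonal-submodule}.

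The second identity is the real content, and your proposed route cannot reach it. In \eqref{eq:intro-open-wdvv} every $g^{lm}$ has one index in $I_L$, and $g^{lL}=0$ for $l\in I_L$. So the $m=L$ terms you want to exploit vanish identically, and only $\Phi$ evaluated on three classes of $\mathcal A$ survives. This is structural. The open WDVV equations come from $T_uT_v$ acting on $V$, and $\pi(V)\subset\Ker\mathfrak i\cap(U\otimes Q_W)=\mathcal A$ never contains $p$. By contrast, $\lambda(a)=g^{LL}\Phi_{ppa}$ is a structure constant involving $p$ twice.

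Since $\mathcal A$ can carry several characters (for the quadric at zero bulk, evaluation at $x=0$ and at the roots of $x^n=4q$), nothing in your argument distinguishes $\lambda$ from $\mathcal L$. Your fallback via the involution $x\mapsto\sigma-x$ cannot help either. If $\lambda\neq\mathcal L$, the mixed identity pins $x$ to a single value, so no second extension exists.

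The missing ingredient is the open--closed relation carried by the second component of the chain-map property of the tensor potential, \cref{lm:tensor-potential-second-component}. It comes from the $l=1$ scalar structure equation, that is, from sphere bubbling onto $L$, not from open WDVV. Apply it to the closed form $\eta=p$ to get $\mathfrak i(\q^{\gamma_W}_{\emptyset,1}(p))=-c\cdot\mathfrak i(p)$. Its $s$-degree-zero part is $\mathfrak i_Q([\q^{\gamma_W}_{\emptyset,1}(p)])=-c_0\,\mathfrak i_Q(p)$. Applying $\partial_u$ for $u\in\mathcal A$ turns the left side into $\mathfrak i_Q(p\star u)=\lambda(u)\,\mathfrak i_Q(p)$. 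Since $\mathfrak i_Q(p)=\langle p,p\rangle_X=2(-1)^{n/2}\neq0$, this gives $\lambda(u)=-\partial_uc_0=\mathcal L(u)$.

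This also shows that $\mathcal L$ takes values in $Q_W$ rather than merely in $R_W^0$. You need that for $D\in Q_W$, and hence for $Q'_W$ to be defined at all. Once $p\star u=\mathcal L(u)p$ is established, multiplicativity reduces exactly to your quadratic, as in \cref{prop:boundary-character-extensions}.
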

The extensions are given in~\cref{prop:boundary-character-extensions}, and the
value on $E$ follows from~\eqref{eq:euler-element}, since
$E\in\mathcal A$.

\subsubsection{Reconstruction for the even quadric}
\label{subsec:intro-even-quadric}
Let
\[
 X=\{z_0^2+\cdots+z_n^2-z_{n+1}^2=0\}\subset\CP^{n+1},
 \qquad L=X(\R)\simeq S^n,\qquad n>2\text{ even}.
\]
Fix an orientation of $L$ and the relative spin structure induced by
its spin structure.  Let $h\in H^2(X;\R)$ be the hyperplane class
and $p=\operatorname{PD}[L]\in H^n(X;\R)$.  The classical
cohomology is
\begin{equation}\label{prop:even-quadric-topology}
 H^*(X;\R)=\R[h,p]/(h^{n+1},hp,p^2-(-1)^{n/2}h^n),
 \qquad \int_Xh^n=2.
\end{equation}
The line class identifies
$H_2(X;\Z)$ and $\Pi=H_2(X,L;\Z)$ with $\Z$, and
$\mu_L(d)=2nd$.  Let $\tilde h$ be the relative hyperplane class.
The relative bulk classes are
$\Gamma_i=\tilde h^i$, $0\le i\le n$, with
$\rho(\Gamma_i)=h^i$.

Take $U=H^*(X;\R)$ and $W=\operatorname{span}\{\Gamma_0,\ldots,\Gamma_n\}$.
In this case
\[
 U_L=\operatorname{span}\{1,h,\ldots,h^n\},\qquad
 W'=W,\qquad P_\R=0,\qquad\kappa^*=\Id.
\]
The point-like bounding pair is unique up to gauge equivalence
by~\cite{kosloff2025pointlikenoncommutativef}. So, one can define open Gromov--Witten invariants as the coefficients of the enhanced superpotential $\overline\Omega$.

\begin{thm}[Reconstruction from two open invariants]
\label{thm:intro-quadric-reconstruction}
The classical cohomology of $X$, the open and closed axioms and WDVV
relations, and the initial values
\[
 \ogw_{1,1}(\Gamma_n)=-2,\qquad
 \ogw_{1,3}=2(-1)^{n/2+1}
\]
determine all positive-degree genus-zero closed invariants of $X$ and
all enhanced open invariants
$\ogw_{d,k}(\Gamma_{i_1},\ldots,\Gamma_{i_l})$ of $(X,L)$.
\end{thm}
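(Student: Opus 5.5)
We argue by induction on the degree $d$, in the spirit of the reconstruction for odd-dimensional quadrics in~\cite{hugtenburg2024examples} and the $(\CP^n,\mathbb{RP}^n)$ recursion of~\cite{solomon2019relative}. First we reduce the set of unknowns using the axioms of \cref{prop:enhanced-ogw-axioms}.
\begin{itemize}[label={}]
\item \emph{Degree axiom.} Since $\mu_L(d)=2nd$, $|s|=1-n$ and $|\Gamma_i|=2i$, an invariant $\ogw_{d,k}(\Gamma_{i_1},\ldots,\Gamma_{i_l})$ can be nonzero only if $n-3+2nd+k+2l = k(n) + \sum 2i_j$, up to the paper's normalization. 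This leaves finitely many unknowns in each degree $d$.
\item \emph{Unit axiom.} This removes $\Gamma_0$.
\item \emph{Divisor axiom.} This removes $\Gamma_1=\tilde h$ at the cost of a factor $d$ (the relative hyperplane class pairs to $d$ with the line class).
\item \emph{Zero-energy axiom.} This fixes all $d=0$ invariants.
\end{itemize}
By the boundary Euler identity (\cref{thm:intro-boundary-euler}) together with $c$ having only even powers of $s$, the $s^{\mathrm{even}}$ coefficients of $\overline\Omega$ vanish in $HH_0$ (recall $[s^2]=0$). So the unknowns are the $\ogw_{d,k}$ with $k$ odd, interior constraints $\Gamma_i$ with $2\le i\le n$, and dimension constraint as above.

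Next we specialize \cref{thm:intro-open-wdvv}. Here $W'=W$, $I_L=I_U$ minus the index of $p$, and the closed tensor $\Phi$ is known inductively: the closed invariants are either taken from~\cite{beauville1995quantum} or reconstructed in parallel. For the latter we run Kontsevich--Manin WDVV for $X$, whose only degree-one seed, the number of lines through appropriate cycles, we recover from the open data. Concretely, we take the open WDVV with $u=\partial_s$ (that is, $u=(0,1)$) and $v,w$ bulk classes, and also with $u,v,w$ all bulk. These are the two families analogous to Solomon--Tukachinsky's equations for $\mathbb{RP}^n$.
\begin{itemize}[label={}]
\item \emph{Extracting coefficients.} We extract the coefficient of $T^ds^k$ in $HH_0(R_W)$. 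Because of the noncommutativity, the terms $\overline\Omega_{u\Gamma_l}$ with $u=(0,1)$ raise $k$ by differentiating in $s$. The terms $(\partial_uc)\cdot\overline\Omega_{vw}$ use the identification $-c=\mathcal E_s$-data, i.e.\ $c$ is itself a generating function of the $k$-odd invariants via \cref{thm:intro-boundary-euler} and~\eqref{eq:one-boundary-superpotential}.
\item \emph{Isolating the top invariant.} Choosing $w=\Gamma_2$ (or $\Gamma_1$, using the divisor axiom), the equation expresses an invariant with one more interior constraint $\Gamma_{i+1}$, or with two more boundary points, in terms of products of lower-degree or lower-$k$ invariants times closed ones. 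For the $s\partial_s$-type terms this uses the quantum relation $h\star h^i=h^{i+1}+(\text{quantum corrections})$, with $hp=0$ classically.
\end{itemize}

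The induction is ordered lexicographically by $(d,k,\sum i_j)$.
\begin{itemize}[label={}]
\item \emph{Moving constraints down.} Invariants with interior constraint $\Gamma_i$, $i\ge2$, are lowered to $\Gamma_{i-1}$ plus an extra $\Gamma_1$ (handled by the divisor axiom) plus lower terms. Iterating, all interior constraints are reduced to divisors, leaving only pure boundary invariants $\ogw_{d,k}$.
\item \emph{Reducing boundary constraints.} The $u=\partial_s$ equations trade two boundary points for interior constraints, so each $\ogw_{d,k}$ is reduced to $\ogw_{d,1}$ or $\ogw_{d,3}$ with lower data.
\item \emph{Degree one.} In degree $1$, the dimension constraint leaves exactly the two seeds $\ogw_{1,1}(\Gamma_n)$ and $\ogw_{1,3}$, which are given.
\item \emph{Higher degree.} For $d\ge2$, the recursion must determine the pure boundary invariants from lower degrees. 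This uses the term $\Phi_{\overline u\overline w\Delta_l}g^{lm}\overline\Omega_{v\Gamma_m}$ with $\Delta_l=p$ excluded on the open side but present on the closed side. Here $\langle p,p\rangle_X=2(-1)^{n/2}\neq0$ and $p\star h$ enter.
\item \emph{Closed invariants.} Conversely, the same equations with open terms known in degree $1$ solve for the degree-one closed invariants involving $p$. Closed WDVV, in the form used for quadrics in~\cite{beauville1995quantum}, then determines the remaining closed invariants.
\end{itemize}

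The main obstacle is the bookkeeping in $HH_0(R_W)$. We must check that, at each stage, the top invariant appears with an invertible coefficient (a nonzero multiple like $d$, $2$ or $\langle p,p\rangle$) and with the correct sign from the graded commutator, since odd $s$ makes $[s^k]$ for even $k$ vanish and the sign $(-1)^{|\partial_u||\partial_v|}$ matters. We would first verify this on the lowest cases $d=1,2$ explicitly, choosing $w=\Gamma_1$ to exploit the divisor axiom, and only then formalize the ordering.
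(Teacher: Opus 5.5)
There is a genuine gap at the core of your proposal: the boundary equations you rely on are empty in this setting. You take $u=(0,1)$, i.e.\ $\partial_u=\partial_s$, as in the commutative recursions for $\mathbb{RP}^n$ and odd quadrics. Here $n$ is even, so $s$ and $\partial_s$ are odd, which gives $\partial_s(s^{2j})=0$ and $\partial_s(s^{2j+1})=s^{2j}$. Thus $\partial_s$ maps each component $Q_W[s^k]$, $k$ odd, of $\Coker\mathfrak i=\widehat{\bigoplus}_{k\text{ odd}}Q_W[s^k]$ to an even component. That component vanishes: positive even powers die in $HH_0(R_W)$, and $s^0$ dies in the cokernel. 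So $\partial_s$ acts as zero on $\Coker\mathfrak i$. Moreover, $|c|=2$ with $R_W^0$ even forces $c$ to have only even powers of $s$, so $\partial_sc=0$; also $\overline u=0$. Every term of the open WDVV equation with $u=(0,1)$ therefore vanishes, and the equation reads $0=0$. It cannot trade boundary points for interior ones, give the single-insertion recursion, or recover a closed seed.

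The missing idea is to use $\partial_u=\mathcal E_s=\partial_{(0,s)}$ instead. Note that $\mathcal E_s\neq s\partial_s$, and $s\partial_s$ is not even a derivation here. $\mathcal E_s$ acts by $k$ on $[s^k]$, and the boundary Euler identity $\mathcal E_s\overline\Omega=[-cs]$ converts the $c$-terms into open invariants: $\bigl[T^d s^{k-1}/(k-1)!\bigr]\,\partial_Ic|_{t=0}=-\ogw_{d,k}(\Gamma_I)$ for odd $k$. This produces the binomially weighted products in the recursions.

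Several further steps would fail even after this substitution.

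\emph{Interior insertions.} The all-bulk equation replaces $(\Gamma_{j_1},\Gamma_{j_2})$ by $(\Gamma_{j_2+1},\Gamma_{j_1-1})$, which preserves $\sum_a(j_a-1)$. By the degree axiom this sum is fixed by $(d,k)$, so interior insertions can never be reduced to pure boundary invariants. The reduction ends at a single $\Gamma_j$ with $j\ge2$, which needs the $\mathcal E_s$-equation with $v=\Gamma_1$, $w=\Gamma_{j-1}$.

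\emph{Pure boundary invariants.} These are not reduced to $k=1,3$. They are solved from the $\mathcal E_s$-equation with $v=\Gamma_{n/2+1}$, $w=\Gamma_{n/2}$ at Novikov degree $d+1$. There $\ogw_{d,k}$ appears with coefficient $4d+(k-1)\ogw_{1,1}(\Gamma_{n/2},\Gamma_{n/2+1})=2(2d-k+1)$. The degree constraint forces $d=1+m(n-1)$ and $k=3+2mn$, so the coefficient equals $-4m$. It vanishes exactly at $(d,k)=(1,3)$, which is why $\ogw_{1,3}$ must be supplied as a seed. Your expectation of a leading coefficient like $d$, $2$ or $\langle p,p\rangle_X$ misses this vanishing.

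\emph{The class $p$.} Since $g^{iL}=0$ for $i\in I_L$, every term with $\Delta_l=p$ drops out of the open WDVV equation. Neither $\langle p,p\rangle_X$ nor closed invariants with $p$-insertions enter. What the open data actually recovers is $\GW_1(h^{n-1},h^n)=-2\ogw_{1,1}(\Gamma_n)=4$, from the $\mathcal E_s$-equation with $v=\Gamma_1$, $w=\Gamma_{n-1}$ at $d=k=1$. Closed WDVV and the closed axioms then determine the remaining closed invariants.

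\emph{Degree one.} Degree one contains more than the two seeds, for example $\ogw_{1,1}(\Gamma_{n/2},\Gamma_{n/2+1})$. This invariant equals $-2$ by the insertion reduction and must be computed before the pure boundary recursion can be run.
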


The first value determines the closed invariant
$\GW_1(h^{n-1},h^n)=4$.  If the closed invariants are given, the
second value alone is sufficient to reconstruct the open invariants.
The open reconstruction is proved
in~\cref{cor:even-quadric-open-reconstruction}, and the recovery of
the closed invariants is explained
in~\cref{rem:even-quadric-open-to-closed-reconstruction}.

\subsubsection{Two characters of the quadric}
\label{subsec:intro-quadric-characters}
For this pair $\Lambda_c=\Lambda$.  Set $q=T^1$ and write
\[
 Q_U=\Lambda\otimes\R[[t_0,\ldots,t_n,t_L]],\qquad
 Q_W=\Lambda\otimes\R[[t_0,\ldots,t_n]].
\]
Here $t_i$ is dual to $h^i$ in $U$ and to $\Gamma_i$ in $W$, while
$t_L$ is dual to $p$; thus $|t_i|=2-2i$, $|t_L|=2-n$, and
$\rho^*$ sets $t_L=0$.  The product $\star$ on $U\otimes Q_W$
retains all the ambient bulk variables.
The coefficient $c$ has only even powers of $s$, so the hypothesis
of the character theorem holds and
\[
 \mathcal L(u)=-\partial_uc_0,
 \qquad u\in\mathcal A=U_L\otimes Q_W.
\]

To extend $\mathcal L$ to the class $p$, we prove $\Phi_{ppp}=0$ and
\begin{equation}\label{eq:intro-primitive-square}
 D=\mathcal L(p\star p)=2[s^3/3!]\overline\Omega
   =4\varepsilon q(1+H),\qquad
 \varepsilon=(-1)^{n/2+1},\quad H\in(t_0,\ldots,t_n)Q_W.
\end{equation}
The coefficient $[s^3/3!]\overline\Omega=3![s^3]\overline\Omega$
is the generating function for invariants with three boundary
constraints.  Thus these invariants determine $D$, and hence
$\vartheta=(1+H)^{1/2}$, the formal square root with constant term one.

\begin{thm}[Enumerative formulas for the quadric characters]
\label{thm:intro-quadric-characters}
Adjoin $\zeta$ with $\zeta^2=\varepsilon q$ and $|\zeta|=n$.
The character $\mathcal L$ has two distinct unital degree-zero
extensions
\[
 \mathcal L_\pm:(U\otimes Q_W,\star)\longrightarrow Q_W[\zeta],
 \qquad
 \mathcal L_\pm(\alpha+fp)=\mathcal L(\alpha)\pm2f\zeta\vartheta,
\]
where $\alpha\in\mathcal A$ and $f\in Q_W$.
\end{thm}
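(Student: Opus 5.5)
The plan is to derive \cref{thm:intro-quadric-characters} from \cref{thm:intro-monotone-sphere-character}, using the two facts about the quadric announced just before the statement: $\Phi_{ppp}=0$ and \eqref{eq:intro-primitive-square}. Here $U=H^*(X;\R)$, $U_L=p^\perp$ and $\kappa^*=\Id$. The discussion preceding \cref{thm:intro-monotone-sphere-character} therefore gives the direct sum $U\otimes Q_W=\mathcal A\oplus Q_Wp$. With $\Phi_{ppp}=0$ we get $D=\mathcal L(p\star p)$ and $Q'_W=Q_W[\xi]/(\xi^2-4D)$, and \cref{thm:intro-monotone-sphere-character} supplies characters $\mathcal L_\pm$ with $\mathcal L_\pm(p)=\pm\tfrac12\xi$.

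The first step is to exhibit the square root explicitly. By \eqref{eq:intro-primitive-square}, $4D=16\varepsilon q(1+H)$. Since $H$ lies in the ideal $(t_0,\ldots,t_n)$, the binomial series defines $\vartheta=(1+H)^{1/2}$ in $Q_W$, and it is convergent in the $\mathfrak m_W$-adic topology. Setting $\xi=4\zeta\vartheta$ with $\zeta^2=\varepsilon q$ gives $\xi^2=16\varepsilon q(1+H)=4D$. The degrees also match: $|\zeta|=n=|\xi|$, because $\vartheta$ has degree zero. This follows since $D=\mathcal L(p\star p)$ has degree $2n$ and $|q|=2n$. Hence $\xi\mapsto 4\zeta\vartheta$ defines a graded $Q_W$-algebra map $Q'_W\to Q_W[\zeta]$. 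Composing it with $\mathcal L_\pm$ gives $\mathcal L_\pm(p)=\pm2\zeta\vartheta$. By $Q_W$-linearity and the fact that $\mathcal L_\pm$ extends $\mathcal L$ on $\mathcal A$, we obtain $\mathcal L_\pm(\alpha+fp)=\mathcal L(\alpha)\pm2f\zeta\vartheta$.

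Next I would check the stated properties. Unitality holds because $1\in\mathcal A$ and $\mathcal L(1)=1$. The maps have degree zero because $\mathcal L$ does and $|p|=n=|\zeta\vartheta|$. They are distinct because their difference on $p$ is $4\zeta\vartheta$, which is nonzero in the free $Q_W$-module $Q_W\oplus Q_W\zeta$, since $\vartheta$ has constant term one. Multiplicativity is inherited from \cref{thm:intro-monotone-sphere-character}. As a sanity check, write $p\star p=\alpha_0+f_0p$ with $\alpha_0\in\mathcal A$. Pairing with $p$ gives $f_0=g^{LL}\Phi_{ppp}=0$, so $p\star p=\alpha_0$. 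Then $\mathcal L_\pm(p)^2=4\varepsilon q\vartheta^2=D=\mathcal L(p\star p)$, as required. Mixed products $\alpha\star p$ decompose similarly and are handled in \cref{prop:boundary-character-extensions}.

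The main obstacle is establishing $\Phi_{ppp}=0$ and \eqref{eq:intro-primitive-square}, in particular that $D$ equals exactly $4\varepsilon q$ times a unit $1+H$. The identity $\Phi_{ppp}=0$ I would obtain from the classical relation $p^2=(-1)^{n/2}h^n$ and $hp=0$, together with a degree count. The three-point invariants $\langle p,p,p\rangle$ in degree $d$ require $3n=2n+2nd$ at zero bulk, so $d=1/2$, which is impossible. With bulk insertions of the $t_i$, the divisor and dimension constraints must be checked term by term. For $D$, the identity $D=2[s^3/3!]\overline\Omega$ would come from the open WDVV relation \eqref{eq:intro-open-wdvv} with $u=v=\partial_s$, combined with the Euler identity \cref{thm:intro-boundary-euler}. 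The leading coefficient would then be read off from $\ogw_{1,3}=2(-1)^{n/2+1}$ in \cref{thm:intro-quadric-reconstruction}: $2\cdot\ogw_{1,3}\,q=4\varepsilon q$. The $t$-independent higher-degree terms vanish by degree, since $[s^3]$ forces $\mu_L(\beta)=2nd=3n-n$, that is, $d=1$. The remaining terms carry positive powers of the $t_i$ and make up $H$.
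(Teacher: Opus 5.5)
Your reduction is the paper's own. Once $\Phi_{ppp}=0$ and $D=(2\zeta\vartheta)^2$ are known, \cref{prop:boundary-character-extensions} with $z=\pm2\zeta\vartheta\in Q_W[\zeta]$ gives $\mathcal L_\pm$, and your checks of unitality, degree and distinctness are fine. The gap is in the two inputs you single out as the main obstacle: neither argument you sketch would establish them.

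For $\Phi_{ppp}=0$, a dimension count does not suffice once bulk insertions are present. Take $d=1$ and the single extra insertion $h^{n/2+1}$. The constraint $2n(d+1)-6+2l=\sum_a|\Delta_{i_a}|$ then reads $4n+2=4n+2$. So $q\,t_{n/2+1}\GW_1(p,p,p,h^{n/2+1})$ is an allowed term of $\Phi_{ppp}$, and no unit or divisor axiom removes it, since $n/2+1\ge3$. What is missing is parity in $p$: closed invariants with an odd number of primitive insertions, all other insertions ambient, vanish. The paper takes this from \cite[Lemma~7.3]{hugtenburg2024examples} in \cref{lm:even-quadric-primitive-products}. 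Geometrically, the isometric automorphism $z_0\mapsto-z_0$ fixes $h$ and the line class but reverses the orientation of $L$, so it sends $p\mapsto-p$.

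For $D=2[s^3/3!]\overline\Omega$, open WDVV with $u=v=\partial_s$ gives nothing. Any specialization with $u=v$ is a tautology, and here both sides of \eqref{eq:intro-open-wdvv} vanish outright. Indeed $\overline v=0$ kills the $\Phi$-terms, and $\partial_sc=0$, because for odd $s$ one has $\partial_s(s^2)=0$ and $c$ has only even powers of $s$. The boundary Euler identity alone only gives $[s^3/3!]\overline\Omega=-2[s^2]c$.

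Relating $[s^2]c$ to the quadratic expression $D=(-1)^{n/2}\sum_i(\partial_ic_0)(\partial_{n-i}c_0)$ requires the mixed block relation for $(\mathcal E_s,\Delta_i)$ of \cref{lm:boundary-euler-block}. It is applied to $\alpha=p\star p-g^{LL}\Phi_{ppp}p\in\mathcal A$, which satisfies $\alpha\star\Delta_i=\mathcal L(\Delta_i)\alpha$. Reading off the $s^1$- and $s^3$-components yields only a proportionality, $D\,\partial_iF=F\,\partial_iD$ with $F=[s^3/3!]\overline\Omega$ (\cref{lm:boundary-three-constraint-equation}).

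To fix the constant you need two things besides $F|_{t=0}=2\varepsilon q$ from $\ogw_{1,3}$:
\begin{itemize}
\item the value $D|_{t=0}=4\varepsilon q$, which comes from $\mathcal L(h^n)|_{t=0}=-2q$, that is, $\ogw_{1,1}(\Gamma_n)=-2$;
\item the divisibility $q\mid D$, which follows from $c_0|_{q=0}=-t_0$ and makes $D/q$ a unit.
\end{itemize}
These two facts already give $D=4\varepsilon q(1+H)$ with $H\in(t_0,\ldots,t_n)Q_W$ directly, and that is all the character statement itself uses. The identity with $2[s^3/3!]\overline\Omega$ only supplies the enumerative meaning of $\vartheta$.
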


This is~\cref{prop:even-quadric-character-extensions}.
At zero bulk, the two extensions take the primitive class
$p=\operatorname{PD}[L]$ to $\mathcal L_\pm(p)=\pm2\zeta$, where
$\zeta^2=(-1)^{n/2+1}q$.  The invariants
with one boundary constraint determine the common restriction to
$\mathcal A$, and those with three boundary constraints determine
the two primitive values.  Composing with $\Id\otimes\rho^*$ gives
characters of the big closed quantum algebra over $Q_U$ that factor
through $t_L=0$.
\subsection{Directions for future research}
\label{subsec:future-directions}
\subsubsection{Lagrangian cobordisms}
\label{subsec:quantum-discriminant-future}
Biran and Membrez define a discriminant for monotone Lagrangian spheres
satisfying additional hypotheses, using Lagrangian Floer homology. They express the discriminant as four times a count
of disks through three points of $L$, with correction terms involving
auxiliary disk counts \cite[Theorem~7.A]{biranmembrez2016cubic}.

Open Gromov--Witten invariants suggest another description.  For $L$ as in
\cref{subsubsec:boundary-character-extensions}, the characters
$\mathcal L_\pm$ send $p=\operatorname{PD}[L]$ to the roots of
$z^2-az-D$, where $a=g^{LL}\Phi_{ppp}$.  We propose identifying
$\mathfrak D_L=a^2+4D$ at zero bulk with four times the refined
discriminant of Biran and Membrez
\cite[Section~6]{biranmembrez2016cubic}.  For the even quadric,
\cref{lm:even-quadric-three-boundary-character} gives
$\mathfrak D_L=8[s^3/3!]\overline\Omega$, a generating function of
invariants with three boundary constraints.  The quadric sphere has
minimal Maslov number $N_L=2n$, so it lies outside their hypotheses
\cite[Section~5.2.1]{biranmembrez2016cubic}.  It was comunicated to me by P. Bar-Lev that the relation between the
discriminant and open Gromov--Witten invariants will be explored further
in~\cite{barlev-discriminant}.

The discriminant constrains Lagrangian cobordisms.  For a connected
monotone spin cobordism satisfying the end and homological hypotheses
of \cite[Theorem~E]{biranmembrez2016cubic}, the numerical discriminants
of all ends coincide, and their common value is a perfect square if
there are at least three ends. 

Hirschi and Hugtenburg establish a cobordism relation for open
Gromov--Witten invariants~\cite{hirschihugtenburg2026open}.  It would
be interesting to reproduce the results of Biran and Membrez using open Gromov--Witten invariants and, more generally, to understand how open Gromov--Witten
invariants enter Lagrangian cobordism theory.

\subsubsection{Arithmetic of OGW}
\label{subsec:arithmetic-theory-open-gromov}
The theory in this paper is defined over a field of characteristic zero.
It would be interesting to define it over localizations of \(\Z\), compatibly with
reduction modulo primes.  The denominator conjecture of
\cite[Sections~1.2.2 and~1.3, Hypothesis~1]{Relativequantum2025}
predicts that \(\ogw_{\beta,k}\) has denominators supported on the
torsion primes of \(H^*(L;\Z)\).  For the Chiang Lagrangian
\(L_\triangle\subset\CP^3\), we have
\(H^2(L_\triangle;\Z)\simeq\Z/4\Z\), so the conjecture reads
\[
 \ogw_{\beta,k}\in\Z[1/2].
\]
The corresponding question is whether its point-like theory can be
defined over \(\Z[1/2]\).

Evans--Lekili \cite{evanslekili2015chiang} prove, in the pearl model
for Floer theory,
\[
 HF(L_\triangle;\F_5)\ne0,
 \qquad HF(L_\triangle;\F_7)=0.
\]
The requirement that the Maslov-two disk count be an eigenvalue of
quantum multiplication by \(c_1(T\CP^3)\) does not rule out
\(HF(L_\triangle;\F_7)\ne0\)
\cite[Remark~1.2.1]{evanslekili2015chiang}.

The open invariant \(\ogw_{2,2}=5/4\), in our sign conventions,
vanishes modulo \(5\) and is nonzero modulo \(7\)
\cite[Table~1]{Relativequantum2025}.
A point-like theory over \(\Z[1/2]\) compatible with reduction would
therefore predict nonzero Floer cohomology modulo \(5\) and vanishing
modulo \(7\), by
\cref{lm:point-like-floer-nonvanishing,lm:point-like-floer-annihilation}.

Changing one of the three initial signs in the Chiang recursion, while
fixing the other two, gives another formal open WDVV solution
\cite[Section~1.2.6 and Corollary~1.3]{Relativequantum2025}, whose recursion gives
\[
 \ogw_{2,2}=\frac{175}{116}\equiv0\pmod7.
\]
Does this solution come from a monotone \(L'\subset\CP^3\) with a
point-like theory over \({\Z[1/(2\cdot29)]}\) compatible with reduction
modulo \(7\)?
The same reasoning would then give nonzero Floer cohomology modulo
\(7\).  A comparison with monotone Floer theory could give a
Lagrangian realization of the characteristic-\(7\) asymptotic spectral
invariant of \cite{kawamotoshelukhin2024spectral}, as \(L_\triangle\)
does in characteristic \(5\)
\cite[Section~4.2]{kawamotoshelukhin2024spectral}.

\subsubsection{Open Gromov--Witten theory with an anti-symplectic involution}
\label{subsec:open-gromov-witten-antysymplectic}
The even quadric suggests a relation between real bounding cochains and
the coefficients needed to define a quantum character.  By
\cref{thm:intro-quadric-characters}, the two extensions of the boundary
character are obtained by adjoining
\[
    \zeta^2=(-1)^{n/2+1}q.
\]
After adjoining the Novikov root \(\sqrt q\), the character values have
real coefficients when \(n\equiv2\pmod4\), whereas
\(n\equiv0\pmod4\) requires the nonreal root \(i\sqrt q\).
These are precisely the dimensions in which the real locus admits no
real point-like bounding cochain in the involution-compatible sense of
\cite{kosloff2025pointlikenoncommutativef}.  Let $\phi$ denote complex
conjugation on the quadric.  In that theory
\(\phi^*s=(-1)^{n/2}s\), so the real condition
\(\phi^*b=-b\) and the normalization \(\int_Lb=s\) are incompatible
when \(n\equiv0\pmod4\), since \(\phi|_L=\Id\).
When \(n\equiv2\pmod4\), real point-like bounding cochains exist by
\cite{kosloff2025pointlikenoncommutativef}.
Here reality is a condition involving the involution; the point-like
bounding pairs over \(\R\) of
\cref{prop:even-quadric-standing-hypotheses} exist in every even dimension
under consideration.

This leads to a complementary question for a Lagrangian fixed by an
anti-symplectic involution.  Under the hypotheses of the character
construction, does the existence of a real point-like bounding cochain
ensure that the boundary character extends to a character of the full
quantum algebra with real coefficients, after adjoining any necessary
Novikov roots?

Moreover, in dimensions where the real point-like condition is
obstructed, can one construct point-like bounding cochains with complex
coefficients and use the involution to weaken the cohomological
assumptions for their existence?  The desired result would be analogous
to Solomon--Tukachinsky's construction of real point-like bounding
chains \cite{solomon2016point,kosloff2025pointlikenoncommutativef}, in which the involution forces
some obstruction classes to vanish and thereby relaxes the
cohomological hypotheses.  The question is whether a suitable
compatibility with complex conjugation can yield similar cancellations
in the remaining dimensions.

\subsection{Acknowledgment}
\label{subsec:acknowledgment}
I would like to thank my advisor, Jake Solomon, for his guidance and support throughout this project. An earlier version of this work relied on an inelegant, ad hoc construction of the non-commutative open WDVV equations. Jake insisted that this abomination has no place in mathematics and illustrated a more natural approach. This led to the current formulation of the non-commutative open WDVV equations and helped me understand the underlying structure of the open-closed theory. I would also like to thank May Sela, Sara B. Tukachinsky, Asaf Kapota, and Paul Seidel for helpful conversations.

The author used large language models for proofreading, improving the exposition, and critically reviewing and refining the arguments.

\section{Algebraic setup for open Gromov--Witten theory}
\label{sec:open-closed-framework}
\subsection{Axiomatic formulation}
\label{subsec:open-closed-background}
We formulate algebraic assumptions on the Fukaya \(A_\infty\) operations
and the associated closed, scalar, and geodesic operations used in
Solomon--Tukachinsky's construction, recasting the geometric setup of
\cref{subsec:intro-geometric-background} in axiomatic form.
One aim of this paper is to construct the enhanced potential and
derive its properties from algebraic data and identities. Throughout, \(\kk\) is a field of characteristic zero.

This framework builds on the open Gromov--Witten theory and
relative quantum cohomology developed
in~\cite{solomon2016differential,solomon2016point,solomon2019relative}, and
extends the non-commutative coefficient framework
of~\cite{kosloff2025pointlikenoncommutativef}.

The axiomatic setting also accommodates two extensions of the framework
of~\cite{solomon2019relative}.  First, the Lagrangian coefficient DGA \(C_L\) of
\cref{dfn:pre-open-closed-pair} is not assumed graded-commutative; only the
trace is cyclic and \(i^*(C_X)\subset\mathcal Z(C_L)\).  Moreover, the open
trace may take values in a commutative coefficient algebra \(K_L\), as in
the local-system model of \cref{ex:local-system-open-trace}.
Second, the open coefficient ring \(R_W\) of
\cref{dfn:filtered-coefficient-algebras} uses the free boundary variable
and Hochschild coinvariants of \cref{subsubsec:intro-coefficients}.

\subsection{Algebraic setup}
We begin with the algebras, restriction map, and traces underlying the
differential-form construction of~\cite{solomon2016differential}.
The following definition makes sense over an arbitrary commutative ring, although
elsewhere we retain the standing assumption that \(\kk\) is a field of
characteristic zero.
\begin{dfn}[Algebras, restriction, and traces]
\label{dfn:pre-open-closed-pair}
Let \(\kk\) be a commutative ring and let \(n\in\mathbb Z\).
The data consist of a unital
\(\kk\)-DGA \((C_X,d_X)\), a commutative \(\kk\)-algebra \(K_L\), a unital
\(K_L\)-DGA \((C_L,d_L)\), trace maps
\[
\int_X:C_X^{2n}\to\Bbbk,
\qquad
\int_L:C_L^n\to K_L,
\]
and a unital \(\kk\)-DGA morphism
\[
i^*:C_X\to C_L,
\]
where \(C_L\) is regarded as a \(\kk\)-DGA by restriction of scalars.  The
closed trace is \(\kk\)-linear, the open trace is \(K_L\)-linear, and both
vanish on exact elements.  They induce perfect pairings
\[
H^p(C_X)\otimes_{\Bbbk}H^{2n-p}(C_X)\to\Bbbk,
\qquad
H^p(C_L)\otimes_{K_L}H^{n-p}(C_L)\to K_L.
\]
The trace on \(C_L\) is cyclic,
\[
\int_L ab=(-1)^{|a||b|}\int_L ba,
\]
and \(i^*(C_X)\subset\mathcal Z(C_L)\), that is,
\[
(i^*\xi)a=(-1)^{|\xi||a|}a(i^*\xi),
\]
for homogeneous \(a,b\in C_L\) and \(\xi\in C_X\).  We use the pairings
\[
\langle\xi,\eta\rangle_X=(-1)^{|\eta|}\int_X\xi\eta\in\Bbbk,
\qquad
\langle a,b\rangle_L=(-1)^{|b|}\int_Lab\in K_L.
\]
Here perfectness of the open pairing means that, for every \(p\), the map
\[
H^p(C_L)\longrightarrow
\operatorname{Hom}_{K_L}\bigl(H^{n-p}(C_L),K_L\bigr),
\qquad
[a]\longmapsto
\left([b]\longmapsto(-1)^{|b|}\int_Lab\right),
\]
is an isomorphism of \(K_L\)-modules.
\end{dfn}

\begin{ex}[Local-system coefficients]\label{ex:local-system-open-trace}
Take \(\kk=\R\) and let \(F\to L\) be a flat complex vector bundle
of complex rank \(r\geq1\), viewed as a real vector bundle of rank
\(2r\).  Its complex-linear endomorphisms form the algebra bundle
\(\mathcal B=\End_{\C}(F)\), with fibre \(M_r(\C)\) and center
\(L\times\C\).  Set
\[
C_L=A^*(L;\mathcal B),
\qquad K_L=\C,
\qquad
\int_L\alpha:=\int_L^{\mathrm{dR}}\tr_{\C}(\alpha),
\]
where the complex matrix trace \(\tr_{\C}(A)=\sum_{j=1}^r A_{jj}\)
is applied to the endomorphism coefficient before integration.
For a complex line, multiplication by \(a+ib\) is represented in
an associated real frame by
\[
A=\begin{pmatrix}a&-b\\ b&a\end{pmatrix},
\qquad
\tr_{\C}(A)=a+ib,
\qquad
\tr_{\R}(A)=2a.
\]
Thus the open trace takes values in \(\C\), although the ground
field is \(\R\).  The complex matrix trace is cyclic and its pairing
is nondegenerate; Poincar\'e duality with local coefficients then
makes the induced cohomological pairing perfect.
Together with \(C_X=A^*(X;\R)\), ordinary integration on \(X\), and
\(i^*\xi=\xi|_L\cdot1_F\), these data satisfy
\cref{dfn:pre-open-closed-pair}.
For background on higher-rank local systems in Lagrangian Floer
theory, see \cite[Sections~2.1.2 and~2.5]{konstantinov2017higher}.
\end{ex}

For the remainder of this paper, we
retain the degree bound
\[
C_L^j=0\quad(j<0\text{ or }j>n).
\]

\section{Structure maps}
\label{sec:structure-maps}

In this section, we axiomatize the open, closed, and scalar operators used
in this paper.  In \cref{subsec:de-rham-model}, we show that the
differential-form operators of \cref{subsubsec:intro-operations} satisfy the
axioms.

\subsection{Axioms}
\label{subsec:structure-maps}
The trace map
\(\mathfrak i_\R\) of \cref{subsubsec:intro-bounding-pairs} generalizes to
the map
\[
 \mathfrak i_K\colon C_X\longrightarrow K_L[-n],\qquad
 \mathfrak i_K(\alpha):=\langle1,i^*\alpha\rangle_L.
\]
Fix also a \(\kk\)-linear subspace
\(\mathcal D_X^{\mathrm{adm}}\subset Z^2(\ker i^*)\) and a period pairing
\((D,B)\mapsto\int_BD\) between \(\mathcal D_X^{\mathrm{adm}}\) and
\(H_2(X,L;\mathbb Z)\), which is \(\kk\)-linear in \(D\) and additive in
\(B\).  Set
\[
 H^2_{\mathrm{adm}}(X,L;\kk)
 :=\operatorname{im}\!\left(
 \mathcal D_X^{\mathrm{adm}}\longrightarrow H^2(\ker\mathfrak i_K)
 \right).
\]
We assume that the period pairing descends to \(\Pi\) and to
\(H^2_{\mathrm{adm}}(X,L;\kk)\).  So, for
\(A\in H^2_{\mathrm{adm}}(X,L;\kk)\) and \(\beta\in\Pi\), the pairing
\(\int_\beta A\) is well defined.  For \(D\in\mathcal D_X^{\mathrm{adm}}\),
we write \(\int_\beta D:=\int_\beta[D]\).  For \(B\in H_2(X;\mathbb Z)\),
we write \(\int_BA:=\int_{\varpi(B)}A\) and
\(\int_BD:=\int_{\varpi(B)}D\).

For a list \(\eta=(\eta_1,\ldots,\eta_l)\) of homogeneous elements of
\(C_X\) and a permutation \(\sigma\in S_l\), define the Koszul sign
\[
s_\sigma(\eta)
:=
\sum_{\substack{r<s\\ \sigma^{-1}(r)>\sigma^{-1}(s)}}
|\eta_r|\,|\eta_s|
\quad(\mathrm{mod}\ 2).
\]
For a partition \(I\sqcup J=\{1,\ldots,l\}\), write
\(\eta_I:=(\eta_i)_{i\in I}\) for the sublist of \(\eta\) with indices in
\(I\), in increasing order, and similarly for \(\eta_J\).  For a list
\(a=(a_1,\ldots,a_k)\) of homogeneous elements of \(C_L\), define
\[
\iota(a,\eta;i,I)
:=
\left(1+\sum_{j\in J}|\eta_j|\right)
\sum_{r=1}^{i-1}(|a_r|+1)
+\sum_{j\in I}|\eta_j|
+\sum_{\substack{m\in I,\ j\in J\\ j<m}}|\eta_m|\,|\eta_j|,
\]
\[
\iota(\eta;I)
:=
\sum_{j\in I}|\eta_j|
+\sum_{\substack{m\in I,\ j\in J\\ j<m}}|\eta_m|\,|\eta_j|.
\]
We omit empty lists of inputs from the notation.

The following definition is modeled on the properties of the operators of
\cite{solomon2016differential} and \cite[Section~2.3]{solomon2019relative}.
We state the axioms separately for each class \(\beta\in\Pi\) and
\(A\in H_2(X;\Z)\), and before extending scalars, whereas the operators of
\cite{solomon2019relative} include the sums over these classes.  In
Appendix~\ref{app:scalar-extension}, we extend the operators to the
coefficient rings of \cref{sec:extension-of-scalars}, multiplying the
coefficients of the boundary inputs in the order of the inputs.  The axioms
then give the corresponding identities for the 
\(A_\infty\) algebras over non-commutative differential graded algebras of this paper.

\begin{dfn}[Structure maps]\label{dfn:structure-maps}
A system of structure maps on the data of \cref{dfn:pre-open-closed-pair}
consists of the following three families of maps, which satisfy the axioms
listed below.

\medskip
\noindent\textbf{\(C_L\)-valued operations.}
For every \(k,l\ge 0\) and \(\beta\in\Pi\), there is a \(\kk\)-linear map
\[
\q_{k,l}^{\beta}\colon C_X^{\otimes l}\otimes C_L^{\otimes k}\to C_L
\]
of degree \(2-k-2l-\mu_L(\beta)\), which is \(K_L\)-multilinear in the
\(C_L\) inputs.  We write its value as
\(\q_{k,l}^{\beta}(a_1,\ldots,a_k;\eta_1,\ldots,\eta_l)\), even though the
\(C_X\) factors precede the \(C_L\) factors in the source.  These maps
satisfy the following axioms.
\begin{axioms}[label=\textup{(O\arabic*)},ref=\textup{O\arabic*}]
\item\label{ax:qkl-structure-equation}
The structure equations hold: for all homogeneous \(a_i\in C_L\) and
\(\eta_j\in C_X\),
\begin{equation}\label{eq:qkl-structure}
\begin{aligned}
0 ={}&
\sum_{r=1}^{l}
(-1)^{1+\sum_{j=1}^{r-1}|\eta_j|}\,
\qop{k}{l}{\beta}\bigl(a_1,\ldots,a_k;\,
   \eta_1,\ldots,d_X\eta_r,\ldots,\eta_l\bigr) \\
&+ \sum_{\substack{\beta_1+\beta_2=\beta\\
                   k_1+k_2=k+1\\
                   1\le i\le k_1\\
                   I\sqcup J=\{1,\ldots,l\}}}\!\!
(-1)^{\iota(a,\eta;i,I)}\,
\qop{k_1}{|I|}{\beta_1}\Bigl(a_1,\ldots,a_{i-1}, \\
&\hspace{6em}
\qop{k_2}{|J|}{\beta_2}\bigl(a_i,\ldots,a_{i+k_2-1};\,\eta_J\bigr),\,
a_{i+k_2},\ldots,a_k;\,\eta_I\Bigr).
\end{aligned}
\end{equation}

\item\label{ax:qkl-energy}
If \(\omega(\beta)<0\), then \(\q_{k,l}^{\beta}=0\).  If
\(\omega(\beta)=0\), then
\[
\q_{k,l}^{\beta}(a_1,\ldots,a_k;\eta_1,\ldots,\eta_l)
=
\begin{cases}
d_La_1,
&(\beta,k,l)=(\beta_0,1,0),\\
(-1)^{|a_1|}a_1\wedge a_2,
&(\beta,k,l)=(\beta_0,2,0),\\
-i^*\eta_1,
&(\beta,k,l)=(\beta_0,0,1),\\
0,
&\text{otherwise.}
\end{cases}
\]
For every \(E>0\), only finitely many \(\beta\in\Pi\) with
\(\omega(\beta)\le E\) satisfy \(\q_{k,l}^{\beta}\ne0\) for some \(k,l\).

\item\label{ax:qkl-closed-symmetry}
The \(C_X\) inputs are graded symmetric: for every \(\sigma\in S_l\),
\[
\q_{k,l}^{\beta}(a_1,\ldots,a_k;\eta_1,\ldots,\eta_l)
=
(-1)^{s_\sigma(\eta)}
\q_{k,l}^{\beta}
(a_1,\ldots,a_k;\eta_{\sigma(1)},\ldots,\eta_{\sigma(l)}).
\]

\item\label{ax:qkl-divisor}
For \(D\in\mathcal D_X^{\mathrm{adm}}\) and \(l\ge 1\),
\[
\q_{k,l}^{\beta}(a_1,\ldots,a_k;D,\eta_2,\ldots,\eta_l)
=
\left(\int_\beta D\right)
\q_{k,l-1}^{\beta}(a_1,\ldots,a_k;\eta_2,\ldots,\eta_l).
\]
By \ref{ax:qkl-closed-symmetry}, the same identity holds with \(D\) in any
\(C_X\) position, since \(|D|=2\).

\item\label{ax:qkl-top-degree}
Except for the zero-energy operations in \ref{ax:qkl-energy} with
\((\beta,k,l)=(\beta_0,2,0)\) or \((\beta_0,0,1)\),
\[
\left\langle
1_L,
\q_{k,l}^{\beta}(a_1,\ldots,a_k;\eta_1,\ldots,\eta_l)
\right\rangle_L=0.
\]

\item\label{ax:qkl-fundamental-class}
For \(l\ge 1\),
\[
\q_{k,l}^{\beta}(a_1,\ldots,a_k;1_X,\eta_2,\ldots,\eta_l)
=
\begin{cases}
-1_L,
&(k,l,\beta)=(0,1,\beta_0),\\
0,
&\text{otherwise.}
\end{cases}
\]
By \ref{ax:qkl-closed-symmetry}, the same identity holds with \(1_X\) in
any \(C_X\) position.

\item\label{ax:qkl-cyclic-symmetry}
For \(k\ge 1\), the \(C_L\) inputs are cyclic with respect to
\(\langle\cdot,\cdot\rangle_L\):
\[
\begin{aligned}
&\left\langle
\q_{k,l}^{\beta}(a_1,\ldots,a_k;\eta_1,\ldots,\eta_l),
a_{k+1}
\right\rangle_L
\\
&\quad=
(-1)^{(|a_{k+1}|+1)\sum_{r=1}^k(|a_r|+1)}
\left\langle
\q_{k,l}^{\beta}(a_{k+1},a_1,\ldots,a_{k-1};\eta_1,\ldots,\eta_l),
a_k
\right\rangle_L .
\end{aligned}
\]

\item\label{ax:qkl-unit}
If one of the \(C_L\) inputs is \(f\in K_L\cdot 1_L\), then the operation
vanishes, except for
\[
\q_{2,0}^{\beta_0}(f,a)=fa,
\qquad
\q_{2,0}^{\beta_0}(a,f)=(-1)^{|a|}fa.
\]
\end{axioms}

\medskip
\noindent\textbf{\(C_X\)-valued operations.}
For every \(l\ge 0\) and \(A\in H_2(X;\mathbb Z)\), there is a map
\[
\q_{\emptyset,l}^{A}\colon C_X^{\otimes l}\to C_X
\]
of degree \(4-2l-2\langle c_1(TX),A\rangle\).  These maps satisfy the
following axioms.
\begin{axioms}[label=\textup{(C\arabic*)},ref=\textup{C\arabic*}]
\item\label{ax:qempty-unit}
For \(l\ge 1\),
\[
\q_{\emptyset,l}^{A}(1_X,\eta_1,\ldots,\eta_{l-1})
=
\begin{cases}
\eta_1,
&(A,l)=(A_0,2),\\
0,
&\text{otherwise.}
\end{cases}
\]

\item\label{ax:qempty-energy}
If \(\omega(A)<0\), then \(\q_{\emptyset,l}^{A}=0\).  If \(\omega(A)=0\), then
\[
\q_{\emptyset,l}^{A}(\eta_1,\ldots,\eta_l)
=
\begin{cases}
\eta_1\wedge\eta_2,
&(A,l)=(A_0,2),\\
0,
&\text{otherwise.}
\end{cases}
\]
For every \(E>0\), only finitely many \(A\in H_2(X;\mathbb Z)\) with
\(\omega(A)\le E\) satisfy \(\q_{\emptyset,l}^{A}\ne0\) for some \(l\).

\item\label{ax:qempty-divisor}
For \(D\in\mathcal D_X^{\mathrm{adm}}\) and \(l\ge 1\),
\[
\q_{\emptyset,l}^{A}(D,\eta_2,\ldots,\eta_l)
=
\begin{cases}
D\wedge\eta_2,
&(A,l)=(A_0,2),\\[2pt]
\left(\int_A D\right)
\q_{\emptyset,l-1}^{A}(\eta_2,\ldots,\eta_l),
&\text{otherwise.}
\end{cases}
\]

\item\label{ax:qempty-chain-map}
For \(l>0\),
\begin{equation}\label{eq:qempty-chain-map}
d_X\q_{\emptyset,l}^{A}(\eta_1,\ldots,\eta_l)
=
\sum_{r=1}^l
(-1)^{\sum_{j=1}^{r-1}|\eta_j|}
\q_{\emptyset,l}^{A}
(\eta_1,\ldots,d_X\eta_r,\ldots,\eta_l).
\end{equation}
Moreover, \(d_X\q_{\emptyset,0}^{A}=0\).

\item\label{ax:qempty-symmetry}
The inputs are graded symmetric: for every \(\sigma\in S_l\),
\[
\q_{\emptyset,l}^{A}(\eta_1,\ldots,\eta_l)
=
(-1)^{s_\sigma(\eta)}
\q_{\emptyset,l}^{A}(\eta_{\sigma(1)},\ldots,\eta_{\sigma(l)}).
\]
\end{axioms}

\medskip
\noindent\textbf{\(K_L\)-valued operations.}
For every \(l\ge 0\) and \(\beta\in\Pi\), there is a map
\[
\q_{-1,l}^{\beta}\colon C_X^{\otimes l}\to K_L
\]
of degree \(3-n-2l-\mu_L(\beta)\).  These maps satisfy the following axioms.
\begin{axioms}[label=\textup{(S\arabic*)},ref=\textup{S\arabic*}]
\item\label{ax:qminus-structure-equation}
The structure equations hold: for all homogeneous \(\eta_j\in C_X\),
\begin{equation}\label{eq:qminus-structure}
\begin{aligned}
0 ={}&
\sum_{r=1}^{l}
(-1)^{1+\sum_{j=1}^{r-1}|\eta_j|}\,
\qop{-1}{l}{\beta}(\eta_1,\ldots,d_X\eta_r,\ldots,\eta_l) \\
&+ \tfrac{1}{2}\!\!\sum_{\substack{\beta_1+\beta_2=\beta\\
                                   I\sqcup J=\{1,\ldots,l\}}}\!\!
(-1)^{\iota(\eta;I)}\,
\bigl\langle
   \qop{0}{|I|}{\beta_1}(\,;\eta_I),\,
   \qop{0}{|J|}{\beta_2}(\,;\eta_J)
\bigr\rangle_L \\
&- \sum_{\substack{A\in H_2(X;\mathbb Z)\\ \varpi(A)=\beta}}
\bigl\langle
   1_L,\, i^*\qop{\emptyset}{l}{A}(\eta_1,\ldots,\eta_l)
\bigr\rangle_L.
\end{aligned}
\end{equation}

\item\label{ax:qminus-symmetry}
The inputs are graded symmetric: for every \(\sigma\in S_l\),
\[
\q_{-1,l}^{\beta}(\eta_1,\ldots,\eta_l)
=
(-1)^{s_\sigma(\eta)}
\q_{-1,l}^{\beta}(\eta_{\sigma(1)},\ldots,\eta_{\sigma(l)}).
\]

\item\label{ax:qminus-fundamental-class}
For \(l\ge 1\),
\[
\q_{-1,l}^{\beta}(1_X,\eta_2,\ldots,\eta_l)=0.
\]
By \ref{ax:qminus-symmetry}, the same identity holds with \(1_X\) in any
position.

\item\label{ax:qminus-divisor}
For \(D\in\mathcal D_X^{\mathrm{adm}}\) and \(l\ge 1\),
\[
\q_{-1,l}^{\beta}(D,\eta_2,\ldots,\eta_l)
=
\left(\int_\beta D\right)
\q_{-1,l-1}^{\beta}(\eta_2,\ldots,\eta_l).
\]

\item\label{ax:qminus-energy}
If \(\omega(\beta)\le 0\), then \(\q_{-1,l}^{\beta}=0\).  For every \(E>0\),
only finitely many \(\beta\in\Pi\) with \(\omega(\beta)\le E\)
satisfy \(\q_{-1,l}^{\beta}\ne0\) for some \(l\).
\end{axioms}
\end{dfn}

We also assume that there are geodesic operators as in
\cite[Section~3]{solomon2019relative}, which satisfy
\cite[Propositions~3.1--3.5, Lemmas~3.7--3.15, and
Corollary~3.16]{solomon2019relative} and the energy axioms of
\cref{dfn:structure-maps}, except for their prescribed values in class zero.
The geodesic operators are used only in the
proof of \cref{thm:flatness-relation}.

\subsection{The de~Rham model}
\label{subsec:de-rham-model}

We now show that the differential-form operators of
\cref{subsubsec:intro-operations} satisfy the axioms of
\cref{dfn:structure-maps}.  Let \(\kk=K_L=\R\), \(C_X=A^*(X)\), and
\(C_L=A^*(L)\).  The traces are given by integration over \(X\) and \(L\), and
\(i^*\) is the restriction of forms to \(L\).  By Stokes' theorem and
Poincar\'e duality, these data satisfy \cref{dfn:pre-open-closed-pair}.  Let
\(\mathcal D_X^{\mathrm{adm}}\) consist of chosen representatives of relative
classes that annihilate \(S_L\), and let the period pairing be given by
integration.

For \(\beta\in\Pi\) and \(A\in H_2(X;\Z)\), denote by \(\q_{k,l}^\beta\),
\(\q_{-1,l}^\beta\), and \(\q_{\emptyset,l}^A\) the summands of class
\(\beta\) and \(A\) in these operators, so that
\[
\q_{k,l}=\sum_{\beta\in\Pi}T^\beta\q_{k,l}^\beta,\qquad
\q_{-1,l}=\sum_{\beta\in\Pi}T^\beta\q_{-1,l}^\beta,\qquad
\q_{\emptyset,l}=\sum_{A\in H_2(X;\Z)}T^{\varpi(A)}\q_{\emptyset,l}^A.
\]

\begin{prop}\label[prop]{prop:de-rham-structure-maps}
Under the assumptions of \cref{subsec:intro-geometric-background}, the
operators \(\q_{k,l}^\beta\), \(\q_{\emptyset,l}^A\), and
\(\q_{-1,l}^\beta\) form a system of structure maps in the sense of
\cref{dfn:structure-maps}.  Moreover, the geodesic operators of
\cite[Section~3]{solomon2019relative} satisfy the assumptions stated after
\cref{dfn:structure-maps}.
\end{prop}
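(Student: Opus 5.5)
The plan is to verify the axioms of \cref{dfn:structure-maps} one family at a time. Most verifications are read off from the properties of the Solomon--Tukachinsky operators established in \cite{solomon2016differential} and \cite[Section~2.3]{solomon2019relative}, applied class by class. The one real change is bookkeeping: those references state the identities for the full sums $\q_{k,l}=\sum_\beta T^\beta\q_{k,l}^\beta$ over $\Lambda$. Since the $T^\beta$ are linearly independent over $\R$ and $T^{\beta_1}T^{\beta_2}=T^{\beta_1+\beta_2}$, each such identity splits into its homogeneous $T^\beta$-components. For the structure equation this gives exactly \eqref{eq:qkl-structure} with the sum over $\beta_1+\beta_2=\beta$; the same holds for \eqref{eq:qminus-structure} and for the sphere relations. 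The energy finiteness clauses in \ref{ax:qkl-energy}, \ref{ax:qempty-energy} and \ref{ax:qminus-energy} follow from Gromov compactness. Only finitely many classes of area at most $E$ are represented by $J$-holomorphic disks or spheres, and $\q^\beta=0$ whenever $\M(\beta)$ is empty. Negative-area classes have empty moduli spaces.

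For the $C_L$-valued operations, I would cite the following results.
\begin{itemize}
\item \ref{ax:qkl-structure-equation}: the $A_\infty$-type relations with closed inputs from \cite[Proposition~2.4]{solomon2016differential}, derived by Stokes' theorem for fiber integration over the boundary strata of $\M_{k+1,l}(\beta)$.
\item \ref{ax:qkl-energy}: the zero-energy computation in \cite{solomon2016differential}. Constant disks have $\M_{k+1,l}(\beta_0)=L$ only in the stable cases, and the signs come from $\epsilon(a)$ and from the convention $\q_{0,1}^{\beta_0}(\eta)=-i^*\eta$.
\item \ref{ax:qkl-closed-symmetry}: the graded commutativity of forms together with the symmetry of relabeling interior points.
\item \ref{ax:qkl-divisor}: the divisor property, which uses $D|_L=0$ and the forgetful map $\M_{k+1,l}\to\M_{k+1,l-1}$.
\item \ref{ax:qkl-top-degree}: \cite[Proposition~2.7]{solomon2016differential}, which says that $\int_L$ of a pushforward along $ev_0$ vanishes unless the fiber dimension is zero.
\item \ref{ax:qkl-fundamental-class} and \ref{ax:qkl-unit}: the fundamental class and unit properties, which again come from the forgetful maps.
\item \ref{ax:qkl-cyclic-symmetry}: \cite[Proposition~2.3]{solomon2016differential}, the cyclic symmetry obtained by relabeling the boundary points.
\end{itemize}
I would check that the sign conventions of the pairings $\langle\cdot,\cdot\rangle_L$ in \cref{subsubsec:intro-curve-classes} match those references. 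They do, since the paper adopts the pairings of \cite{solomon2019relative}.

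For the sphere operations, \ref{ax:qempty-unit}--\ref{ax:qempty-symmetry} are standard properties of genus-zero fiber integration, recorded in \cite[Section~2.3]{solomon2019relative}. The sign $(-1)^{w_{\mathfrak s}(A)}$ is constant in $A$, so it does not affect any of these identities.

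For the scalar operations, \ref{ax:qminus-symmetry}, \ref{ax:qminus-fundamental-class}, \ref{ax:qminus-divisor} and \ref{ax:qminus-energy} are analogous; $\q_{-1,l}^{\beta_0}=0$ holds by definition. The step I expect to be the main obstacle is \ref{ax:qminus-structure-equation}. It comes from Stokes' theorem on $\M_{0,l}(\beta)$, whose boundary consists of two kinds of strata:
\begin{itemize}
\item disk-disk splittings, which give the term $\frac12\langle\q_0,\q_0\rangle$; the factor $\tfrac12$ accounts for the unordered pair of components;
\item boundary-collapse strata, which are sphere configurations meeting $L$ and give the term $\langle 1,i^*\q_{\emptyset,l}^A\rangle$.
\end{itemize}
I would take this identity from \cite[Proposition~2.8 and Lemma~2.9]{solomon2019relative}, projected to each class $\beta$ as above. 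Here the orientation relation involving $w_{\mathfrak s}$ from \cite{solomon2019relative} is exactly what is absorbed into the sign in $\q_{\emptyset,l}$. The same reference supplies the geodesic operators together with Propositions~3.1--3.5, Lemmas~3.7--3.15 and Corollary~3.16, under the regularity assumptions on the geodesic moduli spaces imposed in \cref{subsubsec:intro-operations}. Their energy axioms follow from the same compactness argument as above.
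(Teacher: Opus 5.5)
Your route is the paper's: check each axiom class by class by quoting the Solomon--Tukachinsky properties, obtain the energy axioms from positivity of area and Gromov compactness, and take the geodesic statements directly from \cite[Section~3]{solomon2019relative}. However, two steps do not yet cover what the axioms require.

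First, \ref{ax:qkl-energy}, \ref{ax:qempty-energy} and \ref{ax:qminus-energy} require vanishing for every class $\beta\ne\beta_0$ (resp.\ $A\ne A_0$) with zero area, not only for negative area. Such classes can occur in $\Pi$, for example torsion classes when $S_L=0$. You need the line the paper uses: a $J$-holomorphic stable map of zero area is constant, so it represents $\beta_0$ (resp.\ $A_0$), and the moduli spaces of all other classes of nonpositive area are empty.

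Second, \ref{ax:qkl-top-degree} has only $(\beta_0,2,0)$ and $(\beta_0,0,1)$ as exceptions. The case $(\beta_0,1,0)$, where $\q_{1,0}^{\beta_0}=d_L$ is not a pushforward, must therefore be handled separately by Stokes' theorem: $\langle1_L,d_La\rangle_L=\pm\int_Ld_La=0$. The paper adds exactly this to the top-degree property \cite[Proposition~3.12]{solomon2016differential}. Moreover, the criterion you attribute to the citation, vanishing `unless the fiber dimension is zero', is not the mechanism and would not prove the axiom. In general $\int_L(\operatorname{ev}^{b,\beta}_0)_*\xi=\int_{\M_{k+1,l}(\beta)}\xi$ has no reason to vanish. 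Conversely, the axiom also demands vanishing in positive-energy cases where $\operatorname{ev}^{b,\beta}_0$ has zero-dimensional fibers, for instance Maslov-zero classes with $(k,l)=(0,1)$ and $|\eta_1|=n$. The actual reason is that the integrand is pulled back along the map forgetting the output boundary point, and this map exists exactly outside the unstable constant cases.

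For \ref{ax:qminus-structure-equation}, the Solomon--Tukachinsky relation \cite[Proposition~2.5]{solomon2016differential} has $\int_Li^*\q_{\emptyset,l}^A(\eta)$ in its last term, whereas the axiom has $\langle1_L,i^*\q_{\emptyset,l}^A(\eta)\rangle_L$. It is not enough to say that the pairing conventions agree. You should record that $|\q_{\emptyset,l}^A(\eta)|\equiv\sum_j|\eta_j|$ modulo $2$, hence $\langle1_L,i^*\q_{\emptyset,l}^A(\eta)\rangle_L=(-1)^{\sum_j|\eta_j|}\int_Li^*\q_{\emptyset,l}^A(\eta)$ (cf.\ \cite[Lemma~2.15]{solomon2019relative}). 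This is how the paper matches the two terms.

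Also, $(-1)^{w_{\mathfrak s}(A)}$ is not constant in $A$. What \ref{ax:qempty-unit}--\ref{ax:qempty-symmetry} actually use is that this sign is fixed for each $A$ and equals $1$ for $A_0$, which carries the exceptional values.

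With these additions your argument coincides with the paper's proof.
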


\begin{proof}
We verify the axioms of \cref{dfn:structure-maps}.  Some of the results of
\cite{solomon2016differential,solomon2019relative} cited below are stated
for the sums over $\Pi$, but their proofs apply to each $\beta$ separately.
The degrees of the operators are computed in
\cite[Proposition~3.5]{solomon2016differential} and
\cite[Lemmas~2.9 and~2.15]{solomon2019relative}.

The operators \(\q_{k,l}^\beta\) satisfy the structure equations
\ref{ax:qkl-structure-equation} by
\cite[Proposition~2.4]{solomon2016differential}.  Axioms
\ref{ax:qkl-closed-symmetry}, \ref{ax:qkl-divisor},
\ref{ax:qkl-fundamental-class}, and \ref{ax:qkl-cyclic-symmetry} follow from
\cite[Propositions~3.6, 3.9, 3.7, and~3.3]{solomon2016differential},
respectively, and the unit axiom \ref{ax:qkl-unit} is the case of constant
\(f\) in \cite[Proposition~3.2]{solomon2016differential}.  Axiom
\ref{ax:qkl-top-degree} follows from
\cite[Proposition~3.12]{solomon2016differential}, and from Stokes' theorem
for \(\q_{1,0}^{\beta_0}=d\).

The operators \(\q_{-1,l}^\beta\) satisfy the structure equations
\ref{ax:qminus-structure-equation} by
\cite[Proposition~2.5]{solomon2016differential}.  Indeed, by
\cite[Lemma~2.15]{solomon2019relative},
\[
\bigl\langle1_L,i^*\q_{\emptyset,l}^A(\eta_1,\ldots,\eta_l)\bigr\rangle_L
=(-1)^{\sum_j|\eta_j|}\int_Li^*\q_{\emptyset,l}^A(\eta_1,\ldots,\eta_l),
\]
so the last term of \eqref{eq:qminus-structure} coincides with the last term
in \cite[Proposition~2.5]{solomon2016differential}.  Axioms
\ref{ax:qminus-symmetry}, \ref{ax:qminus-fundamental-class}, and
\ref{ax:qminus-divisor} follow from
\cite[Propositions~3.6, 3.7, and~3.9]{solomon2016differential},
respectively.

The operators \(\q_{\emptyset,l}^A\) satisfy \ref{ax:qempty-unit},
\ref{ax:qempty-divisor}, and \ref{ax:qempty-symmetry} by
\cite[Lemmas~2.14, 2.18, and~2.16]{solomon2019relative}, respectively,
except for the case \((A,l)=(A_0,2)\) of \ref{ax:qempty-divisor}, which
follows from \cite[Lemma~2.17]{solomon2019relative}.  Axiom
\ref{ax:qempty-chain-map} is \cite[Proposition~3.13]{solomon2016differential}.

It remains to verify the energy axioms \ref{ax:qkl-energy},
\ref{ax:qempty-energy}, and \ref{ax:qminus-energy}.  For the classes
\(\beta_0\) and \(A_0\), they follow from
\cite[Proposition~3.8]{solomon2016differential} and
\cite[Lemma~2.17]{solomon2019relative}.  A \(J\)-holomorphic stable map in
any other class is nonconstant and therefore has positive area.  So, the
moduli spaces are empty for all other classes of nonpositive area.  The
finiteness statements follow from Gromov compactness.

Finally, the geodesic operators satisfy
\cite[Propositions~3.1--3.5, Lemmas~3.7--3.15, and
Corollary~3.16]{solomon2019relative} under the assumptions of
\cref{subsec:intro-geometric-background}, and these energy conditions follow as
above.
\end{proof}

\begin{rem}\label[rem]{rem:other-structure-maps}
For data other than the de~Rham model, such as the local systems of
\cref{ex:local-system-open-trace}, one must construct the structure maps,
with compatible orientations, and verify the axioms separately.
\end{rem}

\section{Coefficient rings and bounding cochains}
\label{sec:extension-of-scalars}

\begin{dfn}[Completed Novikov rings]
\label{dfn:novikov-coefficient-rings}
Use the Novikov rings \(\Lambda_c\subset\Lambda\) and positive-area
ideals \(\Lambda_{c,+}\subset\Lambda_+\) of
\cref{subsubsec:intro-coefficients}, now with coefficients in \(\kk\).  Write
\[
 \Pi^{\geq0}:=\{\beta\in\Pi:\omega(\beta)\geq0\}.
\]
For \(A\in H_2(X;\Z)\), write
\(c_1(A):=\langle c_1(TX),A\rangle\).  The standard normalization
\[
 \mu_L(\varpi(A))=2c_1(A),\qquad
 \omega(\varpi(A))=\omega(A)
\]
gives \(|T^{\varpi(A)}|=2c_1(A)\).
\end{dfn}

\begin{dfn}[Formal coefficient algebras]
\label{dfn:filtered-coefficient-algebras}
Extend the formal coefficient conventions of
\cref{subsubsec:intro-coefficients} to \(\kk\).  For a finite-dimensional
graded bulk space \(V\), put
\[
 P_V:=\operatorname{Sym}\bigl(V^\vee[-2]\bigr).
\]
Write \(\kk[[V]]\) for formal power series in the graded variables
\(V^\vee[-2]\).  From now on, tensor products, direct sums, and derivations
involving the coefficient rings are understood in the completed sense
specified in \cref{subsec:filtrations}; completion symbols are suppressed in
the body.  The absolute coefficient algebra is
\[
 Q_V:=\Lambda_c\otimes_\kk\kk[[V]].
\]
A graded linear map \(V'\to V\) induces a homomorphism \(Q_V\to Q_{V'}\).

For the rest of the paper fix \(W\), and put
\begin{equation}\label{eq:s-free-relative-coefficients}
 R_W^0:=K_L\otimes_\kk\Lambda\otimes_\kk\kk[[W]],\qquad
 R_W:=R_W^0\otimes_\kk\kk\langle s\rangle,\quad |s|=1-n,\qquad
 Q_L:=K_L\otimes_\kk Q_W\subset R_W^0.
\end{equation}
The ideals \(\mathfrak m_W\), \(\mI_{Q_W}\), \(\mI_{R_W}\) are as in
\eqref{eq:positive-deformation-ideals}, and \((s)\subset R_W\) is the
two-sided ideal generated by \(s\).

Both \(Q_W\) and \(R_W^0\) are graded-commutative, with \(K_L\)
concentrated in degree zero, and
\[
 Q_W\subset Q_L\subset\mathcal Z(R_W).
\]
We call the exponent of \(s\) the \(s\)-degree.
\end{dfn}

\subsection{Scalar extension of operations}\label{subsec:scalar-extension-operations}

Let \(HH_0(R_W)\) and \([a]\) be as in \cref{subsubsec:intro-coefficients},
over \(\kk\).  Setting \(s=0\) kills graded commutators, since \(R_W^0\) is
graded-commutative, so \(Q_W\subset Q_L\subset HH_0(R_W)\).
Define the scalar extensions
\[
 C_X^{Q_W}:=C_X\otimes_\kk Q_W,
 \qquad
 C_L^{R_W}:=C_L\otimes_{K_L}R_W.
\]
In
\cref{dfn:scalar-extension-structure-maps} of
Appendix~\ref{app:scalar-extension}, the component operators \(\q_{k,l}^\beta\),
\(\q_{\emptyset,l}^A\), and \(\q_{-1,l}^\beta\) are extended to these
modules by the Koszul-sign rules \textup{(E1)}--\textup{(E4)}, with the
trace-valued operators taking values in \(HH_0(R_W)\).

\begin{dfn}
\label{dfn:novikov-summed-structure-maps}
For \(k,l\ge0\), set
\[
\begin{aligned}
\q_{k,l}&:=\sum_{\beta\in\Pi^{\ge0}}T^\beta\q_{k,l}^\beta,\\
\q_{\emptyset,l}&:=\sum_{\substack{A\in H_2(X;\Z)\\\omega(A)\ge0}}
T^{\varpi(A)}\q_{\emptyset,l}^A,\\
\q_{-1,l}&:=\sum_{\beta\in\Pi^{\ge0}}T^\beta\q_{-1,l}^\beta.
\end{aligned}
\]
\end{dfn}

By \cref{ax:qkl-energy,ax:qempty-energy,ax:qminus-energy}, these sums converge in their completed targets; see
\cref{app:derivation-continuity}.  In the closed sum,
\cref{ax:qempty-energy} ensures that only finitely many nonzero terms
contribute to each coefficient after collecting classes with the same
\(\varpi(A)\).

The scalar extension of the pairing is recorded in
\cref{dfn:scalar-extension-interval-pairing}.
\subsection{Bounding cochains}
This subsection uses the \((\gamma,b)\)-deformed operations
\(\q_{k,l}^{\gamma,b}\), \(\q_{\emptyset,l}^{\gamma}\),
and \(\q_{-1,l}^{\gamma,b}\), constructed in
\cref{subsec:bg-deformations}.  By \cref{lm:qkl-bg-insertions}, the
insertion sums defining them converge on the completed modules over
\(Q_W\) and \(R_W\).  Their deformed structure equations are proved in
\cref{lm:bg-deformed-axioms}.

\begin{dfn}[Bounding pairs]\label{dfn:bounding-pair}
The terminology of \cref{subsubsec:intro-bounding-pairs} extends to
the present coefficient algebras as follows.
A pair
\[
(\gamma,b)\in (\mI_{Q_W} C_X^{Q_W})^2
 \oplus \bigl(((s)+\mI_{R_W})C_L^{R_W}\bigr)^1
\]
is called a bounding pair if \(d_X\gamma=0\) and there exists an
element \(c\in \mI_{R_W}\cap\mathcal{Z}(R_W)\) such that
\[
\q_{0,0}^{\gamma,b}=c\cdot 1.
\]
The terms \emph{separated} and \emph{point-like} retain their meanings
from \cref{subsubsec:intro-bounding-pairs}.  
\end{dfn}

\begin{rem}\label{rem:point-like-existence}
In the de~Rham setting, point-like bounding pairs exist when
\(L\) is an even-dimensional rational cohomology sphere.  Indeed,
\cite[Proposition~6.1\textup{(1)}]{kosloff2025pointlikenoncommutativef}
constructs a bounding pair with \(\int_Lb=s\), which gives
\(\langle1,b\rangle_L=-s\) under our pairing convention.
\end{rem}

Gauge equivalence via pseudoisotopy bounding pairs is defined in
\cref{dfn:gauge-equivalence-bounding-pairs}.
\section{Tensor and relative potentials}
\label{sec:tensor-relative-potentials}
\subsection{Derivations on the cone}
\label{subsec:coefficient-actions-cone}
We use the \(Q_W\)-actions and scalar-extension signs of
\cref{rem:scalar-extension-shifts}.

The trace map \(\mathfrak i_K\) defined above has
a \(Q_W\)-linear scalar extension
\[
\mathfrak i_Q\colon C_X^{Q_W}\longrightarrow Q_L[-n],
\qquad
\mathfrak i_Q(\alpha\otimes q)
:=(-1)^{(|\alpha|+1)|q|}
q\,\langle1,i^*\alpha\rangle_L.
\]
Here the product is taken in \(Q_L\).
Via the inclusion \(Q_L\subset HH_0(R_W)\), the same formula gives
\[
\mathfrak{i}\colon C_X^{Q_W}\longrightarrow HH_0(R_W)[-n],
\qquad
\mathfrak i(\alpha\otimes q):=\mathfrak i_Q(\alpha\otimes q),
\]
where the target has trivial differential.  In the cone below we regard the
same underlying map as a degree-one graded \(Q_W\)-linear map
\[
\mathfrak{i}\colon C_X^{Q_W}\longrightarrow HH_0(R_W)[-n-1].
\]
Thus, for homogeneous \(q\in Q_W\) and \(\eta\in C_X^{Q_W}\),
\[
    \mathfrak{i}(q\eta)=(-1)^{|q|}q\mathfrak{i}(\eta).
\]
\begin{lm}\label{lm:i-cochain-map}
The degree-zero version of \(\mathfrak{i}\) is a cochain map.  Equivalently,
the shifted map used in the cone satisfies \(\mathfrak{i}d_X=0\).
\end{lm}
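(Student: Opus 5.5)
The plan is to reduce the claim to the statement that \(\mathfrak i_K\) kills exact elements of \(C_X\), which follows from the axioms of \cref{dfn:pre-open-closed-pair}. Since the target \(HH_0(R_W)[-n]\) carries the zero differential, being a cochain map means exactly that \(\mathfrak i\circ d_X=0\) on \(C_X^{Q_W}\). The shifted version used in the cone is the same underlying map, so the two formulations are equivalent. Here the differential on \(C_X^{Q_W}\) is \(d_X\otimes\Id\), with the Koszul convention of \cref{rem:scalar-extension-shifts}; the coefficients \(Q_W\) carry no differential.

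The first step is to check the identity on elementary tensors \(\alpha\otimes q\). The differential gives \(d(\alpha\otimes q)=d_X\alpha\otimes q\), possibly up to a sign, and applying the definition yields
\[
\mathfrak i(d_X\alpha\otimes q)=\pm\, q\,\langle1,i^*d_X\alpha\rangle_L .
\]
Since \(i^*\) is a DGA morphism, \(i^*d_X\alpha=d_Li^*\alpha\). We then have
\[
\langle1,d_Li^*\alpha\rangle_L=\pm\int_L d_L(i^*\alpha)=0,
\]
because the open trace vanishes on exact elements. The signs are irrelevant, since the expression is zero.

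The second step extends the identity from elementary tensors to arbitrary elements of the completed tensor product. Here I would use that \(\mathfrak i\) and \(d\) are \(Q_W\)-linear up to sign and continuous for the filtrations of \cref{subsec:filtrations}. A general element is then a convergent sum of elementary tensors, and the identity passes to the limit. The only mildly delicate point is this continuity and completion argument; I would handle it by working modulo each filtration level, where only finitely many terms contribute.
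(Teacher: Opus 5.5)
Your proof is correct and follows the same structure as the paper's. Both reduce to elementary tensors \(\alpha\otimes q\), use that \(i^*\) is a cochain map, and then show \(\langle 1,d_La\rangle_L=0\). The only difference is in this last step: you take it directly from the requirement in \cref{dfn:pre-open-closed-pair} that \(\int_L\) vanish on exact elements, whereas the paper derives it from the zero-energy value \(\q_{1,0}^{\beta_0}=d_L\) in axiom \ref{ax:qkl-energy} together with the top-degree axiom \ref{ax:qkl-top-degree}. Your continuity argument for the completed tensor product also spells out a step that the paper covers only by ``\(\kk\)-linearity.''
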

\begin{proof}
Since $HH_0(R_W)[-n]$ carries the trivial differential, it suffices to show
that $\mathfrak{i}(d_X\eta)=0$ for every homogeneous $\eta\in C_X^{Q_W}$.
By $\kk$-linearity, take $\eta=\xi\otimes q$ with $\xi\in C_X$ and
$q\in Q_W$ homogeneous.  Since \(R_W\) has trivial differential and \(i^*\) is
a cochain map,
\[
i^*(d_X\eta)=(d_Li^*\xi)\otimes q.
\]
The formula in \cref{dfn:scalar-extension-interval-pairing} gives
\[
\mathfrak{i}(d_X\eta)
=
(-1)^{(|d_Li^*\xi|+1)|q|}\,
q\,\langle 1,d_Li^*\xi\rangle_L
.
\]
It remains to show that \(\langle 1,d_La\rangle_L=0\) for homogeneous
\(a\in C_L\).  By \cref{ax:qkl-energy}, \(\qop{1}{0}{\beta_0}(a)=d_La\).
Since the case
\((\beta,k,l)=(\beta_0,1,0)\) is not excluded in \cref{ax:qkl-top-degree},
\[
\langle 1,d_La\rangle_L
=
\left\langle 1,\qop{1}{0}{\beta_0}(a)\right\rangle_L
=0,
\]
as required.
\end{proof}
We also use \(\mathfrak i_K\), \(\mathfrak i_Q\), and \(\mathfrak i\)
for their induced maps on cohomology.

Use the cone and differential of \eqref{eq:intro-cone} over the present
coefficient algebras.  Scalar multiplication is given by
\begin{equation}\label{eq:cone-coefficient-action}
 q\cdot(\eta,\xi)=(q\eta,q\xi).
\end{equation}

A graded derivation \(\delta\) satisfies
\[
    \delta(r_1r_2)
    =\delta(r_1)r_2+(-1)^{|\delta||r_1|}r_1\delta(r_2),
\]
on homogeneous elements.

\begin{dfn}[Coefficient derivations]
\label{dfn:coefficient-derivations}
Let \(\mathfrak g\) be the space of continuous graded \(K_L\)-linear
derivations of \(R_W\) preserving \(Q_W\), and put
\[
    \mathfrak g_{\mathrm{for}}
    :=\{\delta\in\mathfrak g\mid\delta|_{\Lambda}=0\}.
\]
For \(q\in Q_W\), set \((q\delta)(r):=q\delta(r)\), and use the bracket
\[
    [\delta,\epsilon]
    :=\delta\epsilon-(-1)^{|\delta||\epsilon|}\epsilon\delta.
\]
These operations preserve \(\mathfrak g\), since
\(Q_W\subset\mathcal Z(R_W)\).  For homogeneous elements,
\[
    [\delta,q\epsilon]
    =\delta(q)\epsilon+(-1)^{|\delta||q|}q[\delta,\epsilon].
\]
\end{dfn}

The derivations \(\partial_u\) of
\cref{subsubsec:intro-coefficients}, extended \(K_L\)-linearly,
belong to \(\mathfrak g_{\mathrm{for}}\).

\begin{rem}\label{rem:euler-not-s-partial-s}
When \(n\) is even, \(s\) is odd and
\[
    \partial_s(s^2)=0,\qquad \mathcal E_s(s^2)=2s^2.
\]
Thus \(\mathcal E_s\ne s\partial_s\), and \(s\partial_s\) is not a
derivation.  When \(n\) is odd, \(s\) is even and
\(\mathcal E_s=s\partial_s\).
\end{rem}

We use the same symbol \(\delta\) for its restrictions to \(Q_W\) and
\(Q_L\); preservation of \(Q_L\) follows from \(K_L\)-linearity.
On an unshifted scalar extension, define
\begin{equation}\label{eq:induced-coefficient-actions}
    \delta(a\otimes r)
    :=(-1)^{|\delta||a|}a\otimes\delta(r).
\end{equation}
By the graded Leibniz rule, \(\delta\) maps graded commutators to graded
commutators, so it induces a map on \(HH_0(R_W)\) given by
\(\delta([r]):=[\delta(r)]\).

For homogeneous \(\delta\in\mathfrak g\), write
\(\delta_X,\delta_L,\bar\delta\) for the operators it induces on
\(C_X^{Q_W},C_L^{R_W},HH_0(R_W)\), respectively, and set
\[
    \delta_{\Cone}(\eta,\xi):=(\delta_X\eta,\bar\delta\xi).
\]
\begin{lm}[Compatibility with the differentials]
\label{lm:coefficient-actions-cone}
For every homogeneous \(\delta\in\mathfrak g\),
\[
\begin{aligned}
d_X\delta_X&=(-1)^{|\delta|}\delta_Xd_X,&
d_L\delta_L&=(-1)^{|\delta|}\delta_Ld_L,\\
\mathfrak i\delta_X&=(-1)^{|\delta|}\bar\delta\,\mathfrak i,&
d_{\Cone}\delta_{\Cone}&=(-1)^{|\delta|}\delta_{\Cone}d_{\Cone}.
\end{aligned}
\]
Consequently, these operators descend to cohomology and preserve
the kernel and image of the induced map \(\mathfrak i\).
\end{lm}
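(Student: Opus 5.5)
The plan is to check the identities on elementary tensors and then extend by linearity and continuity. By $\kk$-linearity and continuity of $\delta$ (with respect to the completed topology of \cref{subsec:filtrations}), it suffices to treat homogeneous $a\otimes r$ with $a\in C_X$ or $C_L$ and $r\in Q_W$ or $R_W$.

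\textbf{Differentials.} Because the coefficient rings carry the trivial differential, the differential acts on the first factor only:
\[
d_X(a\otimes r)=d_Xa\otimes r .
\]
Using \eqref{eq:induced-coefficient-actions}, we get
\[
d_X\delta_X(a\otimes r)=(-1)^{|\delta||a|}d_Xa\otimes\delta(r),
\qquad
\delta_Xd_X(a\otimes r)=(-1)^{|\delta|(|a|+1)}d_Xa\otimes\delta(r).
\]
The two expressions differ by the sign $(-1)^{|\delta|}$, which gives the first identity. The computation for $d_L\delta_L$ is identical. The only extra point is that $C_L^{R_W}=C_L\otimes_{K_L}R_W$, so I must confirm that the formula for $\delta_L$ is well defined on the relation $af\otimes r=a\otimes fr$ for $f\in K_L$. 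This holds because $\delta$ is $K_L$-linear and $K_L$ sits in degree zero.

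\textbf{Trace map.} Take $\eta=\alpha\otimes q$. By definition,
\[
\mathfrak i(\eta)=(-1)^{(|\alpha|+1)|q|}\,q\,\langle1,i^*\alpha\rangle_L .
\]
Here $\langle1,i^*\alpha\rangle_L\in K_L$, and it is nonzero only when $|\alpha|=n$. Applying $\bar\delta$ and using $K_L$-linearity of $\delta$ gives
\[
\bar\delta\,\mathfrak i(\eta)=(-1)^{(|\alpha|+1)|q|}\,\delta(q)\,\langle1,i^*\alpha\rangle_L .
\]
On the other side, $\delta_X\eta=(-1)^{|\delta||\alpha|}\alpha\otimes\delta(q)$, so
\[
\mathfrak i\delta_X\eta=(-1)^{|\delta||\alpha|+(|\alpha|+1)(|q|+|\delta|)}\,\delta(q)\,\langle1,i^*\alpha\rangle_L .
\]
The ratio of the two signs is $(-1)^{|\delta|}$, which is the third identity. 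This is where the degree-one shifted convention for $\mathfrak i$ in the cone enters. I will check it against the stated rule $\mathfrak i(q\eta)=(-1)^{|q|}q\mathfrak i(\eta)$. I will also check that $\bar\delta$ on the target $HH_0(R_W)[-n-1]$ carries no extra shift sign, or, if it does, that the sign matches.

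\textbf{Cone differential.} With $d_{\Cone}(\eta,\xi)=(d\eta,\mathfrak i\eta)$, the first component of $d_{\Cone}\delta_{\Cone}$ is handled by the $d_X$ identity. The second component is $\mathfrak i\delta_X\eta=(-1)^{|\delta|}\bar\delta\,\mathfrak i\eta$, by the third identity. Together these give
\[
d_{\Cone}\delta_{\Cone}=(-1)^{|\delta|}\delta_{\Cone}d_{\Cone}.
\]

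\textbf{Consequences.} Each graded-commutation relation implies that $\delta$ maps cocycles to cocycles and coboundaries to coboundaries, so it descends to cohomology. The relation $\mathfrak i\delta=\pm\bar\delta\,\mathfrak i$ on cohomology then shows that $\delta$ preserves $\Ker\mathfrak i$ and maps $\operatorname{Im}\mathfrak i$ into itself.

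\textbf{Main obstacle.} I expect the main difficulty to be sign bookkeeping for the shifted target and the scalar-extension sign conventions of \cref{rem:scalar-extension-shifts}, rather than any conceptual issue. If the signs do not match exactly, I will fix them by recomputing $\mathfrak i\delta_X$ directly from \cref{dfn:scalar-extension-interval-pairing}.
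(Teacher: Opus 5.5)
Your proposal is correct and follows the paper's proof: the $d_X$ and $d_L$ identities come from the sign in the coefficient-action convention, and the $\mathfrak i$ identity comes from computing $\mathfrak i\delta_X(\alpha\otimes q)$ with $\delta$ killing $\langle1,i^*\alpha\rangle_L\in K_L$, yielding exactly your sign $(-1)^{|\delta||\alpha|+(|\alpha|+1)(|q|+|\delta|)}$. The cone identity is then assembled componentwise. Your worry about a shift sign on the target is unfounded, since the paper defines $\bar\delta[r]=[\delta r]$ with no extra sign, so your computation goes through as written.
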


\begin{proof}
Compatibility with \(d_X\) and \(d_L\) follows from
\eqref{eq:induced-coefficient-actions}.

For homogeneous \(\alpha\in C_X\) and \(q\in Q_W\), we compute
\[
\begin{aligned}
\mathfrak i\bigl(\delta_X(\alpha\otimes q)\bigr)
&=(-1)^{|\delta||\alpha|+(|\alpha|+1)(|q|+|\delta|)}
  \delta(q)\,\langle1,i^*\alpha\rangle_L
  \\
&=(-1)^{|\delta|}\bar\delta\,\mathfrak i(\alpha\otimes q).
\end{aligned}
\]
Here \(\delta\) annihilates
\(\langle1,i^*\alpha\rangle_L\in K_L\).  Thus, for
\((\eta,\xi)\in\Cone{\mathfrak i}\),
\[
\begin{aligned}
d_{\Cone}\delta_{\Cone}(\eta,\xi)
&=(d_X\delta_X\eta,\mathfrak i\delta_X\eta)\\
&=(-1)^{|\delta|}(\delta_Xd_X\eta,\bar\delta\,\mathfrak i\eta)\\
&=(-1)^{|\delta|}\delta_{\Cone}d_{\Cone}(\eta,\xi).
\end{aligned}
\]
\end{proof}

For \(M=C_X^{Q_W},C_L^{R_W},HH_0(R_W)\), or
\(\Cone{\mathfrak i}\), write \(\delta_M\) for the operator induced
by \(\delta\).  For homogeneous \(q,m,\delta,\epsilon\), the graded
Leibniz rule and the formulas defining these operators give
\begin{align}
\delta_M(qm)
&=\delta(q)m
  +(-1)^{|\delta||q|}q\,\delta_M(m),\label{eq:coefficient-action-leibniz}\\
(q\delta)_M&=q\delta_M,\label{eq:coefficient-action-q-linearity}\\
[\delta_M,\epsilon_M]_{\mathrm{gr}}
&=[\delta,\epsilon]_M.\label{eq:coefficient-action-commutator}
\end{align}
On the cone, the product \(qm\) is the scalar multiplication defined in \eqref{eq:cone-coefficient-action}.

For a homogeneous linear map \(G\colon M\to N\) between modules to which
\(\delta\) has been extended, define \(\delta G\) by
\begin{equation}\label{eq:operator-derivative-convention}
    (\delta G)(x)
    :=\delta(G(x))-(-1)^{|\delta||G|}G(\delta(x)).
\end{equation}

\subsection{Tensor potential and relative quantum connection}
For a bounding pair \((\gamma,b)\) with Maurer--Cartan coefficient \(c\),
define \(\nun^{\gamma,b}\colon\Cone{\mathfrak i}\to\Cone{\mathfrak i}\) by
\[
\nun^{\gamma,b}(\eta,\xi)=
\bigl(\q_{\emptyset,1}^{\gamma}(\eta),\,
-\q_{-1,1}^{\gamma,b}(\eta)-c\cdot\xi\bigr).
\]

\begin{lm}\label{lm:tensor-potential-second-component}
For any \(\eta\in C_X^{Q_W}\), we have
\[
-\q_{-1,1}^{\gamma,b}(d_X\eta)-c\cdot\mathfrak{i}(\eta)
=
\mathfrak{i}\!\left(\q_{\emptyset,1}^{\gamma}(\eta)\right).
\]
\end{lm}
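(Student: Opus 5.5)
The identity is the deformed \(l=1\) case of the scalar structure equation \ref{ax:qminus-structure-equation}. I will derive it from the deformed structure equations of \cref{lm:bg-deformed-axioms} and then substitute the Maurer--Cartan equation \(\q_{0,0}^{\gamma,b}=c\cdot1\).

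Step one is to write the \((\gamma,b)\)-deformed version of \eqref{eq:qminus-structure} with a single closed input \(\eta\). Expanding the deformation by \(\gamma\) and \(b\) (\cref{subsec:bg-deformations}), this should read
\[
0=-\q_{-1,1}^{\gamma,b}(d_X\eta)
+\bigl\langle \q_{0,1}^{\gamma,b}(\eta),\q_{0,0}^{\gamma,b}\bigr\rangle_L
-\bigl\langle 1_L,i^*\q_{\emptyset,1}^{\gamma}(\eta)\bigr\rangle_L .
\]
The factor \(\tfrac12\) in the quadratic term disappears because the two ways of distributing \(\eta\) between the factors give equal contributions. This equality uses the graded symmetry of the pairing and holds up to a sign to be tracked. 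The extra contributions from the \(b\)-insertions are the familiar terms \(\langle\q_{0,0}^{\gamma,b},\ldots\rangle\), which are absorbed by the \(b\)-deformed form of the equation, presumably as stated in \cref{lm:bg-deformed-axioms}. Closedness of \(\gamma\) removes every term containing \(d_X\gamma\).

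Step two substitutes \(\q_{0,0}^{\gamma,b}=c\cdot1_L\). The middle term becomes \(\pm\langle\q_{0,1}^{\gamma,b}(\eta),c\cdot1_L\rangle_L\). Because \(c\in\mathcal Z(R_W)\), the factor \(c\) can be moved out to the left. Cyclic symmetry \ref{ax:qkl-cyclic-symmetry} or graded symmetry of \(\langle\cdot,\cdot\rangle_L\) then turns the term into \(\pm c\langle1_L,\q_{0,1}^{\gamma,b}(\eta)\rangle_L\).

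By \ref{ax:qkl-top-degree}, every component of \(\q_{0,1}^{\gamma,b}(\eta)\) pairs to zero with \(1_L\), except the zero-energy term \(\q_{0,1}^{\beta_0}(\eta)=-i^*\eta\) from \ref{ax:qkl-energy} and the terms involving \(\q_{2,0}^{\beta_0}\). Among the latter, any term with a single \(b\) and \(\eta\) is impossible, since \(\q_{2,0}^{\beta_0}\) has no closed input. So the surviving piece is \(-\langle1,i^*\eta\rangle_L\), which equals \(-\mathfrak i(\eta)\) by the definition of \(\mathfrak i\) as an element of \(HH_0(R_W)\). This produces the term \(-c\cdot\mathfrak i(\eta)\). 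The last term of the displayed equation is, by definition, \(-\mathfrak i(\q_{\emptyset,1}^{\gamma}(\eta))\). Rearranging gives the claimed equality.

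The main obstacle is sign bookkeeping. The signs to track are the Koszul signs in the scalar extension (E1)--(E4), the sign \((-1)^{(|\alpha|+1)|q|}\) in \(\mathfrak i_Q\), and the graded commutation of \(c\) past elements of \(C_L^{R_W}\) inside \(HH_0(R_W)\). The last point matters because passing to coinvariants is what makes \(c\) commute with the trace. Degree parity helps: \(|c|=2\) is even and \(|\q_{\emptyset,1}^\gamma(\eta)|=|\eta|\), so several signs collapse. I would verify the overall sign by checking the case \(\gamma=0\), \(b=0\), \(\eta\) of top class, where both sides reduce to classical zero-energy expressions.
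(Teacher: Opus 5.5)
This is the paper's argument: the deformed $l=1$ case of \ref{ax:qminus-structure-equation} from \cref{lm:bg-deformed-axioms}, then the substitution $\q_{0,0}^{\gamma,b}=c\cdot1_L$, then \ref{ax:qkl-top-degree}, which leaves only $\q_{0,1}^{\beta_0}(\,;\eta)=-i^*\eta$.

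The sign you left open is fixed as follows. Put $x=\q_{0,1}^{\gamma,b}(\eta)$. The two quadratic terms are $\langle\q_{0,0}^{\gamma,b},x\rangle_L+(-1)^{|\eta|}\langle x,\q_{0,0}^{\gamma,b}\rangle_L$. Since $\langle x,\q_{0,0}^{\gamma,b}\rangle_L=(-1)^{|\eta|}\langle\q_{0,0}^{\gamma,b},x\rangle_L$, half their sum is $\langle\q_{0,0}^{\gamma,b},x\rangle_L=c\langle1,x\rangle_L$, with no leftover sign. Your displayed line puts $\q_{0,0}^{\gamma,b}$ in the second slot with coefficient $+1$, so it is off by $(-1)^{|\eta|}$. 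A sanity check in even degree would not catch this.
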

\begin{proof}
By linearity, it is enough to take \(\eta\) homogeneous.  The deformed
\(l=1\) case of \cref{eq:qminus-structure}, provided by
\cref{lm:bg-deformed-axioms}, gives
\[
\begin{aligned}
0={}&-\q_{-1,1}^{\gamma,b}(d_X\eta)
+\frac{1}{2}\left(
\left\langle
\q_{0,0}^{\gamma,b},\q_{0,1}^{\gamma,b}(\eta)
\right\rangle_L
+(-1)^{|\eta|}
\left\langle
\q_{0,1}^{\gamma,b}(\eta),\q_{0,0}^{\gamma,b}
\right\rangle_L
\right)  \\
&\qquad
-\mathfrak{i}\!\left(\q_{\emptyset,1}^{\gamma}(\eta)\right).
\end{aligned}
\]
We first simplify the middle term.  Since \((\gamma,b)\) is a bounding pair,
\(\q_{0,0}^{\gamma,b}=c\cdot1\), where \(|c|=2\).  Moreover,
\[
\left|\q_{0,1}^{\gamma,b}(\eta)\right|=|\eta|.
\]
The pairing convention and its scalar extension in
\cref{dfn:pre-open-closed-pair,dfn:scalar-extension-interval-pairing} give
\[
\left\langle\q_{0,1}^{\gamma,b}(\eta),1\right\rangle_L
=
(-1)^{|\eta|}
\left\langle1,\q_{0,1}^{\gamma,b}(\eta)\right\rangle_L.
\]
Since \(c\) is even, the same scalar-extension formula therefore yields
\[
\begin{aligned}
&\frac{1}{2}\left(
\left\langle
\q_{0,0}^{\gamma,b},\q_{0,1}^{\gamma,b}(\eta)
\right\rangle_L
+(-1)^{|\eta|}
\left\langle
\q_{0,1}^{\gamma,b}(\eta),\q_{0,0}^{\gamma,b}
\right\rangle_L
\right) \\
&\qquad
=c\cdot
\left\langle1,\q_{0,1}^{\gamma,b}(\eta)\right\rangle_L.
\end{aligned}
\]

It remains to compute the pairing on the right.  Expanding the deformation,
\[
\q_{0,1}^{\gamma,b}(\eta)
=
\sum_{\substack{k\ge0\\ m\ge1}}
\frac{1}{(m-1)!}\,
\q_{k,m}\bigl(
\underbrace{b,\ldots,b}_{k};\eta,
\underbrace{\gamma,\ldots,\gamma}_{m-1}
\bigr).
\]
By \cref{ax:qkl-top-degree}, pairing with \(1\) kills every summand except
the zero-energy exceptions in \cref{ax:qkl-energy}.  Since \(m\ge1\), only
\((\beta,k,m)=(\beta_0,0,1)\) can occur, and
\(\qop{0}{1}{\beta_0}(\,;\eta)=-i^*\eta\).  Hence
\[
\left\langle1,\q_{0,1}^{\gamma,b}(\eta)\right\rangle_L
=-\langle 1,i^*\eta\rangle_L
=-\mathfrak{i}(\eta).
\]
Substituting the two simplifications into the structure equation gives
\[
0=
-\q_{-1,1}^{\gamma,b}(d_X\eta)
-c\cdot\mathfrak{i}(\eta)
-\mathfrak{i}\!\left(\q_{\emptyset,1}^{\gamma}(\eta)\right),
\]
which is equivalent to the desired identity.
\end{proof}

\begin{lm}\label{lm:nun-cochain-map}
The tensor potential \(\nun^{\gamma,b}\) is a degree \(2\) cochain map on
\(\Cone{\mathfrak{i}}\).
For homogeneous \(\delta\in\mathfrak g_{\mathrm{for}}\), its derivative
\(\delta\nun^{\gamma,b}\) is a graded \(Q_W\)-linear cochain endomorphism
of degree \(|\delta|+2\).
\end{lm}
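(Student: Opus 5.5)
The plan is to verify the cochain identity $d_{\Cone}\nun^{\gamma,b}=\nun^{\gamma,b}d_{\Cone}$ componentwise, and then obtain the statement about $\delta\nun^{\gamma,b}$ formally from \cref{lm:coefficient-actions-cone} and the convention \eqref{eq:operator-derivative-convention}.

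\textbf{Degree.} The operator $\q_{\emptyset,1}^{\gamma}$ has degree $4-2-2c_1(A)+|T^{\varpi(A)}|=2$, and insertions of $\gamma$ (degree two) do not change this. The map $\q_{-1,1}^{\gamma,b}$ has degree $3-n-2$. Viewed as a map into the summand $HH_0(R_W)[-n-1]$, this is degree $2$. Multiplication by $c$ has degree $|c|=2$. So $\nun^{\gamma,b}$ is homogeneous of degree two, and in particular even.

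\textbf{First component of the cochain identity.} For $(\eta,\xi)$ we have $d_{\Cone}\nun(\eta,\xi)=\bigl(d_X\q_{\emptyset,1}^{\gamma}(\eta),\ \mathfrak i\q_{\emptyset,1}^\gamma(\eta)\bigr)$ and $\nun d_{\Cone}(\eta,\xi)=\bigl(\q_{\emptyset,1}^\gamma(d_X\eta),\ -\q_{-1,1}^{\gamma,b}(d_X\eta)-c\cdot\mathfrak i(\eta)\bigr)$. In the first component, we use \ref{ax:qempty-chain-map} in its $\gamma$-deformed form from \cref{lm:bg-deformed-axioms}. Since $d_X\gamma=0$, all terms where $d_X$ hits a $\gamma$ insertion vanish. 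This gives $d_X\q^\gamma_{\emptyset,1}(\eta)=\q^\gamma_{\emptyset,1}(d_X\eta)$.

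\textbf{Second component of the cochain identity.} Here we need exactly \cref{lm:tensor-potential-second-component}. One point needs checking: the term $c\cdot\xi$ does not contribute to $d_{\Cone}$ of the second slot, because the second summand of the cone has trivial differential. Similarly, $\xi$ does not enter $d_{\Cone}(\eta,\xi)$ except through $\mathfrak i(\eta)$. I also have to confirm that multiplication by the central even element $c$ is well defined on $HH_0(R_W)$. This holds because $c\in\mathcal Z(R_W)$, so $c$ sends graded commutators to graded commutators. Finally, the sign conventions for $d_{\Cone}$ must be consistent with the degree-one version of $\mathfrak i$, and since $\nun$ is even no Koszul sign appears.

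\textbf{Derivative statement.} Take $\delta\in\mathfrak g_{\mathrm{for}}$. By \cref{lm:coefficient-actions-cone}, $d_{\Cone}\delta_{\Cone}=(-1)^{|\delta|}\delta_{\Cone}d_{\Cone}$. Expanding $d_{\Cone}(\delta\nun)-(-1)^{|\delta|}(\delta\nun)d_{\Cone}$ with the cochain property of $\nun$ shows that this combination vanishes, so $\delta\nun$ is a cochain map of degree $|\delta|+2$.

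For $Q_W$-linearity, $\nun$ itself is graded $Q_W$-linear. This follows because the extended operations are $Q_W$-linear (Appendix~\ref{app:scalar-extension}) and $c$ commutes with $Q_W$. By \eqref{eq:coefficient-action-leibniz}, we then get
\[
(\delta\nun)(qm)=\delta(q)\nun(m)+(-1)^{|\delta||q|}q\,\delta\nun(m)-\delta(q)\nun(m)-(-1)^{|\delta||q|}q\,\nun(\delta m)
=(-1)^{|\delta||q|}q\,(\delta\nun)(m),
\]
using that $\nun$ is even.

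\textbf{Main obstacle.} The delicate step is justifying the deformed chain-map axiom for $\q^\gamma_{\emptyset,1}$ and the well-definedness of the extended operations on completed modules. Both are deferred to \cref{lm:bg-deformed-axioms} and \cref{lm:qkl-bg-insertions}. Beyond that, the remaining risk is sign bookkeeping in the scalar-extended pairing, which \cref{lm:tensor-potential-second-component} has already handled. The role of $\mathfrak g_{\mathrm{for}}$, rather than all of $\mathfrak g$, is that $\delta$ kills $\Lambda$, so no $T^\beta$ terms are differentiated. Even without this restriction, the argument above would only use the Leibniz rule.
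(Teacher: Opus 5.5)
Your proof is correct and follows the paper's argument. The degree count is the content of \cref{lm:tensor-potential-degree}; the two components of the cochain identity are handled as you do, via the deformed closed chain-map identity of \cref{lm:bg-deformed-axioms} and via \cref{lm:tensor-potential-second-component}; and the statement about the derivative follows from \cref{lm:coefficient-actions-cone} together with the Leibniz rule \eqref{eq:coefficient-action-leibniz}. You additionally write out explicitly the sign check and the $Q_W$-linearity computation, which the paper leaves implicit.
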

\begin{proof}
The degree of \(\nun^{\gamma,b}\) is given by
\cref{lm:tensor-potential-degree}.  Since this degree is even, the
cochain-map condition is
\(d_{\Cone}\circ\nun^{\gamma,b}
=\nun^{\gamma,b}\circ d_{\Cone}\).
For \((\eta,\xi)\in\Cone{\mathfrak{i}}\), the cone differential gives
\[
d_{\Cone}\bigl(\nun^{\gamma,b}(\eta,\xi)\bigr)
=
\Bigl(d_X\q_{\emptyset,1}^{\gamma}(\eta),\,
\mathfrak{i}\bigl(\q_{\emptyset,1}^{\gamma}(\eta)\bigr)\Bigr),
\]
while
\[
\nun^{\gamma,b}\bigl(d_{\Cone}(\eta,\xi)\bigr)
=
\Bigl(\q_{\emptyset,1}^{\gamma}(d_X\eta),\,
-\q_{-1,1}^{\gamma,b}(d_X\eta)-c\cdot\mathfrak{i}(\eta)\Bigr).
\]
By \cref{eq:qempty-chain-map,lm:bg-deformed-axioms}, the first components
agree:
\[
d_X\q_{\emptyset,1}^{\gamma}(\eta)=\q_{\emptyset,1}^{\gamma}(d_X\eta).
\]
The second components agree by \cref{lm:tensor-potential-second-component}.
Therefore \(d_{\Cone}\circ\nun^{\gamma,b}=\nun^{\gamma,b}\circ d_{\Cone}\).

The derivative
\(\delta\nun^{\gamma,b}=[\delta_{\Cone},\nun^{\gamma,b}]_{\mathrm{gr}}\)
is a graded cochain map by \cref{lm:coefficient-actions-cone}.
Its graded \(Q_W\)-linearity follows from
\eqref{eq:coefficient-action-leibniz}.
\end{proof}

We also use \(\nun^{\gamma,b}\), \(\delta\nun^{\gamma,b}\), and
\(\delta\) for the induced operators on cohomology.

By \cref{thm:tensor-potential-gauge-invariance}, the induced tensor potential
on \(H^*(\Cone{\mathfrak i})\) depends only on the gauge-equivalence class of
the bounding pair \((\gamma,b)\).

\begin{prop}[Commutator identity]
\label[prop]{prop:tensor-potential-derivative-identity}
For homogeneous \(\delta,\epsilon\in\mathfrak g_{\mathrm{for}}\),
\[
 (q\delta)\nun^{\gamma,b}=q\bigl(\delta\nun^{\gamma,b}\bigr)
\qquad(q\in Q_W),
\]
and
\[
\delta\!\left(\epsilon\nun^{\gamma,b}\right)
-(-1)^{|\delta||\epsilon|}
 \epsilon\!\left(\delta\nun^{\gamma,b}\right)
-[\delta,\epsilon]\nun^{\gamma,b}=0.
\]
Here the action on endomorphisms is defined by
\cref{eq:operator-derivative-convention}.
\end{prop}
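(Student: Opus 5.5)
The proof is formal. By the convention \eqref{eq:operator-derivative-convention}, the derivative of an operator is a graded commutator with the induced action: \(\delta\nun^{\gamma,b}=[\delta_{\Cone},\nun^{\gamma,b}]_{\mathrm{gr}}\), where \(|\nun^{\gamma,b}|=2\) by \cref{lm:tensor-potential-degree}. By \cref{lm:nun-cochain-map}, \(\epsilon\nun^{\gamma,b}\) is again a homogeneous operator on \(\Cone{\mathfrak i}\), of degree \(|\epsilon|+2\). Applying the same convention once more gives
\[
\delta\bigl(\epsilon\nun^{\gamma,b}\bigr)
=\bigl[\delta_{\Cone},[\epsilon_{\Cone},\nun^{\gamma,b}]_{\mathrm{gr}}\bigr]_{\mathrm{gr}} .
\]
Both identities should then follow from \eqref{eq:coefficient-action-leibniz}--\eqref{eq:coefficient-action-commutator} and the graded Jacobi identity in the graded algebra of continuous operators on \(\Cone{\mathfrak i}\).

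\emph{First identity.} I would first record that \(\nun^{\gamma,b}\) is graded \(Q_W\)-linear: \(\nun^{\gamma,b}(qx)=(-1)^{2|q|}q\,\nun^{\gamma,b}(x)=q\,\nun^{\gamma,b}(x)\). This should hold for three reasons. The scalar-extended operators \(\q_{\emptyset,1}^{\gamma}\) and \(\q_{-1,1}^{\gamma,b}\) are \(Q_W\)-linear with the Koszul signs of Appendix~\ref{app:scalar-extension}. Their degrees are even, so those signs are trivial. Multiplication by \(c\) commutes with \(q\), because \(c\in\mathcal Z(R_W)\) is even and \(Q_W\subset\mathcal Z(R_W)\). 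Next, \eqref{eq:coefficient-action-q-linearity} gives \((q\delta)_{\Cone}=q\,\delta_{\Cone}\). Expanding \eqref{eq:operator-derivative-convention} for \(q\delta\), whose degree is \(|q|+|\delta|\), then gives
\[
((q\delta)\nun^{\gamma,b})(x)
=q\,\delta(\nun^{\gamma,b}x)-(-1)^{2(|q|+|\delta|)}\nun^{\gamma,b}(q\,\delta x).
\]
Pulling \(q\) out of the last term by \(Q_W\)-linearity yields \(q\,(\delta\nun^{\gamma,b})(x)\).

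\emph{Second identity.} Expand the three double commutators as signed sums of compositions of \(\delta_{\Cone}\), \(\epsilon_{\Cone}\) and \(\nun^{\gamma,b}\). The graded Jacobi identity then gives
\[
[\delta_{\Cone},[\epsilon_{\Cone},\nun]]
-(-1)^{|\delta||\epsilon|}[\epsilon_{\Cone},[\delta_{\Cone},\nun]]
=\bigl[[\delta_{\Cone},\epsilon_{\Cone}]_{\mathrm{gr}},\nun\bigr].
\]
By \eqref{eq:coefficient-action-commutator}, \([\delta_{\Cone},\epsilon_{\Cone}]_{\mathrm{gr}}=[\delta,\epsilon]_{\Cone}\), so the right side is \([\delta,\epsilon]\nun^{\gamma,b}\). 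Since \(\mathfrak g_{\mathrm{for}}\) is closed under the bracket, \([\delta,\epsilon]\) lies in it and the expression is meaningful. The same identities then descend to cohomology via \cref{lm:coefficient-actions-cone}.

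\emph{Main obstacle.} The only non-formal point is that all compositions are defined and associative on the completed modules. This requires continuity of \(\delta_{\Cone}\) and \(\nun^{\gamma,b}\) for the filtrations of Appendix~\ref{app:scalar-extension}; see \cref{app:derivation-continuity} and \cref{lm:qkl-bg-insertions}. A second, smaller check is that the operator \(\delta_{\Cone}\) built from \(\bar\delta\) on \(HH_0(R_W)\) is well defined. This was already noted, since \(\delta\) maps graded commutators to graded commutators. Once these are in place, the sign bookkeeping in the Jacobi expansion is routine.
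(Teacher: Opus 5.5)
Your proposal is correct and follows the paper's proof. The first identity comes from graded $Q_W$-linearity of the even-degree operator $\nun^{\gamma,b}$ together with \eqref{eq:coefficient-action-q-linearity}. The second comes from the graded Jacobi identity for the nested commutators with $\delta_{\Cone}$ and $\epsilon_{\Cone}$, followed by \eqref{eq:coefficient-action-commutator}; your added remarks on continuity and on closure of $\mathfrak g_{\mathrm{for}}$ under the bracket are harmless details that the paper leaves implicit.
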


\begin{proof}
Because \(\nun^{\gamma,b}\) is \(Q_W\)-linear of even degree and
the operators \(\delta_{\Cone}\) satisfy
equation~\eqref{eq:coefficient-action-q-linearity},
its graded commutator with \(q\delta\) is
\(q(\delta\nun^{\gamma,b})\).

For the second assertion, apply the graded Jacobi identity in
\(\End_\kk(\Cone{\mathfrak i})\):
\[
\begin{aligned}
\delta\!\left(\epsilon\nun^{\gamma,b}\right)
-(-1)^{|\delta||\epsilon|}
 \epsilon\!\left(\delta\nun^{\gamma,b}\right)
&=
[\delta,[\epsilon,\nun^{\gamma,b}]_{\mathrm{gr}}]_{\mathrm{gr}}\\
&\quad-
(-1)^{|\delta||\epsilon|}
[\epsilon,[\delta,\nun^{\gamma,b}]_{\mathrm{gr}}]_{\mathrm{gr}}\\
&=[[\delta,\epsilon]_{\mathrm{gr}},
   \nun^{\gamma,b}]_{\mathrm{gr}}\\
&=[\delta,\epsilon]\nun^{\gamma,b},
\end{aligned}
\]
where the last equality is
equation~\eqref{eq:coefficient-action-commutator}.  This calculation includes
the action of each derivation on the coefficients of the other.
\end{proof}

\begin{thm}[Commutation up to chain homotopy]\label{thm:flatness-relation}
Let \((\gamma,b)\) be a bounding pair and let
\(\delta,\epsilon\in\mathfrak g_{\mathrm{for}}\) be homogeneous.
Then \([\delta\nun^{\gamma,b},\epsilon\nun^{\gamma,b}]_{\mathrm{gr}}\) is
chain homotopic to zero.  In particular,
\[
[\delta\nun^{\gamma,b},\epsilon\nun^{\gamma,b}]_{\mathrm{gr}}=0
\quad\text{on}\quad
H^*(\Cone{\mathfrak i}).
\]
\end{thm}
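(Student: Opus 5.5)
We reduce the statement to the flatness argument of Solomon--Tukachinsky for the relative quantum connection, with the noncommutative coefficients entering only through the scalar-extension rules of Appendix~\ref{app:scalar-extension}. The first step is to make the derivative concrete. For $\delta\in\mathfrak g_{\mathrm{for}}$, $\delta$ kills $\Lambda$ and commutes (up to the Koszul sign) with the structure maps extended by \textup{(E1)}--\textup{(E4)}. Hence $\delta\nun^{\gamma,b}$ is the graded commutator $[\delta_{\Cone},\nun^{\gamma,b}]_{\mathrm{gr}}$, and it is computed by differentiating the deformation data:
\[
\delta\nun^{\gamma,b}(\eta,\xi)=\Bigl(\q_{\emptyset,2}^{\gamma}(\delta\gamma,\eta),\,-\q_{-1,2}^{\gamma,b}(\delta\gamma,\eta)-\q_{-1,1}^{\gamma,b}(\eta;\delta b)-(\delta c)\cdot\xi\Bigr),
\]
up to signs. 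Here the middle term is the scalar operation with one extra boundary insertion of $\delta b$, placed cyclically. Thus $\delta\nun^{\gamma,b}$ and $\epsilon\nun^{\gamma,b}$ are the cone analogues of the operators $\partial_u\nun$ of \cite{solomon2019relative}. The only new features are that $\delta b$ lies in $C_L^{R_W}$ with noncommutative coefficients, and that the second component lands in $HH_0(R_W)$.

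The second step is to write down an explicit null-homotopy $H_{\delta,\epsilon}$ of degree $|\delta|+|\epsilon|+3$ on $\Cone{\mathfrak i}$, modeled on \cite[Section~4]{solomon2019relative}. Its $C_X$-component is built from the closed geodesic operator with inputs $\delta\gamma,\epsilon\gamma,\eta$. Its $HH_0$-component is built from the open geodesic operators with inputs $(\delta\gamma\text{ or }\delta b)$, $(\epsilon\gamma\text{ or }\epsilon b)$ and $\eta$, together with a term $\xi\mapsto(\text{geodesic scalar})\cdot\xi$ coming from $c$. The key identity, $d_{\Cone}H+H d_{\Cone}=[\delta\nun,\epsilon\nun]_{\mathrm{gr}}$, is then the boundary-of-moduli computation packaged in \cite[Propositions~3.1--3.5, Lemmas~3.7--3.15, Corollary~3.16]{solomon2019relative}, which we have assumed for the geodesic operators. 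The deformed versions follow exactly as in \cref{lm:bg-deformed-axioms}, by summing over insertions of $\gamma$ and $b$. The Maurer--Cartan equation $\q_{0,0}^{\gamma,b}=c\cdot1$, combined with \cref{ax:qkl-top-degree,ax:qkl-unit}, turns the boundary-bubble terms into multiples of $c$, $\delta c$ and $\epsilon c$, as in the proof of \cref{lm:tensor-potential-second-component}. These multiples cancel against the $c\cdot\xi$ pieces of $\nun$ and of $H$.

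The main obstacle is checking that the sign and ordering bookkeeping survives noncommutativity. In \cite{solomon2019relative} the coefficients graded-commute, so terms in which $\delta b$ and $\epsilon b$ appear in opposite cyclic orders can be freely combined. Here that combination only holds after passing to $HH_0(R_W)=R_W/[R_W,R_W]_{\mathrm{gr}}$. We would therefore verify that every identity we use lands in the trace-valued component, so that the cyclic symmetry axiom \cref{ax:qkl-cyclic-symmetry} together with cyclicity of $\int_L$ allows cyclic reordering of boundary inputs modulo commutators. We would also check that $c,\delta c,\epsilon c$ act on $HH_0$ via \cref{lm:mc-coefficient-actions-coker}, which needs $c\in\mathcal Z(R_W)$ and that $\delta,\epsilon$ preserve the center up to commutators. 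The $C_X$-component involves only $Q_W$, which is commutative, so there the argument of \cite{solomon2019relative} transfers verbatim. We expect this reduction to be the content deferred to Appendix~\ref{app:standard-arguments}. Once the homotopy identity holds, the statement on $H^*(\Cone{\mathfrak i})$ follows because $\delta\nun$ and $\epsilon\nun$ are cochain maps by \cref{lm:nun-cochain-map}.
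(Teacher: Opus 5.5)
Your proposal follows the paper's argument in Appendix~\ref{app:standard-arguments} essentially step for step. The null-homotopy \(H_{\delta,\epsilon}\) is built from the deformed closed and scalar geodesic operators with \(\delta\gamma,\epsilon\gamma,\delta b,\epsilon b\) inserted. The derivative of a deformed operator replaces one copy of \(\gamma\) or \(b\), the \(HH_0(R_W)\)-valued pairing absorbs cyclic rotations of noncommuting coefficients, and \(c,\delta c,\epsilon c\) are central by \cref{lm:mc-coefficient-actions-coker}.

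Two small corrections. First, the paper's \(H_{\delta,\epsilon}\) has no \(\xi\)-dependent term, and none is needed: on \((0,\xi)\) the commutator is \((0,((\delta c)(\epsilon c)-(-1)^{|\delta||\epsilon|}(\epsilon c)(\delta c))\xi)=0\) by graded centrality. Second, transferring the argument from \cite{solomon2019relative} also requires conjugating by \(F_n\) of \cref{rem:conventions-st}, and inserting the extra sign \((-1)^{|b_i|}\) in \textup{(E2)}--\textup{(E3)} for the geodesic operators, whose degree is one higher.
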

This is \cite[Theorem~5]{solomon2019relative}.  Why its proof remains valid
with our completed, non-commutative coefficients and cone convention is
explained in Appendix~\ref{app:standard-arguments}.

We follow the formulation of the relative quantum connection in
\cite[Section~1.3.6]{solomon2019relative}.  For connection operators, we
use the grading modulo \(2\).

\begin{thm}[Flat relative connection]
\label{thm:flat-relative-connection}
For homogeneous \(\delta\in\mathfrak g_{\mathrm{for}}\), the operator
\begin{equation}\label{eq:relative-connection}
\nabla_\delta
:=\delta+\delta\nun^{\gamma,b}
\end{equation}
on \(\Cone{\mathfrak i}\) satisfies
\(d_{\Cone}\nabla_\delta=(-1)^{|\delta|}\nabla_\delta d_{\Cone}\) and
\[
\nabla_\delta(qm)
=\delta(q)m
+(-1)^{|\delta||q|}q\nabla_\delta m,
\qquad
\nabla_{q\delta}=q\nabla_\delta.
\]
For homogeneous \(\delta,\epsilon\), its curvature is
\begin{equation}\label{eq:relative-curvature}
[\nabla_\delta,\nabla_\epsilon]_{\mathrm{gr}}
-\nabla_{[\delta,\epsilon]}
=[\delta\nun^{\gamma,b},\epsilon\nun^{\gamma,b}]_{\mathrm{gr}},
\end{equation}
which is chain homotopic to zero.
In particular, \(\nabla\) is flat on \(H^*(\Cone{\mathfrak i})\).
\end{thm}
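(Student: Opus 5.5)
The plan is to check the three assertions of \cref{thm:flat-relative-connection} one after another. Each is a formal consequence of results already established: \cref{lm:coefficient-actions-cone}, \cref{lm:nun-cochain-map}, \cref{eq:coefficient-action-leibniz,eq:coefficient-action-q-linearity,eq:coefficient-action-commutator}, \cref{prop:tensor-potential-derivative-identity}, and \cref{thm:flatness-relation}. No new geometric input is needed. Throughout, $\delta$ means $\delta_{\Cone}$, and $\delta\nun^{\gamma,b}=[\delta_{\Cone},\nun^{\gamma,b}]_{\mathrm{gr}}$ is the graded commutator from \eqref{eq:operator-derivative-convention}.

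\emph{Compatibility with $d_{\Cone}$.} \cref{lm:coefficient-actions-cone} gives $d_{\Cone}\delta_{\Cone}=(-1)^{|\delta|}\delta_{\Cone}d_{\Cone}$. By \cref{lm:nun-cochain-map}, $\delta\nun^{\gamma,b}$ is a cochain map of degree $|\delta|+2$, so it commutes with $d_{\Cone}$ up to the sign $(-1)^{|\delta|+2}=(-1)^{|\delta|}$. Adding the two relations gives the claim for $\nabla_\delta$.

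\emph{Leibniz rule and $Q_W$-linearity.} The Leibniz rule for $\nabla_\delta$ is \eqref{eq:coefficient-action-leibniz} for the summand $\delta_{\Cone}$, together with the graded $Q_W$-linearity of $\delta\nun^{\gamma,b}$ from \cref{lm:nun-cochain-map}. That linearity contributes exactly $(-1)^{|\delta||q|}q\,(\delta\nun^{\gamma,b})(m)$, since $|\delta\nun^{\gamma,b}|\equiv|\delta|$ modulo $2$. The identity $\nabla_{q\delta}=q\nabla_\delta$ combines \eqref{eq:coefficient-action-q-linearity} with the first identity of \cref{prop:tensor-potential-derivative-identity}.

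\emph{Curvature.} Write $N=\nun^{\gamma,b}$ and expand bilinearly:
\[
[\nabla_\delta,\nabla_\epsilon]_{\mathrm{gr}}
=[\delta,\epsilon]_{\mathrm{gr}}
+[\delta,\epsilon N]_{\mathrm{gr}}
+[\delta N,\epsilon]_{\mathrm{gr}}
+[\delta N,\epsilon N]_{\mathrm{gr}}.
\]
The first term is $[\delta,\epsilon]_{\Cone}$ by \eqref{eq:coefficient-action-commutator}. The second term is $\delta(\epsilon N)$. By graded antisymmetry, the third term is $-(-1)^{|\delta||\epsilon|}\epsilon(\delta N)$; here it is important to use degrees, not just parities, because $|\delta N|$ and $|\delta|$ differ by the even number $2$. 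By the second identity of \cref{prop:tensor-potential-derivative-identity}, the second and third terms sum to $[\delta,\epsilon]N$. Together with the first term, this gives $\nabla_{[\delta,\epsilon]}$, which leaves exactly \eqref{eq:relative-curvature}. We also need $[\delta,\epsilon]\in\mathfrak g_{\mathrm{for}}$, so that $\nabla_{[\delta,\epsilon]}$ is defined. This holds because the bracket of two derivations vanishing on $\Lambda$ and preserving $Q_W$ again has these properties.

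\cref{thm:flatness-relation} then shows that the right side of \eqref{eq:relative-curvature} is chain homotopic to zero. Since every operator involved graded-commutes with $d_{\Cone}$, all of them descend to $H^*(\Cone{\mathfrak i})$, and there the curvature vanishes. The main obstacle is the sign bookkeeping in the curvature expansion, in particular checking that the mod-$2$ convention for connection operators agrees with the integer-degree signs in \cref{prop:tensor-potential-derivative-identity}. This works because $N$ has even degree.
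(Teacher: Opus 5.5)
Your proposal is correct and follows the paper's proof essentially step by step. The $d_{\Cone}$-relation comes from \cref{lm:coefficient-actions-cone,lm:nun-cochain-map}, since the two summands of $\nabla_\delta$ have degrees of equal parity. The Leibniz rule and $\nabla_{q\delta}=q\nabla_\delta$ come from the coefficient-action identities together with \cref{prop:tensor-potential-derivative-identity}. In the curvature, the terms linear in $\nun^{\gamma,b}$ cancel by the commutator identity, and \cref{thm:flatness-relation} supplies the homotopy. Your check that $[\delta,\epsilon]\in\mathfrak g_{\mathrm{for}}$ is a detail the paper leaves implicit.
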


\begin{proof}
The two summands in \eqref{eq:relative-connection} are cochain maps of
degrees \(|\delta|\) and \(|\delta|+2\) by
\cref{lm:coefficient-actions-cone,lm:nun-cochain-map}.  These degrees have
the same parity, which gives the relation with \(d_{\Cone}\).
Since \(\delta\nun^{\gamma,b}\) is graded \(Q_W\)-linear by
\cref{lm:nun-cochain-map}, the Leibniz rule and linearity in \(\delta\)
follow from \eqref{eq:coefficient-action-leibniz},
\eqref{eq:coefficient-action-q-linearity}, and
\cref{prop:tensor-potential-derivative-identity}.
Expanding the curvature, the terms linear in \(\nun^{\gamma,b}\) cancel by
\cref{prop:tensor-potential-derivative-identity}, giving
\eqref{eq:relative-curvature}.  The right-hand side is chain homotopic to
zero by \cref{thm:flatness-relation}, so \(\nabla\) is flat on cohomology.
\end{proof}

\subsection{Relative potential}
For a bounding pair \((\gamma,b)\), use the cochain \(\psi(\gamma,b)\)
and its class \(\Psi(\gamma,b)\) from \cref{subsubsec:intro-potentials},
with \(\gamma_W\) replaced by \(\gamma\).  The following lemma establishes
closedness over the present coefficient algebras.
\begin{lm}\label{lm:relative-potential-closed}
   $d_{\Cone}\psi(\gamma,b)=0$.
\end{lm}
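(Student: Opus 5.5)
By the definition of the cone differential in \eqref{eq:intro-cone},
\[
 d_{\Cone}\psi
 =\bigl(d_X\q_{\emptyset,0}^{\gamma},\ \mathfrak i(\q_{\emptyset,0}^{\gamma})\bigr),
\]
since \(HH_0(R_W)\) carries the trivial differential. The proof therefore has two steps. First, we show that \(d_X\q_{\emptyset,0}^{\gamma}=0\). Second, we show that \(\mathfrak i(\q_{\emptyset,0}^{\gamma})=0\) in \(HH_0(R_W)\). The sign in front of the second component of \(\psi\) is irrelevant here.

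For the first step, expand \(\q_{\emptyset,0}^{\gamma}=\sum_l\frac1{l!}\q_{\emptyset,l}(\gamma^{\otimes l})\). Apply \cref{ax:qempty-chain-map}, in its deformed form from \cref{lm:bg-deformed-axioms}, together with \(d_X\gamma=0\). For \(l=0\), the axiom already states \(d_X\q_{\emptyset,0}^A=0\). Convergence of the sum is supplied by \cref{lm:qkl-bg-insertions}.

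For the second step, use the deformed \(l=0\) case of the scalar structure equation \eqref{eq:qminus-structure} from \cref{lm:bg-deformed-axioms}. With no closed inputs, the \(d_X\) term is absent, and the equation reads
\[
 0=\tfrac12\bigl\langle\q_{0,0}^{\gamma,b},\q_{0,0}^{\gamma,b}\bigr\rangle_L
 -\mathfrak i\bigl(\q_{\emptyset,0}^{\gamma}\bigr),
\]
up to the scalar-extension sign conventions of \cref{dfn:scalar-extension-interval-pairing}. We then argue exactly as in the proof of \cref{lm:tensor-potential-second-component}. Substitute \(\q_{0,0}^{\gamma,b}=c\cdot1\) with \(c\) central of degree \(2\). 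The middle term becomes \(\tfrac12c^2\langle1,1\rangle_L\).

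It remains to check that \(\langle1,1\rangle_L=0\). Here \(1\) has degree \(0\), and the trace lives on \(C_L^n\) with \(n>1\), so \(\int_L1=0\) for degree reasons. Alternatively, \(\langle 1,1\rangle_L=\langle1,\q_{2,0}^{\beta_0}(1,1)\rangle_L\) up to sign. Hence the middle term vanishes, and we obtain \(\mathfrak i(\q_{\emptyset,0}^{\gamma})=0\).

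The main obstacle is bookkeeping rather than ideas. We must confirm that the deformed \(l=0\) identity really takes the displayed form once \(b\) has non-commutative coefficients, with values in \(HH_0(R_W)\) rather than \(R_W\). In particular, \(\tfrac12\langle\q_{0,0}^{\gamma,b},\q_{0,0}^{\gamma,b}\rangle_L\) should be the image of the deformed pairing term, and the cyclicity used to symmetrize holds only modulo graded commutators. I would verify this against the proof of \cref{lm:bg-deformed-axioms} in Appendix~\ref{subsec:bg-deformations}. Once that identity is in place, the degree argument above makes the precise signs irrelevant.
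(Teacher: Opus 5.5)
Your argument is correct and is essentially the paper's. The paper transports the proof of \cite[Lemma~4.8]{solomon2019relative} through the isomorphism $F_n$ of \cref{rem:conventions-st}. That proof uses the closed chain-map property for the first component, and for the second component the deformed $l=0$ scalar equation together with $\q_{0,0}^{\gamma,b}=c\cdot1$ and $\langle1,1\rangle_L=0$, exactly as you do.

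One small point: your ``alternatively'' remark does not work on its own, because $(\beta_0,2,0)$ is one of the exceptions in \cref{ax:qkl-top-degree}. The degree argument you give first is enough.
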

The applicability of the proof of
\cite[Lemma~4.8]{solomon2019relative} in the present setting is explained in
Appendix~\ref{app:standard-arguments}.

By \cref{thm:relative-potential-gauge-invariance}, the class
\(\Psi(\gamma,b)\) depends only on the gauge-equivalence class of the
bounding pair.
\begin{lm}\label{lm:relative-potential-derivative}
Let \(\delta\in\mathfrak g_{\mathrm{for}}\) be homogeneous.
Applying \(\delta\) to the relative potential gives
\[
\delta\psi(\gamma,b)
=
\nun^{\gamma,b}\bigl(\delta\gamma,\delta\langle 1,b\rangle_L\bigr).
\]
\end{lm}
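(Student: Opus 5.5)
We compare the two components of \(\delta\psi(\gamma,b)\) with those of \(\nun^{\gamma,b}(\delta\gamma,\delta\langle1,b\rangle_L)\), working at chain level, since the statement is an identity of cochains.

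\textbf{First component.} It is \(\delta_X\q_{\emptyset,0}^{\gamma}=\sum_l\frac1{l!}\delta_X\q_{\emptyset,l}(\gamma^{\otimes l})\). The operators \(\q_{\emptyset,l}^A\) have constant (Novikov) coefficients, and \(\delta\in\mathfrak g_{\mathrm{for}}\) kills \(\Lambda\). So \(\delta\) commutes with each \(\q_{\emptyset,l}\) up to the Koszul signs of \textup{(E1)}--\textup{(E4)} and acts by the Leibniz rule on the inputs. Because \(|\gamma|=2\) is even, the signs are trivial. Graded symmetry \ref{ax:qempty-symmetry} collects the \(l\) resulting terms into \(\frac1{(l-1)!}\q_{\emptyset,l}(\delta\gamma,\gamma^{\otimes l-1})\), and summing over \(l\) gives \(\q_{\emptyset,1}^{\gamma}(\delta\gamma)\). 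This is the first component of \(\nun^{\gamma,b}\).

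\textbf{Second component.} We must show
\[
\delta\q_{-1,0}^{\gamma,b}=\q_{-1,1}^{\gamma,b}(\delta\gamma)+c\cdot\delta\langle1,b\rangle_L
\]
in \(HH_0(R_W)\). The scalar term \(\sum\frac1{l!}\q_{-1,l}(\gamma^{\otimes l})\) is handled exactly as above and contributes the pure-bulk part of \(\q_{-1,1}^{\gamma,b}(\delta\gamma)\). For the term
\[
\sum\frac{\langle\q_{k,l}(b^{\otimes k};\gamma^{\otimes l}),b\rangle_L}{l!(k+1)},
\]
\(\delta\) hits either a \(\gamma\) or one of the \(k+1\) copies of \(b\) (including the paired one). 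The \(\gamma\)-hits give, by \ref{ax:qkl-closed-symmetry}, the remaining part of \(\q_{-1,1}^{\gamma,b}(\delta\gamma)\) as defined in \cref{subsec:bg-deformations}. For the \(b\)-hits, use cyclic symmetry \ref{ax:qkl-cyclic-symmetry} to rotate the differentiated \(b\) into the last (paired) slot. Here it is essential that we work in \(HH_0(R_W)\), where the trace is cyclic, so that cyclically reordering coefficients is allowed. The \(k+1\) positions then give equal contributions, cancelling the factor \(\frac1{k+1}\), and the \(b\)-hits sum to \(\pm\langle\q_{0,0}^{\gamma,b},\delta b\rangle_L\). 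The Maurer--Cartan equation turns this into \(\langle c\cdot1,\delta b\rangle_L\). Since \(c\) is even and central, this equals \(c\cdot\langle1,\delta b\rangle_L=c\cdot\delta\langle1,b\rangle_L\), where we use that \(\delta\) commutes with the pairing because it kills \(K_L\)-valued traces. With the minus signs in \(\psi\) and in \(\nun\), the second component matches.

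\textbf{Main obstacle.} The delicate step is the sign and ordering bookkeeping in the \(b\)-hits. \(\delta\) may be odd and \(b\) has non-commuting coefficients, so we need that the Koszul sign \((-1)^{|\delta|(\ldots)}\) from moving \(\delta\) past earlier inputs exactly compensates the cyclic-rotation sign of \ref{ax:qkl-cyclic-symmetry} in the scalar-extended form (Appendix~\ref{app:scalar-extension}). To check this, we would first verify it in the case \(k=1\) and then induct on the rotation, using \(|b|=1\), so that each rotation contributes the sign \((-1)^{(|\delta b|+1)\cdot 0}\)-type factors, which vanish. This is the same calculation as in \cite[Lemma~4.8]{solomon2019relative}, now carried out in \(HH_0\).
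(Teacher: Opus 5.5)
Your decomposition is the paper's. You apply Leibniz over the copies of $\gamma$ and $b$, use the symmetry axioms to assemble the $\gamma$-hits into $\q_{\emptyset,1}^{\gamma}(\delta\gamma)$ and $\q_{-1,1}^{\gamma,b}(\delta\gamma)$, use cyclic symmetry \ref{ax:qkl-cyclic-symmetry} to move $\delta b$ into the paired slot so that the $k+1$ hits cancel $\frac1{k+1}$, and finish with the Maurer--Cartan equation. The gap is in the only sign-sensitive step.

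You claim that $\delta$ commutes with the pairing, so that $c\cdot\langle1,\delta b\rangle_L=c\cdot\delta\langle1,b\rangle_L$. This is false for odd $\delta$. The convention $\langle r_1a,r_2b\rangle_L=(-1)^{|r_2|(|a|-1)}[r_1r_2\langle a,b\rangle_L]$ of \cref{dfn:scalar-extension-interval-pairing} gives
\[
 \delta\langle x,y\rangle_L=\langle\delta x,y\rangle_L+(-1)^{|\delta|(|x|-1)}\langle x,\delta y\rangle_L .
\]
Hence $\delta\langle1,b\rangle_L=(-1)^{|\delta|}\langle1,\delta b\rangle_L$. The sign comes from the coefficient convention of the pairing, not from the trace values.

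Likewise, the $b$-hits sum to $(-1)^{|\delta|}\langle\q_{0,0}^{\gamma,b},\delta b\rangle_L$, not to $\langle\q_{0,0}^{\gamma,b},\delta b\rangle_L$. For the paired slot this follows from the formula above, since $\q_{k,m}(b^{\otimes k};\gamma^{\otimes m})$ has degree $2$. For the interior slots it holds because $\delta$ must pass $\q_{k,m}$, which is odd in the shifted grading where $b$ and $\gamma$ have degree $0$. The cyclic rotations, and the passage of $\delta$ past the other inputs, are sign-free. So the compensation your last paragraph looks for is not where the sign lives.

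The two factors of $(-1)^{|\delta|}$ cancel: $(-1)^{|\delta|}\langle c\cdot1,\delta b\rangle_L=(-1)^{|\delta|}c\cdot\langle1,\delta b\rangle_L=c\cdot\delta\langle1,b\rangle_L$. This is exactly the paper's computation. As written, your $\pm$ is never fixed and the one sign you do fix is wrong, so the sign of the $c$-term is not established.

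This is not cosmetic. The lemma is used for odd $\delta$, for instance $\partial_s$ when $n$ is even, in the $s$-equation of \cref{thm:enhanced-superpotential-axioms}. \cref{rem:odd-boundary-derivative-sign} records that dropping exactly this factor in the corresponding computation of \cite[Lemma~4.9]{solomon2019relative} reverses the sign of the $\partial_s$ identity for even $n$. (The Lemma~4.8 you cite there is the closedness statement.) So deferring the bookkeeping to the Solomon--Tukachinsky calculation does not close the gap; you need the explicit sign tracking above.
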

\begin{proof}
By \cref{lm:bg-deformation-qkl} and
\cref{app:derivation-continuity}, we may differentiate the deformation
formulas term by term.  Since \(\delta\in\mathfrak g_{\mathrm{for}}\), it
annihilates the Novikov weights.

For the closed component, \cref{dfn:bg-deformation-qempty} gives
\begin{align}
\delta\q_{\emptyset,0}^{\gamma}
&=
\sum_{m\ge1}\frac{1}{m!}
\sum_{j=1}^{m}
\q_{\emptyset,m}
\bigl(\gamma^{\otimes(j-1)},\delta\gamma,
\gamma^{\otimes(m-j)}\bigr)\notag\\
&=
\sum_{m\ge1}\frac{1}{(m-1)!}
\q_{\emptyset,m}
\bigl(\delta\gamma,\gamma^{\otimes(m-1)}\bigr)\notag\\
&=\q_{\emptyset,1}^{\gamma}(\delta\gamma).
\label{eq:relative-potential-derivative-closed}
\end{align}
The sum over \(j\) comes from differentiating each of the \(m\) copies
of \(\gamma\); rule \textup{(E3)} of
\cref{dfn:scalar-extension-structure-maps} and
\cref{ax:qempty-symmetry} make these \(m\) terms equal.

For the second component, first differentiate the copies of \(\gamma\).
Rules \textup{(E2)} and \textup{(E4)} of
\cref{dfn:scalar-extension-structure-maps}, the pairing of
\cref{dfn:scalar-extension-interval-pairing}, and
\cref{ax:qkl-closed-symmetry,ax:qminus-symmetry} give
\begin{align*}
&
\sum_{\substack{k\ge0\\m\ge1}}
\frac{1}{m!(k+1)}
\sum_{j=1}^{m}
\left\langle
\q_{k,m}\bigl(
b^{\otimes k};
\gamma^{\otimes(j-1)},\delta\gamma,\gamma^{\otimes(m-j)}
\bigr),b
\right\rangle_L
\\
&+
\sum_{m\ge1}\frac{1}{m!}
\sum_{j=1}^{m}
\q_{-1,m}
\bigl(\gamma^{\otimes(j-1)},\delta\gamma,
\gamma^{\otimes(m-j)}\bigr)
\\
&=
\sum_{\substack{k\ge0\\m\ge1}}
\frac{
\left\langle
\q_{k,m}\bigl(
b^{\otimes k};\delta\gamma,\gamma^{\otimes(m-1)}
\bigr),b
\right\rangle_L
}{(m-1)!(k+1)}
\\
&+
\sum_{m\ge1}\frac{1}{(m-1)!}
\q_{-1,m}
\bigl(\delta\gamma,\gamma^{\otimes(m-1)}\bigr)
\\
&=\q_{-1,1}^{\gamma,b}(\delta\gamma).
\end{align*}

It remains to differentiate the \(k+1\) copies of \(b\) in
\(\langle\q_{k,m}(b^{\otimes k};\gamma^{\otimes m}),b\rangle_L\).
By rule \textup{(E1)} of \cref{dfn:scalar-extension-structure-maps}, the
pairing of \cref{dfn:scalar-extension-interval-pairing}, and
\cref{ax:qkl-cyclic-symmetry}, each of the \(k+1\) terms equals
\[
(-1)^{|\delta|}
\left\langle
\q_{k,m}\bigl(b^{\otimes k};\gamma^{\otimes m}\bigr),
\delta b
\right\rangle_L.
\]
Summing over \(k\) and \(m\) gives
\begin{align*}
&
(-1)^{|\delta|}
\sum_{k,m\ge0}
\frac{k+1}{m!(k+1)}
\left\langle
\q_{k,m}\bigl(b^{\otimes k};\gamma^{\otimes m}\bigr),
\delta b
\right\rangle_L\\
&=
(-1)^{|\delta|}
\left\langle
\q_{0,0}^{\gamma,b},\delta b
\right\rangle_L.
\end{align*}
By \cref{dfn:scalar-extension-interval-pairing},
\[
\delta\langle1,b\rangle_L
=(-1)^{|\delta|}\langle1,\delta b\rangle_L.
\]
Since \((\gamma,b)\) is a bounding pair,
\(\q_{0,0}^{\gamma,b}=c\cdot1\), where \(|c|=2\).  Therefore
\[
(-1)^{|\delta|}
\left\langle\q_{0,0}^{\gamma,b},\delta b\right\rangle_L
=(-1)^{|\delta|}\langle c\cdot1,\delta b\rangle_L
=c\cdot\delta\langle1,b\rangle_L.
\]
Adding the two parts gives
\begin{equation}
\delta\q_{-1,0}^{\gamma,b}
=\q_{-1,1}^{\gamma,b}(\delta\gamma)
+c\cdot\delta\langle1,b\rangle_L.
\label{eq:relative-potential-derivative-scalar}
\end{equation}

Finally,
\cref{eq:relative-potential-derivative-closed,eq:relative-potential-derivative-scalar},
the definition of \(\psi\), and
\eqref{eq:tensor-potential-definition} give
\[
\begin{aligned}
\delta\psi(\gamma,b)
&=
\left(
\q_{\emptyset,1}^{\gamma}(\delta\gamma),
-\q_{-1,1}^{\gamma,b}(\delta\gamma)
-c\cdot\delta\langle1,b\rangle_L
\right)\\
&=
\nun^{\gamma,b}
\bigl(\delta\gamma,\delta\langle1,b\rangle_L\bigr),
\end{aligned}
\]
as required.
\end{proof}

\begin{rem}
\label[rem]{rem:odd-boundary-derivative-sign}
Apart from the conventions of \cref{rem:conventions-st}, there is a discrepancy
between our calculation and the calculation in
\cite[Lemma~4.9]{solomon2019relative}.  If \(\partial\) is a homogeneous
derivation, its action on the paired term in \(\q_{-1,0}^{\gamma,b}\) contributes
\[
(-1)^{|\partial|}
\left\langle\q_{0,0}^{\gamma,b},\partial b\right\rangle_L.
\]
This sign factor is omitted in the first line of the calculation with
\(\partial_s\) there.  For the boundary variable \(s\),
\(|\partial_s|=n-1\) and \(|\partial_s b|=n\), so the corrected identity is
\[
\partial_s\psi_{\mathrm{ST}}
=\bigl(0,(-1)^n c\,\partial_s\!\int_L b\bigr).
\]
It agrees with the displayed conclusion of the cited lemma when \(n\) is odd
and has the opposite sign when \(n\) is even.  The final equality
\(\nun_{\mathrm{ST}}^{\gamma,b}(0,1)=(0,-c)\) remains correct.  The proof
above keeps this sign.
\end{rem}

\subsection{Enhanced superpotential}
\label{subsec:enhanced-superpotential}
We construct the splitting used in \cref{subsubsec:intro-potentials},
following \cite[Lemmas~4.10 and~4.11]{solomon2019relative}.
Recall the trace map \(\mathfrak i_K\) from \cref{subsec:structure-maps}
and its extensions \(\mathfrak i_Q\) and \(\mathfrak i\) from
\cref{subsec:coefficient-actions-cone}.
The cone exact sequence \eqref{eq:cone-i-exact-sequence}, with
\(\overline x([r])=[(0,-r)]\) and \(\pi[(\eta,\xi)]=[\eta]\),
holds over the present coefficient algebras by the cohomology long
exact sequence of the cone.

The inclusion
\(Q_L\subset HH_0(R_W)\) identifies
\(\ker\mathfrak i=\ker\mathfrak i_Q\) and
\(\Ker\mathfrak i=\Ker\mathfrak i_Q\).  Lowercase \(\ker\) denotes
the cochain kernel; \(\Ker,\Coker\) retain the cohomological meanings
of \cref{subsubsec:intro-potentials}.  We have a connecting map
\[
 \overline y\colon
 \frac{\mathfrak i_K(C_X^n)}{\operatorname{Im}\mathfrak i_K}
 \longrightarrow H^*(\ker\mathfrak i_K),
 \qquad
 \overline y([\mathfrak i_K(\alpha)])=[d_X\alpha].
\]
Its domain equals \(\Coker\mathfrak i_K\) when the cochain
trace is onto \(K_L\), as in the real de~Rham model; in general it can
be smaller.  Alongside the cone exact sequence
\eqref{eq:cone-i-exact-sequence}, we use
\begin{equation}\label{eq:ker-i-exact-sequence}
 0\longrightarrow
 \frac{\mathfrak i_Q(C_X^{Q_W})}{\operatorname{Im}\mathfrak i_Q}
 \xrightarrow{\,\overline y\otimes\Id_{Q_W}\,}H^*(\ker\mathfrak i)
 \xrightarrow{\,\rho\,}\Ker\mathfrak i_Q
 \longrightarrow0.
\end{equation}
This is the scalar extension of the corresponding sequence over
\(\kk\).  The inclusion-induced map \(\rho\) of
\cref{subsubsec:intro-bounding-pairs} now takes values in \(H^*(C_X)\);
we use the same notation for its scalar extension.
The cochain inclusion \(\eta\mapsto(\eta,0)\) induces an injection
\[
 a\colon H^*(\ker\mathfrak i)\longrightarrow H^*(\Cone{\mathfrak i}),
 \qquad \pi\circ a=\rho.
\]
We use the scalar-extension identifications
\[
 H^*(\ker\mathfrak i)=H^*(\ker\mathfrak i_K)\otimes Q_W,
 \qquad
 \Coker\mathfrak i_Q=(\Coker\mathfrak i_K)\otimes Q_W
 \subset\Coker\mathfrak i,
\]
with the coefficient signs of \eqref{eq:scalar-target-coefficient-actions}.
The last inclusion is induced by \(Q_L\subset HH_0(R_W)\).

Since \(\kk\) is a field, choose a graded \(\kk\)-linear left inverse
\[
 P_K\colon H^*(\ker\mathfrak i_K)\longrightarrow
 \frac{\mathfrak i_K(C_X^n)}{\operatorname{Im}\mathfrak i_K}
 \subseteq\Coker\mathfrak i_K
\]
of \(\overline y\).  In the real de~Rham model, this is the choice
\(P_\R\) of \cref{subsubsec:intro-potentials}.  Set
\[
 P_Q:=P_K\otimes\Id_{Q_W}\colon
 H^*(\ker\mathfrak i)\longrightarrow\Coker\mathfrak i_Q.
\]

\begin{lm}
\label{lm:enhanced-superpotential-choice-from-base}
There is a direct-sum decomposition
\[
 H^*(\Cone{\mathfrak i})
 =\operatorname{Im}\overline x\oplus a(\ker P_Q).
\]
Projection onto the first summand followed by \(\overline x^{-1}\)
defines the unique degree-zero graded \(Q_W\)-linear map
\(P\colon H^*(\Cone{\mathfrak i})\to\Coker\mathfrak i\) satisfying
\[
 P\circ\overline x=\Id,
 \qquad P\circ a=P_Q.
\]
Its kernel is \(a(\ker P_Q)\), and the natural image of
\(P([(\eta,\xi)])\) in \(HH_0(R_W)/Q_L\) is \([-\xi]\).
\end{lm}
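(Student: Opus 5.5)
The plan is to compare the two exact sequences through the map $a$ and then verify the three assertions in turn.

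\textbf{Step 1: $\Coker\mathfrak i_Q$ as a subquotient.} The cochain map $\eta\mapsto(\eta,0)$ from $\ker\mathfrak i$ to $\Cone{\mathfrak i}$ satisfies $\pi\circ a=\rho$. Take $\alpha\in C_X^{Q_W}$ and put $r=\mathfrak i_Q(\alpha)$. The class $\overline y(\otimes\Id)[r]=[d_X\alpha]$ maps under $a$ to $[(d_X\alpha,0)]$. Since
\[
(d_X\alpha,0)=d_{\Cone}(\alpha,0)-(0,\mathfrak i(\alpha)),
\]
this class equals $[(0,-r)]=\overline x([r])$. So
\[
a\circ(\overline y\otimes\Id)=\overline x\circ j,
\]
where $j$ is the natural map from $\mathfrak i_Q(C_X^{Q_W})/\operatorname{Im}\mathfrak i_Q$ into $\Coker\mathfrak i$. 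The map $j$ is injective because $\operatorname{Im}\mathfrak i$ equals $\operatorname{Im}\mathfrak i_Q$ inside $Q_L\subset HH_0(R_W)$. The inclusion $\Ker\mathfrak i_Q\subset\Ker\mathfrak i$ is an equality. This is the commutativity that drives the argument. Signs are fixed by the convention $\overline x([r])=[(0,-r)]$, and I expect them to work out as written.

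\textbf{Step 2: the direct sum.} For the intersection, suppose $a(h)=\overline x(z)$ with $P_Q h=0$. Applying $\pi$ gives $\rho(h)=0$. By exactness of \eqref{eq:ker-i-exact-sequence}, $h=(\overline y\otimes\Id)(r)$ for some $r$. Since $P_Q$ is a left inverse of $\overline y\otimes\Id$, we get $r=P_Qh=0$, so $h=0$.

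For spanning, take any class $x\in H^*(\Cone{\mathfrak i})$. Surjectivity of $\rho$ gives $h$ with $\rho(h)=\pi(x)$. Replace $h$ by $h-(\overline y\otimes\Id)(P_Qh)$. This keeps $\rho(h)$ unchanged and lands in $\ker P_Q$. Then $\pi(x-a(h))=0$, so $x-a(h)\in\operatorname{Im}\overline x$ by \eqref{eq:cone-i-exact-sequence}.

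All maps involved are graded $Q_W$-linear of degree zero with the sign conventions already fixed. Hence both summands are $Q_W$-submodules, and the projection is $Q_W$-linear. Injectivity of $\overline x$ makes $\overline x^{-1}$ well defined on the first summand, which defines $P$.

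\textbf{Step 3: the characterizing identities and uniqueness.} $P\circ\overline x=\Id$ holds by construction. For $P\circ a=P_Q$, write an arbitrary $h$ as
\[
h=\bigl(h-(\overline y\otimes\Id)P_Qh\bigr)+(\overline y\otimes\Id)P_Qh.
\]
The first piece lies in $\ker P_Q$, so its image under $a$ is killed by $P$. The second piece maps under $a$ to $\overline x(jP_Qh)$ by Step 1, and $P$ sends this to $P_Qh$, using $\Coker\mathfrak i_Q\subset\Coker\mathfrak i$.

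For uniqueness, any $P'$ with $P'\circ\overline x=\Id$ and $P'\circ a=P_Q$ kills $a(\ker P_Q)$ and agrees with $P$ on $\operatorname{Im}\overline x$. By Step 2 it therefore equals $P$. The kernel statement also follows from the decomposition.

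\textbf{Step 4: the image in $HH_0(R_W)/Q_L$.} Write $[(\eta,\xi)]=\overline x(z)+a(h)$, with $h$ represented by $\eta'\in\ker\mathfrak i$. Then, at the cochain level,
\[
(\eta,\xi)-(\eta',0)=(0,-z)+d_{\Cone}(\alpha,\zeta)
\]
for some representative $z$ and some $(\alpha,\zeta)$. Comparing second components gives
\[
\xi=-z+\mathfrak i(\alpha)\in -z+Q_L,
\]
so $P([(\eta,\xi)])=[z]\equiv[-\xi]$ modulo $Q_L$.

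I expect the main care point to be the sign bookkeeping in Step 1, namely the degree-one shift of $\mathfrak i$ in the cone. The remaining steps are formal diagram chasing.
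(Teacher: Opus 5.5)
Your proposal is correct and takes essentially the same route as the paper. In both, the decomposition comes from the cone exact sequence together with \(\pi\circ a=\rho\) and the left inverse \(P_Q\) of \(\overline y\otimes\Id\), and \(P\circ a=P_Q\) comes from the identity \(a\circ(\overline y\otimes\Id)=\overline x\); you check that identity at the cochain level, whereas the paper states it without proof. For the last assertion, your cochain-level comparison is equivalent to the paper's observation that the two maps to \(HH_0(R_W)/Q_L\) agree on \(\operatorname{Im}\overline x\) and vanish on \(a(\ker P_Q)\).
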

\begin{proof}
Since \(P_Q(\overline y\otimes\Id_{Q_W})=\Id\), exactness of
\eqref{eq:ker-i-exact-sequence} gives an isomorphism
\(\rho|_{\ker P_Q}\colon\ker P_Q\to\Ker\mathfrak i_Q\).
Together with \(\pi a=\rho\), the cone exact sequence gives the
direct-sum decomposition.  The identity
\(a((\overline y\otimes\Id_{Q_W})(r))=\overline x(r)\)
then shows that the projection satisfies \(P\circ a=P_Q\).
These restrictions determine \(P\) uniquely.  The maps to
\(HH_0(R_W)/Q_L\) induced by \(P\) and by
\([(\eta,\xi)]\mapsto[-\xi]\) agree on \(\operatorname{Im}\overline x\)
and vanish on \(a(\ker P_Q)\), proving the last assertion.
\end{proof}

Fix this splitting \(P\).  If the natural map
\(\Coker\mathfrak i\to HH_0(R_W)/Q_L\) is an isomorphism, the last
assertion determines \(P\) canonically.

\begin{dfn}
\label{dfn:enhanced-superpotential}
For a bounding pair \((\gamma,b)\), use the enhanced superpotential
\(\overline\Omega(\gamma,b)\) of \cref{subsubsec:intro-potentials},
formed with the splitting \(P\) just constructed.
\end{dfn}
Its natural image in \(HH_0(R_W)/Q_L\) is the class of
\(\q_{-1,0}^{\gamma,b}\), by
\cref{lm:enhanced-superpotential-choice-from-base}.

By \cref{lm:coefficient-actions-cone}, each \(\delta\in\mathfrak g\) induces
operators on
\(\Coker\mathfrak i\) and \(\Coker\mathfrak i_Q\), with
\(\delta[r]=[\delta r]\).
As above, we use the same symbol for these induced operators.
An element of \(R_W\) or \(HH_0(R_W)\) in a formula valued in
\(\Coker\mathfrak i\) denotes its natural quotient class.

\begin{lm}\label{lm:splitting-commutes-derivations}
Let \(P\) be the splitting chosen above.  For every
homogeneous \(\delta\in\mathfrak g\), one has
\[
    P\circ\delta
    =
    \delta\circ P
    \colon
    H^*(\Cone{\mathfrak{i}})
    \longrightarrow
    \Coker\mathfrak{i}.
\]
\end{lm}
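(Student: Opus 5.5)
The plan is to use the direct-sum decomposition of \cref{lm:enhanced-superpotential-choice-from-base}. It suffices to check $P\circ\delta=\delta\circ P$ separately on the two summands $\operatorname{Im}\overline x$ and $a(\ker P_Q)$. The preliminary point is that $\delta$ (meaning $\delta_{\Cone}$ on cohomology) preserves both summands, or at least maps each into something on which $P$ is computable. By \cref{lm:coefficient-actions-cone}, $\delta_{\Cone}$ descends to $H^*(\Cone{\mathfrak i})$, and the induced operator on $\Coker\mathfrak i$ is $[r]\mapsto[\delta r]$.

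\emph{First summand.} For $r\in HH_0(R_W)$ we have
\[
\delta\,\overline x([r])=\delta[(0,-r)]=[(0,-\bar\delta r)]=\overline x(\delta[r]),
\]
so $\delta\circ\overline x=\overline x\circ\delta$. Combined with $P\circ\overline x=\Id$, this gives
\[
P\delta\overline x[r]=P\overline x\delta[r]=\delta[r]=\delta P\overline x[r].
\]
One checks that no sign appears, because $\overline x$ has degree zero in the conventions of \eqref{eq:cone-i-exact-sequence}.

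\emph{Second summand.} Take a class $m\in H^*(\ker\mathfrak i)=H^*(\ker\mathfrak i_K)\otimes Q_W$. The cochain inclusion $\eta\mapsto(\eta,0)$ commutes with $\delta$, so $\delta\circ a=a\circ\delta$, where $\delta$ acts on $H^*(\ker\mathfrak i)$ through the $Q_W$ factor via \eqref{eq:induced-coefficient-actions}. Since $P\circ a=P_Q$, it suffices to show $P_Q\circ\delta=\delta\circ P_Q$ on $H^*(\ker\mathfrak i)$. For this:
\begin{itemize}
\item $P_Q=P_K\otimes\Id_{Q_W}$, and $\delta$ acts on elements $x\otimes q$ by $(-1)^{|\delta||x|}x\otimes\delta q$.
\item $P_K$ is degree-zero and $\kk$-linear, so $|P_K x|=|x|$.
\item On the target $\Coker\mathfrak i_Q=(\Coker\mathfrak i_K)\otimes Q_W$, the operator $\delta$ acts the same way, with the sign determined by the $\kk$-factor degree.
\end{itemize}
Hence
\[
P_Q\delta(x\otimes q)=(-1)^{|\delta||x|}P_Kx\otimes\delta q=\delta P_Q(x\otimes q).
\]
Any element of $a(H^*(\ker\mathfrak i))$, not only of $a(\ker P_Q)$, then satisfies $P\delta a m=P a\delta m=P_Q\delta m=\delta P_Q m=\delta P a m$. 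By additivity and continuity this covers the whole second summand.

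\emph{Main obstacle.} The hard step is making the second summand rigorous with the completed tensor products and the sign conventions of \eqref{eq:scalar-target-coefficient-actions}. Two things must be checked:
\begin{itemize}
\item The identification $H^*(\ker\mathfrak i)=H^*(\ker\mathfrak i_K)\otimes Q_W$ intertwines the coefficient action of $\delta$ on cochains with the action on the $Q_W$ factor, including the Koszul signs, and the same holds for the inclusion $\Coker\mathfrak i_Q\subset\Coker\mathfrak i$.
\item $\delta$ is a well-defined continuous operator on these completed tensor products, so that the two identities above hold on all elements once verified on elementary tensors. This follows from continuity of $\delta\in\mathfrak g$ and the filtrations of the appendix.
\end{itemize}
Since the scalar extension of \cref{rem:scalar-extension-shifts} is designed so that $Q_W$-coefficients are placed to the right, I expect the signs to match on both sides. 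Because $\delta$ preserves $Q_W$ and is $K_L$-linear, the cohomology computation commutes with the $Q_W$-action, and the result follows.
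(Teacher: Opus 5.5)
Your proposal is correct and matches the paper's proof. Both arguments use the decomposition $\operatorname{Im}\overline x\oplus a(\ker P_Q)$, the identities $\delta\overline x=\overline x\delta$ and $\delta a=a\delta$, and the commutation $P_Q\delta=\delta P_Q$; the paper phrases the last step as $\delta$ preserving $\ker P_Q$, so that both sides vanish on $a(\ker P_Q)$. The sign check you flagged as the main obstacle is settled there as you anticipated: by \eqref{eq:tensor-to-hh0}, $P_Q(u_0\otimes q)=(-1)^{(n+1)|q|}[P_K(u_0)q]$, and $P_K(u_0)\neq0$ forces $|u_0|=n+1$, so the Koszul sign $(-1)^{|\delta||u_0|}$ from \eqref{eq:induced-coefficient-actions} agrees with the sign coming from the shifted target.
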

\begin{proof}
Use the decomposition of \cref{lm:enhanced-superpotential-choice-from-base}.
The action of \(\delta\) on coefficients gives
\[
 \delta\overline x(c)=\overline x(\delta c),
 \qquad \delta a(u)=a(\delta u).
\]
Since \(P\circ\overline x=\Id\), the first identity proves the claim on
\(\operatorname{Im}\overline x\).
For homogeneous \(u_0\in H^*(\ker\mathfrak i_K)\) and \(q\in Q_W\),
\eqref{eq:tensor-to-hh0} gives
\[
 P_Q(u_0\otimes q)=(-1)^{(n+1)|q|}[P_K(u_0)q].
\]
If \(P_K(u_0)\ne0\), then \(|u_0|=n+1\), so
\eqref{eq:induced-coefficient-actions} gives
\(P_Q\delta=\delta P_Q\).
Thus \(\ker P_Q\) is preserved by \(\delta\), and both
\(P\delta\) and \(\delta P\) vanish on \(a(\ker P_Q)\).
\end{proof}

\begin{rem}
\label{rem:conventions-st}
For the reader comparing with \cite{solomon2019relative}, we collect here
all differences of convention; every adaptation of an argument of
\cite{solomon2019relative} below refers back to this remark.
\begin{enumerate}
\item Our map \(\mathfrak i(\eta)=\langle 1,i^*\eta\rangle_L\) replaces
the map \(\mathrm i(\eta)=(-1)^{n+|\eta|}\int_L i^*\eta\) of
\cite[Section~1.3.1]{solomon2019relative}.
\item Our tensor potential \eqref{eq:tensor-potential-definition}
differs from \cite[Eq.~(17)]{solomon2019relative} by the factor \((-1)^n\)
in the sign of the \(\q^{\gamma,b}_{-1,1}(\eta)\)-term.
\item Our scalar term \(\q_{-1,0}^{\gamma,b}\) carries no factor
\((-1)^n\), whereas \cite[Eq.~(42)]{solomon2019relative} includes that
factor.
\item The displayed identification
\(\overline{x}(r)=[(0,-r)]\) has the same form as
\cite[Eq.~(36)]{solomon2019relative}, but the cokernel coordinate transported
between the two cone conventions is
\(r_{\mathrm{ours}}=(-1)^n r_{\mathrm{ST}}\).
\end{enumerate}
Precisely, set
\[
F_n(\eta,\xi):=(\eta,(-1)^n\xi),
\qquad
J_n([r]):=[(-1)^n r].
\]
Since \(\mathfrak i=(-1)^n\mathrm i_{\mathrm{ST}}\), including after scalar
extension, one has
\[
d_{\Cone}^{\mathrm{ours}}F_n=F_n d_{\Cone}^{\mathrm{ST}},
\qquad
\nun_{\mathrm{ours}}^{\gamma,b}
=F_n\nun_{\mathrm{ST}}^{\gamma,b}F_n^{-1},
\qquad
\psi_{\mathrm{ours}}=F_n\psi_{\mathrm{ST}}.
\]
In particular, the \(\q^{\gamma,b}_{-1,1}(\eta)\)-term in the second
component acquires the factor \((-1)^n(-1)^{n+1}=-1\), while the
\(c\cdot\xi\)-term is unchanged because \((-1)^{2n}=1\).  On the long exact
sequence,
\[
H^*(F_n)\circ\overline{x}_{\mathrm{ST}}
=\overline{x}_{\mathrm{ours}}\circ J_n.
\]
Thus, for compatible splittings,
\[
P_{\mathrm{ours}}\circ H^*(F_n)=J_n\circ P_{\mathrm{ST}},
\qquad
\overline\Omega_{\mathrm{ours}}=(-1)^n\overline\Omega_{\mathrm{ST}}.
\]

\end{rem}

\subsection{Enhanced axioms}
\label{subsec:enhanced-axioms}

\begin{assum}[Point-like bounding pair]
\label[assum]{assum:point-like-bounding-pair}
Assume that there exists a point-like bounding pair \((\gamma_W,b)\)
with \(\gamma_W=\Gamma_W\), where \(\Gamma_W\) is the bulk
representative specified below.  Fix such a pair for this subsection.
\end{assum}

We establish identities for the enhanced superpotential of this pair
and deduce its coefficient axioms.
The variables \(t_1,\ldots,t_N\) are homogeneous coordinates on the
previously fixed bulk space \(W\).  For the enhanced
equations, assume that \(W\) admits the decomposition
\[
    W=\kk\cdot1_X\oplus W_{\mathrm{rel}},
\]
where the classes in \(W_{\mathrm{rel}}\) have cocycle representatives in
\(\ker i^*\).  Assume that
\[
 W^2\subset H^2_{\mathrm{adm}}(X,L;\kk).
\]
Thus, for \(A\in W^2\) and \(\beta\in\Pi\), the pairing
\(\int_\beta A\) is well defined.

Take the basis \(w_1,\ldots,w_N\) adapted to this decomposition,
with \(w_1=1_X\), and choose cocycle representatives, denoted by
the same symbols, in \(\ker i^*\) for \(i>1\) and in
\(\mathcal D_X^{\mathrm{adm}}\) in degree two.  Form the bulk
representative \(\gamma_W=\Gamma_W\) as in
\cref{subsubsec:intro-bounding-pairs} from these cocycles.
Require
\[
    \left.[T^{\beta_0}]b\right|_{t=0}
    \in C_L\otimes_\kk s\kk\langle s\rangle.
\]

Write \(P_Q=P_K\otimes\Id_{Q_W}\) for the scalar extension fixed in
\cref{lm:enhanced-superpotential-choice-from-base}, and use the natural map
from \(\Coker\mathfrak i_Q\) to
\(\Coker\mathfrak i\) without further notation.  The product
\(A\smile B\) below is represented by the product in \(C_X\).  It belongs to
\(\ker\mathfrak i_Q\) under the preceding choice of representatives, so
\(P_Q(A\smile B)\) is defined.

\begin{dfn}[Divisor derivations]
\label{notn:novikov-divisor-fields}
For \(A\in H^2_{\mathrm{adm}}(X,L;\kk)\), define \(E_A\in\mathfrak g\)
termwise by
\[
    E_A(T^\beta)=\left(\int_\beta A\right)T^\beta,
    \qquad E_A(t_i)=E_A(s)=0.
\]
Additivity of the periods gives the Leibniz rule, and preservation of
energy gives continuity.  Thus \(E_A\) is a degree-zero derivation
preserving \(Q_W\); it belongs to \(\mathfrak g_{\mathrm{for}}\) exactly
when all periods vanish.
\end{dfn}

The derivations
appearing below descend to \(HH_0(R_W)\), the cone, and its cokernel, and they
commute with \(P\) by
\cref{lm:splitting-commutes-derivations}.

\begin{lm}\label{lm:relative-potential-divisor}
For \(A\in W^2\),
\[
 (\partial_A-E_A)\Psi(\gamma_W,b)
 =a(A\smile\Gamma_W).
\]
\end{lm}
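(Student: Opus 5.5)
We work at the cochain level with the representative $\psi=\psi(\gamma_W,b)$ and compute $\partial_A\psi$ and $E_A\psi$ separately. Since $\partial_A\in\mathfrak g_{\mathrm{for}}$, \cref{lm:relative-potential-derivative} gives
\[
 \partial_A\psi=\nun^{\gamma_W,b}\bigl(\partial_A\gamma_W,\partial_A\langle1,b\rangle_L\bigr).
\]
Because the pair is point-like, $\langle1,b\rangle_L=-s$ and $\partial_A(s)=0$, so the second entry vanishes. We also have $\partial_A\gamma_W=A$, up to the sign convention \eqref{eq:induced-coefficient-actions}, which is trivial since $|A|=2$. Hence
\[
 \partial_A\psi=\bigl(\q_{\emptyset,1}^{\gamma_W}(A),-\q_{-1,1}^{\gamma_W,b}(A)\bigr).
\]

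Next we expand both components using the divisor axioms, with $A$ represented in $\mathcal D_X^{\mathrm{adm}}$.

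For the closed component, write $\q_{\emptyset,1}^{\gamma_W}(A)=\sum_m\frac1{m!}\q_{\emptyset,m+1}(A,\gamma_W^{\otimes m})$. By \ref{ax:qempty-divisor}, the class-$A_0$, $m=1$ term gives $A\wedge\gamma_W$, which is the cochain $A\smile\Gamma_W$. Every other term gives $(\int_{B}A)\,T^{\varpi(B)}\q^{B}_{\emptyset,m}(\gamma_W^{\otimes m})$. Summed over $B$, these terms are exactly $E_A\q_{\emptyset,0}^{\gamma_W}$, since $E_A(T^{\varpi(B)})=(\int_B A)T^{\varpi(B)}$ and $E_A$ kills the $t_i$.

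For the scalar component, write
\[
 \q_{-1,1}^{\gamma_W,b}(A)=\sum\frac{1}{m!(k+1)}\langle\q_{k,m+1}(b^{\otimes k};A,\gamma_W^{\otimes m}),b\rangle_L+\sum\frac1{m!}\q_{-1,m+1}(A,\gamma_W^{\otimes m}).
\]
Apply \ref{ax:qkl-divisor} and \ref{ax:qminus-divisor} class by class. The only zero-energy exception is $\q_{0,1}^{\beta_0}(;A)=-i^*A$, which vanishes because $A\in\ker i^*$ in degree two. The remaining terms reproduce $(\int_\beta A)T^\beta$ times the summands of $\q_{-1,0}^{\gamma_W,b}$.

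We must compare this with $E_A\q_{-1,0}^{\gamma_W,b}$. Here $E_A$ also acts on the Novikov coefficients of $b$, so
\[
 E_A\q_{-1,0}^{\gamma_W,b}=(\text{divisor terms})+c\cdot\langle1,E_Ab\rangle_L,
\]
by the same cyclic-symmetry computation as in the proof of \cref{lm:relative-potential-derivative}, with $|E_A|=0$. Since $\langle1,b\rangle_L=-s$ has no $T$-dependence, $\langle1,E_Ab\rangle_L=E_A(-s)=0$. Hence $\partial_A\psi-E_A\psi=(A\smile\Gamma_W,0)$, whose class is $a(A\smile\Gamma_W)$.

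\textbf{Main obstacle.} Lemma~\ref{lm:relative-potential-derivative} was stated only for $\delta\in\mathfrak g_{\mathrm{for}}$, and $E_A$ is not in $\mathfrak g_{\mathrm{for}}$. We therefore need to redo the term-by-term differentiation for $E_A$, where the Novikov weights of the operators themselves are differentiated. Those contributions are exactly the divisor terms computed above, and matching them carefully, with signs, is the key check.

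Two further points need care:
\begin{itemize}
\item Combinatorial factors: $\frac1{m!}$ from the expansion in $A$ matches $\frac1{m!}$ in $\q^{\gamma_W}_{\emptyset,0}$ after removing $A$, but the $\frac1{l!}$ normalisation in $\q^{\gamma_W}_{\emptyset,0}$ begins at the $(l+1)$ slot. These must be aligned using the symmetry axioms \ref{ax:qempty-symmetry} and \ref{ax:qminus-symmetry}.
\item Whether $E_A$ applied to $\gamma_W$ is zero: it is, since $\gamma_W$ carries only $t$-coefficients.
\end{itemize}
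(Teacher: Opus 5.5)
Your proposal is correct and follows the paper's proof essentially step by step. The paper computes $\partial_A\psi$ via \cref{lm:relative-potential-derivative}, matches the action of $E_A$ on the Novikov weights with the divisor axioms (the $(A_0,2)$ exception of \ref{ax:qempty-divisor} producing $A\wedge\gamma_W$), and collapses the action of $E_A$ on $b$, by cyclic symmetry and the Maurer--Cartan equation, to $c\cdot E_A\langle1,b\rangle_L=c\cdot E_A(-s)=0$. Your worry about the factorials is unnecessary: after reindexing, the $\frac1{m!}$ factors match directly. To finish, note, as the paper does, that $A\wedge\gamma_W$ is a cocycle in $\ker\mathfrak i_Q$ because $i^*A=0$, so its class is indeed $a(A\smile\Gamma_W)$.
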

\begin{proof}
Let
\[
 D:=\partial_A\gamma_W\in\mathcal D_X^{\mathrm{adm}}
\]
be the representative determined by the chosen coordinates.  Then
\(E_A\gamma_W=0\).
By the definition of \(a\), it suffices to prove the cochain identity
\begin{equation}
 (\partial_A-E_A)\psi(\gamma_W,b)
 =(D\wedge\gamma_W,0).
 \label{eq:relative-divisor-cochain-target}
\end{equation}
Since \(\partial_A\in\mathfrak g_{\mathrm{for}}\),
\(\partial_A\gamma_W=D\), and
\(\partial_A\langle1,b\rangle_L=0\) by
the assumption that \((\gamma_W,b)\) is point-like, and seprated. So,
\cref{lm:relative-potential-derivative} and
\eqref{eq:tensor-potential-definition} give
\begin{align}
 \partial_A\q_{\emptyset,0}^{\gamma_W}
 &=\q_{\emptyset,1}^{\gamma_W}(D),
 \label{eq:relative-divisor-closed-coordinate}\\
 \partial_A\q_{-1,0}^{\gamma_W,b}
 &=\q_{-1,1}^{\gamma_W,b}(D).
 \label{eq:relative-divisor-scalar-coordinate}
\end{align}

It remains to compute the action of \(E_A\) on the Novikov weights.  For
the closed component,
\begin{align}
 E_A\q_{\emptyset,0}^{\gamma_W}
 &=
 \sum_{\substack{B\in H_2(X;\Z)\\\omega(B)\ge0}}\sum_{m\ge0}
 \frac{T^{\varpi(B)}}{m!}
 \left(\int_BD\right)
 \q_{\emptyset,m}^{B}(\gamma_W^{\otimes m})\notag\\
 &=
 \sum_{\substack{B\in H_2(X;\Z)\\\omega(B)\ge0}}\sum_{m\ge0}
 \frac{T^{\varpi(B)}}{m!}\,
 \q_{\emptyset,m+1}^{B}
 \bigl(D,\gamma_W^{\otimes m}\bigr)
 -D\wedge\gamma_W\notag\\
 &=\q_{\emptyset,1}^{\gamma_W}(D)-D\wedge\gamma_W.
 \label{eq:relative-divisor-closed-novikov}
\end{align}
The subtracted term is the exception in
\cref{ax:qempty-divisor},
\(\q_{\emptyset,2}^{A_0}(D,\gamma_W)=D\wedge\gamma_W\), which 
\(E_A\) evaluates to zero.  Equations
\eqref{eq:relative-divisor-closed-coordinate} and
\eqref{eq:relative-divisor-closed-novikov} therefore give
\begin{equation}
 (\partial_A-E_A)\q_{\emptyset,0}^{\gamma_W}
 =D\wedge\gamma_W.
 \label{eq:relative-divisor-closed-component}
\end{equation}

Let \(c\) be the Maurer--Cartan coefficient.  For the $HH_0$ component, the
terms in which \(E_A\) acts on the
boundary insertions and the second argument of the pairing combine, by
\cref{ax:qkl-cyclic-symmetry}
and the Maurer--Cartan equation, to
\[
 \left\langle\q_{0,0}^{\gamma_W,b},E_Ab\right\rangle_L
 =c\cdot E_A\langle1,b\rangle_L
 =c\cdot E_A(-s)=0.
\]
Here we used \eqref{eq:point-like-bounding-pair} and \(E_A(s)=0\) from
\cref{notn:novikov-divisor-fields}.

The remaining terms come from the action of \(E_A\) on the Novikov weights:
\begin{align*}
 &\sum_{\substack{\beta\in\Pi^{\ge0}\\k,m\ge0}}
 \frac{T^\beta}{m!(k+1)}
 \left(\int_\beta D\right)
 \left\langle
 \q_{k,m}^{\beta}
 \bigl(b^{\otimes k};\gamma_W^{\otimes m}\bigr),b
 \right\rangle_L\\
 &\quad+
 \sum_{\substack{\beta\in\Pi^{\ge0}\\m\ge0}}
 \frac{T^\beta}{m!}
 \left(\int_\beta D\right)
 \q_{-1,m}^{\beta}(\gamma_W^{\otimes m})\\
 &=
 \sum_{\substack{\beta\in\Pi^{\ge0}\\k,m\ge0}}
 \frac{T^\beta}{m!(k+1)}
 \left\langle
 \q_{k,m+1}^{\beta}
 \bigl(b^{\otimes k};D,\gamma_W^{\otimes m}\bigr),b
 \right\rangle_L\\
 &\quad+
 \sum_{\substack{\beta\in\Pi^{\ge0}\\m\ge0}}
 \frac{T^\beta}{m!}
 \q_{-1,m+1}^{\beta}
 \bigl(D,\gamma_W^{\otimes m}\bigr)\\
 &=\q_{-1,1}^{\gamma_W,b}(D).
\end{align*}
The second equality is exactly
\cref{ax:qkl-divisor,ax:qminus-divisor}.  Together with the vanishing
boundary contribution above, it gives
\begin{equation}
 E_A\q_{-1,0}^{\gamma_W,b}
 =\q_{-1,1}^{\gamma_W,b}(D).
 \label{eq:relative-divisor-scalar-novikov}
\end{equation}
Equations \eqref{eq:relative-divisor-scalar-coordinate} and
\eqref{eq:relative-divisor-scalar-novikov} imply
\begin{equation}
 (\partial_A-E_A)\q_{-1,0}^{\gamma_W,b}=0.
 \label{eq:relative-divisor-scalar-component}
\end{equation}

Combining \eqref{eq:relative-divisor-closed-component} and
\eqref{eq:relative-divisor-scalar-component} with the definition of
\(\psi\) proves \eqref{eq:relative-divisor-cochain-target}.
Since \(i^*D=0\), one has \(i^*(D\wedge\gamma_W)=0\); and since
\(d_XD=d_X\gamma_W=0\), the product is a cocycle in
\(\ker\mathfrak i_Q\).  Consequently,
\[
 [(D\wedge\gamma_W,0)]
 =a([D\wedge\gamma_W])
 =a(A\smile\Gamma_W).
\]
Passing to cohomology in \eqref{eq:relative-divisor-cochain-target} gives
the result.
\end{proof}

\begin{thm}[Enhanced superpotential equations]
\label{thm:enhanced-superpotential-axioms}
Let \(c\in \mI_{R_W}\cap\mathcal Z(R_W)\) be the Maurer--Cartan coefficient of
\((\gamma_W,b)\).  The enhanced superpotential satisfies the following
equations.
\begin{enumerate}[label=\textup{(\alph*)}]
\item\label{item:enhanced-degree}
\emph{Homogeneity:}
\(\overline\Omega\) is homogeneous of degree \(3-n\) in the unshifted
coefficient grading.

\item\label{item:enhanced-s-equation}
\emph{\(s\)-equation:} for every homogeneous derivation
\(\delta\) vanishing on \(R_W^0\),
\[
    \delta\overline\Omega
    =
    [-c\cdot\delta(s)]
    \qquad\text{in }\Coker\mathfrak i.
\]
In particular,
\[
    \mathcal E_s\overline\Omega
    =
    [-cs].
\]

\item\label{item:enhanced-unit}
\emph{Unit / fundamental class equation:}
\[
    \partial_1\overline\Omega
    =
    T^{\beta_0}\bigl(P_Q(\Gamma_W)+s\bigr).
\]

\item\label{item:enhanced-divisor}
\emph{Divisor equation:} if \(A\in W^2\), then
\[
    (\partial_A-E_A)\overline\Omega
    =P_Q(A\smile\Gamma_W).
\]

\item\label{item:enhanced-zero}
\emph{Zero-energy equation:}
\[
    [T^{\beta_0}]\overline\Omega
    =
    t_1s+\frac12P_Q(\Gamma_W\smile\Gamma_W).
\]
\end{enumerate}
\end{thm}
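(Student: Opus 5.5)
Every part follows the same pattern.  Apply a derivation to the cochain $\psi(\gamma_W,b)$ using \cref{lm:relative-potential-derivative} or \cref{lm:relative-potential-divisor}.  Pass to cohomology in $\Cone{\mathfrak i}$.  Then apply the splitting $P$, which commutes with derivations by \cref{lm:splitting-commutes-derivations} and satisfies $P\circ\overline x=\Id$ and $P\circ a=P_Q$.

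For \ref{item:enhanced-degree} I would invoke the degree lemma of Appendix~\ref{app:tensor-potential-lemmas}.  Granting that $\q_{\emptyset,0}^{\gamma_W}$ has degree $2$ and $\q_{-1,0}^{\gamma_W,b}$ has degree $3-n$, the class $\Psi$ is homogeneous.  Since $P$ has degree zero, $\overline\Omega$ is homogeneous of degree $3-n$ in the unshifted coefficient grading.

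For \ref{item:enhanced-s-equation}, take $\delta$ vanishing on $R_W^0$; then $\delta\in\mathfrak g_{\mathrm{for}}$ and $\delta\gamma_W=0$.  By \cref{lm:relative-potential-derivative},
\[
\delta\psi=\nun^{\gamma_W,b}(0,\delta\langle1,b\rangle_L)=(0,-c\cdot\delta\langle1,b\rangle_L)=(0,c\,\delta(s)),
\]
using point-likeness $\langle1,b\rangle_L=-s$.  This cochain is $\overline x([-c\,\delta(s)])$, so applying $P$ gives the claim.  The case $\mathcal E_s=\partial_{(0,s)}$ is the special case $\delta(s)=s$.

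For \ref{item:enhanced-unit}, take $\delta=\partial_1$, so that $\partial_1\gamma_W=1_X$ and $\partial_1\langle1,b\rangle_L=0$.  Then $\partial_1\psi=\nun(1_X,0)=(\q_{\emptyset,1}^{\gamma_W}(1_X),-\q_{-1,1}^{\gamma_W,b}(1_X))$.
\begin{itemize}
\item By \ref{ax:qempty-unit}, the first component is $\q_{\emptyset,2}^{A_0}(1_X,\gamma_W)=\gamma_W$.
\item In the second component, \ref{ax:qminus-fundamental-class} kills the scalar terms.  By \ref{ax:qkl-fundamental-class}, only $\langle\q_{0,1}^{\beta_0}(;1_X),b\rangle_L=\langle-1_L,b\rangle_L=s$ survives, with $k=0$.
\end{itemize}
So $\partial_1\psi=(\gamma_W,-s)$.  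Writing this as $a([\gamma_W])+\overline x([s])$ requires $\gamma_W\in\ker\mathfrak i$.  This holds because $w_1=1_X$ is paired with a coordinate $t_1$, and $\mathfrak i(t_1 1_X)$ must be handled.  Precisely, $\gamma_W$ is only a cocycle of the cone once the $\xi$-term is included, so I would track the decomposition carefully.  Applying $P$ then gives $P_Q(\Gamma_W)+s$, with the prefactor $T^{\beta_0}=1$.  I expect this bookkeeping, namely whether $(\gamma_W,-s)$ is closed and how it splits, to be the main sign obstacle.

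For \ref{item:enhanced-divisor}, apply $P$ to \cref{lm:relative-potential-divisor}, using that $E_A$ commutes with $P$.

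For \ref{item:enhanced-zero}, set $T=T^{\beta_0}$ and compute $[T^{\beta_0}]\psi$ directly from the energy axiom \ref{ax:qkl-energy} and \ref{ax:qempty-energy}; \ref{ax:qminus-energy} removes the $\q_{-1,l}$ terms.
\begin{itemize}
\item The closed part is $\tfrac12\gamma_W\wedge\gamma_W$.
\item The open part involves $\langle\q_{k,l}^{\beta_0}(b^{\otimes k};\gamma_W^{\otimes l}),b\rangle_L$ with only $(k,l)\in\{(1,0),(2,0),(0,1)\}$, evaluated on $b_0=[T^{\beta_0}]b$.  This uses the requirement $[T^{\beta_0}]b|_{t=0}\in C_L\otimes s\kk\langle s\rangle$ together with the Maurer--Cartan equation in energy zero.
\end{itemize}
The $(0,1)$ term gives $-\langle i^*\gamma_W,b\rangle$.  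Since $i^*w_i=0$ for $i>1$, this reduces to $t_1\langle1,b\rangle$, which produces the $t_1s$ term.  The $(1,0)$ and $(2,0)$ terms should combine to a quotient class killed modulo $Q_L$/commutators or absorbed, in the spirit of \cite[Lemma~4.13]{solomon2019relative}.  Finally, split the result as $a(\tfrac12\Gamma_W\smile\Gamma_W)+\overline x(\cdot)$ and apply $P$.
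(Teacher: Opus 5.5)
Parts (a)--(d) follow the paper's argument, apart from two slips. In (a), $\q_{\emptyset,0}^{\gamma_W}$ has degree $4$, not $2$. That value is what matches the degree $3-n$ of $\q_{-1,0}^{\gamma_W,b}$ after the shift $[-n-1]$; with your value, $\psi$ would not be homogeneous. The appendix lemma you cite concerns the degree of the map $\nun^{\gamma,b}$, not of $\psi$. In (c), the step you flag as the main obstacle is not one. $\mathfrak i$ vanishes on forms of degree $\neq n$ (in particular on $1_X$, since $n>1$) and on $\ker i^*$, so $(\gamma_W,0)$ is a cone cocycle. Every $(0,\xi)$ is a cone cocycle as well. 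Hence $(\gamma_W,-s)$ represents $a(\Gamma_W)+\overline x(s)$ immediately.

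The genuine gap is in (e). After the closed part and the $(0,1)$ term, you are left with
\[
X:=\tfrac12\langle\q_{1,0}^{\beta_0}(b_0),b_0\rangle_L+\tfrac13\langle\q_{2,0}^{\beta_0}(b_0,b_0),b_0\rangle_L,\qquad b_0=[T^{\beta_0}]b,
\]
and the theorem requires $X=0$ in $\Coker\mathfrak i$. Your suggested mechanism does not deliver this. Passing to the quotient by $Q_L$ only affects $s$-degree zero. Graded commutators kill only even positive powers of $s$, and only when $n$ is even; when $n$ is odd they kill nothing. Classes of odd $s$-degree survive in $HH_0(R_W)$. Nor is there a pointwise reason for $X$ to vanish. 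Here $b_0$ is an arbitrary $t$-dependent solution of the energy-zero Maurer--Cartan equation $\q_{1,0}^{\beta_0}(b_0)+\q_{2,0}^{\beta_0}(b_0,b_0)=(c_{\beta_0}+t_1)1_L$, with $c_{\beta_0}=[T^{\beta_0}]c$, and you never determine $c_{\beta_0}$.

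The missing idea is variational. By \cref{ax:qkl-cyclic-symmetry} and this equation, for $\delta=\partial_{w_i}$ or $\delta=\mathcal E_s$ one gets $\delta X=(c_{\beta_0}+t_1)\,\delta\langle1,b_0\rangle_L$, where $\langle1,b_0\rangle_L=-s$. Hence $X$ is independent of $t$. Moreover $\mathcal E_s(X|_{t=0})=0$, because $[T^{\beta_0}]c|_{t=0}=0$ for $c\in\mI_{R_W}$. By \cref{lm:s-degree-one-detector}, $X|_{t=0}$ therefore has $s$-degree zero. But the hypothesis $[T^{\beta_0}]b|_{t=0}\in C_L\otimes s\kk\langle s\rangle$ forces $s$-degree at least $2$, so $X|_{t=0}=0$ and hence $X=0$.

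The paper runs the same argument at the level of $\overline\Omega$, without isolating $X$:
\begin{itemize}
\item For $A\in W_{\mathrm{rel}}$, it computes $[T^{\beta_0}]\partial_A\overline\Omega=P_Q(A\smile\Gamma_W)$ from \cref{lm:relative-potential-derivative} and the energy-zero axioms.
\item For $A=1_X$, it uses the unit equation.
\item It concludes that $[T^{\beta_0}]\overline\Omega-t_1s-\tfrac12P_Q(\Gamma_W\smile\Gamma_W)$ is independent of $t$.
\item It fixes the constant at $t=0$ using the $s$-equation together with $[T^{\beta_0}]b|_{t=s=0}=0$.
\end{itemize}
Without some version of this argument, your proposal names the hypothesis on $[T^{\beta_0}]b|_{t=0}$ and the energy-zero Maurer--Cartan equation but never actually uses them.
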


\begin{proof}
We first prove \ref{item:enhanced-degree}.  By
\cref{dfn:bg-deformation-qminusone}, the terms that define
\(\q_{-1,0}^{\gamma_W,b}\) are of the form
\[
 \left\langle
 \q_{k,m}(b^{\otimes k};\gamma_W^{\otimes m}),b
 \right\rangle_L
 \qquad\text{or}\qquad
 \q_{-1,m}(\gamma_W^{\otimes m}).
\]
The degree conventions in
\cref{dfn:structure-maps,dfn:novikov-summed-structure-maps} give
\[
    |\q_{k,m}|=2-k-2m,
    \qquad
    |\q_{-1,m}|=3-n-2m.
\]
By \cref{dfn:bounding-pair},
\(|b|=1\) and \(|\gamma_W|=2\), while the scalar-extended pairing has
degree \(-n\) by \cref{dfn:scalar-extension-interval-pairing}.  Thus the
two types of terms have degree
\[
    (2-k-2m)+k+2m+1-n=3-n,
    \qquad
    (3-n-2m)+2m=3-n,
\]
respectively.  Similarly, the degree convention in
\cref{dfn:structure-maps,dfn:novikov-summed-structure-maps} and the closed
deformation formula \cref{dfn:bg-deformation-qempty} give
\(|\q_{\emptyset,0}^{\gamma_W}|=4\).  Therefore
\[
 \psi=(\q_{\emptyset,0}^{\gamma_W},
       -\q_{-1,0}^{\gamma_W,b})
\]
has degree \(4\) in \(\Cone{\mathfrak i}\), whose second summand is shifted
by \(-n-1\).  By
\cref{lm:enhanced-superpotential-choice-from-base}, the splitting \(P\) has
degree zero
when the shifts in the splitting sequence are restored.  Hence
\(\overline\Omega=P[\psi]\) has degree \(3-n\) in the unshifted
coefficient grading.

Let \(\delta\) be a homogeneous derivation vanishing on \(R_W^0\).
It acts trivially on
\(\gamma_W\), and
\(\delta\langle1,b\rangle_L=-\delta s\) by
\eqref{eq:point-like-bounding-pair}.  Hence
\cref{lm:relative-potential-derivative} gives
\[
    \delta\psi
    =\nun^{\gamma_W,b}(0,-\delta s)
    =(0,c\cdot\delta s).
\]
The last equality is the definition of the tensor potential
\eqref{eq:tensor-potential-definition}, where \(c\in\mathcal Z(R_W)\)
satisfies the Maurer--Cartan equation in \cref{dfn:bounding-pair},
\(\q_{0,0}^{\gamma_W,b}=c\cdot1_L\).
With \(\overline x(r)=[(0,-r)]\), this says
\[
    \delta\Psi
    =\overline x\!\left([-c\cdot\delta(s)]\right).
\]
By
\cref{lm:splitting-commutes-derivations,lm:enhanced-superpotential-choice-from-base}
and the identity \(P\overline x=\Id\),
\[
    \delta\overline\Omega
    =P(\delta\Psi)=[-c\cdot\delta(s)].
\]
Taking \(\delta=\mathcal E_s\) proves \ref{item:enhanced-s-equation}.

For the unit equation, \cref{lm:relative-potential-derivative} gives
\[
    \partial_1\psi
    =\left(
       \q_{\emptyset,1}^{\gamma_W}(1_X),
       -\q_{-1,1}^{\gamma_W,b}(1_X)
     \right),
\]
where \(\partial_1\gamma_W=1_X\) and
\(\partial_1\langle1,b\rangle_L=0\) by
the separatedness assumption on \((\gamma_W,b)\).  Using the deformation formulas
\cref{dfn:bg-deformation-qempty,dfn:bg-deformation-qminusone}, the closed
unit \cref{ax:qempty-unit}, the fundamental-class
\cref{ax:qkl-fundamental-class,ax:qminus-fundamental-class}, and
\eqref{eq:point-like-bounding-pair}, we obtain
\[
 \q_{\emptyset,1}^{\gamma_W}(1_X)=T^{\beta_0}\gamma_W,
 \qquad
 \q_{-1,1}^{\gamma_W,b}(1_X)
 =T^{\beta_0}\langle-1_L,b\rangle_L
 =T^{\beta_0}s.
\]
Consequently,
\[
    \partial_1\Psi
    =T^{\beta_0}\bigl(a(\Gamma_W)+\overline x(s)\bigr).
\]
Applying \(P\) gives
\[
    \partial_1\overline\Omega
    =T^{\beta_0}\bigl(P_Q(\Gamma_W)+s\bigr),
\]
which proves \ref{item:enhanced-unit}.

Applying \(P\) to the identity in \cref{lm:relative-potential-divisor} gives
\[
 (\partial_A-E_A)\overline\Omega
 =P_Q(A\smile\Gamma_W).
\]
This proves \ref{item:enhanced-divisor}.

It remains to prove the zero-energy equation.  If
\(A\in W_{\mathrm{rel}}\), then \(i^*A=0\).  The closed formula
\cref{dfn:bg-deformation-qempty} and the closed zero-energy
\cref{ax:qempty-energy} give the first equality below.  The scalar
 formula \cref{dfn:bg-deformation-qminusone}, together with 
\cref{ax:qkl-energy,ax:qminus-energy}, gives the second:
\[
 [T^{\beta_0}]\q_{\emptyset,1}^{\gamma_W}(A)
 =A\smile\Gamma_W,
 \qquad
 [T^{\beta_0}]\q_{-1,1}^{\gamma_W,b}(A)
 =\left\langle-i^*A,[T^{\beta_0}]b\right\rangle_L=0.
\]
Thus
\cref{lm:relative-potential-derivative} give
\[
    [T^{\beta_0}]\partial_A\psi
    =
    (A\smile\Gamma_W,0).
\]
Taking cohomology classes in the preceding equality and applying \(P\),
we obtain
\[
    [T^{\beta_0}]\partial_A\overline\Omega
    =
    P\bigl([(A\smile\Gamma_W,0)]\bigr)
    =
    P_Q(A\smile\Gamma_W).
\]
The last equality follows from
\(P\circ a=P_Q\) in \cref{lm:enhanced-superpotential-choice-from-base}.
For \(A=1_X\), the unit equation gives
\[
    [T^{\beta_0}]\partial_1\overline\Omega
    =
    P_Q(\Gamma_W)+s.
\]
Since \(P_Q=P_K\otimes\Id_{Q_W}\) and \(\Gamma_W\) has even degree,
\[
 \partial_A\left(\frac12P_Q(\Gamma_W\smile\Gamma_W)\right)
 =\frac12P_Q(A\smile\Gamma_W+\Gamma_W\smile A)
 =P_Q(A\smile\Gamma_W)
\]
for every homogeneous \(A\in W\).  Together with
\(\partial_{w_i}(t_1s)=\delta_{i1}s\), the preceding identities give
\[
 \partial_{w_i}\left(
 [T^{\beta_0}]\overline\Omega-t_1s
 -\frac12P_Q(\Gamma_W\smile\Gamma_W)
 \right)=0,
 \qquad i=1,\ldots,N.
\]
Thus the expression is independent of \(t_1,\ldots,t_N\).
It remains to prove that its value at \(t=0\) is zero.

By \cref{dfn:bounding-pair,dfn:filtered-coefficient-algebras},
\[
 c\in \mI_{R_W}=\Lambda_+R_W+\mathfrak m_WR_W.
\]
The first summand has positive Novikov energy, and every term in the second
contains a bulk variable.  It follows directly that
\[
 \left.[T^{\beta_0}]c\right|_{t=0}=0.
\]
Taking the coefficient of \(T^{\beta_0}\) in the \(s\)-equation and then
setting \(t=0\) gives
\[
 \mathcal E_s
 \left(
  \left.[T^{\beta_0}]\overline\Omega\right|_{t=0}
 \right)=0.
\]
By \cref{lm:s-degree-one-detector}, the kernel of \(\mathcal E_s\) on
\(\Coker\mathfrak i\) is its \(s\)-degree-zero component.  Hence the last
display shows that
\(\left.[T^{\beta_0}]\overline\Omega\right|_{t=0}\) has \(s\)-degree zero.

Finally, the hypothesis
\[
 \left.[T^{\beta_0}]b\right|_{t=0}
 \in C_L\otimes_\kk s\kk\langle s\rangle
\]
implies
\(\left.[T^{\beta_0}]b\right|_{t=s=0}=0\), and
\(\gamma_W|_{t=0}=0\).  The closed zero-energy
\cref{ax:qempty-energy} gives
\[
 \left.[T^{\beta_0}]
 \q_{\emptyset,0}^{\gamma_W}\right|_{t=s=0}=0.
\]
The scalar zero-energy \cref{ax:qminus-energy} gives
\(\q_{-1,l}^{\beta_0}=0\).  By
\cref{dfn:bg-deformation-qminusone}, every remaining term in
\(\q_{-1,0}^{\gamma_W,b}\) is paired with \(b\).  The
open energy \cref{ax:qkl-energy}
implies that a
term of total class \(\beta_0\) must use the \(T^{\beta_0}\)-coefficient of
that final copy.  This coefficient vanishes at \(t=s=0\), so
\[
 \left.[T^{\beta_0}]\psi\right|_{t=s=0}=0,
 \qquad
 \left.[T^{\beta_0}]\overline\Omega\right|_{t=s=0}=0.
\]
It follows that
\(\left.[T^{\beta_0}]\overline\Omega\right|_{t=0}=0\).  Since both terms
on the right-hand side of \ref{item:enhanced-zero} also vanish at \(t=0\),
the equation follows.
\end{proof}

Let
\[
 \varepsilon_{1_X}\colon
 W=\kk\cdot1_X\oplus W_{\mathrm{rel}}\longrightarrow\kk
\]
be the projection onto the coefficient of \(1_X\).

\begin{notn}[Enhanced open coefficients]
\label{notn:enhanced-ogw-coefficients}
For \(\beta\in\Pi^{\geq0}\), \(k\geq0\), and homogeneous
\(A_1,\ldots,A_l\in W\), use the coefficient definition
\eqref{eq:intro-open-invariants}.  Coefficient extraction uses the
\(s\)-degree projections of \cref{lm:s-degree-decomposition}; coefficients
killed by the quotient are assigned zero.  For example, if
\(|s|\) is odd, positive even powers vanish in \(HH_0(R_W)\), although no
relation \(s^2=0\) is imposed in \(R_W\).
\end{notn}

\begin{prop}[Enhanced axioms]
\label[prop]{prop:enhanced-ogw-axioms}
\label[prop]{prop:noncommutative-ogw-axioms}
The coefficients of the enhanced superpotential satisfy:
\begin{enumerate}[label=\textup{(\alph*)}]
\item \textup{(Degree)}
\[
    \ogw_{\beta,k}(A_1,\ldots,A_l)=0
\]
unless
\[
    n-3+\mu_L(\beta)+k+2l
    =
    kn+\sum_{j=1}^l|A_j|.
\]

\item \textup{(Unit / fundamental class)}
\[
\ogw_{\beta,k}(1_X,A_1,\ldots,A_{l-1})
=
\begin{cases}
1,&(\beta,k,l)=(\beta_0,1,1),\\
P_K(A_1),&(\beta,k,l)=(\beta_0,0,2),\\
0,&\text{otherwise.}
\end{cases}
\]

\item \textup{(Zero)}
\[
\ogw_{\beta_0,k}(A_1,\ldots,A_l)
=
\begin{cases}
\varepsilon_{1_X}(A_1),&(k,l)=(1,1),\\
(-1)^{|A_1||A_2|}P_K(A_1\smile A_2),&(k,l)=(0,2),\\
0,&\text{otherwise.}
\end{cases}
\]

\item \textup{(Divisor)}
If \(|A_l|=2\), then
\[
\ogw_{\beta,k}(A_1,\ldots,A_l)
=
\begin{cases}
P_K(A_1\smile A_2),
&(\beta,k,l)=(\beta_0,0,2),\\[2pt]
\displaystyle
\left(\int_\beta A_l\right)
\ogw_{\beta,k}(A_1,\ldots,A_{l-1}),
&\text{otherwise.}
\end{cases}
\]
\end{enumerate}
If \(\Coker\mathfrak i_Q=0\), then
\(P_K=0\), so the second exceptional terms in the unit
and zero axioms vanish.
\end{prop}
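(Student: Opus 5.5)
Each item of the proposition will be obtained by extracting coefficients from the corresponding equation of \cref{thm:enhanced-superpotential-axioms}, using the definition \eqref{eq:intro-open-invariants}: $\ogw_{\beta,k}(A_1,\ldots,A_l)=k!\,[T^\beta s^k]\,\partial_{A_l}\cdots\partial_{A_1}\overline\Omega|_{t=0}$. Coefficient extraction in $s$-degree will use the $s$-degree projections of \cref{lm:s-degree-decomposition}. These projections commute with the derivations $\partial_{A}$, $E_A$ and with $[T^\beta]$, since these preserve $s$-degree and Novikov class. Because the $\partial_{w_i}$ for even classes commute, and signs for odd classes are governed by graded symmetry, the order of differentiation only affects Koszul signs. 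These signs are where the factor $(-1)^{|A_1||A_2|}$ in the zero axiom arises.

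For \textup{(Degree)}, I will use homogeneity \ref{item:enhanced-degree}. The monomial $T^\beta s^k t_{i_1}\cdots t_{i_l}$ contributing to $\ogw_{\beta,k}(A_1,\ldots,A_l)$ has degree $\mu_L(\beta)+k(1-n)+\sum_j(2-|A_j|)$. Setting this equal to $3-n$ and rearranging gives exactly the stated constraint. For \textup{(Unit)}, I will apply $\partial_{A_{l-1}}\cdots\partial_{A_1}$ to \ref{item:enhanced-unit}. The right-hand side $T^{\beta_0}(P_Q(\Gamma_W)+s)$ has only class $\beta_0$, and it is linear in $t$ and $s$. The $s$-term survives only for $(k,l)=(1,1)$, giving $1!\cdot 1=1$. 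The $P_Q(\Gamma_W)=\sum_i t_iP_K(w_i)$ term survives after one further derivative only for $(k,l)=(0,2)$, giving $P_K(A_1)$. Everything else vanishes after setting $t=0$. Before this, I will reorder so that $\partial_1$ acts first; since $1_X$ is even, no sign appears.

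For \textup{(Zero)}, I will differentiate \ref{item:enhanced-zero}: $[T^{\beta_0}]\overline\Omega=t_1s+\tfrac12P_Q(\Gamma_W\smile\Gamma_W)$. For $k=1$, only $t_1s$ contributes, and it yields $\varepsilon_{1_X}(A_1)$ when $l=1$. For $k=0$, one has $\partial_{A_2}\partial_{A_1}\tfrac12P_Q(\Gamma_W\smile\Gamma_W)=\partial_{A_2}P_Q(A_1\smile\Gamma_W)$, computed as in the proof of \ref{item:enhanced-zero}. Moving $\partial_{A_2}$ past $A_1$ produces the Koszul sign $(-1)^{|A_1||A_2|}$, which gives $(-1)^{|A_1||A_2|}P_K(A_1\smile A_2)$. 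All other $(k,l)$ vanish, because the expression is of $t$-degree at most $2$ and $s$-degree at most $1$.

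For \textup{(Divisor)}, I will use \ref{item:enhanced-divisor} with $A=A_l\in W^2$. First I will commute $\partial_{A_l}$ to the front; it is even, so no sign appears. I will also use that $E_{A_l}$ commutes with each $\partial_{A_j}$, since $E_A$ kills the $t_i$ and the $\partial_{A_j}$ kill Novikov weights. This gives
\[
\partial_{A_{l-1}}\cdots\partial_{A_1}(\partial_{A_l}-E_{A_l})\overline\Omega=\partial_{A_{l-1}}\cdots\partial_{A_1}P_Q(A_l\smile\Gamma_W).
\]
Taking $[T^\beta s^k]$ and setting $t=0$, the $E_{A_l}$-term gives $(\int_\beta A_l)\ogw_{\beta,k}(A_1,\ldots,A_{l-1})$. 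The right side is of class $\beta_0$, $s$-degree $0$ and $t$-linear, so it contributes only when $(\beta,k,l)=(\beta_0,0,2)$. In that case it gives $P_K(A_l\smile A_1)$, and this equals $P_K(A_1\smile A_2)$ by graded commutativity of the cup product with $|A_2|=2$ even. In the exceptional case, $\int_{\beta_0}A_l=0$, so the Novikov term drops out.

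The final remark follows because $P_K$ takes values in a subquotient of $\Coker\mathfrak i_K$, which vanishes when $\Coker\mathfrak i_Q=\Coker\mathfrak i_K\otimes Q_W$ does. I expect the main obstacle to be the sign bookkeeping in the zero axiom and the precise handling of the $s$-degree projections in $HH_0(R_W)$. I will deal with the latter by appealing to \cref{lm:s-degree-decomposition} and the convention in \cref{notn:enhanced-ogw-coefficients} that killed coefficients are assigned zero.
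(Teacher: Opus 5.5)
Your proposal is correct and follows essentially the same route as the paper. Each axiom is read off by coefficient extraction from the corresponding item (homogeneity, unit, zero-energy, divisor) of \cref{thm:enhanced-superpotential-axioms}, and the final remark uses \(\Coker\mathfrak i_Q=(\Coker\mathfrak i_K)\otimes Q_W\); your additional checks, that \(E_{A_l}\) commutes with the \(\partial_{A_j}\) and that \(\int_{\beta_0}A_l=0\) in the exceptional divisor case, only make explicit steps the paper leaves implicit.
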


\begin{proof}
By \cref{thm:enhanced-superpotential-axioms},
\(|\overline\Omega|=3-n\).  Since \(|\partial_A|=|A|-2\),
comparing degrees in \eqref{eq:intro-open-invariants} proves the degree axiom.
For the unit axiom, differentiate
\(T^{\beta_0}(P_Q(\Gamma_W)+s)\) in
\(A_1,\ldots,A_{l-1}\) and set \(t=0\).
Only \(s\) for \(l=1\) and \(P_K(A_1)\) for \(l=2\) remain.
For the zero axiom, \(t_1s\) gives
\(\varepsilon_{1_X}(A_1)\) when \((k,l)=(1,1)\), while
\[
\partial_{A_2}\partial_{A_1}\tfrac12P_Q(\Gamma_W\smile\Gamma_W)
=(-1)^{|A_1||A_2|}P_K(A_1\smile A_2)
\]
gives the case \((k,l)=(0,2)\).
All other coefficients vanish.

For the divisor axiom, differentiate the divisor equation in
\(A_1,\ldots,A_{l-1}\), set \(t=0\), and compare coefficients.
Here \(E_{A_l}(T^\beta)=(\int_\beta A_l)T^\beta\), while
\(P_Q(A_l\smile\Gamma_W)\) gives
\(P_K(A_1\smile A_2)\) only when
\((\beta,k,l)=(\beta_0,0,2)\).
Finally, \(\Coker\mathfrak i_Q=(\Coker\mathfrak i_K)\otimes Q_W\),
so its vanishing implies \(P_K=0\).
\end{proof}

\subsection{Wall-crossing formula}
\label{subsec:wall-crossing-formula}

We now state the wall-crossing formula of
\cite[Theorem~6]{solomon2019relative} with our conventions.  Assume
that \(\mathfrak i_K\) is zero on cohomology, and choose
\(\theta_\diamond\in C_X^n\) such that
\[
    \mathfrak i_K(\theta_\diamond)=1_{K_L}.
\]
In the scalar de~Rham case, such a \(\theta_\diamond\) always exists,
and \(\mathfrak i_\R\) is zero on cohomology if and only if \([L]=0\).
Under these assumptions,
\(\Coker\mathfrak i_K=K_L\) and
\(\Coker\mathfrak i=HH_0(R_W)\).  Define
\[
    \Gamma_\diamond
    :=
    [d_X\theta_\diamond]
    =\overline y\bigl([1_{K_L}]\bigr)
    \in H^{n+1}(\ker\mathfrak i_K).
\]
Suppose that \(\Gamma_\diamond\in W\), and let \(t_\diamond\) be its
coordinate.  Choose a cocycle representative
\(\zeta_\diamond\in\ker i^*\) as in \cref{subsec:enhanced-axioms},
and use it in \(\gamma_W\), so that
\[
    \partial_{\Gamma_\diamond}\gamma_W=\zeta_\diamond,
    \qquad
    \partial_{\Gamma_\diamond}s=0.
\]
Thus
\[
    |\Gamma_\diamond|=n+1,
    \qquad
    |t_\diamond|=1-n=|s|.
\]
Since \(W\) has only even-degree classes, \(\Gamma_\diamond\in W\)
requires \(n=\dim L\) to be odd.  The following theorem therefore
assumes that \(n\) is odd.

\begin{thm}[Wall-crossing]
\label{thm:wall-crossing}
Suppose that \(\mathfrak i_K\) vanishes on cohomology, that \(W\) is
concentrated in even parity, that
\(1_{K_L}\in\operatorname{Im}(\mathfrak i_K\colon C_X^n\to K_L)\), that
\(n\) is odd, that \(b\) is point-like, and that \(\Gamma_\diamond\in W\).  Then,
for every \(\beta\in\Pi^{\geq0}\), \(k,l\geq0\), and homogeneous
\(A_1,\ldots,A_l\in W\), the enhanced open invariants satisfy
\[
    \ogw_{\beta,k+1}(A_1,\ldots,A_l)
    =
    \ogw_{\beta,k}
    (\Gamma_\diamond,A_1,\ldots,A_l).
\]
\end{thm}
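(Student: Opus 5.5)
The identity should be a generating-function statement: it amounts to
\[
\partial_{\Gamma_\diamond}\overline\Omega=\partial_s\overline\Omega
\qquad\text{in }\Coker\mathfrak i=HH_0(R_W),
\]
after which extracting the coefficient of $T^\beta s^k$ and differentiating in $A_1,\ldots,A_l$ gives the theorem. Since $n$ is odd, $s$ is even and central, $\mathcal E_s=s\partial_s$, and the factor $k!$ versus $(k+1)!$ matches: $\partial_s s^{k+1}=(k+1)s^k$. I will therefore compute both derivatives at the cochain level with \cref{lm:relative-potential-derivative} and compare them in $H^*(\Cone{\mathfrak i})$.

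\textbf{Step 1 (the $s$-derivative).} $\partial_s$ vanishes on $R_W^0$, so part \ref{item:enhanced-s-equation} of \cref{thm:enhanced-superpotential-axioms} gives $\partial_s\overline\Omega=[-c]$. At the cochain level, $\partial_s\psi=\nun^{\gamma_W,b}(0,-1)=(0,c)$.

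\textbf{Step 2 (the $\Gamma_\diamond$-derivative).} We have $\partial_{\Gamma_\diamond}\gamma_W=\zeta_\diamond$, and $\partial_{\Gamma_\diamond}\langle1,b\rangle_L=\partial_{\Gamma_\diamond}(-s)=0$ by the point-like normalization. Hence $\partial_{\Gamma_\diamond}\psi=\nun^{\gamma_W,b}(\zeta_\diamond,0)$. Since $\zeta_\diamond$ and $d_X\theta_\diamond$ represent the same class in $H^{n+1}(\ker\mathfrak i_K)$, write $\zeta_\diamond=d_X\theta_\diamond+d_X\lambda$ with $\lambda\in\ker\mathfrak i_K$. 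In the cone,
\[
d_{\Cone}(\theta_\diamond+\lambda,0)=(\zeta_\diamond,\mathfrak i(\theta_\diamond))=(\zeta_\diamond,\pm1),
\]
with the sign fixed by the degree-one convention for $\mathfrak i$. So $(\zeta_\diamond,0)$ is cohomologous to $(0,\mp1)$. Because $\nun^{\gamma_W,b}$ is a cochain map (\cref{lm:nun-cochain-map}), we get
\[
[\nun^{\gamma_W,b}(\zeta_\diamond,0)]=[\nun^{\gamma_W,b}(0,\mp1)]=[(0,\pm c)].
\]
Applying $P$ with $P\overline x=\Id$ (\cref{lm:enhanced-superpotential-choice-from-base}) and using \cref{lm:splitting-commutes-derivations} gives $\partial_{\Gamma_\diamond}\overline\Omega=[\mp c]$, which is Step 1 up to this sign.

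\textbf{Main obstacle.} The hard part is the sign bookkeeping that makes the two derivatives agree exactly. One has to check that $\mathfrak i(\theta_\diamond)=+1$ under the convention $\mathfrak i_Q(\alpha\otimes q)=(-1)^{(|\alpha|+1)|q|}q\langle1,i^*\alpha\rangle_L$ with $|\theta_\diamond|=n$ and $q=1$, and that the cone differential is $(d\eta,\mathfrak i\eta)$ with no extra sign. That yields $(\zeta_\diamond,0)\sim(0,-1)$ and hence $[(0,c)]$, matching Step 1. If a stray sign $(-1)^n$ remains, it is harmless because $n$ is odd, and \cref{rem:conventions-st} should absorb it.

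\textbf{Coefficient extraction.} It remains to check two points. First, $\partial_{A_j}$ commutes with $\partial_s$ and $\partial_{\Gamma_\diamond}$: these are commuting derivations, since the coordinates are even in the relevant parity and each derivation sends generators to constants. Second, coefficient extraction in $s$ is compatible with $\partial_s$ on $HH_0(R_W)$. For the latter I use the $s$-degree decomposition of \cref{lm:s-degree-decomposition}, noting that $R_W$ is commutative in $s$ for $n$ odd, so no coinvariant classes are killed.
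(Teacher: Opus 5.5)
Your proposal is correct and follows the paper's proof. Both compute $\partial_s\overline\Omega=[-c]$ from the $s$-equation, and both obtain $\partial_{\Gamma_\diamond}\overline\Omega=[-c]$ from $(d_X\theta_\diamond,0)=d_{\Cone}(\theta_\diamond,0)+(0,-1_{K_L})$, that is $a(\Gamma_\diamond)=\overline x([1_{K_L}])$, together with the relative-potential derivative lemma and $P\overline x=\Id$, before extracting coefficients. Your fallback remark about a stray $(-1)^n$ is backwards, since for odd $n$ such a sign would flip the result rather than be harmless. It is moot, because your explicit check that $\mathfrak i(\theta_\diamond)=+1$ shows that no such sign occurs.
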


\begin{proof}
Using the chosen lift \(\theta_\diamond\), in the cone one has
\[
    (d_X\theta_\diamond,0)
    =
    d_{\Cone}(\theta_\diamond,0)+(0,-1_{K_L}).
\]
It follows that the two natural images of the unit agree,
\[
    a(\Gamma_\diamond)
    =
    \overline x\bigl([1_{K_L}]\bigr)
    \qquad\text{in }H^*(\Cone{\mathfrak i}).
\]

The point-like equation
\(\langle1,b\rangle_L=-s\) gives
\(\partial_{\Gamma_\diamond}\langle1,b\rangle_L=0\).  Hence
\cref{lm:relative-potential-derivative}, and the definition of the tensor potential
give
\[
\begin{aligned}
    \partial_{\Gamma_\diamond}\Psi
    &=
    \nun^{\gamma_W,b}\bigl(a(\Gamma_\diamond)\bigr)\\
    &=
    \nun^{\gamma_W,b}
    \left(\overline x\bigl([1_{K_L}]\bigr)\right)
    =
    \overline x([-c]).
\end{aligned}
\]
The splitting \(P\) commutes with coefficient derivations and satisfies
\(P\overline x=\Id\), so
\[
    \partial_{\Gamma_\diamond}\overline\Omega=[-c].
\]
On the other hand, the \(s\)-equation in
\cref{thm:enhanced-superpotential-axioms}, applied to
\(\delta=\partial_s\), gives
\[
    \partial_s\overline\Omega=[-c]
    =\partial_{\Gamma_\diamond}\overline\Omega.
\]
Applying the derivations
\(\partial_{A_1},\ldots,\partial_{A_l}\), setting the bulk coordinates to zero,
and taking the coefficient of \(T^\beta s^k/k!\) gives
the result.
\end{proof}
\section{The action on \texorpdfstring{\(V\)}{V}, open WDVV,
and the boundary character}
\label{sec:noncommutative-relative-quantum}

For the remainder of the main text, assume that the bulk spaces \(U\) and
\(W\), the subspace \(U_L\), and the \(s\)-free coefficient algebras
\(Q_W\) and \(R_W^0\) are concentrated in even parity.  
\subsection{Setup}
\label{subsec:noncommutative-open-wdvv-setup}

Continue to use \(\rho\) from \eqref{eq:ker-i-exact-sequence} and the
chosen splitting \(P\), and hence \(P_K\), from
\cref{lm:enhanced-superpotential-choice-from-base}.

Use the closed quantum product and WDVV notation of
\cref{subsubsec:intro-closed-potential,subsubsec:intro-open-wdvv}, now for
the open--closed pair \((C_X,C_L)\) over \(\kk\).  Thus
\(U\subset H^*(C_X)\), \(\gamma_U\in C_X^{Q_U}\), and
\(\star_U\) is defined over the coefficient algebra
\(Q_U\) of \cref{dfn:filtered-coefficient-algebras}.

\begin{assum}[Closed Frobenius subalgebra]
\label[assum]{assum:closed-frobenius-sector}
Assume that \(1_X\in U\), that the restriction
\(\langle\,\cdot\,,\,\cdot\,\rangle_X|_U\) is nondegenerate, and that
\(U\otimes Q_U\) is closed under \(\star_U\).
\end{assum}

\begin{assum}[Bounding pair]
\label[assum]{assum:universal-bounding-pair}
Assume that the fixed data satisfy \(W=\rho^{-1}(U)\), that
\((\gamma_W,b)\) is a separated bounding pair over \(W\), as in
\cite[Assumption~(A.2)]{solomon2019relative}, and that
\(\gamma_W=\Gamma_W\).
\end{assum}

As in \cref{subsubsec:intro-open-wdvv}, put
\(U_L=U\cap\operatorname{Im}\rho\) and \(W'=\ker(P_K|_W)\).
The sequence \eqref{eq:ker-i-exact-sequence} before scalar extension,
together with
\(P_K\overline y=\Id\) and \(W=\rho^{-1}(U)\), gives
\[
 \rho(W)=U_L,
 \qquad
 W=W'\oplus\operatorname{Im}\overline y,
 \qquad
 \rho|_{W'}\colon W'\xrightarrow{\ \simeq\ }U_L,
\]
and
\[
 \operatorname{Im}\rho=\Ker\mathfrak i_K,
 \qquad
 U_L=U\cap\Ker\mathfrak i_K.
\]
With \(\mathcal A=U_L\otimes Q_W\) as in
\cref{subsubsec:intro-character}, put
\(V:=\pi^{-1}(U\otimes Q_W)\).

\begin{lm}
\label{lm:V-splitting}
\[
 \operatorname{Im}\overline x\oplus a(W'\otimes Q_W)=V.
\]
\end{lm}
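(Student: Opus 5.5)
The plan is to compare the two sides of the claimed equality using the cone exact sequence \eqref{eq:cone-i-exact-sequence} and the decomposition of \cref{lm:enhanced-superpotential-choice-from-base}. Recall \(V=\pi^{-1}(U\otimes Q_W)\), where \(\pi\colon H^*(\Cone{\mathfrak i})\to\Ker\mathfrak i\) and \(U\otimes Q_W\) is viewed inside \(H^*(C_X)\otimes Q_W\). The inclusion \(\supseteq\) is immediate. First, \(\operatorname{Im}\overline x=\ker\pi\subset V\). Second, \(\pi\circ a=\rho\) and \(\rho(W')=U_L\subset U\). Together these give \(\pi\bigl(a(W'\otimes Q_W)\bigr)\subset U_L\otimes Q_W\subset U\otimes Q_W\). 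The sum is direct, since \(a(W'\otimes Q_W)\subset a(\ker P_Q)\): indeed \(W'=\ker(P_K|_W)\) and \(P_Q=P_K\otimes\Id_{Q_W}\). The direct-sum decomposition of \cref{lm:enhanced-superpotential-choice-from-base} then applies.

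For \(\subseteq\), take \(x\in V\) and decompose it as \(x=\overline x(r)+a(y)\) with \(y\in\ker P_Q\). Then \(\pi(x)=\rho(y)\in(U\otimes Q_W)\cap\Ker\mathfrak i_Q\). The key step is to show \(y\in W'\otimes Q_W\). Here I use that \(\rho|_{\ker P_Q}\colon\ker P_Q\to\Ker\mathfrak i_Q\) is an isomorphism, as shown in the proof of that lemma. It therefore suffices to show that the isomorphism \(\rho|_{W'}\colon W'\to U_L\), after scalar extension, has image exactly \((U\otimes Q_W)\cap\Ker\mathfrak i_Q\). Then \(\rho(y)=\rho(y')\) for some \(y'\in W'\otimes Q_W\subset\ker P_Q\), and injectivity forces \(y=y'\).

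So the remaining point, which I expect to be the main (though mild) obstacle, is the identity
\[
(U\otimes Q_W)\cap(\Ker\mathfrak i_K\otimes Q_W)=(U\cap\Ker\mathfrak i_K)\otimes Q_W=U_L\otimes Q_W
\]
in the completed scalar extension. I would justify it over the field \(\kk\) by choosing a homogeneous basis of \(H^*(C_X)\) adapted to the subspaces \(U\), \(\Ker\mathfrak i_K\), and their intersection. Such a basis exists for two subspaces of a graded vector space, and \(U\) is finite dimensional. Coefficientwise comparison in \(Q_W\) then shows that an element lying in both extended subspaces has all its coefficients supported on the basis vectors of \(U_L\). This uses the identification \(\Ker\mathfrak i_Q=\Ker\mathfrak i_K\otimes Q_W\) and the fact, stated in the setup, that \(\operatorname{Im}\rho=\Ker\mathfrak i_K\) and \(U_L=U\cap\Ker\mathfrak i_K\). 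Finally, I would check that completion causes no issue, because all the relevant modules are finite free over \(Q_W\) or are completed tensor products with finite-dimensional spaces, so intersections commute with \(\otimes Q_W\).
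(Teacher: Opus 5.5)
Your proof is correct and follows the paper's argument. Both start from the decomposition $H^*(\Cone{\mathfrak i})=\operatorname{Im}\overline x\oplus a(\ker P_Q)$, use $\operatorname{Im}\overline x\subset V$, and identify $V\cap a(\ker P_Q)$ with $a(W'\otimes Q_W)$. The paper reaches the last step directly via $\rho^{-1}(U\otimes Q_W)\cap\ker P_Q=W'\otimes Q_W$, whereas you go through injectivity of $\rho$ on $\ker P_Q$ and spell out the base change of intersections that the paper leaves implicit.
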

\begin{proof}
This is the exact-sequence argument of
\cite[Lemma~5.11]{solomon2019relative}.  Since \(P\) splits the cone exact
sequence and \(\ker P=a(\ker P_Q)\),
\[
 H^*(\Cone{\mathfrak i})
 =\operatorname{Im}\overline x\oplus a(\ker P_Q).
\]
Moreover, \(\operatorname{Im}\overline x\subset V\), and
\[
 V\cap\ker P
 =a\!\left(\rho^{-1}(U\otimes Q_W)\cap\ker P_Q\right)
 =a(W'\otimes Q_W).
\]
Here we used \(\pi a=\rho\) and
\(P_Q=P_K\otimes\Id_{Q_W}\).  The result follows.
\end{proof}

The restriction of \(\pi\) to \(a(W'\otimes Q_W)\) is an isomorphism onto
\(\mathcal A\).  Hence
\[
 0\longrightarrow\Coker\mathfrak i
 \xrightarrow{\ \overline x\ }V
 \xrightarrow{\ \pi|_V\ }\mathcal A
 \longrightarrow0
\]
is split exact.  The chosen splitting \(P\) determines the section
\[
 \sigma
 :=a\circ(\rho|_{W'\otimes Q_W})^{-1}
 =(\pi|_{a(W'\otimes Q_W)})^{-1}.
\]
It satisfies
\[
 \pi|_V\circ\sigma=\Id_{\mathcal A},
 \qquad
 P\circ\sigma=0,
 \qquad
 V=\operatorname{Im}\overline x\oplus\sigma(\mathcal A).
\]

\begin{notn}[Coordinates on \(W\) and \(U\)]
\label[notn]{notn:closed-potential-coordinate-bases}
The coefficient maps \(\rho^*\) and \(\kappa^*\) are those of
\cref{subsubsec:intro-open-wdvv}; here
\(\ker(\rho|_W)=\operatorname{Im}\overline y\).
The latter map extends to \(R_W\) and \(\Coker\mathfrak i\).
Since \(U_L=U\cap\Ker\mathfrak i_K\), one has
\(\mathcal A=U\otimes Q_W\) when \(\mathfrak i_K\) vanishes on cohomology.

Set \(\gamma_{W'}=\kappa^*\gamma_W\).  Since
\(\rho|_{W'}\colon W'\to U_L\) is an isomorphism, choose a closed
bulk form \(\gamma_U\) extending \(\gamma_{W'}\), so that
\begin{equation}\label{eq:wdvv-compatible-W-representatives}
    \rho^*\gamma_U
    =
    \gamma_{W'}
    \qquad\text{in }C_X^{Q_W}.
\end{equation}

Use the compatible bases \(\{\Gamma_m\}_{m\in I_L}\) and
\(\{\Delta_l\}_{l\in I_U}\) fixed in \cref{subsubsec:intro-open-wdvv}.
For \(m\in I_L\), the coordinate \(t_m\) dual to \(\Gamma_m\) vanishes
on \(\operatorname{Im}\overline y\) and is the pullback under \(\rho^*\)
of the coordinate dual to \(\Delta_m\).  In the abstract pairing notation,
\[
    g_{lm}:=\langle\Delta_l,\Delta_m\rangle_X,
    \qquad
    (g^{lm}):=(g_{lm})^{-1}
    \qquad(l,m\in I_U);
\]
the inverse exists by \cref{assum:closed-frobenius-sector}.
The section \(\sigma\) satisfies
\(\sigma(\Delta_m)=a(\Gamma_m)\) for \(m\in I_L\).

Write \(\partial_l=\partial_{\Delta_l}\) on \(Q_U\) for \(l\in I_U\)
and, when \(l\in I_L\), also \(\partial_l=\partial_{\Gamma_l}\) on
\(Q_W\).

As in the Introduction, closed operations in the relative theory are
pulled back along \(\rho^*\), and \(\star=\rho^*(\star_U)\).  We leave
\(\rho^*\) visible on other closed tensors when their source matters.
Because \(U\) is a \(\kk\)-vector space, base change is free:
\begin{equation}\label{eq:closed-base-change}
    (U\otimes Q_U)\otimes_{Q_U,\rho^*}Q_W\simeq U\otimes Q_W ,
\end{equation}
so the pulled-back Frobenius algebra is \((U\otimes Q_W,\star)\).
\end{notn}

Via \(\rho|_{W'}\), identify \(\mathcal A\) with \(W'\otimes Q_W\) and
write \(\partial_u\) for the corresponding derivation.  For
\(u\in U_L\), one has \(\partial_u\rho^*=\rho^*\partial_u\).

\begin{lm}
\label{lm:closed-operations-subalgebra-U}
For closed \(\eta_1,\ldots,\eta_l\in C_X^{Q_W}\),
\[
    \left[
    \rho^*\q_{\emptyset,l}^{\gamma_U}
    (\eta_1,\ldots,\eta_l)
    \right]
    =
    \left[
    \q_{\emptyset,l}^{\gamma_W}
    (\eta_1,\ldots,\eta_l)
    \right]
    \in H^*(C_X^{Q_W}).
\]
\end{lm}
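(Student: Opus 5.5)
The plan is to reduce the statement to an exactness statement for the difference of the two bulk deformations. There are three steps.

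First I will move \(\rho^*\) inside the operations. The map \(\rho^*\colon Q_U\to Q_W\) is a continuous coefficient homomorphism. The scalar-extended closed operations of \cref{dfn:scalar-extension-structure-maps} are defined coefficientwise by rule \textup{(E3)}, so they commute with any such homomorphism applied to coefficients. Since \(\gamma_U\) has even degree, no signs intervene, and
\[
 \rho^*\q_{\emptyset,l}^{\gamma_U}(\eta_1,\ldots,\eta_l)
 =\q_{\emptyset,l}^{\rho^*\gamma_U}(\eta_1,\ldots,\eta_l)
 =\q_{\emptyset,l}^{\gamma_{W'}}(\eta_1,\ldots,\eta_l)
\]
by \eqref{eq:wdvv-compatible-W-representatives}. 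It therefore suffices to show that \(\q_{\emptyset,l}^{\gamma_{W'}}(\eta)\) and \(\q_{\emptyset,l}^{\gamma_W}(\eta)\) are cohomologous in \(C_X^{Q_W}\).

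Second, I will show that \(\gamma_W-\gamma_{W'}\) is exact in \(C_X^{Q_W}\), not merely in \(\ker\mathfrak i\). The difference is the sum of \(t_j w_j\) over the basis elements \(w_j\) of \(\ker(\rho|_W)=\operatorname{Im}\overline y\). For each such class, \(\rho(w_j)=0\) in \(H^*(C_X)\). Hence its chosen cocycle representative satisfies \(w_j=d_X\theta_j\) for some \(\theta_j\in C_X\); from \(\overline y([\mathfrak i_K(\alpha)])=[d_X\alpha]\) one could even take \(\theta_j=\alpha+{}\)(a correction in \(\ker\mathfrak i_K\)), but only exactness in \(C_X\) is needed. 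Set \(\theta=\sum_j t_j\theta_j\), which lies in \(\mI_{Q_W}C_X^{Q_W}\), with the sign from \eqref{eq:induced-coefficient-actions} absorbed. Then \(\gamma_W=\gamma_{W'}+d_X\theta\). Both \(\gamma_{W'}\) and \(d_X\theta\) are closed of degree two, and \(\theta\) has degree one.

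Third, I will expand by multilinearity and symmetry \textup{(C5)}:
\[
 \q_{\emptyset,l}^{\gamma_W}(\eta)-\q_{\emptyset,l}^{\gamma_{W'}}(\eta)
 =\sum_{m\ge0,\ j\ge1}\frac{1}{m!\,j!}\,
 \q_{\emptyset,l+m+j}\bigl(\eta_1,\ldots,\eta_l,\gamma_{W'}^{\otimes m},(d_X\theta)^{\otimes j}\bigr).
\]
In each summand all inputs other than one copy of \(d_X\theta\) are closed. The chain-map axiom \eqref{eq:qempty-chain-map}, extended to \(C_X^{Q_W}\) and summed over classes \(A\), therefore identifies the summand, up to a sign, with
\[
 d_X\q_{\emptyset,l+m+j}\bigl(\eta,\gamma_{W'}^{\otimes m},\theta,(d_X\theta)^{\otimes(j-1)}\bigr).
\]
The only other term produced by \(d_X\) is the one where \(d_X\) hits \(\theta\), and all remaining terms vanish because their inputs are closed. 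Summing gives \(d_X\) of a single cochain \(h\).

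The main obstacle I expect is convergence and the legitimacy of this termwise manipulation. The cochain \(h\) is an infinite sum, and one needs \(d_X\) to commute with it. Here I will use that \(\theta\) and \(\gamma_{W'}\) lie in the ideal \(\mI_{Q_W}\), together with the energy-finiteness in \cref{ax:qempty-energy}. These give convergence in the completed topology of \cref{app:derivation-continuity}, exactly as for \cref{lm:qkl-bg-insertions}. Continuity of \(d_X\) then lets it pass through the sum. Tracking the Koszul signs is routine, since \(\theta\) is the only odd bulk input.
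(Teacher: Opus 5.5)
Your proposal is correct and follows essentially the paper's route. The paper invokes the argument of \cite[Lemma~5.1]{solomon2019relative}, using \eqref{eq:wdvv-compatible-W-representatives} to replace $\rho^*\gamma_U$ by $\gamma_{W'}$ and the closed chain-map identity to show that the exact bulk classes in $\operatorname{Im}\overline y$ change $\q_{\emptyset,l}$ only by an exact term. The one cosmetic difference is that the paper treats these classes one at a time, whereas you absorb them all into a single primitive $\theta$.
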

\begin{proof}
The proof of \cite[Lemma~5.1]{solomon2019relative} applies
to each class in \(\operatorname{Im}\overline y\), using
\eqref{eq:wdvv-compatible-W-representatives} and the closed
chain-map identity of \cref{lm:bg-deformed-axioms}.
\end{proof}

Recall that \(\overline u=(\rho\otimes\Id_{Q_W})(u_W)\in\mathcal A\)
for \(u=(u_W,x_u)\in(W\otimes Q_W)\oplus R_W\).

\begin{lm}
\label{lm:tensor-potential-preserves-V}
Let \(\partial_u\in\mathfrak g_{\mathrm{for}}\) be homogeneous.  Then
\[
    \partial_u\nun^{\gamma_W,b}(V)\subset V.
\]
Moreover, for \(v\in V\),
\[
    \pi\!\left(
    (\partial_u\nun^{\gamma_W,b})(v)
    \right)
    =
    \pi(v)\,\star\,\overline u,
\]
so the closed projection depends only on the \(W\)-component \(u_W\).
\end{lm}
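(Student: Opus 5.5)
Since $V=\operatorname{Im}\overline x\oplus\sigma(\mathcal A)$ by \cref{lm:V-splitting}, and the claim is linear in $v$, I will check both statements on each summand separately. The derivative $\partial_u\nun^{\gamma_W,b}=[\partial_u,\nun^{\gamma_W,b}]_{\mathrm{gr}}$ is computed componentwise from \eqref{eq:tensor-potential-definition}. On the closed component the $c\cdot\xi$ term is absent, so the first component of $(\partial_u\nun^{\gamma_W,b})(\eta,\xi)$ is $(\partial_u\q_{\emptyset,1}^{\gamma_W})(\eta)$. Differentiating the deformation formula term by term, as in the proof of \cref{lm:relative-potential-derivative}, with $\delta=\partial_u\in\mathfrak g_{\mathrm{for}}$ killing Novikov weights, gives
\[
(\partial_u\q_{\emptyset,1}^{\gamma_W})(\eta)=\pm\,\q_{\emptyset,2}^{\gamma_W}(\eta,\partial_u\gamma_W),
\]
with the sign fixed by rule (E3) and \cref{ax:qempty-symmetry}. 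Here $\partial_u\gamma_W=u_W$, viewed as a closed form in $C_X^{Q_W}$ (with coefficients $f_i$). The $s$-component $x_u$ does not enter, because $\gamma_W$ is $s$-free.

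For $v=\overline x([r])=[(0,-r)]$, the closed component of $v$ is zero. Hence the closed component of $(\partial_u\nun)(v)$ vanishes, and the image lies in $\operatorname{Im}\overline x\subset V$. Both sides of the projection identity are then zero, since $\pi(v)=0$. More concretely, the second component is $(\partial_uc)\cdot r$ up to sign, by the $c\cdot\xi$ term and \cref{lm:mc-coefficient-actions-coker}.

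For $v=\sigma(\alpha)=a(\eta)$ with $\eta\in W'\otimes Q_W$ a closed form in $\ker\mathfrak i$, we have $\pi(v)=\rho(\eta)=\alpha$. The closed component of the image is represented by $\pm\q_{\emptyset,2}^{\gamma_W}(\eta,u_W)$. By \cref{lm:closed-operations-subalgebra-U} its cohomology class equals $[\rho^*\q_{\emptyset,2}^{\gamma_U}(\rho\eta,\overline u)]$. By the definition of $\star_U$ and its pullback, this is $\alpha\star\overline u$, which lies in $U\otimes Q_W$ by \cref{assum:closed-frobenius-sector} and \eqref{eq:closed-base-change}. So $\pi$ of the image lies in $U\otimes Q_W$, and by definition of $V=\pi^{-1}(U\otimes Q_W)$ the image lies in $V$. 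Linearity over $Q_W$ (\cref{lm:nun-cochain-map}) then extends both statements from constant $\eta,u_W$ to general ones.

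The main obstacle is bookkeeping rather than mathematics. First, the sign from moving $\partial_u$ past $\eta$ must come out so that the result is exactly $\pi(v)\star\overline u$, in that order. I will fix it by checking the graded-commutator convention \eqref{eq:operator-derivative-convention} against rule (E3). I will also use that $U$ and $Q_W$ are even in this section, so graded commutativity of $\star$ makes the order harmless.

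Second, I must make sure that \cref{lm:closed-operations-subalgebra-U} applies when one input, $u_W$, has a component in $\operatorname{Im}\overline y=\ker\rho|_W$. Such a component maps to $0$ under $\rho$. It contributes an exact or trivial term to the closed class, consistent with $\overline u$ ignoring it. The same lemma, with representatives chosen via \eqref{eq:wdvv-compatible-W-representatives}, handles this case.
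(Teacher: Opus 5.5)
Your argument is correct and is essentially the paper's proof. The closed component of $(\partial_u\nun^{\gamma_W,b})(\eta,\xi)$ is $\q_{\emptyset,2}^{\gamma_W}(\eta,\partial_u\gamma_W)$, the $x_u$-part contributes nothing because it kills $Q_W$, and \cref{lm:closed-operations-subalgebra-U} together with closure of $U\otimes Q_W$ under $\star$ finishes the argument. The only difference is your preliminary splitting $V=\operatorname{Im}\overline x\oplus\sigma(\mathcal A)$. It is harmless but unnecessary, since that closed component does not involve $\xi$, and the paper simply works with an arbitrary cone cocycle $(\eta,\xi)$ representing $v$.
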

\begin{proof}
This is the projection calculation of
\cite[Lemma~5.2]{solomon2019relative}.
Split \(\partial_u=\partial_{u_W}+\partial_{(0,x_u)}\).
Since \(\partial_{(0,x_u)}|_{R_W^0}=0\) while \(\gamma_W\) and the closed
component of the tensor potential have coefficients in \(Q_W\), the first
component of \(\partial_{(0,x_u)}\nun^{\gamma_W,b}\) vanishes.  Hence
\[
    \pi\!\left(
    (\partial_{(0,x_u)}\nun^{\gamma_W,b})(v)
    \right)
    =0\in U\otimes Q_W,
\]
so \((\partial_{(0,x_u)}\nun^{\gamma_W,b})(v)\in\ker\pi\subset V\).
Since \(\overline u\) depends only on \(u_W\), it remains to treat
\(\partial_{u_W}\), where \(u_W\in W\otimes Q_W\).
By graded \(Q_W\)-linearity it is enough to take \(u_W\in W\).
Let \((\eta,\xi)\) be a cone cocycle representing \(v\).  The first
component of \((\partial_{u_W}\nun^{\gamma_W,b})(v)\) gives
\[
\begin{aligned}
    \pi\!\left((\partial_{u_W}\nun^{\gamma_W,b})(v)\right)
    &=
    \left[
      \q_{\emptyset,2}^{\gamma_W}
      (\eta,\partial_{u_W}\gamma_W)
    \right] \\
    &=
    \pi(v)\,\star\,\overline u,
\end{aligned}
\]
where the second equality is
\cref{lm:closed-operations-subalgebra-U}.  Both factors on the right belong
to \(U\otimes Q_W\).  By \cref{assum:closed-frobenius-sector} and base
change \eqref{eq:closed-base-change}, their \(\star\)-product also belongs to
\(U\otimes Q_W\).  Hence
\((\partial_u\nun^{\gamma_W,b})(v)\in V\), and adding the two parts
proves the lemma. 
\end{proof}

For a homogeneous derivation \(\partial_\upsilon\in\mathfrak g_{\mathrm{for}}\),
define
\[
    T_\upsilon
    :=
    \left.H^*\!\left(\partial_\upsilon\nun^{\gamma_W,b}\right)\right|_V
    \colon V\longrightarrow V.
\]
Here \(\partial_\upsilon\nun^{\gamma_W,b}\) is the operator defined by
\eqref{eq:operator-derivative-convention}.  The map \(T_\upsilon\) is graded
\(Q_W\)-linear of degree \(|\partial_\upsilon|+2\).

\begin{cor}
\label{cor:liftable-quantum-subalgebra}
The submodule \(\mathcal A=U_L\otimes Q_W\) is a unital
\(Q_W\)-subalgebra of \((U\otimes Q_W,\star)\).
\end{cor}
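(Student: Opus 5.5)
The plan is to deduce the corollary directly from \cref{lm:tensor-potential-preserves-V}, applied to elements of the form \(v=\sigma(\alpha)\). Here \(\alpha\in\mathcal A\), \(\sigma\) is the section fixed after \cref{lm:V-splitting}, and \(u\in W'\otimes Q_W\) is the unique lift of a second element \(\beta\in\mathcal A\), so that \(\overline u=\beta\). Closure under \(\star\) is the main content; unitality and the module structure are routine.

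\textbf{Unit and module structure.} Since \(1_X\in U\) by \cref{assum:closed-frobenius-sector}, and \(\mathfrak i_K(1_X)=\langle1,1_L\rangle_L\) is the trace of a degree-zero element, \(1_X\) lies in \(\Ker\mathfrak i_K\) whenever \(n>0\) by degree reasons (the trace vanishes outside degree \(n\)). Hence \(1_X\in U\cap\Ker\mathfrak i_K=U_L\). It is also a unit for \(\star\) on \(U\otimes Q_W\): this follows from \ref{ax:qempty-unit}, giving \(1\star a=a\) already at the level of \(\rho^*\star_U\). Moreover, \(\mathcal A=U_L\otimes Q_W\) is visibly a \(Q_W\)-submodule, and \(\star\) is \(Q_W\)-bilinear by base change \eqref{eq:closed-base-change}.

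\textbf{Closure under \(\star\).} Given \(\alpha,\beta\in\mathcal A\), take \(v=\sigma(\alpha)=a(w_\alpha)\) with \(w_\alpha\in W'\otimes Q_W\). This is the class of a cone cocycle \((\eta,0)\) with \(\eta\in\ker\mathfrak i\) representing \(w_\alpha\), and \(\pi(v)=\alpha\). Let \(u=(u_W,0)\) with \(u_W\) the lift of \(\beta\), so that \(\partial_u\in\mathfrak g_{\mathrm{for}}\). By \cref{lm:tensor-potential-preserves-V},
\[
\alpha\star\beta=\pi\bigl((\partial_u\nun^{\gamma_W,b})(v)\bigr).
\]
The right-hand side lies in the image of \(\pi\colon H^*(\Cone{\mathfrak i})\to\Ker\mathfrak i\). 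The cone exact sequence \eqref{eq:cone-i-exact-sequence} shows that this image is contained in \(\Ker\mathfrak i\). By \cref{lm:tensor-potential-preserves-V} the element also lies in \(U\otimes Q_W\), so it lies in \(U\otimes Q_W\cap\Ker\mathfrak i\).

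\textbf{The main obstacle.} The remaining step is to identify \((U\otimes Q_W)\cap\Ker\mathfrak i\) with \(U_L\otimes Q_W\) after scalar extension. I would argue as follows. We have \(\Ker\mathfrak i=\Ker\mathfrak i_Q\) (stated before \cref{lm:enhanced-superpotential-choice-from-base}). Since \(\mathfrak i_Q=\mathfrak i_K\otimes\Id_{Q_W}\) up to sign, and \(Q_W\) is flat (free, in the completed sense) over the field \(\kk\), we get \(\Ker\mathfrak i_Q=(\Ker\mathfrak i_K)\otimes Q_W\). Intersecting with \(U\otimes Q_W\) is compatible with this tensor product because all spaces involved are \(\kk\)-subspaces of the finite-dimensional \(H^*(C_X)\); to see this, choose a basis of \(H^*(C_X)\) adapted to \(U\) and \(\Ker\mathfrak i_K\) and compare coefficients. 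This yields \((U\cap\Ker\mathfrak i_K)\otimes Q_W=U_L\otimes Q_W\). Alternatively, the sharper statement \(\pi(V)=\mathcal A\) from the split exact sequence following \cref{lm:V-splitting} gives the same conclusion directly, since \((\partial_u\nun)(v)\in V\). Either way, \(\alpha\star\beta\in\mathcal A\), which completes the proof.
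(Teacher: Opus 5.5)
Your proof is correct and follows the paper's argument. The paper also writes the product as $\pi(T_\alpha v)$ for a lift $v\in V$ via \cref{lm:tensor-potential-preserves-V}, concludes from $\pi(V)=\mathcal A$ (your ``alternative''), and gets $1_X\in\mathcal A$ from the unit's relative lift. Your flatness argument identifying $(U\otimes Q_W)\cap\Ker\mathfrak i$ with $U_L\otimes Q_W$ is valid but redundant, since that identification is what \cref{lm:V-splitting} and the split exact sequence after it already provide.
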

\begin{proof}
For homogeneous \(a,\alpha\in\mathcal A\), choose \(v\in V\) with
\(\pi(v)=a\).
By \cref{lm:tensor-potential-preserves-V},
\[
    a\,\star\,\alpha=\pi(T_\alpha v)\in\pi(V)=\mathcal A.
\]
The unit has a relative lift, so \(1_X\in\mathcal A\).
\end{proof}

For the following two lemmas, we work in the geometric setting of
\cref{subsec:intro-geometric-background}.

For the closed potential \(\Phi_U\) of \eqref{eq:closed-potential},
the gradient of the full potential evaluated at \(\gamma_U\) is given
by \cite[Eq.~(5)]{solomon2019relative}:
\[
    \nabla\Phi_U
    =\left[\q_{\emptyset,0}^{\gamma_U}\right]
    =\sum_{k\geq0}\frac{1}{k!}
      \left[\q_{\emptyset,k}(\gamma_U^{\otimes k})\right]
    \in H^*(C_X)\otimes Q_U.
\]
For \(\alpha\in U\), it satisfies
\[
    \partial_\alpha\Phi_U
    =\left\langle\nabla\Phi_U,\alpha\right\rangle_X
    =\left\langle
       \left[\q_{\emptyset,0}^{\gamma_U}\right],\alpha
      \right\rangle_X.
\]
We use the pulled-back, \(Q_W\)-trilinear tensor
\(\Phi_{\alpha\beta\chi}\) introduced in
\cref{subsubsec:intro-open-wdvv}.

\begin{lm}\label{lm:closed-potential-product-pairing}
For \(\alpha,\beta,\chi\in U\otimes Q_W\), we have
\[
    \left\langle\beta\star\alpha,\chi\right\rangle_X
    =\Phi_{\alpha\beta\chi}
    =\left\langle\beta,\alpha\star\chi\right\rangle_X.
\]
\end{lm}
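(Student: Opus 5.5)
The plan is to reduce everything to the corresponding statement over \(Q_U\) recalled in \cref{subsubsec:intro-closed-potential}, namely
\[
 \langle a\star_U b,d\rangle_X=\langle a,b\star_U d\rangle_X=\partial_a\partial_b\partial_d\Phi_U
\]
for constant \(a,b,d\in U\), and then pull back along \(\rho^*\) and extend \(Q_W\)-trilinearly. By definition \(\Phi_{abd}=\rho^*(\partial_a\partial_b\partial_d\Phi_U)\) for constant classes, extended \(Q_W\)-trilinearly. Both sides of the claimed identity are also \(Q_W\)-trilinear, after accounting for Koszul signs; but since \(U\), \(Q_W\) are even by the standing assumption of \cref{sec:noncommutative-relative-quantum}, no signs appear. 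It therefore suffices to treat \(\alpha,\beta,\chi\) constant in \(U\).

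For constant classes, I would first verify the closed identity directly from the definitions. Write \(\beta\star_U\alpha=\sum_l\frac1{l!}[\q_{\emptyset,l+2}(\beta,\alpha,\gamma_U^{\otimes l})]\). I would then compute \(\partial_\alpha\partial_\beta\partial_\chi\Phi_U\) by differentiating \eqref{eq:closed-potential} term by term, as in \eqref{eq:relative-potential-derivative-closed}. The first derivative gives \(\langle[\q_{\emptyset,0}^{\gamma_U}],\chi\rangle_X\) by the formula for \(\nabla\Phi_U\). Two further derivatives, using graded symmetry \cref{ax:qempty-symmetry} to move the differentiated slots to the front, give
\[
 \langle\beta\star_U\alpha,\chi\rangle_X .
\]
The formula for \(\nabla\Phi_U\) itself uses cyclic symmetry of \(\langle\q_{\emptyset,l}(\cdot),\cdot\rangle_X\) in all \(l+1\) entries, which holds in the geometric setting since the pairing is integration over \(\M_{l+1}(A)\) of pulled-back forms. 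This is why the lemma is stated in the geometric setting. The same cyclic symmetry then yields \(\langle\beta\star_U\alpha,\chi\rangle_X=\langle\beta,\alpha\star_U\chi\rangle_X\), and this is exactly the cited statement of \cite[Sections~1.2.6--1.2.7]{solomon2019relative}. I would simply invoke it, checking that the commutativity \(\beta\star\alpha=\alpha\star\beta\) in even degree matches the order of slots.

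The final step is to apply \(\rho^*\colon Q_U\to Q_W\). It is a ring homomorphism, commutes with the pairing (which is \(\kk\)-valued on constant classes and extended linearly), and satisfies \(\star=\rho^*(\star_U)\) on \(U\otimes Q_W\) via the base change \eqref{eq:closed-base-change}. Applying \(\rho^*\) to the three-term identity gives the claim for constant arguments, and trilinearity finishes it.

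The main obstacle I expect is bookkeeping. The first point is confirming that \(\rho^*\) applied after differentiation agrees with the definition of \(\Phi_{\alpha\beta\chi}\), which holds by definition since the derivatives are taken in \(Q_U\) before pulling back. The second point is checking that the pairing sign convention \(\langle\eta,\xi\rangle_X=(-1)^{|\xi|}\int_X\eta\xi\) produces no stray signs. Since every class involved is even, these signs are trivial.
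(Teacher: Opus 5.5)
Your proposal is correct and follows the paper's route. The paper likewise pulls back the closed Frobenius identity of Solomon--Tukachinsky along \(\rho^*\); it cites the closed-component calculation in their Lemma~5.12 rather than Sections~1.2.6--1.2.7, but it is the same identity. It then extends \(Q_W\)-trilinearly, with no Koszul signs since everything is even, and obtains the second equality from the symmetry of \(\Phi\) and of the pairing, which matches your argument including the reason the lemma is restricted to the geometric setting.
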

\begin{proof}
This is the closed-component calculation in the proof of
\cite[Lemma~5.12]{solomon2019relative}, pulled back along \(\rho^*\)
and extended \(Q_W\)-trilinearly.  The second equality follows from
symmetry of the closed potential tensor and of the pairing.
\end{proof}

Let \(\mathcal A^\perp\) denote the orthogonal complement of
\(\mathcal A\) in \(U\otimes Q_W\) with respect to
\(\langle\,\cdot\,,\,\cdot\,\rangle_X\).

\begin{lm}\label{lm:liftable-orthogonal-submodule}
\(\mathcal A^\perp\) is an \(\mathcal A\)-submodule of
\((U\otimes Q_W,\star)\).
If \(p=\operatorname{PD}[L]\ne0\) lies in \(U\), then
\(\mathcal A^\perp=Q_Wp\), and there is a unique unital
degree-zero \(Q_W\)-algebra homomorphism
\[
 \lambda\colon\mathcal A\longrightarrow Q_W
\]
such that \(p\star u=\lambda(u)p\) for all \(u\in\mathcal A\).
\end{lm}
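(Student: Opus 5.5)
The first claim follows from the Frobenius property in \cref{lm:closed-potential-product-pairing}. Take $x\in\mathcal A^\perp$ and $u,a\in\mathcal A$. Then
\[
\langle u\star x,a\rangle_X=\langle x\star u,a\rangle_X=\langle x,u\star a\rangle_X .
\]
The outer terms agree by graded commutativity of $\star$ (all classes are even), and the second equality is \cref{lm:closed-potential-product-pairing}. The right-hand side vanishes, since $u\star a\in\mathcal A$ by \cref{cor:liftable-quantum-subalgebra}. Hence $u\star x\in\mathcal A^\perp$. The complement is a $Q_W$-submodule because the pairing is $Q_W$-bilinear, up to Koszul signs.

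For the second claim I first identify $\mathcal A^\perp$ when $p\in U$. By \cref{subsec:noncommutative-open-wdvv-setup}, $U_L=U\cap\Ker\mathfrak i_K$, where $\mathfrak i_K$ on cohomology is $\alpha\mapsto\langle1,i^*\alpha\rangle_L$. This equals $\pm\int_L i^*\alpha=\pm\langle\alpha,p\rangle_X$, up to sign, by Poincar\'e duality; in the abstract setting this is what $p=\operatorname{PD}[L]$ means. So $U_L=U\cap p^{\perp}$.

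Since the pairing on $U$ is nondegenerate by \cref{assum:closed-frobenius-sector}, and $p\neq0$ lies in $U$, the subspace $U_L$ has codimension one in $U$. Its orthogonal complement in $U$ is therefore exactly $\kk p$. After the free base change \eqref{eq:closed-base-change}, I pick a basis of $U$ consisting of a basis of $U_L$ together with one extra vector. Inverting the Gram matrix over $Q_W$ then gives $\mathcal A^\perp=(U_L)^\perp\otimes Q_W=Q_Wp$. I expect this coefficient step to be the delicate one, since one must check that orthogonality over $Q_W$ reduces to orthogonality over $\kk$. It does, because the pairing is constant, coming from $\kk$, and $Q_W$ is free.

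Finally I define $\lambda$. By the first part, $p\star u\in\mathcal A^\perp=Q_Wp$ for $u\in\mathcal A$, so $p\star u=\lambda(u)p$. The coefficient $\lambda(u)$ is unique because $Q_Wp$ is free of rank one: $p$ is part of a $\kk$-basis of $U$, hence part of a $Q_W$-basis of $U\otimes Q_W$. The algebra properties follow directly.
\begin{itemize}
\item Unitality: $p\star1=p$, so $\lambda(1)=1$.
\item $Q_W$-linearity: $\lambda$ is $Q_W$-linear because $\star$ is.
\item Degree zero: since $|p\star u|=|p|+|u|$, we get $|\lambda(u)|=|u|$.
\item Multiplicativity: using associativity and commutativity of $\star$,
\[
p\star(u\star v)=(p\star u)\star v=\lambda(u)\,p\star v=\lambda(u)\lambda(v)\,p .
\]
\end{itemize}
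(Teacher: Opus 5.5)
Your proof is correct and follows the paper's own argument. The paper proves the submodule claim from the Frobenius identity of \cref{lm:closed-potential-product-pairing} and closure of $\mathcal A$ under $\star$, then gets $\mathcal A^\perp=Q_Wp$ from $\mathcal A=(Q_Wp)^\perp$ and nondegeneracy on $U$, and multiplicativity of $\lambda$ from associativity. Your base-change check and your use of the freeness of $Q_Wp$ for uniqueness of $\lambda(u)$ spell out steps the paper states in one line.
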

\begin{proof}
Let \(x\in\mathcal A^\perp\) and \(u,v\in\mathcal A\).
By \cref{cor:liftable-quantum-subalgebra,lm:closed-potential-product-pairing},
\[
 \langle x\star u,v\rangle_X
 =\langle x,u\star v\rangle_X=0.
\]
Thus \(x\star u\in\mathcal A^\perp\).

We have \(\mathfrak i_\R(a)=\langle p,a\rangle_X\) for \(a\in U\).  Since \(U_L=U\cap\Ker\mathfrak i_\R\), we have
\[
 \mathcal A=(Q_Wp)^\perp.
\]
Nondegeneracy of the pairing on \(U\) gives
\(\mathcal A^\perp=Q_Wp\).
Since \(p\ne0\), for each \(u\in\mathcal A\) there is a unique
\(\lambda(u)\in Q_W\) such that \(p\star u=\lambda(u)p\).
For \(u,v\in\mathcal A\), associativity gives
\[
 \lambda(u\star v)p
 =p\star(u\star v)
 =(p\star u)\star v
 =\lambda(u)\lambda(v)p.
\]
Hence \(\lambda(u\star v)=\lambda(u)\lambda(v)\).
The unit, linearity, and degree follow from those of \(\star\).
\end{proof}

\subsection{The action of the tensor potential on
\texorpdfstring{\(V\)}{V}}

The following facts replace the commutative scalar calculation in
\cite[Lemma~5.8]{solomon2019relative}.

\begin{lm}
\label{lm:mc-coefficient-actions-coker}
Let \(c\in\mathcal Z(R_W)\) be the Maurer--Cartan coefficient of
\((\gamma_W,b)\), and let
\(\partial_u\in\mathfrak g_{\mathrm{for}}\) be homogeneous.
Then \(\partial_uc\in\mathcal Z(R_W)\), and
\[
 c\cdot\operatorname{Im}\mathfrak i\subset\operatorname{Im}\mathfrak i,
 \qquad
 (\partial_uc)\cdot\operatorname{Im}\mathfrak i
 \subset\operatorname{Im}\mathfrak i.
\]
Thus multiplication by \(c\) and \(\partial_uc\) is well defined on
\(\Coker\mathfrak i\).
\end{lm}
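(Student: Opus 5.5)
The plan is to derive both inclusions from the deformed $l=1$ structure equation, in the form already packaged in \cref{lm:tensor-potential-second-component}. Centrality of $\partial_uc$ will follow formally from centrality of $c$.

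\emph{Centrality.} Since $c\in\mathcal Z(R_W)$ and $|c|=2$, we have $[c,r]_{\mathrm{gr}}=cr-rc=0$ for every homogeneous $r\in R_W$. Applying the graded derivation $\partial_u$ and using the Leibniz rule gives
\[
 0=\partial_u[c,r]_{\mathrm{gr}}
 =[\partial_uc,r]_{\mathrm{gr}}+(-1)^{|\partial_u||c|}[c,\partial_ur]_{\mathrm{gr}}
 =[\partial_uc,r]_{\mathrm{gr}}.
\]
The last term vanishes because $c$ is central. One checks the Koszul signs using $|\partial_uc|=|\partial_u|+2$. Thus $\partial_uc\in\mathcal Z(R_W)$. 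Central elements act on $HH_0(R_W)$ by $z\cdot[a]=[za]$, so both $c$ and $\partial_uc$ act on $HH_0(R_W)$. It then suffices to show that they preserve $\operatorname{Im}\mathfrak i$, where $\mathfrak i$ denotes the map induced on cohomology.

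\emph{Reduction to generators.} By the scalar-extension identification $H^*(C_X^{Q_W})=H^*(C_X)\otimes Q_W$, the image $\operatorname{Im}\mathfrak i$ is the $Q_W$-span of the elements $\mathfrak i(\alpha)$ with $\alpha\in C_X$ a cocycle. The relevant identity is $\mathfrak i(q\eta)=(-1)^{|q|}q\,\mathfrak i(\eta)$. Since $Q_W\subset\mathcal Z(R_W)$ and $c$, $\partial_uc$ are central, we have $c\cdot(q\,\mathfrak i(\alpha))=q\,(c\cdot\mathfrak i(\alpha))$, and similarly for $\partial_uc$. It therefore suffices to treat $\mathfrak i(\alpha)$ for closed $\alpha\in C_X$, regarded as $\alpha\otimes1\in C_X^{Q_W}$.

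\emph{The case of $c$.} Take $\eta=\alpha\otimes1$ with $d_X\eta=0$. Then \cref{lm:tensor-potential-second-component} reads
\[
 -c\cdot\mathfrak i(\eta)=\mathfrak i\bigl(\q_{\emptyset,1}^{\gamma_W}(\eta)\bigr).
\]
By the closed chain-map identity in \cref{lm:bg-deformed-axioms}, the element $\q_{\emptyset,1}^{\gamma_W}(\eta)$ is a cocycle. Hence the right-hand side lies in $\operatorname{Im}\mathfrak i$, and so does $c\cdot\mathfrak i(\eta)$.

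\emph{The case of $\partial_uc$.} Apply $\partial_u$ to the same identity. By \cref{lm:coefficient-actions-cone} we have $\bar\partial_u\mathfrak i=(-1)^{|\partial_u|}\mathfrak i\,\partial_{u,X}$. Since $\eta$ has constant coefficients, $\partial_{u,X}\eta=0$. The left-hand side then becomes $-(\partial_uc)\cdot\mathfrak i(\eta)$; no term $c\cdot\partial_u\mathfrak i(\eta)$ appears, because that term is $\pm c\cdot\mathfrak i(\partial_u\eta)=0$. On the right-hand side, differentiating the deformation term by term as in the proof of \cref{lm:relative-potential-derivative} gives
\[
 \partial_u\bigl(\q_{\emptyset,1}^{\gamma_W}(\eta)\bigr)
 =\pm\,\q_{\emptyset,2}^{\gamma_W}(\eta,\partial_u\gamma_W).
\]
The sign is immaterial for the inclusion. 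This element is again a cocycle, by \cref{lm:bg-deformed-axioms} together with $d_X\partial_u\gamma_W=\pm\partial_ud_X\gamma_W=0$. Hence $(\partial_uc)\cdot\mathfrak i(\eta)\in\operatorname{Im}\mathfrak i$.

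The $x_u$-part of $\partial_u$ kills $\gamma_W$, which has coefficients in $Q_W$, so for that part the right-hand side is simply $0$.

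The main point of care is the Koszul bookkeeping when $\partial_u$ is odd. Since only membership in $\operatorname{Im}\mathfrak i$ is claimed, overall signs never matter. The possible pitfall is the mixed term $c\cdot\partial_u\mathfrak i(\eta)$, which disappears only because we reduced to constant-coefficient generators before differentiating.
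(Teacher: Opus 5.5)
Your proof is correct and follows essentially the paper's argument: centrality of $\partial_uc$ by differentiating $cr=rc$, the inclusion for $c$ from \cref{lm:tensor-potential-second-component}, and the inclusion for $\partial_uc$ by differentiating that identity using $\mathfrak i\,\partial_{u,X}=\pm\bar\partial_u\,\mathfrak i$ from \cref{lm:coefficient-actions-cone}. The only difference is that the paper avoids your reduction to constant-coefficient generators by writing $(\partial_uc)\cdot r=\partial_u(c\cdot r)-c\cdot\partial_ur$ for arbitrary $r\in\operatorname{Im}\mathfrak i$. There the mixed term is harmless, because $\partial_ur\in\operatorname{Im}\mathfrak i$ and $c$ already preserves the image, so it is not a genuine pitfall.
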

\begin{proof}
Since \(c\in\mathcal Z(R_W)\) and \(|c|=2\), we have \(ca=ac\)
for every homogeneous \(a\in R_W\).  Applying \(\partial_u\) gives
\[
 (\partial_uc)a+c\,\partial_ua
 =(\partial_ua)c+(-1)^{|\partial_u||a|}a(\partial_uc).
\]
The terms \(c\,\partial_ua\) and \((\partial_ua)c\) agree.  Hence
\((\partial_uc)a=(-1)^{|\partial_u||a|}a(\partial_uc)\), so
\(\partial_uc\in\mathcal Z(R_W)\).

For closed \(\eta\in C_X^{Q_W}\),
\cref{lm:tensor-potential-second-component} gives
\[
    -c\cdot\mathfrak i(\eta)
    =
    \mathfrak i\!\left(
    \q_{\emptyset,1}^{\gamma_W}(\eta)
    \right).
\]
Thus \(c\cdot\operatorname{Im}\mathfrak i\subset\operatorname{Im}\mathfrak i\).
By \cref{lm:coefficient-actions-cone},
\(\partial_u(\operatorname{Im}\mathfrak i)\subset\operatorname{Im}\mathfrak i\).
For \(r\in\operatorname{Im}\mathfrak i\), we therefore have
\[
 (\partial_uc)\cdot r
 =\partial_u(c\cdot r)-c\cdot\partial_ur
 \in\operatorname{Im}\mathfrak i.
\]
\end{proof}

\begin{lm}
\label{lm:tensor-potential-coker-action}
For every \(r\in\Coker\mathfrak i\) and every homogeneous
\(\partial_u\in\mathfrak g_{\mathrm{for}}\),
\[
    \nun^{\gamma_W,b}\bigl(\overline x(r)\bigr)
    =
    \overline x(-c\cdot r),
    \qquad
    \bigl(\partial_u\nun^{\gamma_W,b}\bigr)
    \bigl(\overline x(r)\bigr)
    =
    \overline x\bigl(-(\partial_uc)\cdot r\bigr).
\]
\end{lm}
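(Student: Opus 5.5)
Both identities will be checked on cochain representatives of $\overline x(r)=[(0,-r)]$, using the definition \eqref{eq:tensor-potential-definition} of $\nun^{\gamma_W,b}$ directly. First fix a representative: let $r\in\Coker\mathfrak i$ be the class of some $\tilde r\in HH_0(R_W)$. Then $\overline x(r)$ is represented by the cone cocycle $(0,-\tilde r)$. It is a cocycle because $d_{\Cone}(0,-\tilde r)=(0,\mathfrak i(0))=0$, the second summand having trivial differential.

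\emph{First identity.} Apply $\nun^{\gamma_W,b}$ to $(0,-\tilde r)$. The first component is $\q_{\emptyset,1}^{\gamma_W}(0)=0$. The second is $-\q_{-1,1}^{\gamma_W,b}(0)-c\cdot(-\tilde r)=c\cdot\tilde r$. Thus
\[
\nun^{\gamma_W,b}(0,-\tilde r)=(0,c\tilde r)=(0,-(-c\tilde r)),
\]
which represents $\overline x([-c\tilde r])$. By \cref{lm:mc-coefficient-actions-coker}, multiplication by $c$ preserves $\operatorname{Im}\mathfrak i$, so the class $[-c\tilde r]\in\Coker\mathfrak i$ is independent of the lift $\tilde r$ and equals $-c\cdot r$. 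The operator on cohomology is well defined by \cref{lm:nun-cochain-map}.

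\emph{Second identity.} Use the convention \eqref{eq:operator-derivative-convention}, with $|\nun^{\gamma_W,b}|=2$ even:
\[
(\partial_u\nun)(0,-\tilde r)=\partial_u\bigl(\nun(0,-\tilde r)\bigr)-\nun\bigl(\partial_u(0,-\tilde r)\bigr).
\]
The operator $\partial_u$ acts on the cone componentwise, with $\bar\partial_u[\tilde r]=[\partial_u\tilde r]$. Hence the first term is $(0,\partial_u(c\tilde r))$ and the second is $(0,c\,\partial_u\tilde r)$. The graded Leibniz rule gives $\partial_u(c\tilde r)=(\partial_uc)\tilde r+(-1)^{2|\partial_u|}c\,\partial_u\tilde r$. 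Subtracting leaves $(0,(\partial_uc)\tilde r)$, which represents $\overline x(-(\partial_uc)\cdot r)$. This is well defined on $\Coker\mathfrak i$ again by \cref{lm:mc-coefficient-actions-coker}.

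The main point requiring care is that products with $c$ and $\partial_uc$ make sense on $HH_0(R_W)$, not only on $R_W$. The paper's action of the center on $HH_0$ is $z\cdot[a]=[za]$. This is well defined because $z$ is central: one has $z[a,b]_{\mathrm{gr}}=[za,b]_{\mathrm{gr}}$ up to sign, so graded commutators are preserved. \Cref{lm:mc-coefficient-actions-coker} gives $\partial_uc\in\mathcal Z(R_W)$, so the same applies to $\partial_uc$. The Leibniz computation must also be verified at the level of $HH_0$, where it is inherited from $R_W$. Beyond this, one only needs that $\partial_u$ acts on the cone component $HH_0(R_W)[-n-1]$ without extra shift signs. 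The paper defines $\bar\delta$ directly on $HH_0$ by $\bar\delta[r]=[\delta r]$, so the sign bookkeeping above goes through.
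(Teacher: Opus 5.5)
Your proposal is correct and follows essentially the same route as the paper's proof. Both represent \(\overline x(r)\) by \((0,-\tilde r)\), read off the first identity from the definition of the tensor potential, and obtain the second from the commutator convention \eqref{eq:operator-derivative-convention} and the Leibniz rule with \(|c|=2\), with well-definedness supplied by \cref{lm:mc-coefficient-actions-coker}. The only cosmetic difference is that you compute on cochains, whereas the paper writes \(\partial_u\overline x(r)=\overline x(\partial_u r)\) and computes in cohomology.
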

\begin{proof}
If \(\widetilde r\in HH_0(R_W)\) represents \(r\), then
\(\overline x(r)\) is represented by \((0,-\widetilde r)\), and the
definition of the tensor potential gives
\[
    \nun^{\gamma_W,b}(0,-\widetilde r)
    =(0,c\widetilde r)
    =\overline x(-c\cdot r).
\]
The action on the right is well defined by
\cref{lm:mc-coefficient-actions-coker}.

For the second identity, \eqref{eq:operator-derivative-convention} and
\(\lvert\nun^{\gamma_W,b}\rvert=2\) give
\(\partial_u\nun^{\gamma_W,b}
=\partial_u\circ\nun^{\gamma_W,b}-\nun^{\gamma_W,b}\circ\partial_u\).
Since \(\overline x(r)\) is represented by \((0,-\widetilde r)\), we have
\(\partial_u\overline x(r)=\overline x(\partial_ur)\).  Hence
\[
\begin{aligned}
    \bigl(\partial_u\nun^{\gamma_W,b}\bigr)
    \bigl(\overline x(r)\bigr)
    &=
    \partial_u\!\left(\overline x(-c\cdot r)\right)
    -
    \nun^{\gamma_W,b}\!\left(\overline x(\partial_ur)\right)\\
    &=
    \overline x\!\left(
        -(\partial_uc)\cdot r-c\cdot\partial_ur
    \right)
    -
    \overline x\!\left(-c\cdot\partial_ur\right)\\
    &=
    \overline x\!\left(-(\partial_uc)\cdot r\right).
\end{aligned}
\]
Here \(|c|=2\),
and multiplication by \(\partial_uc\) is well defined on the cokernel by
\cref{lm:mc-coefficient-actions-coker}.
\end{proof}

\begin{rem}
This lemma is the non-commutative analogue of
\cite[Lemma~5.8]{solomon2019relative}.  In our setting, the calculation uses
\(c,\partial_uc\in\mathcal Z(R_W)\) and the two inclusions in
\cref{lm:mc-coefficient-actions-coker}.
\end{rem}

Define
\[
    \chi_\upsilon\colon\Coker\mathfrak i\longrightarrow\Coker\mathfrak i,
    \qquad
    \chi_\upsilon(r):=-(\partial_\upsilon c)\cdot r,
\]
and, for \(\alpha\in\mathcal A\), let
\(m_\alpha\colon\mathcal A\to\mathcal A\) be given by
\(m_\alpha(a):=a\star\alpha\).  This is well defined by
\cref{cor:liftable-quantum-subalgebra}.
The maps \(\chi_\upsilon\) and \(m_{\overline\upsilon}\) are graded
\(Q_W\)-linear of degree
\(|\partial_\upsilon|+2\).

\begin{prop}[Triangular block form]
\label[prop]{prop:tensor-potential-block-form}
Let \(\partial_\upsilon\in\mathfrak g_{\mathrm{for}}\) be homogeneous and set
\(j_\upsilon:=P\circ T_\upsilon\circ\sigma\colon\mathcal A\to\Coker\mathfrak i\).
Relative to \(V=\operatorname{Im}\overline x\oplus\sigma(\mathcal A)\),
\[
T_\upsilon=
\begin{pmatrix}
\chi_\upsilon&j_\upsilon\\
0&m_{\overline\upsilon}
\end{pmatrix}.
\]
\end{prop}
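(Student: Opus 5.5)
The plan is to compute \(T_\upsilon\) on each summand of \(V=\operatorname{Im}\overline x\oplus\sigma(\mathcal A)\). We then read off the matrix entries using the projections attached to the splitting. Concretely, any \(v\in V\) decomposes as \(v=\overline x(P v)+\sigma(\pi v)\). This holds because \(P\circ\overline x=\Id\), \(P\circ\sigma=0\), \(\pi\circ\overline x=0\) and \(\pi|_V\circ\sigma=\Id_{\mathcal A}\). So the upper-left, upper-right, lower-left and lower-right entries of \(T_\upsilon\) are, respectively,
\[
P\,T_\upsilon\,\overline x,\qquad P\,T_\upsilon\,\sigma,\qquad \pi\,T_\upsilon\,\overline x,\qquad \pi\,T_\upsilon\,\sigma .
\]
The upper-right entry is \(j_\upsilon\) by definition, so only the other three need to be identified.

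For the first column I will invoke \cref{lm:tensor-potential-coker-action}. For \(r\in\Coker\mathfrak i\) it gives
\[
T_\upsilon(\overline x(r))=\overline x\bigl(-(\partial_\upsilon c)\cdot r\bigr)=\overline x(\chi_\upsilon(r)).
\]
Applying \(P\) and using \(P\overline x=\Id\) yields the entry \(\chi_\upsilon\). Applying \(\pi\) and using exactness, \(\pi\overline x=0\), yields the zero in the lower-left corner. Well-definedness of multiplication by \(\partial_\upsilon c\) on the cokernel comes from \cref{lm:mc-coefficient-actions-coker}.

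For the lower-right entry I will use \cref{lm:tensor-potential-preserves-V}. Taking \(v=\sigma(a)\) with \(a\in\mathcal A\), it gives
\[
\pi\bigl(T_\upsilon\sigma(a)\bigr)=\pi(\sigma(a))\star\overline\upsilon=a\star\overline\upsilon=m_{\overline\upsilon}(a).
\]
This lands in \(\mathcal A\) by \cref{cor:liftable-quantum-subalgebra}. The same lemma guarantees that \(T_\upsilon\) maps \(V\) into \(V\), so the decomposition is legitimate.

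The only delicate point is bookkeeping: the lemmas are stated on cochains or on \(H^*(\Cone{\mathfrak i})\), while the matrix lives on the cohomology module \(V\). Here \(T_\upsilon\) is by definition the induced map \(H^*(\partial_\upsilon\nun^{\gamma_W,b})\) restricted to \(V\), so the identities pass directly to cohomology. I expect no sign issues, since \(\overline x\), \(\sigma\), \(P\) and \(\pi\) are all degree-zero maps.
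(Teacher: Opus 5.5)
Your proposal is correct and follows essentially the same argument as the paper. Both decompose \(v=\overline x(Pv)+\sigma(\pi v)\), read off the first column from \cref{lm:tensor-potential-coker-action}, obtain the lower-right entry \(m_{\overline\upsilon}\) from the projection formula of \cref{lm:tensor-potential-preserves-V}, and take \(j_\upsilon\) as the upper-right entry by definition.
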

\begin{proof}
Every \(v\in V\) decomposes as
\[
    v=\overline x(Pv)+\sigma(\pi v),
\]
since \(P\overline x=\Id\), \(P\sigma=0\), \(\pi\overline x=0\) and
\(\pi\sigma=\Id\).
By \cref{lm:tensor-potential-coker-action},
\(T_\upsilon\overline x(r)=\overline x(\chi_\upsilon(r))\), which is the
first column.
For \(\alpha\in\mathcal A\), \cref{lm:tensor-potential-preserves-V} gives
\(\pi T_\upsilon\sigma(\alpha)=\alpha\star\overline\upsilon\), so
\[
    T_\upsilon\sigma(\alpha)
    =\overline x\bigl(j_\upsilon(\alpha)\bigr)
    +\sigma\bigl(m_{\overline\upsilon}(\alpha)\bigr),
\]
which is the second column.
\end{proof}

\subsection{Open WDVV}
\label{subsec:open-wdvv}

\begin{thm}[Open WDVV in block form]
\label[thm]{thm:structural-open-wdvv}
Let \(\partial_u,\partial_v\in\mathfrak g_{\mathrm{for}}\) be homogeneous
derivations.  Then
\[
    \chi_u\circ j_v+j_u\circ m_{\overline v}
    =
    (-1)^{|\partial_u||\partial_v|}
    \bigl(\chi_v\circ j_u+j_v\circ m_{\overline u}\bigr)
\]
as an identity of graded \(Q_W\)-linear maps
\(\mathcal A\to\Coker\mathfrak i\) of degree
\(|\partial_u|+|\partial_v|+4\).
\end{thm}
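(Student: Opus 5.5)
The plan is to read the identity off the upper-right block of the graded commutator $[T_u,T_v]_{\mathrm{gr}}$, using \cref{thm:flatness-relation} to see that this commutator vanishes. First I would check that $T_u$ and $T_v$ are defined on $V$ and compose there. \cref{lm:tensor-potential-preserves-V} shows that $H^*(\partial_u\nun^{\gamma_W,b})$ maps $V$ into $V$, and likewise for $v$. Hence $T_uT_v$ and $T_vT_u$ are graded $Q_W$-linear endomorphisms of $V$, and each is the restriction to $V$ of the corresponding composite on $H^*(\Cone{\mathfrak i})$. By \cref{thm:flatness-relation},
\[
[\partial_u\nun^{\gamma_W,b},\partial_v\nun^{\gamma_W,b}]_{\mathrm{gr}}=0
\quad\text{on }H^*(\Cone{\mathfrak i}).
\]
Both operators have degrees $|\partial_u|+2$ and $|\partial_v|+2$, so the sign in the graded commutator is $(-1)^{(|\partial_u|+2)(|\partial_v|+2)}=(-1)^{|\partial_u||\partial_v|}$. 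This gives
\[
T_u\circ T_v=(-1)^{|\partial_u||\partial_v|}\,T_v\circ T_u
\quad\text{on }V.
\]

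Next I would multiply the triangular matrices of \cref{prop:tensor-potential-block-form} with respect to $V=\operatorname{Im}\overline x\oplus\sigma(\mathcal A)$. The upper-right entry of $T_uT_v$ is $\chi_u\circ j_v+j_u\circ m_{\overline v}$, and the upper-right entry of $T_vT_u$ is $\chi_v\circ j_u+j_v\circ m_{\overline u}$. To be careful about this bookkeeping, I would compute it directly from the decomposition $w=\overline x(Pw)+\sigma(\pi w)$ used in the proof of \cref{prop:tensor-potential-block-form}. Concretely, I would evaluate $P\circ T_u\circ T_v\circ\sigma$. Applying \cref{prop:tensor-potential-block-form} to $T_v\sigma(\alpha)$ gives $\overline x(j_v\alpha)+\sigma(\alpha\star\overline v)$. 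Applying $T_u$ and then $P$, and using $P\overline x=\Id$ together with $T_u\overline x=\overline x\chi_u$, the result is $\chi_u j_v(\alpha)+j_u(\alpha\star\overline v)$. The same computation with $u$ and $v$ exchanged gives $P\circ T_v\circ T_u\circ\sigma$, and applying $P\circ(\cdot)\circ\sigma$ to the commutation relation above yields the claimed identity.

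For the degree statement, I would note that $P$ and $\sigma$ have degree zero and $T_u,T_v$ are graded $Q_W$-linear. The composite therefore has degree $|\partial_u|+|\partial_v|+4$, as stated, and $Q_W$-linearity is inherited from its factors.

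I expect the main point of care to be the step relating cohomology to $V$. The key fact is that flatness holds on all of $H^*(\Cone{\mathfrak i})$, so its restriction to the invariant submodule $V$ is legitimate, together with the sign convention in the graded commutator. I would also check that no additional sign arises from $\chi_u$ acting on $\Coker\mathfrak i$. By \cref{lm:tensor-potential-coker-action}, $\chi_u$ is simply the restriction of $T_u$ through $\overline x$, so no such sign appears.
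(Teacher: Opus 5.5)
Your proposal is correct and follows the paper's proof: \cref{thm:flatness-relation} gives $T_uT_v=(-1)^{|\partial_u||\partial_v|}T_vT_u$ on the invariant submodule $V$, and evaluating $P\,T_uT_v\,\sigma$ with the triangular block form of \cref{prop:tensor-potential-block-form} gives $\chi_u\circ j_v+j_u\circ m_{\overline v}$, so composing the commutation relation with $P$ and $\sigma$ yields the identity. Your additional checks, namely the sign $(-1)^{(|\partial_u|+2)(|\partial_v|+2)}=(-1)^{|\partial_u||\partial_v|}$ and the degree count, are bookkeeping that the paper leaves implicit.
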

\begin{proof}
By \cref{thm:flatness-relation},
\[
    T_uT_v
    =
    (-1)^{|\partial_u||\partial_v|}T_vT_u
    \qquad\text{on }V,
\]
since \(|T_\upsilon|=|\partial_\upsilon|+2\).
By \cref{prop:tensor-potential-block-form},
\[
    P T_uT_v\sigma
    =\chi_u\circ j_v+j_u\circ m_{\overline v}.
\]
Composing the commutation identity with \(P\) and \(\sigma\) gives the result.
\end{proof}

Abbreviate \(\overline\Omega=\overline\Omega(\gamma_W,b)\).
We work in the geometric setting of
\cref{subsec:intro-geometric-background}.

\begin{lm}\label{lm:enhanced-potential-second-derivative}
Let \(w\in W\otimes1\) and let
\(\partial_v\in\mathfrak g_{\mathrm{for}}\) be homogeneous.
Then
\[
    P T_v(a(w))=\partial_v\partial_w\overline\Omega.
\]
\end{lm}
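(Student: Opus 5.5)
The plan is to express both sides through the relative potential $\Psi$ and then use \cref{lm:relative-potential-derivative} twice. Here $w\in W$ is constant, and $\partial_w\in\mathfrak g_{\mathrm{for}}$ is the coordinate derivation with $\partial_w\gamma_W=w$. We read the identity as an identity between the class $a(w)\in V$ and elements of $\Coker\mathfrak i$.

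\emph{Step 1: first derivative.} Separatedness gives $\partial_w\langle1,b\rangle_L=0$. Then \cref{lm:relative-potential-derivative} yields
\[
\partial_w\psi=\nun^{\gamma_W,b}(w,0).
\]
Passing to cohomology, this reads $\partial_w\Psi=\nun^{\gamma_W,b}(a(w))$, where $w$ is the closed representative in $\ker i^*$ (or its cocycle in $\ker\mathfrak i$).

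\emph{Step 2: second derivative.} Apply $\partial_v$, using the operator-derivative convention \eqref{eq:operator-derivative-convention} and the even degree of $\nun^{\gamma_W,b}$:
\[
\partial_v\partial_w\Psi=(\partial_v\nun^{\gamma_W,b})(a(w))+\nun^{\gamma_W,b}(\partial_v a(w)).
\]
Since $w\in W\otimes1$ has constant coefficients, $\partial_v a(w)=a(\partial_v w)=0$. Note that $\partial_v$ acts on cone elements only through their coefficients, by \eqref{eq:induced-coefficient-actions}. Hence $\partial_v\partial_w\Psi=T_v(a(w))$. Here $a(w)\in V$, because $\pi a(w)=\rho(w)\in U$ by $W=\rho^{-1}(U)$, and $T_v$ preserves $V$ by \cref{lm:tensor-potential-preserves-V}.

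\emph{Step 3: apply $P$.} By \cref{lm:splitting-commutes-derivations}, $P$ commutes with $\partial_v$ and with $\partial_w$. Therefore
\[
\partial_v\partial_w\overline\Omega=P\partial_v\partial_w\Psi=PT_v(a(w)).
\]

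\emph{Main obstacle.} The delicate point is Step 1 at the level of cohomology. We must check that $(\partial_w\gamma_W,0)$ really represents $a(w)$ in the cone, i.e.\ that the chosen cocycle representative lies in $\ker\mathfrak i$ so that $(w,0)$ is a cone cocycle. We must also track the signs in $\partial_v\partial_w$ versus $\partial_w\partial_v$ for odd $\partial_v$. Since $\partial_w$ is even, because $W$ is even, no extra sign arises, and the graded commutator $[\partial_v,\partial_w]$ vanishes on coordinates. This makes the order of differentiation harmless, and I would verify it directly on generators $t_i$ and $s$.
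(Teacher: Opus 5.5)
Your proof is correct and is essentially the paper's argument: \cref{lm:relative-potential-derivative} with separatedness gives $\partial_w\Psi=\nun^{\gamma_W,b}(a(w))$, the convention \eqref{eq:operator-derivative-convention} together with $\partial_v a(w)=0$ identifies $\partial_v\partial_w\Psi$ with $T_v(a(w))$, and \cref{lm:splitting-commutes-derivations} moves $P$ past the derivations. Your closing remark that $[\partial_v,\partial_w]$ vanishes on coordinates is false in general, for example when $v$ has non-constant coefficients or $x_v$ depends on the bulk variables. Nothing depends on it, however, since the computation produces $\partial_v\partial_w$ in exactly the order of the statement.
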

\begin{proof}
Since \(\partial_w\gamma_W=w\) for the chosen cocycle representative
of \(w\), and the bounding pair is separated,
\cref{lm:relative-potential-derivative} gives
\[
    \partial_w\Psi
    =\nun^{\gamma_W,b}(a(w)).
\]
Since \(w\) is constant, \(\partial_v a(w)=0\).  Using
\(\overline\Omega=P\Psi\) and
\cref{lm:splitting-commutes-derivations}, we obtain
\[
\begin{aligned}
    P T_v(a(w))
    &=P\!\left(\partial_v\bigl(\nun^{\gamma_W,b}(a(w))\bigr)\right)\\
    &=P(\partial_v\partial_w\Psi)
      =\partial_v\partial_w(P\Psi)
      =\partial_v\partial_w\overline\Omega.
\end{aligned}
\]
\end{proof}

The following is the analogue of \cite[Lemma~5.12]{solomon2019relative}
for the cokernel formed from \(HH_0(R_W)\).
\begin{lm}\label{lm:tensor-potential-coordinate-decomposition}
Let \(w\in W\otimes1\) and let
\(\partial_v\in\mathfrak g_{\mathrm{for}}\) be homogeneous.
Then
\[
    T_v(a(w))
    =\overline x\!\left(\partial_v\partial_w\overline\Omega\right)
    +\sum_{\substack{l\in I_U\\m\in I_L}}
      \Phi_{\overline v\,\overline w\,\Delta_l}\,
      g^{lm}\,a(\Gamma_m).
\]
\end{lm}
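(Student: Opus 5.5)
We decompose \(T_v(a(w))\) along the splitting \(V=\operatorname{Im}\overline x\oplus\sigma(\mathcal A)\) of \cref{lm:V-splitting}.

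\emph{Membership in \(V\).} First check that \(a(w)\in V\). This holds because \(\pi a(w)=\rho(w)\in U_L\subset U\otimes Q_W\), using \(W=\rho^{-1}(U)\). Then \(T_v(a(w))\in V\) by \cref{lm:tensor-potential-preserves-V}.

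\emph{Decomposition.} The proof of \cref{prop:tensor-potential-block-form} gives, for every \(y\in V\),
\[
 y=\overline x(Py)+\sigma(\pi y).
\]
Apply this to \(y=T_v(a(w))\). The first summand is \(\overline x(\partial_v\partial_w\overline\Omega)\), directly by \cref{lm:enhanced-potential-second-derivative}.

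\emph{The closed projection.} For the second summand, \cref{lm:tensor-potential-preserves-V} gives
\[
 \pi T_v(a(w))=\rho(w)\star\overline v=\overline w\star\overline v\in\mathcal A,
\]
which lies in \(\mathcal A\) by \cref{cor:liftable-quantum-subalgebra}. We must expand this element in the basis \(\{\Delta_m\}_{m\in I_L}\) of \(U_L\). Since \(\{\Delta_l\}_{l\in I_U}\) is a basis of \(U\) with nondegenerate Gram matrix \(g\), any \(x\in U\otimes Q_W\) satisfies
\[
 x=\sum_{l,m\in I_U}\langle x,\Delta_l\rangle_X\, g^{lm}\Delta_m .
\]
Take \(x=\overline w\star\overline v\). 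By \cref{lm:closed-potential-product-pairing} and the graded symmetry of the tensor \(\Phi\) (all classes here are even, so no signs appear), \(\langle\overline w\star\overline v,\Delta_l\rangle_X=\Phi_{\overline v\,\overline w\,\Delta_l}\). Because \(x\in\mathcal A=U_L\otimes Q_W\), only coefficients with \(m\in I_L\) survive. Finally \(\sigma(\Delta_m)=a(\Gamma_m)\) and \(\sigma\) is \(Q_W\)-linear, which yields the stated sum.

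\emph{Main obstacle.} The delicate step is justifying the restriction to \(m\in I_L\) in the basis expansion. The coordinate coefficient of \(\Delta_m\) in \(x\) is \(\sum_l\langle x,\Delta_l\rangle g^{lm}\), and it vanishes for \(m\notin I_L\) precisely because \(x\) lies in the span of \(\{\Delta_m\}_{m\in I_L}\). One must also check the order and sign conventions in \(\Phi_{\overline v\,\overline w\,\Delta_l}\), including how \(Q_W\)-coefficients of \(\overline v\) pass through the pairing. Since \(Q_W\) is even, these should cause no trouble, but this is where I would be careful.
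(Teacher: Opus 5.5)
Your proposal is correct and follows essentially the same route as the paper. You decompose \(T_v(a(w))\) along \(V=\operatorname{Im}\overline x\oplus\sigma(\mathcal A)\), identify the \(\overline x\)-component with \(\partial_v\partial_w\overline\Omega\) via \cref{lm:enhanced-potential-second-derivative}, and recover the \(a(\Gamma_m)\)-coefficients by pairing \(\pi T_v(a(w))=\overline w\star\overline v\) with \(\Delta_l\) and inverting the metric; the only difference is cosmetic, since you expand in the full basis of \(U\) and discard \(m\notin I_L\), whereas the paper solves \(\sum_{m\in I_L}r^mg_{ml}=\Phi_{\overline v\,\overline w\,\Delta_l}\) for the unknowns \(r^m\), which is the same linear algebra.
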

\begin{proof}
By \cref{lm:tensor-potential-preserves-V}, \(T_v(a(w))\in V\).
Write its decomposition as
\[
    T_v(a(w))=\overline x(r)+\sum_{m\in I_L}r^m a(\Gamma_m),
    \qquad r\in\Coker\mathfrak i,\quad r^m\in Q_W.
\]
Applying \(P\) to this decomposition gives \(r=PT_v(a(w))\), so
\(r=\partial_v\partial_w\overline\Omega\) by
\cref{lm:enhanced-potential-second-derivative}.

Applying \(\pi\) to the decomposition and pairing with \(\Delta_l\),
for \(l\in I_U\), we obtain from \cref{lm:tensor-potential-preserves-V} and
\cref{lm:closed-potential-product-pairing}
\[
    \sum_{m\in I_L}r^m g_{ml}
    =\left\langle\pi T_v(a(w)),\Delta_l\right\rangle_X
    =\left\langle\overline w\star\overline v,\Delta_l\right\rangle_X
    =\Phi_{\overline v\,\overline w\,\Delta_l}.
\]
Multiplication by the inverse metric therefore gives
\[
    r^m=\sum_{l\in I_U}
    \Phi_{\overline v\,\overline w\,\Delta_l}\,g^{lm},
\]
which proves the lemma.
\end{proof}

\begin{proof}[Proof of \cref{thm:intro-open-wdvv}]
We follow Solomon--Tukachinsky's proof of Theorem~3 in
\cite{solomon2019relative}.  We compute
\(P T_uT_v(a(w))\) and \(P T_vT_u(a(w))\), then use
\cref{thm:flatness-relation} to relate them.

Since \(T_u\) is graded \(Q_W\)-linear of degree
\(|\partial_u|+2\) and \(P\) has degree zero, for homogeneous \(q\in Q_W\)
we have
\[
    P T_u\bigl(q\,a(\Gamma_m)\bigr)
    =(-1)^{|\partial_u||q|}q\,P T_u(a(\Gamma_m)).
\]
By \cref{lm:tensor-potential-coordinate-decomposition,lm:tensor-potential-coker-action},
we compute
\[
\begin{aligned}
    P T_uT_v(a(w))
    &=- (\partial_uc)\cdot\partial_v\partial_w\overline\Omega
      \\
    &\quad+\sum_{\substack{l\in I_U\\m\in I_L}}
       (-1)^{|\partial_u||\Phi_{\overline v\,\overline w\,\Delta_l}|}
       \Phi_{\overline v\,\overline w\,\Delta_l}\,
       g^{lm}\,P T_u(a(\Gamma_m))\\
    &=- (\partial_uc)\cdot\partial_v\partial_w\overline\Omega
      \\
    &\quad+\sum_{\substack{l\in I_U\\m\in I_L}}
       (-1)^{|\partial_u||\Phi_{\overline v\,\overline w\,\Delta_l}|}
       \Phi_{\overline v\,\overline w\,\Delta_l}\,
       g^{lm}\,\partial_u\partial_{\Gamma_m}\overline\Omega.
\end{aligned}
\]
The sign in the first equality comes from taking the coefficient
\(\Phi_{\overline v\,\overline w\,\Delta_l}\in Q_W\) outside \(T_u\),
using its graded \(Q_W\)-linearity.
The second equality uses
\cref{lm:enhanced-potential-second-derivative} with \(w=\Gamma_m\).
Similarly,
\[
\begin{aligned}
    P T_vT_u(a(w))
    &=- (\partial_vc)\cdot\partial_u\partial_w\overline\Omega\\
    &\quad+\sum_{\substack{l\in I_U\\m\in I_L}}
       (-1)^{|\partial_v||\Phi_{\overline u\,\overline w\,\Delta_l}|}
       \Phi_{\overline u\,\overline w\,\Delta_l}\,
       g^{lm}\,\partial_v\partial_{\Gamma_m}\overline\Omega.
\end{aligned}
\]
By \cref{thm:flatness-relation}, on cohomology we have
\[
    P T_uT_v(a(w))
    =(-1)^{|\partial_u||\partial_v|}P T_vT_u(a(w)).
\]
Comparing the two calculations proves the asserted equation: since
\(Q_W\) is even, the coefficient signs are trivial, and symmetry of
\(\Phi\) and \(g^{lm}\) gives \eqref{eq:intro-open-wdvv}.
\end{proof}

\subsection{The boundary open character}
\label{sec:open-character}

For the remainder of this section, impose
\cref{assum:point-like-bounding-pair} on the bounding pair of
\cref{assum:universal-bounding-pair}.  Fix such a point-like pair
\((\gamma_W,b)\), and let \(c\) be its Maurer--Cartan coefficient.
Assume also the remaining hypotheses of \cref{subsec:enhanced-axioms},
so that its enhanced equations apply.

Evaluation at \(s=0\) means the projection \(R_W\to R_W^0\) from
\eqref{eq:s-free-relative-coefficients}; the \(s\)-degree projections and
the identification in \(s\)-degree one are those of
\cref{lm:s-degree-decomposition,lm:s-degree-one-detector}.
For the Maurer--Cartan coefficient,
\([s]c:=\left.(\partial_sc)\right|_{s=0}\in R_W^0\) denotes its
\(s\)-linear coefficient.

\begin{lm}
\label{lm:boundary-euler-block}
For \(u,\alpha\in \mathcal A\),
\[
 j_{\mathcal E_s}(\alpha\,\star\,u)
 =
 -(\partial_uc)\cdot j_{\mathcal E_s}(\alpha)
 +(\mathcal E_sc)\cdot j_u(\alpha).
\]
\end{lm}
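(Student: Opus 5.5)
The identity should be the block-form open WDVV relation of \cref{thm:structural-open-wdvv}, specialized to the pair of derivations \(\partial_u\) and \(\partial_v:=\mathcal E_s=\partial_{(0,s)}\). First I check that \(\mathcal E_s\) is admissible, that is, \(\mathcal E_s\in\mathfrak g_{\mathrm{for}}\). It is a continuous graded \(K_L\)-linear derivation of \(R_W\) of degree \(0\), since \(|s|=|s|+0\). It annihilates \(\Lambda\) and every \(t_i\), so it kills \(Q_W\) and in particular preserves it. For \(u\in\mathcal A\), the derivation \(\partial_u\) is defined through the lift in \(W'\otimes Q_W\). By \(Q_W\)-linearity of everything in sight, and since \(W\) and \(Q_W\) are even, I may take \(u\) homogeneous with \(|\partial_u|\) even. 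Thus both derivations are homogeneous, and the sign \((-1)^{|\partial_u||\mathcal E_s|}\) equals \(1\).

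The key observation is that \(\mathcal E_s\) has zero \(W\)-component: in the notation \(u=(u_W,x_u)\) it is \((0,s)\). Hence \(\overline{\mathcal E_s}=0\), and the block \(m_{\overline{\mathcal E_s}}\) of \cref{prop:tensor-potential-block-form} vanishes. This is consistent with \cref{lm:tensor-potential-preserves-V}, which says the closed projection of \(T_{\mathcal E_s}\) is multiplication by \(\overline{\mathcal E_s}=0\). Substituting \(v=\mathcal E_s\) into \cref{thm:structural-open-wdvv} gives
\[
 \chi_u\circ j_{\mathcal E_s}
 =\chi_{\mathcal E_s}\circ j_u+j_{\mathcal E_s}\circ m_{\overline u}
 \qquad\text{on }\mathcal A .
\]

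Evaluating at \(\alpha\in\mathcal A\) and using \(m_{\overline u}(\alpha)=\alpha\star u\) (identifying \(\overline u\) with \(u\) via \(\rho|_{W'}\)), together with the definitions \(\chi_u(r)=-(\partial_uc)\cdot r\) and \(\chi_{\mathcal E_s}(r)=-(\mathcal E_sc)\cdot r\), I obtain
\[
 -(\partial_uc)\cdot j_{\mathcal E_s}(\alpha)
 =-(\mathcal E_sc)\cdot j_u(\alpha)+j_{\mathcal E_s}(\alpha\star u).
\]
Rearranging gives the claim. Multiplication by \(\partial_uc\) and \(\mathcal E_sc\) on \(\Coker\mathfrak i\) is well defined by \cref{lm:mc-coefficient-actions-coker}.

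The only point needing care, which I expect to be the main obstacle, is bookkeeping. First, the composition order in \(PT_uT_v\sigma=\chi_u j_v+j_u m_{\overline v}\) must put \(m\) on the inside. Second, no hidden Koszul signs may arise when extending to non-homogeneous \(u\). Both are harmless here, because all relevant degrees \(|\partial_u|\) and \(|\mathcal E_s|\) are even.
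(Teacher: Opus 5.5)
Your proposal is correct and matches the paper's proof: specialize the block-form open WDVV identity of \cref{thm:structural-open-wdvv} to the pair \(\partial_u\), \(\mathcal E_s\), and use \(\overline{\mathcal E_s}=0\) to kill \(m_{\overline{\mathcal E_s}}\). Then substitute \(\chi_u\), \(\chi_{\mathcal E_s}\) and \(m_{\overline u}(\alpha)=\alpha\star u\) and rearrange. Your extra checks, that \(\mathcal E_s\in\mathfrak g_{\mathrm{for}}\) and has degree zero so the Koszul sign is trivial, are ones the paper leaves implicit.
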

\begin{proof}
For \(\mathcal E_s\) and \(u\), the four blocks in
\cref{thm:structural-open-wdvv} are
\[
 \overline{\mathcal E_s}=0,
 \qquad
 m_{\overline{\mathcal E_s}}=m_0=0,
 \qquad
 m_u(\alpha)=\alpha\,\star\,u,
\]
and
\[
 \chi_{\mathcal E_s}(r)=-(\mathcal E_sc)\cdot r,
 \qquad
 \chi_u(r)=-(\partial_uc)\cdot r.
\]
Substitution gives the result.
\end{proof}

\begin{lm}
\label{lm:boundary-euler-unit}
One has
\[
 j_{\mathcal E_s}(1_X)=[s].
\]
\end{lm}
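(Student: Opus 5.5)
The plan is to read off $j_{\mathcal E_s}(1_X)=P\,T_{\mathcal E_s}\,\sigma(1_X)$ directly from the definitions. First identify the lift $\sigma(1_X)$. By the standing decomposition $W=\kk\cdot1_X\oplus W_{\mathrm{rel}}$, the class $1_X$ lies in $W$, and $P_K(1_X)=0$ since $|1_X|=0\neq n+1$. Hence $1_X\in W'$ and $\sigma(1_X)=a(1_X)$, where $1_X=w_1$ is constant with $\partial_1\gamma_W=1_X$.

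Then apply \cref{lm:enhanced-potential-second-derivative} with $w=1_X$ and $\partial_v=\mathcal E_s$. The derivation $\mathcal E_s=\partial_{(0,s)}$ lies in $\mathfrak g_{\mathrm{for}}$, being homogeneous of degree zero and vanishing on $\Lambda$. This gives
\[
 j_{\mathcal E_s}(1_X)=P\,T_{\mathcal E_s}(a(1_X))=\mathcal E_s\partial_1\overline\Omega .
\]

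For the remaining computation, use the unit equation \ref{item:enhanced-unit} of \cref{thm:enhanced-superpotential-axioms}:
\[
 \partial_1\overline\Omega=T^{\beta_0}\bigl(P_Q(\Gamma_W)+s\bigr).
\]
Since $\mathcal E_s$ vanishes on $R_W^0$, it kills $T^{\beta_0}$ and the coefficients $t_i$. The term $P_Q(\Gamma_W)$ lies in $\Coker\mathfrak i_Q=(\Coker\mathfrak i_K)\otimes Q_W$, which has $s$-degree zero, so $\mathcal E_s$ annihilates it; equivalently, one can invoke \cref{lm:splitting-commutes-derivations} and note that $\mathcal E_s\Gamma_W=0$. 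Finally, $\mathcal E_s(s)=s$. Together these give $j_{\mathcal E_s}(1_X)=[s]$, with $T^{\beta_0}=1$.

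The only point needing care is that $\mathcal E_s$ acts on $P_Q(\Gamma_W)$ trivially inside $\Coker\mathfrak i$, rather than through some hidden $s$-dependence introduced by the sign conventions of \eqref{eq:tensor-to-hh0}. I would settle this by the commutation $P_Q\delta=\delta P_Q$ established in the proof of \cref{lm:splitting-commutes-derivations}, applied to the element $\Gamma_W$, which has coefficients in $Q_W$.

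As a cross-check, one can instead use the $s$-equation \ref{item:enhanced-s-equation}, which gives $\mathcal E_s\overline\Omega=[-cs]$. Differentiating by $\partial_1$ then gives $[-(\partial_1c)s]$, and this requires $\partial_1c=-1$. Since the unit route is cleaner, I would present that one.
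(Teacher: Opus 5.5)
Your proposal is correct and follows the paper's own argument. The paper likewise observes $P_K(1_X)=0$, so $\sigma(1_X)=a(1_X)$; it then rewrites $P\,T_{\mathcal E_s}a(1_X)$ as $\mathcal E_s\partial_1\overline\Omega$, doing inline the computation of \cref{lm:enhanced-potential-second-derivative} via \cref{lm:relative-potential-derivative} and \cref{lm:splitting-commutes-derivations}, and finishes with the unit equation \ref{item:enhanced-unit} of \cref{thm:enhanced-superpotential-axioms} — so citing \cref{lm:enhanced-potential-second-derivative} directly, as you do, is equivalent.
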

\begin{proof}
Observe that \(P_K(1_X)=0\); hence \(1_X\in W'\) and
\(\sigma(1_X)=a(1_X)\).  Therefore
\[
\begin{aligned}
 j_{\mathcal E_s}(1_X)
 &=P T_{\mathcal E_s}\sigma(1_X)\\
 &=P\bigl(\mathcal E_s\nun^{\gamma_W,b}\bigr)a(1_X)\\
 &=P\,\mathcal E_s\bigl(\nun^{\gamma_W,b}a(1_X)\bigr)\\
 &=P\,\mathcal E_s(\partial_1\Psi)\\
 &=\mathcal E_s(\partial_1\overline\Omega)=[s].
\end{aligned}
\]
The fourth equality follows from
\cref{lm:relative-potential-derivative}, since
\(\partial_1\gamma_W=1_X\) and
\(\partial_1\langle1,b\rangle_L=0\).
The last equality is the enhanced unit equation in
\cref{thm:enhanced-superpotential-axioms}
\textup{\ref{item:enhanced-unit}}.
\end{proof}

Recall the map \(\mathcal L(u)=-\kappa^*(\partial_uc_0)\) from
\cref{subsubsec:intro-character}, where \(c_0=c|_{s=0}\).

\begin{thm}[The open character]
\label{thm:open-character}
If
\[
 \kappa^*\!\left(\left.(\partial_sc)\right|_{s=0}\right)=0,
\]
then
\[
 \mathcal L\colon
 \bigl(\mathcal A,\star\bigr)\longrightarrow\kappa^*R_W^0
\]
is a unital degree-zero character, and
\[
 \mathcal L(qu)=\kappa^*(q)\mathcal L(u)
 \qquad(q\in Q_W).
\]
\end{thm}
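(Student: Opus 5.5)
The plan is to extract the \(s\)-degree-one part of the two identities \cref{lm:boundary-euler-block} and \cref{lm:boundary-euler-unit}, and then apply \(\kappa^*\). For \(\alpha\in\mathcal A\), set
\[
 J(\alpha):=\kappa^*\bigl([\,j_{\mathcal E_s}(\alpha)\,]_{s^1}\bigr),
\]
where \([\cdot]_{s^1}\) is the \(s\)-degree-one projection of \cref{lm:s-degree-decomposition} on \(\Coker\mathfrak i\). By \cref{lm:s-degree-one-detector}, this component is identified with \(R_W^0\cdot[s]\), and I will use that this identification is free, so that \(r[s]=0\) with \(r\in R_W^0\) forces \(r=0\). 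I will check that \(\kappa^*\), which sets the complementary bulk coordinates to zero, is a ring homomorphism that commutes with the \(s\)-degree projections and with multiplication by central elements. Then \cref{lm:boundary-euler-unit} gives \(J(1_X)=[s]\).

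Next I take the \(s\)-degree-one part of \cref{lm:boundary-euler-block}. Write \(c=\sum_k c_k\) with \(c_k\) of \(s\)-degree \(k\). Since \(\mathcal E_s\) multiplies \(s\)-degree \(k\) by \(k\), the element \(\mathcal E_sc\) has no \(s\)-degree-zero part, and its \(s\)-degree-one part is \(c_1=([s]c)\,s\). Hence the \(s^1\)-component of \((\mathcal E_sc)\cdot j_u(\alpha)\) is \(([s]c)\,s\) times the \(s^0\)-component of \(j_u(\alpha)\). This term is killed by \(\kappa^*\) by the hypothesis \(\kappa^*([s]c)=0\).

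For the other term, \(\partial_uc\) is central by \cref{lm:mc-coefficient-actions-coker}. Its \(s^0\)-part is \(\partial_uc_0\), because \(\partial_u\) for \(u\in W'\otimes Q_W\) kills \(s\) and so preserves \(s\)-degree. The component \([j_{\mathcal E_s}(\alpha)]_{s^0}\) vanishes, because \(\mathcal E_s\) kills \(s\)-degree zero. Indeed, by the definitions and \cref{lm:splitting-commutes-derivations}, \(j_{\mathcal E_s}(\alpha)\) is \(\mathcal E_s\) applied to a class, namely \(P\mathcal E_s(\nun\,\sigma\alpha)\), which has no \(s^0\)-term. Thus only \(\partial_uc_0\) times the \(s^1\)-part contributes. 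Applying \(\kappa^*\), I obtain
\[
 J(\alpha\star u)=-\kappa^*(\partial_uc_0)\,J(\alpha)=\mathcal L(u)\,J(\alpha).
\]

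Taking \(\alpha=1_X\) yields \(J(u)=\mathcal L(u)[s]\) for all \(u\in\mathcal A\). Substituting this back gives
\[
 \mathcal L(\alpha\star u)[s]=\mathcal L(u)\mathcal L(\alpha)[s].
\]
Freeness of \(R_W^0[s]\) then gives multiplicativity, with the order of the factors immaterial since \(R_W^0\) is commutative. Unitality follows from \(J(1_X)=[s]=\mathcal L(1_X)[s]\). Linearity \(\mathcal L(qu)=\kappa^*(q)\mathcal L(u)\) follows from \(\partial_{qu}=q\partial_u\) and the fact that \(\kappa^*\) is multiplicative. The degree is zero because \(|\partial_uc_0|=(|u|-2)+2=|u|\).

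I expect the main obstacle to be the \(s\)-degree bookkeeping in \(\Coker\mathfrak i\) when \(s\) is odd. Three points need care. First, the \(s\)-degree projections must be compatible with multiplication by the central elements \(c\) and \(\partial_uc\) on the cokernel. Second, \([s]\) must generate a free \(R_W^0\)-module in \(s\)-degree one, even after quotienting by \(\operatorname{Im}\mathfrak i\). Third, one must confirm that \(j_{\mathcal E_s}(\alpha)\) has no \(s^0\)-component. For all three I would rely on \cref{lm:s-degree-decomposition,lm:s-degree-one-detector}, the latter already used in the proof of \cref{thm:enhanced-superpotential-axioms}.
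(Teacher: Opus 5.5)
Your proof is correct and follows the paper's argument. You define $J$, take the $s$-degree-one part of \cref{lm:boundary-euler-block} after applying $\kappa^*$, use $J(1_X)=[s]$, and conclude from the injectivity of $q\mapsto q[s]$ in \cref{lm:s-degree-one-detector}. The one minor difference is how you dispose of the cross term $\partial_u([s]c)\,s\cdot[j_{\mathcal E_s}(\alpha)]_{s^0}$: you note that $j_{\mathcal E_s}(\alpha)$ lies in the image of $\mathcal E_s$, which relies on $\mathcal E_s\sigma(\alpha)=0$ and is worth stating explicitly; the paper instead kills this term via $\kappa^*\bigl(\partial_u([s]c)\bigr)=\partial_{\kappa^*u}\bigl(\kappa^*([s]c)\bigr)=0$.
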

\begin{proof}
Put
\[
 J(\alpha):=\bigl[\kappa^*j_{\mathcal E_s}(\alpha)\bigr]_{s^1}.
\]
The hypothesis gives \(\kappa^*([s]c)=0\); hence
\[
 \kappa^*\!\left(\partial_u([s]c)\right)
 =\partial_{\kappa^*u}\!\left(\kappa^*([s]c)\right)=0.
\]
Applying \(\kappa^*\) to \cref{lm:boundary-euler-block} and taking \(s\)-degree one gives
\[
 J(\alpha\star u)=\mathcal L(u)J(\alpha).
\]
By \cref{lm:boundary-euler-unit}, \(J(1_X)=[s]\).  Therefore for every $u\in \mathcal A$,
\[
 J(u)=\mathcal L(u)[s].
\]
For \(u,v\in\mathcal A\), it follows that
\[
 \mathcal L(u\star v)[s]
 =J(v\star u)=\mathcal L(u)J(v)=\mathcal L(u)\mathcal L(v)[s].
\]
The result follows from \cref{lm:s-degree-one-detector}; linearity and degree
are immediate from the definition.
\end{proof}

\begin{rem}
\label{rem:sheridan-comparison}
\Cref{fig:sheridan-character-comparison} compares \cref{thm:open-character}
with Sheridan's proof that the closed--open map is
multiplicative~\cite[Proposition~2.1]{sheridan2016fano}.

\begin{figure}[htbp]
  \centering
  \includegraphics[width=\linewidth]{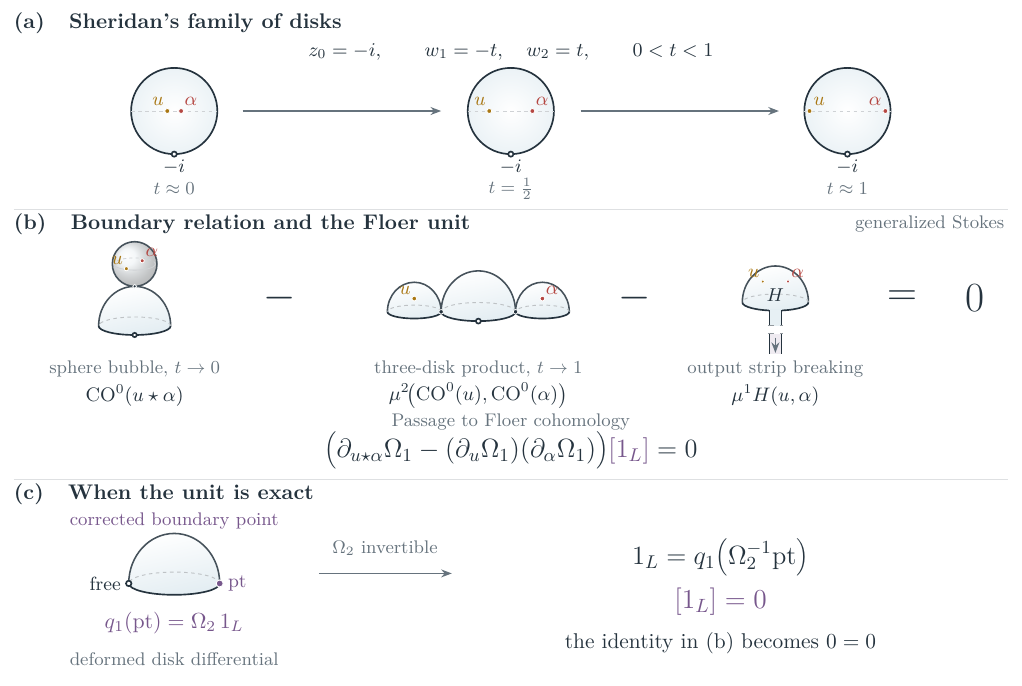}
  \caption{%
    Panels (a) and (b) show Sheridan's proof that the closed--open map is
    multiplicative~\cite[Proposition~2.1 and Figure~2]{sheridan2016fano}.
    The disks in (a) have an output at $-i$ and interior inputs $u$ and
    $\alpha$ at $-t$ and $t$.
    As $t\to0$, a sphere carrying $u$ and $\alpha$ bubbles off; as $t\to1$,
    the disk breaks into three disks; and a strip can break off at the output.
    After deformation by the bounding cochain, the resulting relation in Floer
    cohomology is the identity
    $\mathcal L(u\star\alpha)=\mathcal L(u)\mathcal L(\alpha)$ of
    \cref{thm:open-character} multiplied by the class $[1_L]$ of the unit.
    In (c), the cochain $\mathrm{pt}=(-1)^n\kappa^*((\partial_sb)|_{s=0})$
    satisfies $\q_1(\mathrm{pt})=\Omega_2\,1_L$, as in the proof of
    \cref{lm:point-like-floer-annihilation}.
    So, if $\Omega_2$ is invertible, then $[1_L]=0$ and the relation in (b)
    holds trivially.
    It is instructive to compare this figure with
    \cref{fig:intro-character-strata} in the overview, which illustrates the
    proof of \cref{thm:open-character} in geometric terms.}
  \label{fig:sheridan-character-comparison}
\end{figure}
\end{rem}

\subsubsection{Extensions of the boundary character}
\label{subsubsec:boundary-character-extensions}

In this subsubsection, work in the scalar de~Rham setting of
\cref{subsec:intro-geometric-background}.  Suppose that $n$ is even and
\[
 H^*(L;\R)\cong H^*(S^n;\R).
\]
Take $U=H^{\mathrm{even}}(X;\R)$ and $p=\operatorname{PD}[L]$.
By \cite[Lemma~5.6]{hugtenburg2024examples},
\[
 \langle p,p\rangle_X=2(-1)^{n/2}.
\]
Thus $\mathfrak i_\R$ is onto on cohomology, so $P_\R=0$, and
\[
 U_L=p^\perp,\qquad W'=W,\qquad\kappa^*=\Id,\qquad
 U\otimes Q_W=\mathcal A\oplus Q_Wp.
\]
Choose the bases of \cref{notn:closed-potential-coordinate-bases}
with $\Delta_L=p$ and $I_U=I_L\sqcup\{L\}$.  Then
\[
 g^{iL}=0\quad(i\in I_L),\qquad
 g^{LL}=\frac{(-1)^{n/2}}2,
\]
and $\rho^*$ sets $t_L=0$.
By~\cite[Proposition~6.1\textup{(1)}]
{kosloff2025pointlikenoncommutativef}, the bulk form
$\gamma_W=\Gamma_W$ admits a bounding cochain $b$ with $\int_Lb=s$.
Thus $\langle1,b\rangle_L=-s$, so $(\gamma_W,b)$ is point-like.
Fix such a pair and let $c$ be its Maurer--Cartan coefficient.
Since $R_W^0$ is even and $|s|$ is
odd, the degree-two coefficient $c$ has only even powers of $s$.
Hence \cref{thm:open-character} gives the character
$\mathcal L(u)=-\partial_uc_0$ on $\mathcal A$.

\begin{lm}
\label{lm:boundary-primitive-products}
The character $\lambda$ of \cref{lm:liftable-orthogonal-submodule}
equals $\mathcal L$.  In particular, $\mathcal L$ takes values in
$Q_W$, and for $u\in\mathcal A$,
\[
\begin{aligned}
 p\star u&=\mathcal L(u)p,\\
 p\star p&=2(-1)^{n/2}
 \sum_{l,m\in I_L}g^{lm}\mathcal L(\Delta_l)\Delta_m
 +g^{LL}\Phi_{ppp}p.
\end{aligned}
\]
\end{lm}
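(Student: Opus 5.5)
The plan is to compare the two characters through the trace map \(\mathfrak i\), using the identity of \cref{lm:tensor-potential-second-component} on a closed representative of \(p\).

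\textbf{Step 1: the basic identity.} Fix a closed form \(\eta_p\in C_X^n\) representing \(p\). For the constant closed \(\eta=\eta_p\), \cref{lm:tensor-potential-second-component} gives
\[
 \mathfrak i\bigl(\q_{\emptyset,1}^{\gamma_W}(\eta_p)\bigr)=-c\cdot\mathfrak i(\eta_p).
\]
Here \(\mathfrak i(\eta_p)=\langle1,i^*\eta_p\rangle_L\) is the real number \(\pm\langle p,p\rangle_X\ne0\); only this nonvanishing matters, not the sign.

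\textbf{Step 2: differentiate along \(u\).} Apply \(\partial_u\) for \(u\in\mathcal A\), lifted to \(W'\otimes Q_W=W\otimes Q_W\). Since \(\eta_p\) and \(\mathfrak i(\eta_p)\) are constant, \(\partial_u\) only hits \(\gamma_W\) and \(c\). By \cref{lm:coefficient-actions-cone}, \(\mathfrak i\) intertwines \(\partial_u\) up to a sign, which is trivial because \(|\partial_u|\) is even here. This gives
\[
 \mathfrak i\bigl(\q_{\emptyset,2}^{\gamma_W}(\eta_p,\partial_u\gamma_W)\bigr)
 =-(\partial_uc)\,\mathfrak i(\eta_p).
\]
The left-hand side is \(\mathfrak i\) applied to a closed cocycle. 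By \cref{lm:i-cochain-map}, \(\mathfrak i\) vanishes on exact forms, so the left-hand side depends only on its class. By \cref{lm:closed-operations-subalgebra-U} that class is \(p\star u\), which by \cref{lm:liftable-orthogonal-submodule} equals \(\lambda(u)p\). Hence
\[
 \lambda(u)\,\mathfrak i(\eta_p)=-(\partial_uc)\,\mathfrak i(\eta_p)
\]
in \(HH_0(R_W)\).

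\textbf{Step 3: read off \(\lambda=\mathcal L\).} The left-hand side lies in \(Q_W\) and has \(s\)-degree zero. Take the \(s\)-degree zero projection of \cref{lm:s-degree-decomposition}, which on \(R_W\) is evaluation at \(s=0\) and commutes with \(\partial_u\). Divide by the nonzero real number \(\mathfrak i(\eta_p)\). Since \(\kappa^*=\Id\), this gives \(\lambda(u)=-\partial_uc_0=\mathcal L(u)\). In particular \(\mathcal L\) takes values in \(Q_W\), and \(p\star u=\mathcal L(u)p\).

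The step I expect to need the most care is the bookkeeping in Step 2. One must check that the scalar-extension sign conventions for \(\mathfrak i\) and for the \(Q_W\)-action, as in \eqref{eq:scalar-target-coefficient-actions}, produce no stray sign. One must also check that the differentiated identity may be read on cohomology. Both points should be harmless, because all degrees involved are even and \(\eta_p\) is constant.

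\textbf{Step 4: the formula for \(p\star p\).} Expand in the basis of \(U\):
\[
 p\star p=\sum_{l,m\in I_U}\langle p\star p,\Delta_l\rangle_X\,g^{lm}\Delta_m .
\]
By \cref{lm:closed-potential-product-pairing}, \(\langle p\star p,\Delta_l\rangle_X=\langle p,p\star\Delta_l\rangle_X\).
\begin{itemize}
\item For \(l\in I_L\), Step 3 gives \(\langle p,p\star\Delta_l\rangle_X=\mathcal L(\Delta_l)\langle p,p\rangle_X=2(-1)^{n/2}\mathcal L(\Delta_l)\).
\item For \(l=L\), the coefficient is \(\Phi_{ppp}\).
\end{itemize}
Since \(g^{iL}=0\) for \(i\in I_L\), the inverse metric is block diagonal. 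Substituting therefore gives the displayed formula for \(p\star p\).
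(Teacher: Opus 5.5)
Your proof is correct and follows the paper's argument essentially step for step: you apply \cref{lm:tensor-potential-second-component} to a closed representative of $p$, differentiate along $u$, identify $[\q_{\emptyset,2}^{\gamma_W}(p,\partial_u\gamma_W)]$ with $p\star u=\lambda(u)p$, divide by $\mathfrak i(p)=2(-1)^{n/2}$, and then determine $p\star p$ by pairing against $\mathcal A$ and against $p$. The only difference is cosmetic: the paper sets $s=0$ before differentiating, whereas you differentiate first and then take the $s$-degree-zero projection, which is harmless because $\partial_u s=0$.
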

\begin{proof}
We first prove $\lambda(u)=-\partial_uc_0$ for $u\in\mathcal A$.
Represent $p$ by a fixed closed form, also denoted by $p$.
Taking $\eta=p$ and $s=0$ in
\cref{lm:tensor-potential-second-component} gives
\[
 \mathfrak i_Q\!\left([\q_{\emptyset,1}^{\gamma_W}(p)]\right)
 =-c_0\mathfrak i_Q(p).
\]
Write $u=\sum_{i\in I_L}f_i\Delta_i$.  Then
$\partial_u=\sum_{i\in I_L}f_i\partial_{t_i}$ and
$[\partial_u\gamma_W]=u$.  Since $p$ is independent of the $t_i$,
the deformation formula in \cref{dfn:bg-deformation-qempty} gives
\[
\begin{aligned}
 \partial_u\q_{\emptyset,1}^{\gamma_W}(p)
 &=\partial_u\sum_{l\ge0}\frac1{l!}
   \q_{\emptyset,l+1}(p,\gamma_W^{\otimes l})\\
 &=\sum_{l\ge1}\frac{l}{l!}
   \q_{\emptyset,l+1}
   (p,\partial_u\gamma_W,\gamma_W^{\otimes(l-1)})\\
 &=\q_{\emptyset,2}^{\gamma_W}(p,\partial_u\gamma_W).
\end{aligned}
\]
The factor $l$ comes from differentiating each of the $l$ copies of
$\gamma_W$; \cref{ax:qempty-symmetry} makes these $l$ terms equal.
By the definition of $\star$ in
\cref{subsubsec:intro-closed-potential} and
\cref{lm:closed-operations-subalgebra-U},
\[
 [\q_{\emptyset,2}^{\gamma_W}(p,\partial_u\gamma_W)]
 =p\star u.
\]
By \cref{lm:coefficient-actions-cone}, the even derivation
$\partial_u$ commutes with $\mathfrak i_Q$ on cohomology.
Moreover, $\partial_u\mathfrak i_Q(p)=0$, since $p$ is fixed.
Applying $\partial_u$ to
$\mathfrak i_Q([\q_{\emptyset,1}^{\gamma_W}(p)])
=-c_0\mathfrak i_Q(p)$ and using
$p\star u=\lambda(u)p$ from
\cref{lm:liftable-orthogonal-submodule} therefore gives
\[
 \lambda(u)\mathfrak i_Q(p)
 =\mathfrak i_Q(p\star u)
 =-(\partial_uc_0)\mathfrak i_Q(p).
\]
Since $\mathfrak i_Q(p)=2(-1)^{n/2}$, it follows that
$\lambda(u)=\mathcal L(u)\in Q_W$.

For $v\in\mathcal A$, \cref{lm:closed-potential-product-pairing} gives
\[
 \langle p\star p,v\rangle_X
 =\langle p,p\star v\rangle_X
 =2(-1)^{n/2}\mathcal L(v).
\]
Together with $\langle p\star p,p\rangle_X=\Phi_{ppp}$, this
determines $p\star p$.
\end{proof}

Define
\begin{equation}\label{eq:boundary-character-discriminant-coefficient}
\begin{aligned}
 D&:=\mathcal L\bigl(p\star p-g^{LL}\Phi_{ppp}p\bigr)\\
  &=2(-1)^{n/2}\sum_{l,m\in I_L}
       g^{lm}(\partial_lc_0)(\partial_mc_0)\in Q_W.
\end{aligned}
\end{equation}

\begin{lm}
\label{lm:boundary-three-constraint-equation}
In $R_W^0$, one has
\[
 D\,\partial_i\bigl([s^3/3!]\overline\Omega\bigr)
 =\bigl([s^3/3!]\overline\Omega\bigr)\partial_iD,
 \qquad i\in I_L.
\]
\end{lm}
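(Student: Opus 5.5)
The plan is to specialize the non-commutative open WDVV equation (\cref{thm:intro-open-wdvv}) to $u=(0,s)$, so that $\partial_u=\mathcal E_s$, and to $v=\Gamma_i$, $w=\Gamma_j$ with $i,j\in I_L$. Then I extract the coefficient of $[s^3]$ in $\Coker\mathfrak i=HH_0(R_W)$, since $P_\R=0$ here. Since $\overline{\mathcal E_s}=0$, the $\Phi$-term on the right-hand side drops out. The boundary Euler identity (\cref{thm:intro-boundary-euler}) gives $\overline\Omega_{u\Gamma_l}=\partial_l\mathcal E_s\overline\Omega=[-(\partial_lc)s]$ and $\overline\Omega_{uw}=[-(\partial_jc)s]$.

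Write $c=c_0+c_2s^2+\cdots$, where only even powers of $s$ occur. In $HH_0(R_W)$ the classes of positive even powers of $s$ vanish, while odd powers survive. By the boundary Euler identity, $[s^3]\overline\Omega=-c_2/3$ up to the normalization fixed by \cref{lm:s-degree-decomposition}, so $[s^3/3!]\overline\Omega$ is a fixed nonzero multiple of $c_2$. Taking $[s^3]$-coefficients, and using $\mathcal E_sc=2c_2s^2+\cdots$ and $[s^1]\overline\Omega_{ij}=-\partial_i\partial_jc_0$, I expect an identity of the form
\[
 \sum_{l\in I_L,\,m\in I_U}(\partial_lc_2)\,g^{lm}\Phi_{\Delta_m\Delta_j\Delta_i}
 -2c_2\,\partial_i\partial_jc_0
 =(\partial_ic_0)(\partial_jc_2)+(\partial_ic_2)(\partial_jc_0),
\]
with signs to be checked. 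This is a linear first-order system in $\partial_\bullet c_2$ with coefficients in $Q_W$.

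Next, I compute $\partial_iD$ from \eqref{eq:boundary-character-discriminant-coefficient}. This needs $\partial_i\partial_lc_0=-\partial_i\mathcal L(\Delta_l)$, which I get from the $s^1$-coefficient of the same WDVV specialization, or equivalently from multiplicativity of $\mathcal L$ (\cref{thm:open-character}). That gives $\partial_i\partial_lc_0=\sum\Phi_{il\bullet}g^{\bullet\bullet}\partial_\bullet c_0-(\partial_ic_0)(\partial_lc_0)$, together with a correction from the $p$-component of $\Delta_i\star\Delta_l$, which by \cref{lm:boundary-primitive-products} is $g^{LL}\langle\Delta_i\star\Delta_l,p\rangle_X=g^{LL}\langle\Delta_i,\mathcal L(\Delta_l)p\rangle_X=0$ for $\Delta_i\in p^\perp$. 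Substituting into $\partial_iD=4(-1)^{n/2}\sum g^{lm}(\partial_i\partial_lc_0)(\partial_mc_0)$, I expect $\partial_iD=-2(\partial_ic_0)D+(\text{a }\Phi\text{-term})$. The $\Phi$-term should be handled by the associativity and symmetry of $\Phi_{\bullet\bullet\bullet}$ contracted with $g^{lm}$, as in \cref{lm:closed-potential-product-pairing}.

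The main obstacle is turning the system for $\partial_\bullet c_2$ into the scalar statement $\partial_ic_2\cdot D=c_2\,\partial_iD$. My first attempt is to contract the $[s^3]$-identity with $g^{jk}\partial_kc_0$ over $j,k\in I_L$. The left side should then become $\partial_ic_2$ times $D$ plus terms absorbed by multiplicativity of $\mathcal L$ applied to $\Delta_i\star\Delta_m$. If that contraction does not close up, the fallback is to use $u=v=(0,s)$, or $u=(0,s)$ with $w=1$, to obtain additional constraints. A further fallback is to argue that $c_2$ and $D$ both satisfy the same linear equation $\partial_if=\theta_if$ with the same $\theta_i$; then $D\partial_ic_2-c_2\partial_iD=0$ follows immediately, and dividing by the constant relating $c_2$ to $[s^3/3!]\overline\Omega$ gives the lemma. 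Throughout, all signs are trivial because $Q_W$ and $R_W^0$ are even.
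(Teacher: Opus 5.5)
Your ``first attempt'' is the right route, and it is the paper's proof written in coordinates. The paper applies the block identity of \cref{lm:boundary-euler-block}, whose instance $\alpha=\Delta_j$, $u=\Delta_i$ is exactly your WDVV specialization, to the single vector $\alpha=p\star p-g^{LL}\Phi_{ppp}p=-2(-1)^{n/2}\sum_{j,k\in I_L}g^{jk}(\partial_kc_0)\Delta_j$. That is, it contracts with your vector $\sum_kg^{jk}\partial_kc_0$. The eigenvector property $\alpha\star\Delta_i=\mathcal L(\Delta_i)\alpha$ that it uses is what multiplicativity of $\mathcal L$ on $\Delta_i\star\Delta_m$ gives you.

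However, the sign you left ``to be checked'' is decisive. The right side of the WDVV equation is $-(\partial_ic)\cdot\overline\Omega_{uw}=[(\partial_ic)(\partial_jc)s]$, so the correct $[s^3]$-identity is
\[
 -\sum_{l,m\in I_L}(\partial_lc_2)g^{lm}\Phi_{\Delta_m\Delta_j\Delta_i}
 +2c_2\,\partial_i\partial_jc_0
 =(\partial_ic_0)(\partial_jc_2)+(\partial_ic_2)(\partial_jc_0).
\]
Put $S=\sum_{j,k\in I_L}g^{jk}(\partial_jc_0)(\partial_kc_0)$, so $D=2(-1)^{n/2}S$, and contract with $\sum_kg^{jk}\partial_kc_0$.
\begin{itemize}
\item Multiplicativity gives $\sum_{j,k}\Phi_{\Delta_m\Delta_j\Delta_i}g^{jk}\partial_kc_0=-\mathcal L(\Delta_m\star\Delta_i)=-(\partial_mc_0)(\partial_ic_0)$. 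So the $\Phi$-term becomes $(\partial_ic_0)\sum_{l,m}g^{lm}(\partial_lc_2)(\partial_mc_0)$ and cancels the first product on the right.
\item The second-derivative term is $c_2\,\partial_iS$ by the Leibniz rule.
\item The second product on the right becomes $(\partial_ic_2)S$.
\end{itemize}
Hence $c_2\,\partial_iS=(\partial_ic_2)S$, which is the lemma because $[s^3/3!]\overline\Omega=-2c_2$. With your displayed sign, the two cross terms add instead of cancelling, and you only get $\partial_i(c_2S)=-2(\partial_ic_0)\sum_{l,m}g^{lm}(\partial_lc_2)(\partial_mc_0)$. Note also that the term $(\partial_ic_2)D$ comes from the $\partial_vc$-side, not from the left side as you anticipated.

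Your middle paragraph should be discarded. It is unnecessary, since second derivatives of $c_0$ enter only through $c_2\,\partial_iS$. It is also wrong. Because $\mathcal E_sc$ has no $s^0$ term, the $s^1$-coefficient of this WDVV contains no second derivatives at all. It is just multiplicativity,
\[
\sum_{m,k}\Phi_{\Delta_i\Delta_l\Delta_m}g^{mk}\partial_kc_0=-(\partial_ic_0)(\partial_lc_0).
\]
Substituting this into your formula for $\partial_i\partial_lc_0$ gives $\partial_i\mathcal L(\Delta_l)=2\mathcal L(\Delta_i)\mathcal L(\Delta_l)$. This fails in the unit direction: $\mathcal L(1_X)=1$ identically, so the left side is $0$ while the right side is $2$. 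Your expected form of $\partial_iD$ therefore has no basis, and neither it nor the fallbacks are needed. A minor point: here $\Coker\mathfrak i=HH_0(R_W)/Q_W$, not $HH_0(R_W)$, since $[L]\neq0$; this does not affect the $s^3$-component.
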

\begin{proof}
Since $W'=W$, we have $\kappa^*=\Id$ and
$\mathcal L(u)=-\partial_uc_0$, where $c_0=c|_{s=0}$.
Put
\begin{align*}
 \alpha&:=p\star p-g^{LL}\Phi_{ppp}p\\
 &\overset{\ref{lm:boundary-primitive-products}}{=}
 2(-1)^{n/2}\sum_{l,m\in I_L}
 g^{lm}\mathcal L(\Delta_l)\Delta_m\in\mathcal A.
\end{align*}
For $i\in I_L$, associativity gives
\begin{align*}
 \alpha\star\Delta_i
 &=p\star(p\star\Delta_i)-g^{LL}\Phi_{ppp}(p\star\Delta_i)\\
 &\overset{\ref{lm:boundary-primitive-products}}{=}
 \mathcal L(\Delta_i)\alpha
 =-(\partial_ic_0)\alpha.
\end{align*}
Since $\partial_ic_0\in Q_W$ and $j_{\mathcal E_s}$ is $Q_W$-linear,
\begin{align}
 \partial_i(c-c_0)\cdot j_{\mathcal E_s}(\alpha)
 &=(\partial_ic)\cdot j_{\mathcal E_s}(\alpha)
   +j_{\mathcal E_s}(\alpha\star\Delta_i)\notag\\
 &\overset{\ref{lm:boundary-euler-block}}{=}
 (\mathcal E_sc)\cdot j_{\Delta_i}(\alpha).
 \label{eq:boundary-extension-three-constraint-wdvv}
\end{align}

Substituting the expression for $\alpha$ into the $Q_W$-linear map
$j_{\mathcal E_s}$ and using $\sigma(\Delta_m)=a(\Gamma_m)$, we compute
\begin{align*}
 [j_{\mathcal E_s}(\alpha)]_{s^1}
 &\overset{\ref{lm:enhanced-potential-second-derivative}}{=}
 2(-1)^{n/2}\sum_{l,m\in I_L}g^{lm}\mathcal L(\Delta_l)
 [\mathcal E_s\partial_m\overline\Omega]_{s^1}\\
 &\overset{\ref{thm:enhanced-superpotential-axioms}\textup{\ref{item:enhanced-s-equation}}}{=}
 -2(-1)^{n/2}\sum_{l,m\in I_L}
 g^{lm}\mathcal L(\Delta_l)(\partial_mc_0)[s]\\
 &=2(-1)^{n/2}\sum_{l,m\in I_L}
 g^{lm}\mathcal L(\Delta_l)\mathcal L(\Delta_m)[s]\\
 &\overset{\ref{eq:boundary-character-discriminant-coefficient}}{=}D[s].
\end{align*}
Similarly, $Q_W$-linearity of $j_{\Delta_i}$ gives
\begin{align*}
 [j_{\Delta_i}(\alpha)]_{s^1}
 &\overset{\ref{lm:enhanced-potential-second-derivative}}{=}
 2(-1)^{n/2}\sum_{l,m\in I_L}g^{lm}\mathcal L(\Delta_l)
 [\partial_i\partial_m\overline\Omega]_{s^1}\\
 &\overset{\ref{thm:enhanced-superpotential-axioms}\textup{\ref{item:enhanced-s-equation}}}{=}
 2(-1)^{n/2}\sum_{l,m\in I_L}
 g^{lm}\mathcal L(\Delta_l)\partial_i\mathcal L(\Delta_m)[s]\\
 &=(-1)^{n/2}\partial_i\!\left(
 \sum_{l,m\in I_L}g^{lm}\mathcal L(\Delta_l)\mathcal L(\Delta_m)
 \right)[s]\\
 &\overset{\ref{eq:boundary-character-discriminant-coefficient}}{=}
 \tfrac12(\partial_iD)[s].
\end{align*}
The third equality follows from the Leibniz rule, since $g^{lm}$ is
constant and symmetric.

Since $c$ has only even powers of $s$,
\begin{align*}
 -\tfrac12D\,\partial_i\bigl([s^3/3!]\overline\Omega\bigr)[s^3]
 &\overset{\ref{thm:enhanced-superpotential-axioms}\textup{\ref{item:enhanced-s-equation}}}{=}
 D\,\partial_i([s^2]c)[s^3]\\
 &=\bigl[\partial_i(c-c_0)\cdot j_{\mathcal E_s}(\alpha)\bigr]_{s^3}\\
 &\overset{\ref{eq:boundary-extension-three-constraint-wdvv}}{=}
 \bigl[(\mathcal E_sc)\cdot j_{\Delta_i}(\alpha)\bigr]_{s^3}\\
 &=2[s^2]c\cdot\tfrac12(\partial_iD)[s^3]\\
 &\overset{\ref{thm:enhanced-superpotential-axioms}\textup{\ref{item:enhanced-s-equation}}}{=}
 -\tfrac12\bigl([s^3/3!]\overline\Omega\bigr)\partial_iD[s^3].
\end{align*}
By \cref{lm:s-degree-one-detector}, the map $q\mapsto q[s^3]$ is
injective on $R_W^0$, so this proves the lemma.
\end{proof}

Let
\[
 Q'_W:=Q_W[\xi]/\bigl(\xi^2-(g^{LL}\Phi_{ppp})^2-4D\bigr),
 \qquad |\xi|=n.
\]
Define $Q_W$-linear maps
\[
 \mathcal L_\pm\colon U\otimes Q_W\longrightarrow Q'_W
\]
by
\[
 \mathcal L_\pm(\alpha+fp)
 =\mathcal L(\alpha)+\frac f2
 \bigl(g^{LL}\Phi_{ppp}\pm\xi\bigr),
 \qquad \alpha\in\mathcal A,\quad f\in Q_W,
\]
where $\xi$ also denotes its class in $Q'_W$.

\begin{prop}
\label[prop]{prop:boundary-character-extensions}
The maps $\mathcal L_\pm$ are unital degree-zero characters of
$(U\otimes Q_W,\star)$ extending $\mathcal L$.
\end{prop}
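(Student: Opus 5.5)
The plan is to reduce everything to the single relation satisfied by $p$ under $\star$. By \cref{lm:boundary-primitive-products}, $U\otimes Q_W=\mathcal A\oplus Q_Wp$ as a $Q_W$-module, with $p\star u=\mathcal L(u)p$ for $u\in\mathcal A$ and $p\star p=\alpha+ap$. Here $a:=g^{LL}\Phi_{ppp}\in Q_W$ and $\alpha:=p\star p-ap\in\mathcal A$, and by definition $\mathcal L(\alpha)=D$. A $Q_W$-linear map $\mathcal L_\pm$ that restricts to $\mathcal L$ on $\mathcal A$ is multiplicative as soon as it is multiplicative on the three kinds of products $\mathcal A\star\mathcal A$, $\mathcal A\star p$, and $p\star p$. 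This is because $\star$ is $Q_W$-bilinear and graded commutative, and $Q'_W$ is commutative (the parity of $\xi$ is $n$, which is even). Unitality is immediate from $\mathcal L(1)=1$ and $1\in\mathcal A$. Degree zero holds because $|\xi|=n=|p|$ and $|g^{LL}\Phi_{ppp}|=n$; the latter follows from $|\Phi_{ppp}|=3n-2n$ in the unshifted convention, so that $\Phi_{ppp}$ has the degree of $p$.

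The three checks proceed as follows.

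\begin{enumerate}
\item On $\mathcal A\star\mathcal A$, multiplicativity is \cref{thm:open-character} with $\kappa^*=\Id$.
\item On $u\star p$ with $u\in\mathcal A$, we have $\mathcal L_\pm(u\star p)=\mathcal L(u)\mathcal L_\pm(p)$, since $u\star p=\mathcal L(u)p$ and $\mathcal L_\pm$ is $Q_W$-linear. Products $(fu)\star(gp)$ then reduce to this case by linearity.
\item For $p\star p$, the requirement is $\mathcal L_\pm(p)^2=D+a\,\mathcal L_\pm(p)$. That is, $z=\mathcal L_\pm(p)=\tfrac12(a\pm\xi)$ must be a root of $z^2-az-D$. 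Expanding gives
\[
 z^2-az-D=\tfrac14\bigl(a^2\pm2a\xi+\xi^2\bigr)-\tfrac12\bigl(a^2\pm a\xi\bigr)-D
 =\tfrac14\bigl(\xi^2-a^2-4D\bigr)=0
\]
in $Q'_W$, by the defining relation of $Q'_W$.
\end{enumerate}

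It remains to confirm that general products are covered. Writing $x=\alpha_1+f_1p$ and $y=\alpha_2+f_2p$, the product $x\star y$ expands into the three cases above, using graded commutativity of $\star$ (all classes in $U$ are even) and the fact that $Q_W$ is even, so no signs intervene. Multiplicativity of $\mathcal L_\pm$ then follows by bilinearity, since $\mathcal L$ and $\mathcal L_\pm$ agree on $\mathcal A$ and satisfy $\mathcal L(q\,u)=q\mathcal L(u)$ for $q\in Q_W$.

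The main subtle point is bookkeeping rather than conceptual. One must check that $\mathcal L$ takes values in $Q_W$ (this is \cref{lm:boundary-primitive-products}), so that $\mathcal L_\pm$ lands in $Q'_W$ and $D\in Q_W$. One must also check that $p\star p$ has exactly the stated decomposition, with $\mathcal A$-component $\alpha$ whose $\mathcal L$-value is $D$. Both were established earlier, so once they are in hand the proof is the elementary quadratic computation above.
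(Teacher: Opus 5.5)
Your proposal is correct and takes the same approach as the paper. Both use the splitting $U\otimes Q_W=\mathcal A\oplus Q_Wp$ and the relation $p\star u=\mathcal L(u)p$, which together reduce multiplicativity to requiring that $z=\mathcal L_\pm(p)$ satisfy $z^2-g^{LL}\Phi_{ppp}z-D=0$, and then check that $\tfrac12\bigl(g^{LL}\Phi_{ppp}\pm\xi\bigr)$ does so in $Q'_W$. Your explicit expansion of general products and your degree count only spell out what the paper states in one line.
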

\begin{proof}
Let $B$ be a graded commutative $Q_W$-algebra.  Since
$U\otimes Q_W=\mathcal A\oplus Q_Wp$, a degree-zero $Q_W$-linear
extension of $\mathcal L$ to $U\otimes Q_W$ is uniquely determined
by its value $z\in B^n$ on $p$.
For $\alpha,\beta\in\mathcal A$ and $f,g\in Q_W$,
\cref{lm:boundary-primitive-products} gives
\[
 (\alpha+fp)\star(\beta+gp)
 =\alpha\star\beta+fg(p\star p)
  +\bigl(f\mathcal L(\beta)+g\mathcal L(\alpha)\bigr)p.
\]
Since $\mathcal L$ is a character, multiplicativity is equivalent
to
\[
 z^2-g^{LL}\Phi_{ppp}z-D=0.
\]
The extension preserves the unit because $\mathcal L(1_X)=1$.
Taking $B=Q'_W$, the elements
$z=\tfrac12(g^{LL}\Phi_{ppp}\pm\xi)$ satisfy this equation and
have degree $n$.  The corresponding extensions are
$\mathcal L_\pm$.
\end{proof}

\begin{rem}
\label{rem:boundary-three-constraint-proportionality}
If $D|_{t=0}$ is invertible after an extension of the Novikov
coefficients, then, after completing in the bulk variables,
\cref{lm:boundary-three-constraint-equation} gives
\[
 [s^3/3!]\overline\Omega
 =\frac{\left.([s^3/3!]\overline\Omega)\right|_{t=0}}
        {D|_{t=0}}\,D.
\]
Indeed, $D$ is then a unit, and the quotient
$([s^3/3!]\overline\Omega)/D$ is independent of the bulk variables.
Thus the proportionality factor is determined by the zero-bulk
values.  For the even quadric, we will prove
$\Phi_{ppp}=0$ and $D=2[s^3/3!]\overline\Omega$ in
\cref{subsec:even-quadric-boundary-character}.
\end{rem}

\subsection{The boundary character on the Euler element}
\label{subsec:euler-spectral}

We continue with the point-like pair \((\gamma_W,b)\), its Maurer--Cartan
coefficient \(c\), and the map \(\mathcal L\) defined in the preceding
subsection.  We compute its value on the Euler element of \(\mathcal A\).
We write
\[
    R_{W'}^0:=\kappa^*R_W^0
    :=\operatorname{Im}\!\left(
      \kappa^*\colon R_W^0\longrightarrow R_W^0
    \right).
\]

\subsubsection{The \texorpdfstring{\(s\)}{s}-degree-zero Maurer--Cartan
coefficient}

Write \(c_0=c|_{s=0}\).  Since evaluation at \(s=0\) preserves the
degree of \(c\), we have \(|c_0|=2\).

The divisor equation descends to \(c_0\) as follows.

\begin{lm}
\label{lm:mc-divisor-at-boundary-zero}
For every \(A\in W^2\),
\[
    (\partial_A-E_A)c_0=0.
\]
\end{lm}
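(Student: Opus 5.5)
The plan is to combine the enhanced divisor equation of \cref{thm:enhanced-superpotential-axioms}\ref{item:enhanced-divisor} with the \(s\)-equation \ref{item:enhanced-s-equation}. Both \(\partial_A\) and \(E_A\) are degree-zero derivations that vanish on \(s\), so they commute with \(\mathcal E_s\) and with evaluation at \(s=0\). Consider \(\delta=\mathcal E_s\). It vanishes on \(R_W^0\), and \(\mathcal E_s\overline\Omega=[-cs]\).

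First, apply \(\partial_A-E_A\) to this identity. Since \(\mathcal E_s\) commutes with \(\partial_A\) and \(E_A\) (both kill \(s\) and \(\mathcal E_s\) kills \(R_W^0\), so the graded commutators vanish on generators), the left side becomes
\[
\mathcal E_s(\partial_A-E_A)\overline\Omega=\mathcal E_sP_Q(A\smile\Gamma_W).
\]
Now \(P_Q(A\smile\Gamma_W)\) lies in \(\Coker\mathfrak i_Q\), whose elements have coefficients in \(Q_L\) and hence \(s\)-degree zero. Therefore \(\mathcal E_s\) kills it. The right side is \([-((\partial_A-E_A)c)s]\), using \(\partial_A s=E_A s=0\). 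So
\[
[((\partial_A-E_A)c)\,s]=0 \quad\text{in }\Coker\mathfrak i.
\]

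Next, take the \(s\)-degree-one component via \cref{lm:s-degree-decomposition}. Write \(c=c_0+(\text{terms of positive } s\text{-degree})\). The \(s\)-degree-one part of \([((\partial_A-E_A)c)s]\) is \([((\partial_A-E_A)c_0)s]\), since \(\partial_A-E_A\) preserves \(s\)-degree. By \cref{lm:s-degree-one-detector}, \(q\mapsto q[s]\) is injective on \(R_W^0\) into \(\Coker\mathfrak i\). This gives \((\partial_A-E_A)c_0=0\).

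The main point to check is that the \(s\)-degree-one component of \(\Coker\mathfrak i\) really detects \(R_W^0\) faithfully. The image of \(\mathfrak i\) lies in \(Q_L\), which has \(s\)-degree zero, and \(HH_0\) does not identify \(s\cdot R_W^0\) with anything smaller, because \(R_W^0\) is central. This is exactly the content of \cref{lm:s-degree-one-detector}, which I am taking as given. A secondary check is that \(E_A\) commutes with \(P\), which is \cref{lm:splitting-commutes-derivations}, since \(E_A\in\mathfrak g\).
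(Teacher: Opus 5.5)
Your proof is correct and follows essentially the paper's argument. You combine the $s$-equation with the divisor equation, use that $\partial_A$ and $E_A$ preserve $s$-degree while $P_Q(A\smile\Gamma_W)$ has $s$-degree zero, and conclude by injectivity of $q\mapsto q[s]$ from \cref{lm:s-degree-one-detector}. The only difference is that the paper first extracts $[\overline\Omega]_{s^1}=-c_0[s]$ and then takes the $s$-degree-one part of the divisor equation, which avoids your commutation of $\mathcal E_s$ with $\partial_A$ and $E_A$; this is a reordering, not a different route.
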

\begin{proof}
\cref{thm:enhanced-superpotential-axioms}
\textup{\ref{item:enhanced-s-equation}} gives
\(\mathcal E_s\overline\Omega=[-cs]\).  So,
\[
    [\overline\Omega]_{s^1}
    =[\mathcal E_s\overline\Omega]_{s^1}
    =-c_0[s].
\]
\cref{thm:enhanced-superpotential-axioms}
\textup{\ref{item:enhanced-divisor}} gives
\[
    (\partial_A-E_A)\overline\Omega
    =P_Q(A\smile\Gamma_W).
\]
The right-hand side has \(s\)-degree zero.  Since \(\partial_A\) and
\(E_A\) preserve \(s\)-degree, taking the \(s\)-degree-one component gives
\[
    -(\partial_A-E_A)c_0[s]=0.
\]
Multiplication by \([s]\) is injective on \(R_W^0\) by
\cref{lm:s-degree-one-detector}, which proves the claim.
\end{proof}

\subsubsection{The Euler element and its relative lift}

We assume that \(c_1(TX)\) has the following relative lift.

\begin{assum}[Relative lift of \(c_1(TX)\)]
\label{assum:monotone-divisor-direction}
There is a class \(A_\omega\in(W')^2\) with a representative in
\(\mathcal D_X^{\mathrm{adm}}\) such that
\begin{equation}\label{eq:monotone-divisor-direction}
    \rho(A_\omega)=c_1(TX),
    \qquad
    \int_\beta A_\omega=\frac{\mu_L(\beta)}2
    \quad(\beta\in\Pi).
\end{equation}
\end{assum}

\begin{rem}
In the de~Rham setting, the relative Maslov class
\(\mu_L/2\in H^2(X,L;\R)\) maps to \(c_1(TX)\) and annihilates
\(S_L\).  When \(0\ne c_1(TX)\in U\), choose its divisor
representative from this relative class and choose
\(\mathcal D_X^{\mathrm{adm}}\) compatibly.  Since \(n>1\), we have
\((W')^2=W^2\), so this gives
\cref{assum:monotone-divisor-direction}.  This is a choice of the bulk
representative; it requires no proportionality between area and Maslov
index on all relative homology classes.

A sufficient condition for using the particular representative
\(\lambda^{-1}\omega\) is
\[
    \omega(\beta)=\frac{\lambda}{2}\mu_L(\beta)
    \quad(\beta\in\Pi),\qquad\lambda>0.
\]
In this case \(A_\omega=[\lambda^{-1}\omega]\), with admissibility
following from \(S_L\subset\ker\omega\).
\end{rem}

The Euler element \(E\) of \eqref{eq:euler-element} corresponds to the
Euler vector field of big quantum cohomology \cite{dubrovin1996geometry}
under the identification \(\partial_{t_m}\leftrightarrow\Delta_m\),
after setting the closed bulk coordinates outside \(U_L\) to zero.
We lift it to \(W'\otimes Q_W\) using the isomorphism
\(\rho:W'\to U_L\).

\begin{dfn}[The relative Euler lift]
\label{dfn:liftable-euler-element}
Under \cref{assum:monotone-divisor-direction}, the Euler element
\(E\in\mathcal A^2\) of \eqref{eq:euler-element} has a relative lift and an
associated derivation
\begin{align}
    \widetilde E
    &:=A_\omega+
    \sum_{m\in I_L}
    \left(1-\frac{|\Delta_m|}{2}\right)t_m\Gamma_m,
    \notag\\
    \partial_E:=\partial_{\widetilde E}
    &=\partial_{A_\omega}+
    \sum_{m\in I_L}
    \left(1-\frac{|\Delta_m|}{2}\right)
    t_m\partial_{\Gamma_m}.
    \label{eq:euler-direction}
\end{align}
\end{dfn}

By construction, \(\rho(\widetilde E)=E\).  Moreover,
\(|t_m|=2-|\Delta_m|\), and hence
\(1-|\Delta_m|/2=|t_m|/2\) is half the degree of \(t_m\).
These weights give the homogeneity relation needed to evaluate the
character on \(E\).

\begin{lm}
\label{lm:euler-superpotential-homogeneity}
Suppose \cref{assum:monotone-divisor-direction} holds.  Then
\[
    \kappa^*(\partial_Ec_0)=\kappa^*c_0.
\]
\end{lm}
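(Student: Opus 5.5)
The plan is to combine three ingredients: the degree (homogeneity) of \(c_0\), the divisor identity of \cref{lm:mc-divisor-at-boundary-zero} for \(A_\omega\), and \cref{assum:monotone-divisor-direction}, which converts Novikov weights into degrees. By \eqref{eq:euler-direction},
\[
 \partial_Ec_0=\partial_{A_\omega}c_0+\sum_{m\in I_L}\frac{|t_m|}{2}\,t_m\partial_{\Gamma_m}c_0 .
\]
\cref{lm:mc-divisor-at-boundary-zero} gives \(\partial_{A_\omega}c_0=E_{A_\omega}c_0\). By \eqref{eq:monotone-divisor-direction}, \(E_{A_\omega}\) multiplies \(T^\beta\) by \(\mu_L(\beta)/2=|T^\beta|/2\). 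Hence \(E_{A_\omega}\) is the half-degree operator on the Novikov factor.

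It remains to treat the bulk terms, which need \(\kappa^*\). The operator \(\sum_{m\in I_L}\tfrac{|t_m|}{2}t_m\partial_{\Gamma_m}\) is half the degree operator in the coordinates dual to \(W'\). It does not see the complementary coordinates, those of \(\operatorname{Im}\overline y\), which \(\kappa^*\) sets to zero. Since \(\partial_{\Gamma_m}\) for \(m\in I_L\) involves only \(W'\)-coordinates, it commutes with \(\kappa^*\). The same holds for \(E_{A_\omega}\), which acts only on \(T^\beta\). Therefore
\[
 \kappa^*(\partial_Ec_0)=\mathrm{Eu}\,(\kappa^*c_0),
\]
where \(\mathrm{Eu}\) is the derivation of \(R_{W'}^0\) given by multiplication by \(\tfrac12(\text{degree})\) on each monomial \(T^\beta\prod_{m\in I_L} t_m^{k_m}\). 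The factor \(K_L\) sits in degree zero, so it contributes nothing.

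Now \(|c_0|=2\), because evaluation at \(s=0\) preserves degree and \(|c|=2\) by \cref{dfn:bounding-pair}. Hence every monomial of \(\kappa^*c_0\) has total degree \(2\). The Euler derivation therefore acts on it as multiplication by \(1\), which gives \(\kappa^*(\partial_Ec_0)=\kappa^*c_0\).

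The main point to check is that the full degree of a monomial in \(R_{W'}^0\) really equals \(\mu_L(\beta)+\sum_m k_m|t_m|\). This requires that the coefficients of \(c_0\) carry no hidden degree from \(K_L\) or elsewhere. It also requires that the possibly infinite sums defining \(c_0\) are compatible with termwise application of the Euler derivation. The latter follows from continuity of derivations in \cref{app:derivation-continuity}. A second subtlety is the \(W'\)-coordinates of degree two, those with \(|t_m|=0\), such as the one dual to \(A_\omega\) itself. These get weight zero in \(\partial_E\) and contribute zero degree, so they are consistent with the formula. Coordinates with \(|\Gamma_m|=2\) are handled in the same way by the weight \(1-|\Delta_m|/2=0\).
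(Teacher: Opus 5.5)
Your proof is correct and follows the paper's argument. You replace $\partial_{A_\omega}c_0$ by $E_{A_\omega}c_0$ using \cref{lm:mc-divisor-at-boundary-zero}, commute $\kappa^*$ past $E_{A_\omega}$ and past the $\partial_{\Gamma_m}$ with $m\in I_L$, and then use that every monomial $T^\beta t^a$ of $\kappa^*c_0$ satisfies $\mu_L(\beta)+\sum_m a_m|t_m|=2$, because $|\kappa^*c_0|=2$ and $K_L$ sits in degree zero. Packaging the weights as a half-degree Euler derivation is only a restatement of the paper's coefficientwise computation.
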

\begin{proof}
Write
\[
    \kappa^*c_0=\sum_{\beta,a}c_{\beta,a}T^\beta t^a,
    \qquad
    t^a=\prod_{m\in I_L}t_m^{a_m},
    \qquad c_{\beta,a}\in K_L.
\]
Since \(K_L\) is concentrated in degree zero and \(|\kappa^*c_0|=2\),
\[
    c_{\beta,a}\ne0
    \quad\Longrightarrow\quad
    \mu_L(\beta)+\sum_{m\in I_L}a_m|t_m|=2.
\]
Recall that \(\kappa^*\) fixes the Novikov variables and the coordinates
\(t_m\), \(m\in I_L\), and sets the complementary coordinates to zero.
Thus it commutes with \(E_{A_\omega}\) and with
\(\partial_{\Gamma_m}\) for \(m\in I_L\).
By \cref{lm:mc-divisor-at-boundary-zero} and
\eqref{eq:euler-direction},
\[
\begin{aligned}
    \kappa^*(\partial_Ec_0)
    &=\sum_{\beta,a}
    \left(
      \int_\beta A_\omega+
      \frac12\sum_{m\in I_L}a_m|t_m|
    \right)c_{\beta,a}T^\beta t^a\\
    &=\kappa^*c_0,
\end{aligned}
\]
where the last equality uses \eqref{eq:monotone-divisor-direction}.
\end{proof}

We can now identify the value of the boundary character on the Euler
element.

\begin{thm}[The boundary character on the Euler element]
\label{thm:euler-character}
Suppose \cref{assum:monotone-divisor-direction} holds and
\[
    \kappa^*\!\left(
      \left.(\partial_sc)\right|_{s=0}
    \right)=0.
\]
Then
\[
    \mathcal L(E)=-\kappa^*c_0.
\]
\end{thm}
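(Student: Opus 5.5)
The proof combines \cref{lm:euler-superpotential-homogeneity} with the definition of \(\mathcal L\) in \cref{thm:open-character}. By \cref{dfn:liftable-euler-element}, the Euler element \(E\) lies in \(\mathcal A^2\). Its lift \(\widetilde E\in W'\otimes Q_W\) satisfies \(\rho(\widetilde E)=E\). The derivation \(\partial_E=\partial_{\widetilde E}\) is therefore exactly the derivation that \cref{subsubsec:intro-character} attaches to \(E\) through the unique lift in \(W'\otimes Q_W\). By definition, \(\mathcal L(E)=-\kappa^*(\partial_Ec_0)\).

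Before combining the two results, I would check that \(\partial_E\) is a legitimate element of \(\mathfrak g_{\mathrm{for}}\) to which the earlier lemmas apply. The term \(\partial_{A_\omega}\) is a constant-coefficient bulk derivation. Each term \(t_m\partial_{\Gamma_m}\) is \(Q_W\)-linear in \(\partial_{\Gamma_m}\), with degree \(|t_m|+|\partial_{\Gamma_m}|=0\). So \(\partial_E\) is a homogeneous degree-zero derivation that vanishes on \(\Lambda\) and on \(s\). The hypothesis \(\kappa^*((\partial_sc)|_{s=0})=0\) is only needed so that \(\mathcal L\) is the character of \cref{thm:open-character}; the computation of \(\mathcal L(E)\) does not use it.

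Next I would apply \cref{lm:euler-superpotential-homogeneity}, which gives \(\kappa^*(\partial_Ec_0)=\kappa^*c_0\). This lemma rests on the divisor identity \((\partial_{A_\omega}-E_{A_\omega})c_0=0\) of \cref{lm:mc-divisor-at-boundary-zero}, on \(\int_\beta A_\omega=\mu_L(\beta)/2\), and on \(|c_0|=2\). Substituting, \(\mathcal L(E)=-\kappa^*(\partial_Ec_0)=-\kappa^*c_0\).

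The only delicate point is that the lift used in the character must match \(\widetilde E\) exactly, including the weights on \(t_m\), rather than some other preimage under \(\rho\). This holds because \(A_\omega\in W'\) by \cref{assum:monotone-divisor-direction}, each \(\Gamma_m\) lies in \(W'\) for \(m\in I_L\), and \(\rho|_{W'}\) is injective, so \(\widetilde E\) is the unique lift in \(W'\otimes Q_W\).
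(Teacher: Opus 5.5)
Your proposal is correct and matches the paper's proof. Both arguments identify \(\mathcal L(E)\) with \(-\kappa^*(\partial_E c_0)\) through the unique lift \(\widetilde E\in W'\otimes Q_W\), using that \(\partial_E\) annihilates \(s\), and then apply \cref{lm:euler-superpotential-homogeneity}; your remarks that the hypothesis on \((\partial_s c)|_{s=0}\) is not used in this computation, and that \(\partial_E\) is a degree-zero element of \(\mathfrak g_{\mathrm{for}}\), are also accurate.
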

\begin{proof}
The derivation \(\partial_E\) annihilates \(s\).  Therefore, by the
definition of \(\mathcal L\) and
\cref{lm:euler-superpotential-homogeneity},
\[
    \mathcal L(E)
    =-\kappa^*\!\left(
      \left.(\partial_Ec)\right|_{s=0}
    \right)
    =-\kappa^*(\partial_Ec_0)
    =-\kappa^*c_0.
\]
\end{proof}

Under the hypotheses of \cref{thm:euler-character},
\cref{thm:open-character} makes \(\mathcal L\) a character.  Hence
\[
    \mathcal L(\alpha\star E)
    =-\kappa^*c_0\cdot\mathcal L(\alpha)
    \qquad(\alpha\in\mathcal A).
\]
The content of \cref{thm:euler-character} is the identification of this
Euler eigenvalue with the \(s\)-degree-zero Maurer--Cartan coefficient.

\subsection{Floer cohomology implications}
Continue with the point-like bounding pair \((\gamma_W,b)\) and its
Maurer--Cartan coefficient \(c\).

\begin{lm}\label{lm:point-like-floer-nonvanishing}
Assume \(\left.(\partial_sc)\right|_{s=0}=0\).  Then the Floer cohomology
group
\[
HF\bigl(L,\q_1^{\gamma_W,b|_{s=0}};R_W^0\bigr)
\]
is nonzero.
\end{lm}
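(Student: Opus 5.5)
The plan is to show that the class of the unit \(1_L\) is a nonzero cocycle, by building a cochain map out of the Floer complex that sends \(1_L\) to something nonzero. Put \(b_0:=b|_{s=0}\) and \(c_0:=c|_{s=0}\). Evaluation at \(s=0\) is the projection \(R_W\to R_W^0\). It is a ring map on the subalgebra \(R_W^0\langle s\rangle\) and kills the ideal \((s)\). It therefore commutes with the operations extended by rules (E1)--(E4). Applying it to the Maurer--Cartan equation gives \(\q_{0,0}^{\gamma_W,b_0}=c_0\cdot1\) over the graded-commutative ring \(R_W^0\). The deformed structure equations of \cref{lm:bg-deformed-axioms} then give \((\q_1^{\gamma_W,b_0})^2=0\), since the curvature term is central. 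So the group \(HF\) is defined.

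\textbf{The unit is a cocycle.} By the unit axiom \ref{ax:qkl-unit}, after deformation \(\q_1^{\gamma_W,b_0}(1_L)=\q_{2,0}^{\beta_0}(b_0,1)+\q_{2,0}^{\beta_0}(1,b_0)\). The two terms cancel because \(|b_0|=1\).

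\textbf{A detecting functional.} Next I would use the \(s\)-linear part of \(b\). Set \(\mathrm{pt}:=(\partial_sb)|_{s=0}\), up to the sign in Figure~\ref{fig:sheridan-character-comparison}. Point-likeness gives \(\langle1,\mathrm{pt}\rangle_L=\pm1\). Apply \(\partial_s\) to the Maurer--Cartan equation and set \(s=0\). This gives \(\q_1^{\gamma_W,b_0}(\mathrm{pt})=\pm(\partial_sc)|_{s=0}\cdot1_L\), and the hypothesis says this vanishes. So \(\mathrm{pt}\) is a cocycle. Here one must check that \(\partial_s\) of a product of several copies of \(b\), evaluated at \(s=0\), leaves only terms with exactly one \(\partial_sb\) factor. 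This holds because \(\partial_s\) is a derivation and \(s=0\) kills the other \(s\)-factors.

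Now consider the functional \(\phi(x):=\langle x,\mathrm{pt}\rangle_L\) on the Floer complex. By cyclic symmetry \ref{ax:qkl-cyclic-symmetry} of the deformed operations,
\[
\langle\q_1^{\gamma_W,b_0}(x),\mathrm{pt}\rangle_L=\pm\langle x,\q_1^{\gamma_W,b_0}(\mathrm{pt})\rangle_L=0 .
\]
Hence \(\phi\) vanishes on coboundaries. On the other hand \(\phi(1_L)=\pm\langle1,\mathrm{pt}\rangle_L=\pm1\ne0\). Therefore \([1_L]\ne0\), and \(HF\ne0\).

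\textbf{Main obstacle.} The delicate step is cyclic symmetry for the \(b_0\)-deformed \(\q_1\), with correct signs over \(R_W^0\). One must show that \(\langle\q_{k+1}(b^{\otimes i},x,b^{\otimes j}),y\rangle\) can be rotated to \(\pm\langle x,\q_{k+1}(b^{\otimes j},y,b^{\otimes i})\rangle\). This uses \ref{ax:qkl-cyclic-symmetry} repeatedly, and it works because \(R_W^0\) is graded-commutative, so coefficients can be moved past each other freely. The sign only needs to be uniform across the sum, not computed exactly.

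A second point needs checking: the pairing must send only the exceptional zero-energy terms into \(1\)-pairings. This is where I would use the top-degree axiom \ref{ax:qkl-top-degree}. Alternatively, one can avoid it by noting that \(\phi(1_L)\) is computed directly from \(\langle1,b\rangle_L=-s\).
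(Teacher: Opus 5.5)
Your argument is correct, and it is the mirror image of the paper's. The paper shows that the cocycle \(\mathrm{pt}=(\partial_sb)|_{s=0}\) is not exact, using the functional \(x\mapsto\langle1,x\rangle_L\). That functional kills the whole image of \(\q_1^{\gamma_W,b|_{s=0}}\), by \(\q_1^{\gamma_W,b|_{s=0}}(1_L)=0\) and cyclic symmetry, with no hypothesis needed. In the paper, the assumption \((\partial_sc)|_{s=0}=0\) serves only to make \(\mathrm{pt}\) closed.

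You swap the roles. You show \([1_L]\neq0\) using the functional \(x\mapsto\langle x,\mathrm{pt}\rangle_L\), which descends to cohomology precisely because the hypothesis makes \(\mathrm{pt}\) closed. Both proofs rest on one observation: the cocycles \(1_L\) and \(\mathrm{pt}\) pair to the unit \(\langle1,\mathrm{pt}\rangle_L=(-1)^n\), and the pairing of cocycles descends to cohomology. So in fact both classes are nonzero.

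Your version proves nonvanishing of the unit class. That is the natural formulation for the unital algebra \(HF\). It is also exactly the class that Figure~\ref{fig:sheridan-character-comparison} and \cref{lm:point-like-floer-annihilation} show dies when \((\partial_sc)|_{s=0}\) is invertible. The paper's version instead exhibits an explicit nonzero class of degree \(n\), the point class.

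Two small remarks. Your ``second point'' about the top-degree axiom is unnecessary. The value \(\langle1,\mathrm{pt}\rangle_L=(-1)^{n-1}\partial_s\langle1,b\rangle_L=(-1)^n\) follows directly from the rule \(\delta\langle1,b\rangle_L=(-1)^{|\delta|}\langle1,\delta b\rangle_L\) of \cref{dfn:scalar-extension-interval-pairing}. Also, evaluation at \(s=0\) is a ring homomorphism on all of \(R_W\), since it kills the two-sided ideal \((s)\), so you need not restrict to a subalgebra.
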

\begin{proof}
The structure axiom \ref{ax:qkl-structure-equation} with \(k=1\), \(l=0\),
and the unit axiom \ref{ax:qkl-unit} hold after deformation by
\cref{lm:bg-deformed-axioms}.  Since
\(\q_0^{\gamma_W,b|_{s=0}}=c|_{s=0}\cdot1_L\) with
\(c|_{s=0}\in\mathcal Z(R_W^0)\),
the two curvature terms cancel, so \(\q_1^{\gamma_W,b|_{s=0}}\) squares to zero.
Applying \(\partial_s\) to the Maurer--Cartan equation
\(\q_0^{\gamma_W,b}=c\cdot1_L\) and setting \(s=0\) gives
\[
 \pm\q_1^{\gamma_W,b|_{s=0}}
 \left(\left.(\partial_sb)\right|_{s=0}\right)
 =\left.(\partial_sc)\right|_{s=0}\cdot1_L=0.
\]
Thus \(\left.(\partial_sb)\right|_{s=0}\) is a degree \(n\) Floer cocycle.
The point-like condition
\(\langle1,b\rangle_L=-s\) and the definition of the extension of scalars of the pairing give
\[
 \left\langle1,\left.(\partial_sb)\right|_{s=0}\right\rangle_L=(-1)^n.
\]
By \cref{lm:bg-deformed-axioms}, the cyclic-symmetry axiom
\ref{ax:qkl-cyclic-symmetry} and the unit axiom \ref{ax:qkl-unit} also hold
for the \((\gamma_W,b|_{s=0})\)-deformed operations.  The unit axiom gives
\(\q_1^{\gamma_W,b|_{s=0}}(1_L)=0\); cyclic symmetry with \(k=1\), \(l=0\),
therefore gives
\[
 \left\langle1,\q_1^{\gamma_W,b|_{s=0}}(x)\right\rangle_L
 =\pm\left\langle\q_1^{\gamma_W,b|_{s=0}}(1_L),x\right\rangle_L=0
\]
for every homogeneous \(x\). Hence \(\left.(\partial_sb)\right|_{s=0}\)
cannot be exact, and its class proves the nonvanishing assertion.
\end{proof}
\begin{lm}\label{lm:point-like-floer-annihilation}
We have
\[
 \left.(\partial_sc)\right|_{s=0}\cdot HF\!\left(
 L,\q_1^{\gamma_W,b|_{s=0}};
 R_W^0
 \right)=0.
\]
\end{lm}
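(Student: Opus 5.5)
The plan is to exhibit an explicit cochain \(\mathrm{pt}\) with \(\q_1^{\gamma_W,b|_{s=0}}(\mathrm{pt})=\left.(\partial_sc)\right|_{s=0}\cdot1_L\), as suggested by panel (c) of \cref{fig:sheridan-character-comparison}. Then use the unit to transport this exactness to every Floer cocycle. Write \(b_0=b|_{s=0}\), \(\mathfrak m_k=\q_k^{\gamma_W,b_0}\), and \(c'=\left.(\partial_sc)\right|_{s=0}\in R_W^0\).

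\emph{Step 1.} Apply the derivation \(\partial_s\), which vanishes on \(R_W^0\), to the Maurer--Cartan equation \(\q_{0,0}^{\gamma_W,b}=c\cdot1_L\), and then set \(s=0\). I differentiate the deformation formula term by term, using rule (E1) of \cref{dfn:scalar-extension-structure-maps} as in the proof of \cref{lm:relative-potential-derivative}. Each summand \(\q_{k,l}(b^{\otimes k};\gamma_W^{\otimes l})\) yields the sum over positions of \(\partial_sb\) inserted into a slot. Setting \(s=0\) and resumming gives \(\pm\mathfrak m_1(\partial_sb|_{s=0})\). This is the computation already invoked in the proof of \cref{lm:point-like-floer-nonvanishing}. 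With \(\mathrm{pt}:=\pm\partial_sb|_{s=0}\), the sign fixed as in the figure caption, this yields
\[
\mathfrak m_1(\mathrm{pt})=c'\cdot1_L .
\]
Since \(R_W^0\) is graded commutative and \(c'\) is central, multiplication by \(c'\) is a well-defined module action on \(HF\).

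\emph{Step 2.} Let \(x\) be an \(\mathfrak m_1\)-cocycle. I use the \(k=2\) deformed structure equation from \cref{lm:bg-deformed-axioms} applied to \((\mathrm{pt},x)\), with the curvature \(\mathfrak m_0=c_0\cdot1_L\). By the unit axiom \ref{ax:qkl-unit}, the terms \(\mathfrak m_3(\mathfrak m_0,\mathrm{pt},x)\) and its permutations reduce to zero, or cancel in pairs since \(c_0\) is central of even degree. This gives
\[
\mathfrak m_1\mathfrak m_2(\mathrm{pt},x)\pm\mathfrak m_2(\mathfrak m_1\mathrm{pt},x)\pm\mathfrak m_2(\mathrm{pt},\mathfrak m_1x)=0 .
\]
The last term vanishes because \(\mathfrak m_1x=0\). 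For the middle term, \(\mathfrak m_2(c'1_L,x)=\pm c'x\) by the unit axiom, extended over the coefficients. Note that the unit axiom excludes higher-energy and bulk contributions once \(1_L\) is inserted, so the deformed \(\mathfrak m_2(1_L,x)\) equals \(\q_{2,0}^{\beta_0}(1_L,x)=x\). Therefore \(c'x=\pm\mathfrak m_1\mathfrak m_2(\mathrm{pt},x)\) is exact, and \(c'\cdot HF=0\).

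\emph{Main obstacle.} The main difficulty is the bookkeeping in Step 2 with noncommutative coefficients. The scalar extension multiplies the coefficients of boundary inputs in order, so \(\mathfrak m_2(c'1_L,x)\) must be checked to equal \(c'x\) up to sign. This uses the fact that \(c'\in R_W^0\) is even and central, and that \(b_0\) has coefficients in \(R_W^0\). One must also confirm that the curvature terms in the \(A_\infty\) relation cancel, which is exactly as in the proof that \(\mathfrak m_1^2=0\) in \cref{lm:point-like-floer-nonvanishing}. Signs are irrelevant to the conclusion, so I would only track them enough to confirm that no term fails to cancel.
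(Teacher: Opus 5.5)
Your proposal is correct and follows the paper's proof essentially step for step. You differentiate the Maurer--Cartan equation in $s$ and set $s=0$ to get $\q_1^{\gamma_W,b|_{s=0}}((\partial_sb)|_{s=0})=\pm(\partial_sc)|_{s=0}\cdot1_L$. You then apply the deformed $k=2$ structure equation to the inputs $((\partial_sb)|_{s=0},x)$ and use $\q_2^{\gamma_W,b|_{s=0}}(1_L,x)=x$.

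One point to tighten: the three $\q_3^{\gamma_W,b|_{s=0}}$ curvature terms vanish individually by \ref{ax:qkl-unit}, since every summand has at least three boundary inputs, one of them a multiple of $1_L$. So no pairwise cancellation is needed.
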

\begin{proof}
Applying \(\partial_s\) to the Maurer--Cartan equation of \cref{dfn:bounding-pair},
using the sign rule in \cref{rem:scalar-extension-shifts}, and setting
\(s=0\) gives
\[
 (-1)^{n-1}\q_1^{\gamma_W,b|_{s=0}}
 \left(\left.(\partial_sb)\right|_{s=0}\right)
 =\left.(\partial_sc)\right|_{s=0}\cdot1_L.
\]
Let \(x\) be a homogeneous Floer cocycle. Apply the structure axiom
\ref{ax:qkl-structure-equation}, in its deformed form from
\cref{lm:bg-deformed-axioms}, with \(k=2\), \(l=0\), and inputs
\(\left.(\partial_sb)\right|_{s=0}\) and \(x\). Substituting
\(\q_0^{\gamma_W,b|_{s=0}}=c|_{s=0}\cdot1_L\), we obtain
\[
\begin{aligned}
0={}&
\q_1^{\gamma_W,b|_{s=0}}
\left(\q_2^{\gamma_W,b|_{s=0}}
\left(\left.(\partial_sb)\right|_{s=0},x\right)\right)
\\
&+\q_2^{\gamma_W,b|_{s=0}}
\left(\q_1^{\gamma_W,b|_{s=0}}
\left(\left.(\partial_sb)\right|_{s=0}\right),x\right)
\\
&+(-1)^{n+1}\underbrace{\q_2^{\gamma_W,b|_{s=0}}
\left(\left.(\partial_sb)\right|_{s=0},
\q_1^{\gamma_W,b|_{s=0}}(x)\right)}_{=0}
\\
&+\underbrace{\q_3^{\gamma_W,b|_{s=0}}
\left(c|_{s=0}\cdot1_L,
\left.(\partial_sb)\right|_{s=0},x\right)}_{=0}
\\
&+(-1)^{n+1}\underbrace{\q_3^{\gamma_W,b|_{s=0}}
\left(\left.(\partial_sb)\right|_{s=0},
c|_{s=0}\cdot1_L,x\right)}_{=0}
\\
&+(-1)^{n+|x|}\underbrace{\q_3^{\gamma_W,b|_{s=0}}
\left(\left.(\partial_sb)\right|_{s=0},
x,c|_{s=0}\cdot1_L\right)}_{=0}.
\end{aligned}
\]
The third term is zero because \(\q_1^{\gamma_W,b|_{s=0}}(x)=0\).
The last three terms are zero by the unit axiom \ref{ax:qkl-unit},
after scalar extension and deformation as in
\cref{dfn:scalar-extension-structure-maps,lm:bg-deformed-axioms}.
Substituting the identity obtained by applying \(\partial_s\) to the
Maurer--Cartan equation into the two remaining terms, and then using
\(\q_2^{\gamma_W,b|_{s=0}}(1_L,x)=x\) from the same unit axiom, gives
\[
 \begin{gathered}
 \q_1^{\gamma_W,b|_{s=0}}
 \left(\q_2^{\gamma_W,b|_{s=0}}
 \left(\left.(\partial_sb)\right|_{s=0},x\right)\right)
 \\=(-1)^n\q_2^{\gamma_W,b|_{s=0}}
 \left(\left.(\partial_sc)\right|_{s=0}\cdot1_L,x\right)
 =(-1)^n\left.(\partial_sc)\right|_{s=0}x.
 \end{gathered}
\]
Thus \(\left.(\partial_sc)\right|_{s=0}x\) is exact for every Floer
cocycle \(x\), proving the assertion over \(R_W^0\).
\end{proof}

\begin{rem}
\label{rem:point-like-obstruction-orders}
In even dimension, the existence of a point-like bounding pair is a
strictly stronger condition than unobstructedness in the ordinary sense.  Indeed,
\(\left.(\partial_sc)\right|_{s=0}=0\) automatically when \(n\) is even, so
by \cref{lm:point-like-floer-nonvanishing} the existence of a point-like
bounding pair forces
\[
 HF\bigl(L,\q_1^{\gamma_W,b|_{s=0}};R_W^0\bigr)\ne0 .
\]
Consequently, an even-dimensional monotone Lagrangian with vanishing self-Floer
cohomology admits no point-like bounding pair.
\end{rem}

\section{Open Gromov--Witten invariants of the even-dimensional quadric}
\label{sec:even-quadric-background}

We use the even quadric pair \((X,L)\), with \(n>2\) even, and
the orientation, relative spin structure, and cohomological notation of
\cref{subsec:intro-even-quadric}.
We apply the flatness equation of
\cref{thm:structural-open-wdvv,thm:intro-open-wdvv} to this pair.
The resulting recursions determine the enhanced open invariants from
\(\ogw_{1,3}\), computed in \cref{lm:even-quadric-seed}.
The case \(n=2\) is not covered.

\subsection{Cohomology and bounding pairs}

We use the scalar de~Rham model of \cref{subsec:de-rham-model}, under the
hypotheses of \cref{prop:de-rham-structure-maps}.
All numbered references to \cite{hugtenburg2024examples} below refer to
arXiv:2402.03209v1.
The classical ring \eqref{prop:even-quadric-topology} and relative basis
recalled in \cref{subsec:intro-even-quadric} follow from
\cite[Propositions~5.8--5.10 and Section~7.1]{hugtenburg2024examples}.
Take \(S_L=0\).  By \cite[Lemma~5.12]{hugtenburg2024examples},
the identification of absolute and relative degrees by the line class
makes \(\varpi=\Id\).  By
\cite[Remark~5.13 and Lemma~5.14]{hugtenburg2024examples},
\begin{equation}\label{lm:even-quadric-classes}
 \int_d\tilde h=d,\qquad c_1(TX)(d)=nd,\qquad
 \mu_L(d)=2nd,\qquad w_{\mathfrak{s}}=0.
\end{equation}
Put \(\Delta_i=h^i\) and \(\Delta_L=p\).
The ring \eqref{prop:even-quadric-topology} gives the metric and its inverse:
\begin{equation}\label{eq:even-quadric-metric}
 g_{ij}=2\delta_{i,n-j},\qquad g_{iL}=g^{iL}=0,\qquad
 g^{i,n-i}=\tfrac12,\qquad g^{LL}=\tfrac{(-1)^{n/2}}{2}.
\end{equation}

The relative complex \(\widehat A^*(X,L)\) of
\cite[Section~2.2]{hugtenburg2024examples} is precisely
\(\ker\mathfrak i_\R\).  The cone construction of
\eqref{eq:intro-cone} gives
\[
 \Cone{\mathfrak i_\R}=A^*(X)\oplus\R[-n-1],\qquad
 d_{\Cone}(\eta,\xi)=(d\eta,\mathfrak i_\R(\eta)).
\]
Since \(\mathfrak i_\R\) is surjective on cochains, the inclusion
\(\eta\mapsto(\eta,0)\) induces an isomorphism
\[
 H^*(\ker\mathfrak i_\R)\xrightarrow{\ \sim\ }
 H^*(\Cone{\mathfrak i_\R}).
\]
Thus the cited relative basis supplies the bulk classes used here;
\(\Gamma_0=1_X\), and \(\Gamma_1\) is represented in
\(\mathcal D_X^{\mathrm{adm}}\) by \(a_\omega^{-1}\omega\), where
\(a_\omega=\omega(1)>0\) is the area of the line class.

\begin{notn}[Coefficients]\label{notn:even-quadric-coefficients}
Set \(q=T^1\), so that \(\Lambda=\R[[q]]\) with \(|q|=2n\), and put
\[
 Q_U:=\Lambda[[t_0,\ldots,t_n,t_L]],\qquad
 Q_W:=\Lambda[[t_0,\ldots,t_n]],\qquad
 R_W:=Q_W\langle s\rangle,
\]
where \(|t_i|=2-2i\), \(|t_L|=2-n\), \(|s|=1-n\), and \(R_W\) is
completed as in \eqref{eq:restricted-s-completion}.
These are the rings of \cref{subsubsec:intro-coefficients} for
\(U=H^*(X;\R)\) and \(W=\operatorname{span}\{\Gamma_0,\ldots,\Gamma_n\}\),
with \(t_i\) dual to \(h^i\) and to \(\Gamma_i\), and \(t_L\) dual to \(p\).
Note that the unit coordinate is \(t_0\) here, rather than the \(t_1\) of
\cref{subsec:enhanced-axioms}.

Represent \(\Gamma_i\) and \(h^i=\rho(\Gamma_i)\) by the same closed form,
and set
\[
 \gamma_U=\sum_{i=0}^n h^it_i+p\,t_L,\qquad
 \gamma_W=\Gamma_W=\sum_{i=0}^n\Gamma_it_i.
\]
Then \(\rho^*\gamma_U=\gamma_W\) as cochains, as required in
\eqref{eq:wdvv-compatible-W-representatives}.
\end{notn}

\begin{prop}[Bounding pair and \(U_L\)]
\label[prop]{prop:even-quadric-standing-hypotheses}
There is a unique gauge class of point-like unit bounding pairs for the
bulk class \(\Gamma_W\) over the above coefficient system.
It has a representative with \(\gamma_W=\Gamma_W\) and
\(\langle1,b\rangle_L=-s\).  The identifications of
\(U_L,W',P_\R,\kappa^*\) in \cref{subsec:intro-even-quadric} hold,
\(\Coker\mathfrak i_Q=0\), and \(\mathcal A\) is a unital subalgebra
of \((U\otimes Q_W,\star)\).
\end{prop}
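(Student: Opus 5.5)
The proof will mostly assemble results already cited, plus direct computations with the classical ring \eqref{prop:even-quadric-topology}.

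\emph{Existence and uniqueness.} Here \(L=X(\R)\simeq S^n\) with \(n\) even, so \(H^*(L;\R)\simeq H^*(S^n;\R)\). Also \(L\) is monotone, since \(\mu_L(d)=2nd\) and \(\omega(d)=a_\omega d\) by \eqref{lm:even-quadric-classes}. \Cref{rem:point-like-existence}, i.e.\ \cite[Proposition~6.1\textup{(1)}]{kosloff2025pointlikenoncommutativef}, gives a bounding cochain \(b\) for \(\gamma_W=\Gamma_W\) with \(\int_Lb=s\). This means \(\langle1,b\rangle_L=-s\), so the pair is point-like. For the unit condition and the requirement \(\left.[T^{\beta_0}]b\right|_{t=0}\in C_L\otimes s\kk\langle s\rangle\), I would take \(b\) to be the one constructed there. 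Its zero-energy, zero-bulk part is a closed \(n\)-form of integral \(s\) times \(s\), which has the required form. Uniqueness of the gauge class is the uniqueness clause of the same paper. Its obstruction groups are \(H^{*}(L)\) in degrees where the relevant coefficients vanish; this uses \(n>2\) and \(\mu_L(1)=2n\). I would quote it rather than reprove it, checking only that its coefficient system matches \cref{notn:even-quadric-coefficients}.

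\emph{Identification of \(U_L,W',P_\R,\kappa^*\) and \(\Coker\mathfrak i_Q=0\).} The map \(\mathfrak i_\R\) on cohomology is \(a\mapsto\langle p,a\rangle_X\). From the ring, \(\int_X h^ip=0\) for all \(i\), since \(hp=0\) and \(\int_Xp=0\) because \(p\in H^n\) with \(n>0\). On the other hand \(\langle p,p\rangle_X=2(-1)^{n/2}\ne0\). Hence \(\mathfrak i_\R\) is surjective on cohomology, so \(\Coker\mathfrak i_\R=0\), and therefore \(\Coker\mathfrak i_Q=(\Coker\mathfrak i_K)\otimes Q_W=0\) and \(P_\R=0\).

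From this, \(W'=\ker P_\R|_W=W\), and \(\kappa\) is the identity, so \(\kappa^*=\Id\). Since \(\operatorname{Im}\overline y=0\), \(\rho\) is injective. Then \(U_L=U\cap\Ker\mathfrak i_\R=p^\perp=\operatorname{span}\{1,h,\dots,h^n\}\), using that the \(h^i\) are orthogonal to \(p\) and that \(p\) pairs nontrivially with itself. The relative classes \(\Gamma_i=\tilde h^i\) satisfy \(\rho(\Gamma_i)=h^i\), so \(W=\rho^{-1}(U)\).

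\emph{Subalgebra.} \Cref{assum:closed-frobenius-sector} holds for \(U=H^*(X;\R)\): Poincaré duality makes the pairing nondegenerate, and \(U\) is all of cohomology, so it is closed under \(\star_U\). \Cref{assum:universal-bounding-pair} holds because point-like pairs are separated. \Cref{cor:liftable-quantum-subalgebra} then shows that \(\mathcal A=U_L\otimes Q_W\) is a unital subalgebra.

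The only real obstacle is matching the hypotheses and conventions of the cited existence and uniqueness theorem. These include the free \(s\)-variable with completion \eqref{eq:restricted-s-completion}, the sign \(\int_Lb=s\) versus \(\langle1,b\rangle_L=-s\), and the unit and zero-energy normalization required in \cref{subsec:enhanced-axioms}. I would check these against that paper's statement term by term.
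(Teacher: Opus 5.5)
Your proposal is correct and follows essentially the same route as the paper. The paper also takes existence from Kosloff--Solomon's Proposition~6.1(1) and uniqueness from their uniqueness theorem, converts $\int_Lb=s$ into $\langle1,b\rangle_L=-s$ by the scalar-extension pairing convention, and uses $\mathfrak i_\R(p)=2(-1)^{n/2}$, $\mathfrak i_\R(h^i)=0$ to get $\Coker\mathfrak i_Q=0$, $P_\R=0$, $W'=W$ and $\kappa^*=\Id$, before concluding with \cref{cor:liftable-quantum-subalgebra}. The only difference is that the paper cites closed WDVV and the explicit metric for \cref{assum:closed-frobenius-sector}, whereas you use Poincar\'e duality and $U=H^*(X;\R)$ directly, which is equally valid.
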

\begin{proof}
We apply \cite[Proposition~6.1\textup{(1)}]
{kosloff2025pointlikenoncommutativef} with \(a=s\).
The cohomological hypothesis holds since \(L\simeq S^n\).
Thus there exists a bounding pair with \(\gamma_W=\Gamma_W\) and
\(\int_Lb=s\).  By \cite[Theorem~2 and equation~(6)]
{kosloff2025pointlikenoncommutativef}.
By the scalar-extension pairing convention of
\cref{dfn:scalar-extension-interval-pairing}, for
\(\eta\in A^n(L)\) with \(\int_L\eta=1\),
\[
 \langle\eta\otimes s,1\rangle_L=s,\qquad
 \langle1,\eta\otimes s\rangle_L=-s.
\]
Hence the constructed pair satisfies \(\langle1,b\rangle_L=-s\).

By \eqref{prop:even-quadric-topology}, 
\(\mathfrak i_\R(p)=2(-1)^{n/2}\) and \(\mathfrak i_\R(h^i)=0\).
Consequently \(\mathfrak i_\R\) is onto on cohomology, so
\(\Coker\mathfrak i_\R=\Coker\mathfrak i_Q=0\).
Hence \(P_\R=0\).  Since \(U=H^*(X;\R)\), the definitions above give
\(U_L=\Ker\mathfrak i_\R\), \(W'=W=\rho^{-1}(U)\), and
\(\kappa^*=\Id\).  Closed WDVV
\cite[Theorem~11]{hugtenburg2024examples} and the metric
\eqref{eq:even-quadric-metric} give
\cref{assum:closed-frobenius-sector}.  The bounding pair above satisfies
\cref{assum:universal-bounding-pair}.  By
\cref{cor:liftable-quantum-subalgebra}, \(\mathcal A\) is closed under
\(\star\).
\end{proof}

\subsection{Coefficients and initial values}

For an ordered tuple \(I=(i_1,\ldots,i_l)\), write
\(\Gamma_I=(\Gamma_{i_1},\ldots,\Gamma_{i_l})\), and similarly for
\(\Delta_I\); put \(\partial_I=\partial_{i_l}\cdots\partial_{i_1}\),
with \(\partial_\emptyset=\Id\).
By \cref{lm:s-degree-one-detector}, as completed \(Q_W\)-modules,
\begin{equation}\label{eq:even-quadric-cyclic-coefficients}
 \Coker\mathfrak i=HH_0(R_W)/Q_W
 =\widehat{\bigoplus}_{\substack{k\ge1\\k\text{ odd}}}Q_W[s^k].
\end{equation}
Thus the invariants vanish for positive even \(k\).  They also vanish
for \(k=0\), since \([L]\ne0\) forces the \(s^0\) component of
\(\Coker\mathfrak i\) to vanish.  By
\cref{lm:enhanced-superpotential-choice-from-base},
the image of \(\overline\Omega\) in \(HH_0(R_W)/Q_W\) is
\([\q_{-1,0}^{\gamma_W,b}]\).  Thus its coefficients with \(k>0\)
are the coefficients \(\ogw\) of that scalar potential, with the
normalization of \eqref{eq:intro-open-invariants}.

The closed degree axiom
\cite[Theorem~10\textup{(a)}, equation~(8)]{hugtenburg2024examples} and
\cref{prop:enhanced-ogw-axioms} give
\begin{equation}\label{lm:even-quadric-degree-constraints}
\begin{aligned}
 \GW_d(\Delta_I)\ne0\quad&\Longrightarrow\quad
 2n(d+1)-6+2l=\sum_{a=1}^l|\Delta_{i_a}|,\\
 \ogw_{d,k}(\Gamma_{i_1},\ldots,\Gamma_{i_l})\ne0
 \quad&\Longrightarrow\quad
 n-3+2nd+k+2l=nk+2\sum_{a=1}^l i_a.
\end{aligned}
\end{equation}
We use the zero, unit and divisor axioms of
\cref{prop:enhanced-ogw-axioms} in the recursions below.

\begin{prop}[Closed initial values]
\label[prop]{prop:even-quadric-closed-initial-values}
The nonzero classical three-point values are
\[
 \GW_0(h^i,h^j,h^a)=2\quad(i+j+a=n),\qquad
 \GW_0(p,p,1)=2(-1)^{n/2},
\]
up to permutation.  In degree one,
\[
 \GW_1(h^{n-1},h^n)=4,\qquad
 \GW_1(h^i,h^j,h^a)=
 \begin{cases}
 0,&(i,j,a)=(0,n,n),\\
 4,&i>0,\ a=n,\\
 8,&i>0,\ a<n,
 \end{cases}
\]
where \(0\le i\le j\le a\le n\) and \(i+j+a=2n\).
These are \cite[Proposition~5.8, Corollary~5.16, and Lemma~5.17]
{hugtenburg2024examples}.
\end{prop}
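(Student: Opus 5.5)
The statement is a transcription of results of \cite{hugtenburg2024examples}, so the plan is to re-derive the classical values directly from the cohomology ring and to import the degree-one values, checking at each stage that the conventions of the present paper agree with those of the cited source. The main points to verify are the sign in the pairing \(\langle\eta,\xi\rangle_X=(-1)^{|\xi|}\int_X\eta\wedge\xi\), the relative-spin sign \((-1)^{w_{\mathfrak s}(A)}\) in \(\q_{\emptyset,l}\), and the identification \(\varpi=\Id\).

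For the degree-zero values, I will use \cref{ax:qempty-energy}: in class \(A_0\) the only nonzero operation is \(\q_{\emptyset,2}^{A_0}(\eta_1,\eta_2)=\eta_1\wedge\eta_2\). So \(\GW_0(a,b,d)\) reduces to \(\int_X a\smile b\smile d\), and because all classes in \(U\) are even the sign in the pairing is trivial. The ring \eqref{prop:even-quadric-topology} with \(\int_Xh^n=2\) then gives \(\int_X h^ih^jh^a=2\) when \(i+j+a=n\). It also gives \(\int_Xp\smile p=(-1)^{n/2}\int_Xh^n=2(-1)^{n/2}\), and \(hp=0\) kills every mixed product involving \(p\) and a positive power of \(h\). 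Since \(p\smile p\smile p\) has degree \(3n>2n\), it vanishes as well. By graded symmetry (\cref{ax:qempty-symmetry}, with all degrees even), these are all the nonzero values up to permutation.

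For degree one, I will quote \cite[Corollary~5.16 and Lemma~5.17]{hugtenburg2024examples}. Since \(w_{\mathfrak s}=0\) by \eqref{lm:even-quadric-classes}, the relative-spin sign is \(+1\), so their closed invariants coincide with ours. I will check consistency of the listed triples with the degree constraint \eqref{lm:even-quadric-degree-constraints}: for \(d=1\) and \(l=3\) it reads \(4n=2(i+j+a)\), i.e.\ \(i+j+a=2n\), and for \(l=2\) it forces \(i+j=2n-1\), which gives the pair \((h^{n-1},h^n)\). Using the divisor axiom \cref{ax:qempty-divisor} with \(\int_1\tilde h=1\), I will cross-check \(\GW_1(h,h^{n-1},h^n)=\GW_1(h^{n-1},h^n)=4\), which matches the case \(i>0,\ a=n\). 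Similarly, \cref{ax:qempty-unit} gives the vanishing for \((0,n,n)\).

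The main obstacle is bookkeeping. The ordering of \cite[arXiv v1]{hugtenburg2024examples} and possible differences in the normalization of \(\GW_d\) (for example, whether the output pairing carries a sign) must match \eqref{eq:closed-potential}. I would settle this by comparing their definition of the potential with \eqref{eq:closed-potential} on the classical term \(\GW_0(1,h^i,h^{n-i})=2\).
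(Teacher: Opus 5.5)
Your proposal is correct and takes essentially the paper's route: the paper gives no argument beyond importing all of these values from \cite[Proposition~5.8, Corollary~5.16, and Lemma~5.17]{hugtenburg2024examples}, and, as you observe, no sign adjustment is needed because $w_{\mathfrak s}=0$ and every class in $U$ has even degree. What you add is the following: you recompute the degree-zero values from the ring \eqref{prop:even-quadric-topology} via \cref{ax:qempty-energy}, and you check the degree-one values against the degree constraint, the divisor axiom (giving $\GW_1(h,h^{n-1},h^n)=\GW_1(h^{n-1},h^n)=4$) and the unit axiom. These are sound consistency checks; they make your proof more self-contained than the paper's, but they do not amount to a different argument.
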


\subsection{Recursions}

To express the terms involving \(c\) in WDVV as open invariants, use the
\(s\)-equation of \cref{thm:enhanced-superpotential-axioms}
\textup{\ref{item:enhanced-s-equation}},
\([-cs]=\mathcal E_s\overline\Omega\).
All coefficients in \(Q_W\) are even, and \(|c|=2\) by
\cref{dfn:bounding-pair,dfn:structure-maps}, so \(c\) contains only even
powers of \(s\).  Since \(\mathcal E_s\) multiplies the coefficient of
\([s^k]\) by \(k\), by equation~\eqref{eq:s-euler-counts} of
\cref{lm:s-degree-one-detector}, comparison of
coefficients gives, for every ordered tuple \(I\), including the empty tuple,
\begin{equation}\label{eq:even-quadric-central-coefficient}
 \left[T^d\frac{s^{k-1}}{(k-1)!}\right]
 \left.\partial_I c\right|_{t=0}
 =-\ogw_{d,k}(\Gamma_I),
 \qquad d\ge0,\quad k\ge1\text{ odd}.
\end{equation}
The coefficient on the left is taken in \(R_W\); products involving
\(c\) use the central action on \(\Coker\mathfrak i\) recalled in
\cref{subsubsec:intro-coefficients}.

\begin{thm}[Open WDVV relations]
\label{thm:even-quadric-open-wdvv-relations}
For an ordered tuple \(J=(j_1,\ldots,j_q)\) and
\(A\subseteq[q]:=\{1,\ldots,q\}\), write
\[
    \Gamma_{J_A}:=(\Gamma_{j_a})_{a\in A},
    \qquad
    \Delta_{J_A}:=(\Delta_{j_a})_{a\in A},
\]
where the entries retain their original order.  A sum over
\(A\sqcup B=[q]\) runs over all complementary ordered subtuples; empty
strings of insertions are omitted, and
\(\Gamma_J:=\Gamma_{J_{[q]}}\), \(\Delta_J:=\Delta_{J_{[q]}}\).
All curve degrees and boundary summation indices below are nonnegative.
Set \(\Gamma_a=\Delta_a=0\) for \(a\notin\{0,\ldots,n\}\), and
set \(\binom ab=0\) unless \(0\le b\le a\).  Finally, set
\[
    \ogw_{d,k}:=\ogw_{d,k}(\,).
\]
Then the enhanced non-commutative open WDVV equation
\cref{thm:intro-open-wdvv} gives the following three
coefficient identities.
\begin{enumerate}[label=\textup{(\arabic*)},ref=\textup{\arabic*}]
\item If \(d>0\), \(k\ge1\), and \(2\le j\le n\), then
\begin{equation}\label{eq:even-quadric-wdvv-single}
\begin{aligned}
\ogw_{d,k}(\Gamma_j)
={}&
-\frac12
\sum_{\substack{\widehat d+d_0=d\\ d_0<d}}
\ \sum_{a=0}^{n}
\GW_{\widehat d}
   (\Delta_{j-1},\Delta_1,\Delta_a)\,
\ogw_{d_0,k}(\Gamma_{n-a})
\\
&-
\sum_{\substack{d_1+d_2=d\\ k_1+k_2=k-1}}
\binom{k-1}{k_1}
\ogw_{d_1,k_1}(\Gamma_{j-1},\Gamma_1)\,
\ogw_{d_2,k_2+2}
\\
&+
\sum_{\substack{d_1+d_2=d\\ k_1+k_2=k-1}}
\binom{k-1}{k_1}
\ogw_{d_1,k_1+1}(\Gamma_{j-1})\,
\ogw_{d_2,k_2+1}(\Gamma_1).
\end{aligned}
\end{equation}
Thus an invariant with one interior insertion is expressed in terms of closed
invariants and open invariants of smaller Novikov degree.  The cases \(j=0\)
and \(j=1\) are determined by \cref{prop:enhanced-ogw-axioms}.

\item Let \(l\ge2\), \(d,k\ge0\), and, after applying the unit and divisor
axioms, assume
\[
    2\le j_1\le j_2\le\cdots\le j_l\le n.
\]
Put \(J=(j_3,\ldots,j_l)\).  Then
\begin{equation}\label{eq:even-quadric-wdvv-multiple}
\begin{aligned}
&\ogw_{d,k}
  (\Gamma_{j_1},\Gamma_{j_2},\Gamma_J)
\\
={}&
\ogw_{d,k}
  (\Gamma_{j_2+1},\Gamma_{j_1-1},\Gamma_J)
\\
&+
\frac12
\sum_{\substack{\widehat d+d_0=d\\ d_0<d}}
\ \sum_{A\sqcup B=[l-2]}
\ \sum_{a=0}^{n}
\Bigl[
  \GW_{\widehat d}
    (\Delta_1,\Delta_{j_2},\Delta_{J_A},\Delta_a)\,
  \ogw_{d_0,k}
    (\Gamma_{n-a},\Gamma_{j_1-1},\Gamma_{J_B})
\\[-2pt]
&\hspace{12em}
 -
  \GW_{\widehat d}
    (\Delta_1,\Delta_{j_1-1},\Delta_{J_A},\Delta_a)\,
  \ogw_{d_0,k}
    (\Gamma_{n-a},\Gamma_{j_2},\Gamma_{J_B})
\Bigr]
\\
&+
\sum_{\substack{d_1+d_2=d\\ k_1+k_2=k\\
                 A\sqcup B=[l-2]}}
\binom{k}{k_1}
\Bigl[
  \ogw_{d_1,k_1}
    (\Gamma_1,\Gamma_{j_2},\Gamma_{J_A})\,
  \ogw_{d_2,k_2+1}
    (\Gamma_{j_1-1},\Gamma_{J_B})
\\[-2pt]
&\hspace{12em}
 -
  \ogw_{d_1,k_1}
    (\Gamma_1,\Gamma_{j_1-1},\Gamma_{J_A})\,
  \ogw_{d_2,k_2+1}
    (\Gamma_{j_2},\Gamma_{J_B})
\Bigr].
\end{aligned}
\end{equation}
The first term on the right is the only term of the same Novikov degree
that is not a product of lower terms; it replaces \(j_1\) by \(j_1-1\).

\item If \(d>0\) and \(k\ge1\), then
\begin{equation}\label{eq:even-quadric-wdvv-no-interior}
\begin{aligned}
0={}&
\frac12
\sum_{\widehat d+d_0=d+1}
\ \sum_{a=0}^{n}
\GW_{\widehat d}
  (\Delta_{n/2},\Delta_{n/2+1},\Delta_a)\,
\ogw_{d_0,k}(\Gamma_{n-a})
\\
&+
\sum_{\substack{d_1+d_2=d+1\\ k_1+k_2=k-1}}
\binom{k-1}{k_1}
\ogw_{d_1,k_1+2}\,
\ogw_{d_2,k_2}(\Gamma_{n/2},\Gamma_{n/2+1})
\\
&-
\sum_{\substack{d_1+d_2=d+1\\ k_1+k_2=k-1}}
\binom{k-1}{k_1}
\ogw_{d_1,k_1+1}(\Gamma_{n/2})\,
\ogw_{d_2,k_2+1}(\Gamma_{n/2+1}).
\end{aligned}
\end{equation}
The terms containing the unknown invariant with no interior insertions are
\[
    \left(
      4d+(k-1)\ogw_{1,1}(\Gamma_{n/2},\Gamma_{n/2+1})
    \right)\ogw_{d,k}
    =
    2(2d-k+1)\ogw_{d,k},
\]
where
\(\ogw_{1,1}(\Gamma_{n/2},\Gamma_{n/2+1})=-2\) by
\cref{thm:even-quadric-basic-values}.  All other terms in
\eqref{eq:even-quadric-wdvv-no-interior} have smaller open Novikov degree or
contain an interior insertion of degree greater than \(2\).  By the degree
constraint \eqref{lm:even-quadric-degree-constraints},
\(2(2d-k+1)\) vanishes only for \((d,k)=(1,3)\), whose value is
computed in \cref{lm:even-quadric-seed}.
\end{enumerate}
\end{thm}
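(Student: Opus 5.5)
Each of the three identities will come from \cref{thm:intro-open-wdvv} with a particular choice of $u$, $v$, and $w$. We then differentiate the resulting identity along a tuple of bulk directions, set $t=0$, and extract a coefficient of $T^d s^{k}$. The preliminary reductions are as follows. Since $P_\R=0$ and $W'=W$, all indices in $I_L$ range over $0,\dots,n$. By \eqref{eq:even-quadric-metric} we have $g^{lm}=\tfrac12\delta_{l,n-m}$ and $g^{iL}=0$, so the sum over $l\in I_L$, $m\in I_U$ collapses to $\tfrac12\sum_a\overline\Omega_{u\Gamma_{n-a}}\Phi_{\Delta_a\overline w\overline v}$. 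The term with $m=L$ does not contribute, because its partner index $l=L\notin I_L$. Every term of the form $(\partial_u c)\cdot\overline\Omega_{vw}$ is rewritten through \eqref{eq:even-quadric-central-coefficient}. That formula identifies $[T^{d}s^{k-1}/(k-1)!]\partial_I c|_{t=0}$ with $-\ogw_{d,k}(\Gamma_I)$. It also makes products of central elements with classes $[s^{k_2}]$ in $\Coker\mathfrak i$ expand into $[s^{k_1+k_2}]$, with the binomial factor $\binom{k_1+k_2}{k_1}$ coming from the $k!$ normalizations in \eqref{eq:intro-open-invariants}. Signs simplify because all of $Q_W$, $W$, and the derivations $\partial_{\Gamma_i}$ are even. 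The case $u=(0,s)$, i.e.\ $\mathcal E_s$, is odd only through $s$, and there the Koszul sign is trivial because $|\partial_u|=0$.

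For (1), take $u=\Gamma_1$ and $v=\Gamma_{j-1}$, and let $w$ run over the tuple of interior insertions, which is empty here. The identity then needs one further derivative, and the open WDVV identity is applied with $w=\Gamma_1$ as well. The cleaner choice is $u=\Gamma_{j-1}$, $v=\mathcal E_s$, and $w=\Gamma_1$. This works because $\overline{\mathcal E_s}=0$ kills the $\Phi$-term on the $v$ side, and $\mathcal E_s c$ and $\mathcal E_s\overline\Omega$ produce the $s$-shifted invariants $\ogw_{d_2,k_2+2}$ and $\ogw_{d_2,k_2+1}$. On the left side, $\Phi_{\Delta_a\Delta_1\Delta_{j-1}}$ contains the classical term $\GW_0(h^a,h,h^{j-1})$, which is nonzero only for $a=n-j$. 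Paired with $\overline\Omega_{\mathcal E_s\Gamma_{n-a}}$, this term yields $k\,\ogw_{d,k}(\Gamma_j)$ times a constant. That is the isolated unknown; all remaining terms either have $d_0<d$ or are products. The divisor axiom in \cref{prop:enhanced-ogw-axioms} converts $\partial_{\Gamma_1}$ factors on the closed side into multiplication by $\widehat d$ where needed. After dividing by the leading constant we obtain \eqref{eq:even-quadric-wdvv-single}.

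For (2), take $u=\Gamma_{j_1-1}$, $v=\Gamma_{j_2}$, and $w=\Gamma_1$ in \cref{thm:intro-open-wdvv}, and apply $\partial_J$. Differentiating the products distributes the insertions over $A\sqcup B$ by the Leibniz rule. In the zero-degree closed term, $\Phi_{\Delta_a\Delta_1\Delta_{j_2}}$ contributes only for $a=n-j_2-1$, giving $\ogw(\Gamma_{j_2+1},\Gamma_{j_1-1},\Gamma_J)$. On the other side, the term $a=n-j_1$ gives $\ogw(\Gamma_{j_1},\Gamma_{j_2},\Gamma_J)$. The $c$-terms become the product sums with $\binom{k}{k_1}$. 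For (3), take $u=\Gamma_{n/2}$, $v=\mathcal E_s$, and $w=\Gamma_{n/2+1}$, and extract the coefficient of $T^{d+1}$. We then locate the coefficient of the no-insertion invariant $\ogw_{d,k}$. One source is $\GW_1(h^{n/2},h^{n/2+1},h^a)$ paired with $\ogw_{d,k}(\Gamma_{n-a})$ with $n-a=1$, which by the divisor axiom gives $d\,\ogw_{d,k}$, with a constant read off from \cref{prop:even-quadric-closed-initial-values}. Another source is the product term containing $\ogw_{1,1}(\Gamma_{n/2},\Gamma_{n/2+1})$. The remaining claims in (3) follow from \eqref{lm:even-quadric-degree-constraints} by checking which $(d,k)$ make $2d-k+1=0$.

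The main obstacle will be the bookkeeping that turns central products in $\Coker\mathfrak i=\widehat\bigoplus_{k\text{ odd}}Q_W[s^k]$ into binomial-weighted products of invariants. Here the noncommutativity matters: $[s^2]=0$ but $s^2\cdot[s]=[s^3]$. The identification of $[s^{k_1}]c\cdot[s^{k_2}]$ with $[s^{k_1+k_2}]$ must therefore be justified via \cref{lm:s-degree-one-detector}, and the factorial normalizations must be tracked carefully. I would first verify the conventions on the case $d=1$, $k=1$ of (1) against $\ogw_{1,1}(\Gamma_n)=-2$, and only then carry out the general extraction.
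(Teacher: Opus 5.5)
Your proposal is correct and follows the paper's route. You use the same three applications of \cref{thm:intro-open-wdvv}: \(\mathcal E_s\) with two bulk classes for (1) and (3), and two bulk classes with \(w=\Gamma_1\) and \(\partial_J\) for (2). You then set \(t=0\), extract \([T^d s^k/k!]\) (or \([T^{d+1}s^k/k!]\) in (3)), divide by \(k\) where \(\mathcal E_s\) appears, and turn the \(c\)-terms into binomially weighted products via \eqref{eq:even-quadric-central-coefficient}.

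Your permutations of \((u,v,w)\) relative to the paper's choices are harmless. For example, \((\partial_{j-1}c)\cdot\mathcal E_s\partial_1\overline\Omega=(\partial_1c)\cdot\mathcal E_s\partial_{j-1}\overline\Omega\), by the \(s\)-equation and centrality of the even elements \(\partial_ic\). Two small points remain. The ``smaller Novikov degree'' assertions also need the zero axiom, which kills every degree-zero open factor (no insertion, two insertions, or a single nonunit insertion). And the divisor-axiom step you mention for the closed side of (1) is not actually used there.
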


We prove \cref{thm:even-quadric-open-wdvv-relations} in four lemmas,
corresponding to \cite[Lemmas~6.4--6.7]{hugtenburg2024examples}.
We first record the coefficient calculation used below,
following \cite[Theorem~1 and Section~6]{Relativequantum2025}.
The derivations \(\partial_{t_i}\) and \(\mathcal E_s\) are even, so the sign in
\cref{thm:structural-open-wdvv} is positive.
In \eqref{eq:intro-open-wdvv}, one index of each \(g^{lm}\) lies in
\(I_L\), and \(g^{iL}=0\) for \(i\in I_L\) by
\eqref{eq:even-quadric-metric}.  Hence the terms with index \(L\)
vanish, and the closed invariants in
\cref{thm:even-quadric-open-wdvv-relations} have only the classes
\(h^i\) as insertions.

For the boundary factors, \(s^e\cdot[s^f]=[s^{e+f}]\) for even \(e\), so
\[
 [s^k/k!]\left(\frac{s^e}{e!}\frac{[s^f]}{f!}\right)
 =\binom{k}{e}\qquad(e+f=k,\ e\text{ even},\ f\text{ odd}).
\]
After substituting for \(c\) by
\eqref{eq:even-quadric-central-coefficient}, the factors from
\(-(\mathcal E_sc)\partial_i\partial_j\overline\Omega\)
and \(-\mathcal E_s\partial_i\overline\Omega\cdot\partial_jc\),
respectively, and dividing by \(k\), are the following, where
\(a,b\) are the odd numbers of boundary insertions in the two invariants:
\[
 \frac1k\frac{k!(b-1)}{a!(b-1)!}
 =\binom{k-1}{a}\quad(a+b-1=k),\qquad
 \frac1k\frac{k!a}{a!(b-1)!}
 =\binom{k-1}{a-1}.
\]
The first expression is zero for \(b=1\).
These yield the two \(\binom{k-1}{k_1}\) sums in
\eqref{eq:even-quadric-wdvv-single} and
\eqref{eq:even-quadric-wdvv-no-interior}.  For the bulk derivations,
the same coefficient calculation gives \(\binom{k}{k_1}\), as in
\eqref{eq:even-quadric-wdvv-multiple}.

\begin{lm}
\label[lm]{lm:even-quadric-single-insertion-recursion}
For \(d>0\), \(k\ge1\), and \(2\le j\le n\), relation
\eqref{eq:even-quadric-wdvv-single} holds.  Every nonzero summand on
its right involves only open invariants of Novikov degree less than \(d\).
\end{lm}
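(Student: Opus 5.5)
I will apply \cref{thm:intro-open-wdvv} with the even derivations $\partial_u=\partial_{\Gamma_{j-1}}$, $\partial_v=\mathcal E_s$ and $w=\Gamma_1$, so that $\overline u=h^{j-1}$, $\overline v=0$, $\overline w=h$. Since $\overline v=0$, the term $\sum\overline\Omega_{u\Gamma_l}g^{lm}\Phi_{\Delta_m\overline w\overline v}$ on the left vanishes. The equation becomes
\[
-(\partial_{j-1}c)\cdot\mathcal E_s\partial_1\overline\Omega
=\sum_{l,m}\Phi_{h^{j-1}h\Delta_l}g^{lm}\mathcal E_s\partial_m\overline\Omega
-(\mathcal E_sc)\cdot\partial_{j-1}\partial_1\overline\Omega .
\]
The terms with index $L$ drop out because $g^{iL}=0$, and $g^{a,n-a}=\tfrac12$. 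In the classical part ($\widehat d=0$) of $\Phi_{h^{j-1}hh^a}$, the cup product gives $h^{j-1}\smile h=h^j$. Hence the $\widehat d=0$ contribution is $\tfrac12\cdot 2\,\mathcal E_s\partial_j\overline\Omega$, the $a=n-j$ term of the classical three-point function from \cref{prop:even-quadric-closed-initial-values}. This isolates $\mathcal E_s\partial_j\overline\Omega$, and the remaining closed terms carry $\widehat d>0$, so $d_0<d$.

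Next I extract the coefficient of $T^d[s^k]/k!$ at $t=0$ after applying the bulk derivations. The left side of the WDVV equation and the $\mathcal E_sc$ term both carry a factor of $c$. I convert these using \eqref{eq:even-quadric-central-coefficient}, which expresses $[T^{d}s^{k-1}/(k-1)!]\partial_Ic|_{t=0}$ as $-\ogw_{d,k}(\Gamma_I)$. I convert $\mathcal E_s\partial_I\overline\Omega$ by the $s$-equation: $\mathcal E_s$ multiplies the $[s^k]$-coefficient by $k$. After dividing by $k$, the multinomial bookkeeping recorded just before the lemma turns the two $c$-products into the two $\binom{k-1}{k_1}$ sums.

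The $-(\mathcal E_sc)\partial_{j-1}\partial_1\overline\Omega$ term yields $-\ogw_{d_1,k_1}(\Gamma_{j-1},\Gamma_1)\ogw_{d_2,k_2+2}$. The $-(\partial_{j-1}c)\mathcal E_s\partial_1\overline\Omega$ term, moved across, yields $+\ogw_{d_1,k_1+1}(\Gamma_{j-1})\ogw_{d_2,k_2+1}(\Gamma_1)$. Keeping track of the overall sign produced by the minus signs in \eqref{eq:even-quadric-central-coefficient} and $\mathcal E_s\overline\Omega=[-cs]$ gives \eqref{eq:even-quadric-wdvv-single}. I expect this sign and binomial bookkeeping in $HH_0(R_W)$ to be the main obstacle. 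It relies on $c$ having only even $s$-powers, so $s^e[s^f]=[s^{e+f}]$, and on $\ogw_{\cdot,k}$ vanishing for even $k$.

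For the degree claim, I check each product term. If $d_1=0$, the zero and divisor axioms of \cref{prop:enhanced-ogw-axioms} apply. With $j\ge2$, the factor $\ogw_{0,k_1}(\Gamma_{j-1},\Gamma_1)$ has $(k,l)=(k_1,2)$ and would need $P_K$, which vanishes since $\Coker\mathfrak i_Q=0$. For $j>2$, $\ogw_{0,k_1+1}(\Gamma_{j-1})$ vanishes by the zero axiom because $\Gamma_{j-1}\ne1_X$. For $j=2$, $\ogw_{0,\cdot}(\Gamma_1)$ vanishes by the zero axiom as well. So every surviving term has $d_1\ge1$, hence $d_2<d$.

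Symmetrically, if $d_2=0$, then $\ogw_{0,k_2+2}$ with no insertions vanishes, and $\ogw_{0,k_2+1}(\Gamma_1)=0$. Thus all open factors have Novikov degree less than $d$, completing the argument.
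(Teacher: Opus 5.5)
Your proposal is correct and follows the paper's proof. You apply \cref{thm:intro-open-wdvv} with $\mathcal E_s$, $\Gamma_1$, $\Gamma_{j-1}$, isolate the classical contraction $\tfrac12\GW_0(h^{n-j},h,h^{j-1})=1$, convert the $c$-terms via \eqref{eq:even-quadric-central-coefficient}, and kill degree-zero open factors with the zero axiom. The one difference is that you take $w=\Gamma_1$ instead of the paper's $w=\Gamma_{j-1}$. This is harmless, because the $s$-equation and centrality give $(\partial_{j-1}c)\cdot\mathcal E_s\partial_1\overline\Omega=[-(\partial_{j-1}c)(\partial_1c)s]=(\partial_1c)\cdot\mathcal E_s\partial_{j-1}\overline\Omega$, so both choices yield the same identity.
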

\begin{proof}
Use the proof of \cite[Lemma~6.4]{hugtenburg2024examples}, replacing
its WDVV equation~(22) by \cref{thm:intro-open-wdvv} with
\[
 \partial_u=\mathcal E_s,\qquad v=\Gamma_1,\qquad w=\Gamma_{j-1}.
\]
Set \(t=0\), extract \([T^d s^k/k!]\), and divide by \(k\).
The coefficient calculation above gives
\eqref{eq:even-quadric-wdvv-single}.  The desired invariant
comes from the classical contraction
\[
 \tfrac12\GW_0(\Delta_{n-j},\Delta_1,\Delta_{j-1})=1.
\]
The degree-zero vanishing argument following equations~(44)--(46)
in the cited proof applies verbatim: the open factors have no
insertion, two insertions, or a single nonunit insertion.
Our zero axiom in \cref{prop:enhanced-ogw-axioms} gives the same
vanishings, so all remaining open factors have degree less than \(d\).
\end{proof}

\begin{lm}
\label[lm]{lm:even-quadric-multiple-insertion-recursion}
For \(d,k\ge0\), \(l\ge2\), and
\(2\le j_1\le\cdots\le j_l\le n\), relation
\eqref{eq:even-quadric-wdvv-multiple} holds.  Apart from its first term,
every nonzero summand on the right involves only open invariants of
Novikov degree less than \(d\).  The first term replaces \(j_1\) by
\(j_1-1\), or vanishes if \(j_2=n\).
\end{lm}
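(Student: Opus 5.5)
The plan is to mirror the proof of \cref{lm:even-quadric-single-insertion-recursion}. I would apply \cref{thm:intro-open-wdvv} with both derivations bulk, taking
\[
 \partial_u=\partial_{\Gamma_{j_2}},\qquad v=\Gamma_1,\qquad w=\Gamma_{j_1-1},
\]
in analogy with \cite[Lemma~6.5]{hugtenburg2024examples}. All these derivations are even, so the sign in \eqref{eq:intro-open-wdvv} is $+1$. The terms with index $L$ drop out because $g^{iL}=0$ for $i\in I_L$, by \eqref{eq:even-quadric-metric}.

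Next I would apply $\partial_J=\partial_{j_l}\cdots\partial_{j_3}$ to the identity, set $t=0$, and extract the coefficient $[T^ds^k/k!]$. The Leibniz rule distributes $\partial_J$ over the products $\Phi\cdot g\cdot\overline\Omega$ and $(\partial_uc)\cdot\overline\Omega$. This produces the sums over $A\sqcup B=[l-2]$ with the ordered subtuples $J_A,J_B$; no signs arise, since everything is even. In the terms $(\partial_uc)\cdot\overline\Omega_{vw}$, I would rewrite $\partial_{J_A}\partial_{j_2}c$ as open invariants using \eqref{eq:even-quadric-central-coefficient}. The boundary bookkeeping $s^e\cdot[s^f]=[s^{e+f}]$ then gives the factor $\binom{k}{k_1}$, as recorded in the coefficient calculation above. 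Doing the same with $u\leftrightarrow v$ gives the second bracketed product in each line. I would compare with \eqref{eq:even-quadric-wdvv-multiple} after rearranging.

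To isolate the leading terms, I would use the classical values of \cref{prop:even-quadric-closed-initial-values}. In the degree-zero sphere contribution $\widehat d=0$, with $J_A=\emptyset$, the only nonzero three-point values are $\GW_0(h^i,h^j,h^a)=2$ with $i+j+a=n$. Combined with $g^{a,n-a}=\tfrac12$, these contribute exactly
\[
 \ogw_{d,k}(\Gamma_{j_2+1},\Gamma_{j_1-1},\Gamma_J)
 \quad\text{and}\quad
 \ogw_{d,k}(\Gamma_{j_1},\Gamma_{j_2},\Gamma_J).
\]
The first is the term on the left of \eqref{eq:even-quadric-wdvv-multiple}, the second the first term on the right; the first vanishes if $j_2=n$, by the convention $\Gamma_{n+1}=0$. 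Degree-zero sphere terms with $J_A\neq\emptyset$ vanish, since $\GW_0$ with four or more insertions is zero by \ref{ax:qempty-energy}. The remaining sphere terms have $\widehat d>0$, hence $d_0<d$.

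The main obstacle is the open--open products. I must show that each product with $d_1=d$ or $d_2=d$, that is, a factor of Novikov degree $0$, vanishes. I would do this with the zero axiom of \cref{prop:enhanced-ogw-axioms}. For $\beta_0$, the only nonzero invariants are $\ogw_{\beta_0,1}(\Gamma_0)$ and $\ogw_{\beta_0,0}(A_1,A_2)$, and the latter vanishes because $P_K=0$. The factor $\ogw_{d_1,k_1}(\Gamma_1,\Gamma_{j_2},\Gamma_{J_A})$ has at least two insertions, so it vanishes in degree zero. The factor $\ogw_{d_2,k_2+1}(\Gamma_{j_1-1},\Gamma_{J_B})$ is nonzero in degree zero only if $J_B=\emptyset$ and $j_1-1=0$, which is excluded since $j_1\ge2$. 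The partner factors are handled in the same way, as in the cited degree-zero vanishing argument. I would check carefully that the unit case $j_1-1=0$ never occurs under the hypotheses $2\le j_1$. I would also check that the extra sign convention on $\GW$ versus $\Phi$ (through $w_{\mathfrak s}=0$) introduces no sign.
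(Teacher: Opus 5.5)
There is a genuine gap. Your overall strategy matches the paper's: specialize \cref{thm:intro-open-wdvv} to three bulk classes, apply $\partial_J$, set $t=0$, extract $[T^ds^k/k!]$, and kill degree-zero factors with the zero axiom. But the specialization you chose does not produce \eqref{eq:even-quadric-wdvv-multiple}, and the key contraction you claim is false for it.

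With $u=\Gamma_{j_2}$, $v=\Gamma_1$, $w=\Gamma_{j_1-1}$, the left side of \eqref{eq:intro-open-wdvv} is unaffected, since it is symmetric in $v,w$. The right side, however, becomes
\[
 \sum_{l,m}\Phi_{\Delta_{j_2}\Delta_{j_1-1}\Delta_l}\,g^{lm}\,\overline\Omega_{\Gamma_1\Gamma_m}
 -(\partial_1c)\cdot\overline\Omega_{\Gamma_{j_2}\Gamma_{j_1-1}}.
\]
Its classical degree-zero contraction forces $l=n+1-j_1-j_2$ and $m=j_1+j_2-1$. So the same-degree term it produces is $\ogw_{d,k}(\Gamma_1,\Gamma_{j_1+j_2-1},\Gamma_J)=d\,\ogw_{d,k}(\Gamma_{j_1+j_2-1},\Gamma_J)$, not $\ogw_{d,k}(\Gamma_{j_2+1},\Gamma_{j_1-1},\Gamma_J)$.

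The other terms change too. The closed sum pairs $\GW_{\widehat d}(\Delta_{j_2},\Delta_{j_1-1},\Delta_{J_A},\Delta_a)$ with $\ogw_{d_0,k}(\Gamma_{n-a},\Gamma_1,\Gamma_{J_B})$. The $c$-term yields products $\ogw(\Gamma_1,\Gamma_{J_A})\,\ogw(\Gamma_{j_2},\Gamma_{j_1-1},\Gamma_{J_B})$ instead of the first bracketed product in \eqref{eq:even-quadric-wdvv-multiple}.

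So you obtain a valid WDVV consequence that merges the two insertions, not the stated relation that shifts one index. In particular, your degree-zero check of $\ogw(\Gamma_1,\Gamma_{j_2},\Gamma_{J_A})$ and $\ogw(\Gamma_{j_1-1},\Gamma_{J_B})$ concerns factors that do not occur in your identity. You have also placed the two same-degree terms on the opposite sides of \eqref{eq:even-quadric-wdvv-multiple}.

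The fix is to make the divisor the fixed class: take $u=\Gamma_{j_2}$, $v=\Gamma_{j_1-1}$, $w=\Gamma_1$, as the paper does. Then the right side is
\[
 \sum_{l,m}\Phi_{\Delta_{j_2}\Delta_1\Delta_l}\,g^{lm}\,\overline\Omega_{\Gamma_{j_1-1}\Gamma_m}
 -(\partial_{j_1-1}c)\cdot\overline\Omega_{\Gamma_{j_2}\Gamma_1}.
\]
Its degree-zero contraction, with $l=n-1-j_2$ and $m=j_2+1$, gives $\ogw_{d,k}(\Gamma_{j_2+1},\Gamma_{j_1-1},\Gamma_J)$, which is absent when $j_2=n$. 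The left side's contraction gives $\ogw_{d,k}(\Gamma_{j_1},\Gamma_{j_2},\Gamma_J)$. The closed and $c$-terms are then exactly those of \eqref{eq:even-quadric-wdvv-multiple}.

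With this choice the rest of your argument goes through and coincides with the paper's proof:
\begin{itemize}
\item Leibniz over $\partial_J$, with the $\binom{k}{k_1}$ bookkeeping;
\item vanishing of classical terms with four or more insertions by \ref{ax:qempty-energy};
\item the zero axiom, with $P_K=0$, $j_1-1\ge1$ and $j_2\ge2$, for the degree-zero open factors.
\end{itemize}
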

\begin{proof}
Use the proof of \cite[Lemma~6.5]{hugtenburg2024examples}, replacing
its WDVV equation~(21) by \cref{thm:intro-open-wdvv} with
\[
 u=\Gamma_{j_2},\qquad v=\Gamma_{j_1-1},\qquad w=\Gamma_1.
\]
Apply \(\partial_J\), set \(t=0\), and extract \([T^d s^k/k!]\).
The coefficient calculation above gives
\eqref{eq:even-quadric-wdvv-multiple}.
The separation of the two closed terms and the degree-zero
vanishing argument following equations~(47)--(52) in the cited
proof apply verbatim.  Indeed, \cref{ax:qempty-energy} leaves only
\(A=\emptyset\) in closed degree zero, and
\cref{prop:enhanced-ogw-axioms} kills every degree-zero open factor
in the product sums.  This gives the asserted bounds and leaves
the displayed replacement of \(j_1\) by \(j_1-1\).
\end{proof}

\begin{lm}
\label[lm]{thm:even-quadric-basic-values}
\label[lm]{lm:even-quadric-ogw-11}
\label[lm]{lm:even-quadric-middle-pair-value}
In the conventions of this paper,
\[
 \ogw_{1,1}(\Gamma_n)=-2,\qquad
 \ogw_{1,1}(\Gamma_{n/2},\Gamma_{n/2+1})=-2.
\]
\end{lm}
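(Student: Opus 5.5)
The plan is to obtain both values from the \(s\)-degree-one part of \(\overline\Omega\) via the boundary Euler identity. By \eqref{eq:one-boundary-superpotential}, \([\overline\Omega]_{s^1}=-c_0[s]\). Hence, for any tuple \(I\), the invariants \(\ogw_{d,1}(\Gamma_I)\) are \(-[T^d]\partial_Ic_0|_{t=0}\); this is the case \(k=1\) of \eqref{eq:even-quadric-central-coefficient}. On the other hand, \(\mathcal L(u)=-\partial_uc_0\) on \(\mathcal A\), and \cref{lm:boundary-primitive-products} identifies \(\mathcal L\) with the closed character \(\lambda\) defined by \(p\star u=\lambda(u)p\). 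Therefore
\[
\ogw_{d,1}(\Gamma_I)=[T^d]\,\partial_{I'}\lambda(\Delta_{i})\big|_{t=0}
\]
whenever \(I=(I',i)\), and every one-boundary invariant becomes a closed quantum-product computation with \(p\).

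For \(\ogw_{1,1}(\Gamma_n)\), I will compute \(p\star h^n\) at \(t=0\) to first order in \(q\). Pairing \(p\star h^n\) with \(p\) and applying \cref{lm:closed-potential-product-pairing} gives \(\lambda(h^n)\langle p,p\rangle_X=\Phi_{p\,h^n\,p}|_{t=0}\). The classical part vanishes because \(hp=0\). The degree-one part is \(q\GW_1(p,h^n,p)\), and the degree constraint \eqref{lm:even-quadric-degree-constraints} holds: \(2n\cdot2-6+6=4n=n+2n+n\). To evaluate \(\GW_1(p,p,h^n)\), I will use the closed WDVV equation, or equivalently associativity in small quantum cohomology. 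The relation \(p^2=(-1)^{n/2}h^n\) deforms quantumly, and the proposition's value \(\GW_1(h^{n-1},h^n)=4\) (two-point, \(h\) divisor) should fix \(\GW_1(p,p,h^n)\). Dividing by \(\langle p,p\rangle_X=2(-1)^{n/2}\) and tracking the sign of \(\mathcal L=-\partial c_0\) should give \(-2\). Consistency requires \(\lambda(h^n)=-2q\) and \(\lambda(h)=0\); in particular \(p\star h\) has no \(q\)-correction, since \(\GW_1(p,h,p)\) fails the degree constraint for \(n>2\).

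For \(\ogw_{1,1}(\Gamma_{n/2},\Gamma_{n/2+1})\), apply \cref{lm:boundary-primitive-products} with one derivative. This gives \([T^1]\partial_{n/2}\lambda(\Delta_{n/2+1})|_{t=0}\), which is determined by the \(t_{n/2}\)-linear term of \(p\star h^{n/2+1}\). That term is \(q\GW_1(p,h^{n/2},h^{n/2+1},p)/\langle p,p\rangle_X\). I will reduce this four-point invariant by closed WDVV to three-point invariants listed in \cref{prop:even-quadric-closed-initial-values} and to \(\GW_1(p,p,h^n)\). Alternatively, I can apply \eqref{eq:even-quadric-wdvv-multiple} with \(k=1\), \(d=1\), which lowers \(j_1\) repeatedly until it reaches \(\ogw_{1,1}(\Gamma_{n+1},\ldots)=0\) or \(\ogw_{1,1}(\Gamma_n,\Gamma_1)\). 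By the divisor axiom the latter equals \(\ogw_{1,1}(\Gamma_n)=-2\); tracking the \(\GW\) correction terms should then give \(-2\).

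The main obstacle will be the signs. These include \(\langle p,p\rangle_X=2(-1)^{n/2}\), the relative-spin signs \(w_{\mathfrak s}=0\), and the orientation conventions relating our \(\overline\Omega\) to that of \cite{hugtenburg2024examples}, where \cref{rem:conventions-st} gives a factor \((-1)^n=1\). I will cross-check the result against \cite[Lemmas~6.2--6.3]{hugtenburg2024examples} rather than trust a single derivation.
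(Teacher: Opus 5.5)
Your proposal is correct, and for the first value it takes a genuinely different route from the paper.

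\emph{The paper's route.} The paper applies \cref{thm:intro-open-wdvv} with $\partial_u=\mathcal E_s$, $v=\Gamma_1$, $w=\Gamma_{n-1}$ at $(d,k)=(1,1)$, which leaves $0=\ogw_{1,1}(\Gamma_n)+2\,\ogw_{0,1}(\Gamma_0)$. At zero bulk and in $s$-degree one, this amounts to multiplicativity of $\mathcal L$ on $h\star h^{n-1}=h^n+2q\,1_X$, together with $\mathcal L(1_X)=1$ and $\mathcal L(h)|_{t=0}=0$. So it stays inside $\mathcal A$ and never touches $p$. The second value comes from the insertion recursion \eqref{eq:even-quadric-wdvv-multiple} plus the divisor axiom, which is your fallback.

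\emph{Your route.} You use $\mathcal L=\lambda$ from \cref{lm:boundary-primitive-products}. That lemma rests on the $l=1$ scalar structure equation (\cref{lm:tensor-potential-second-component}), not on the flatness theorem. It turns every one-boundary invariant into a closed computation, $\ogw_{d,1}(\Gamma_{I'},\Gamma_i)=[q^d]\,\partial_{I'}\lambda(h^i)|_{t=0}$. The cost is that you need closed invariants with primitive insertions, each requiring one more closed WDVV step. Your two ``should'' steps do close:
\begin{itemize}
\item Since $p\star h=0$ at zero bulk for $n>2$, associativity gives $(p\star p)\star h=0$. With $h^n\star h=2qh$ this forces $p\star p=(-1)^{n/2}(h^n-2q\,1_X)$, so $\GW_1(p,p,h^n)=-4(-1)^{n/2}$.
\item WDVV for $(h,h^{n/2}\mid p,p)$ with extra insertion $h^{n/2}$ leaves only $\GW_1(h^{n/2},h^{n/2+1},p,p)+(-1)^{n/2}\GW_1(h^{n/2},h^{n/2},h^n)=0$. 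So this four-point invariant is also $-4(-1)^{n/2}$.
\end{itemize}
Dividing by $\langle p,p\rangle_X=2(-1)^{n/2}$ gives $-2$ in both cases.

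\emph{What each buys.} The paper's route needs no primitive-class invariants. It is also the $d=k=1$ case of \eqref{eq:even-quadric-wdvv-single}, which is exactly what \cref{rem:even-quadric-open-to-closed-reconstruction} inverts. Yours identifies all $k=1$ invariants at once with the eigenvalue of $p\star$, giving \eqref{eq:even-quadric-small-character} directly and bypassing open WDVV.

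One caution: drop the planned cross-check against \cite{hugtenburg2024examples} as a sign test. By \cref{rem:even-quadric-basic-values-comparison}, their normalization gives $\ogw_{1,1}(h^n)=+2$, because their zero axiom has $\ogw_{0,1}(1)=-1$. This discrepancy is not the factor $(-1)^n$ of \cref{rem:conventions-st}; see \cref{rem:odd-boundary-derivative-sign}. Your derivation within the paper's conventions already fixes the sign.
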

\begin{proof}
Follow the proof of \cite[Lemma~5.18]{hugtenburg2024examples},
replacing its WDVV equation~(22) by
\cref{thm:intro-open-wdvv} with
\(\partial_u=\mathcal E_s\), \(v=\Gamma_1\), and \(w=\Gamma_{n-1}\).
Set \(t=0\) and take the coefficient of \(Ts\).  Equivalently, specialize
\eqref{eq:even-quadric-wdvv-single} to \(d=k=1\), \(j=n\).
Every product of two open invariants has a degree-zero factor with
no interior insertion, two interior insertions, or a single nonunit
insertion.  These factors vanish by the zero axiom.
The closed degree constraint leaves just two terms:
\[
\begin{aligned}
0={}&\frac12\GW_0(\Delta_1,\Delta_{n-1},\Delta_0)\,
                 \ogw_{1,1}(\Gamma_n)
\\
 &+\frac12\GW_1(\Delta_1,\Delta_{n-1},\Delta_n)\,
                 \ogw_{0,1}(\Gamma_0).
\end{aligned}
\]
By \cref{prop:even-quadric-closed-initial-values}, the two closed
invariants are \(2\) and \(4\), respectively.  Our zero axiom gives
\(\ogw_{0,1}(\Gamma_0)=1\).  Hence
\[
 0=\frac12\cdot2\,\ogw_{1,1}(\Gamma_n)
   +\frac12\cdot4\cdot1
  =\ogw_{1,1}(\Gamma_n)+2,
\]
which proves the first value.

For the second value, follow the insertion reduction of
\cite[Lemma~6.5]{hugtenburg2024examples}, used there to obtain
Lemma~6.6.  Here their WDVV equation~(21) is replaced by
\eqref{eq:even-quadric-wdvv-multiple}, proved above from
\cref{thm:intro-open-wdvv}.  Put
\[
 A_a=\ogw_{1,1}(\Gamma_a,\Gamma_{n+1-a}),\qquad 1\le a\le n/2.
\]
For \(2\le a\le n/2\), use \(d=k=1\), \(j_1=a\),
\(j_2=n+1-a\), and \(J=\emptyset\).  In our WDVV equation, take
\[
 u=\Gamma_{n+1-a},\qquad v=\Gamma_{a-1},\qquad w=\Gamma_1,
\]
and we extract \([Ts]\) at \(t=0\).
The degree-zero vanishing argument in the cited proof applies
verbatim: every remaining closed or open product has a degree-zero
open factor with two insertions or a single nonunit insertion.
Consequently the two classical terms give
\[
 \frac12\GW_0(\Delta_1,\Delta_{a-1},\Delta_{n-a})\,A_a
 =\frac12\GW_0(\Delta_1,\Delta_{n+1-a},\Delta_{a-2})\,A_{a-1}.
\]
Both closed coefficients are \(2\), so \(A_a=A_{a-1}\).
The divisor axiom and the first value give
\[
 \ogw_{1,1}(\Gamma_{n/2},\Gamma_{n/2+1})=A_{n/2}=\cdots=A_1
 =\ogw_{1,1}(\Gamma_n)=-2.\qedhere
\]
\end{proof}

\begin{rem}
\label{rem:even-quadric-basic-values-comparison}
\Cref{thm:even-quadric-basic-values} gives \(\ogw_{1,1}(\Gamma_n)=-2\),
whereas \cite[Lemma~5.18]{hugtenburg2024examples} gives
\(\ogw_{1,1}(h^n)=2\).  This sign difference already appears in the
zero axiom:
\cite[Theorem~12\textup{(c)}, equation~(19)]{hugtenburg2024examples}
uses \(\ogw_{0,1}(1)=-1\), whereas here
\(\ogw_{0,1}(\Gamma_0)=1\).  Thus the same two-term calculation ends
with \(\ogw_{1,1}(h^n)-2=0\) in their Lemma~5.18 and with
\(\ogw_{1,1}(\Gamma_n)+2=0\) here.  The graded sign responsible for
our normalization is explained in \cref{rem:odd-boundary-derivative-sign}.
The divisor axiom and insertion reduction carry this sign to the second
value.  The middle pair here is
\((\Gamma_{n/2},\Gamma_{n/2+1})\); in their odd-dimensional Lemma~6.6,
it is \((\Gamma_{(n+1)/2},\Gamma_{(n+1)/2})\).
\end{rem}

\begin{lm}
\label[lm]{lm:even-quadric-no-interior-recursion}
For \(d>0\) and \(k\ge1\), relation
\eqref{eq:even-quadric-wdvv-no-interior} holds.  Its terms containing
\(\ogw_{d,k}\) have total coefficient \(2(2d-k+1)\).
All other nonzero summands involve open invariants of smaller Novikov
degree or of degree \(d\) with an insertion \(\Gamma_j\), \(j\ge2\).
Subject to \(2nd=k(n-1)+3-n\), the coefficient vanishes only for
\((d,k)=(1,3)\).
\end{lm}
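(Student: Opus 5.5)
We will follow the pattern of \cref{lm:even-quadric-single-insertion-recursion}, now modelled on the proof of \cite[Lemma~6.7]{hugtenburg2024examples}. We apply \cref{thm:intro-open-wdvv} with
\[
 \partial_u=\mathcal E_s,\qquad v=\Gamma_{n/2},\qquad w=\Gamma_{n/2+1}.
\]
Then we set \(t=0\), extract the coefficient \([T^{d+1}s^k/k!]\), and divide by \(k\).

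Since \(\overline{\mathcal E_s}=0\), the closed-potential term on the right of \eqref{eq:intro-open-wdvv} vanishes. On the left, \(\overline\Omega_{\mathcal E_s\Gamma_l}\) is rewritten through \eqref{eq:even-quadric-central-coefficient}, giving \(k\,\ogw_{d_0,k}(\Gamma_l)\). Contracting with \(g^{lm}\) via \eqref{eq:even-quadric-metric} leaves only the \(h^i\) insertions, since \(g^{iL}=0\). This produces the first sum, with \(\widehat d+d_0=d+1\). The two \(c\)-terms, \((\partial_uc)\overline\Omega_{vw}\) and \((\partial_vc)\overline\Omega_{uw}\), are converted by the binomial coefficient calculation preceding \cref{lm:even-quadric-single-insertion-recursion}. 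This yields the two \(\binom{k-1}{k_1}\) sums, and the vanishing of the first expression at \(b=1\) accounts for the index shift \(k_1+2\). No symmetrization is needed, because all derivations are even.

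Next we isolate the terms containing \(\ogw_{d,k}\). In the closed sum, the relevant pairing is \(\GW_1(\Delta_{n/2},\Delta_{n/2+1},\Delta_a)\) with \(d_0=d\), which forces \(a=n-1\). The open factor is then \(\ogw_{d,k}(\Gamma_1)\), and the divisor axiom of \cref{prop:enhanced-ogw-axioms} reduces it to \(d\,\ogw_{d,k}\). By \cref{prop:even-quadric-closed-initial-values}, \(\GW_1(h^{n/2},h^{n/2+1},h^{n-1})=8\) because \(n/2>0\) and \(n-1<n\), so this contribution is \(\tfrac12\cdot 8\cdot d=4d\). In the first product sum, take \(d_2=1\), \(k_2=0\), \(k_1=k-1\); the factor \(\ogw_{1,0}(\cdot,\cdot)\) vanishes by \eqref{eq:even-quadric-cyclic-coefficients}. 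Instead we take \(d_2=1\) with the boundary index arranged so that the factor is \(\ogw_{1,1}(\Gamma_{n/2},\Gamma_{n/2+1})=-2\) from \cref{thm:even-quadric-basic-values}. The resulting binomial weight is \(k-1\), giving \((k-1)(-2)\ogw_{d,k}\), which we must double-check against the index conventions. The third sum contributes no \(\ogw_{d,k}\) with empty insertion. The total coefficient is therefore \(4d-2(k-1)=2(2d-k+1)\).

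All remaining terms either have open degree \(d_i\le d\) with an insertion \(\Gamma_j\), \(j\ge2\), or lower degree. Degree-zero open factors with two insertions or a single nonunit insertion vanish by the zero axiom, as in the cited proof. For the final claim, we solve \(2d-k+1=0\) together with the degree constraint \eqref{lm:even-quadric-degree-constraints} with \(l=0\). Substituting \(k=2d+1\) gives \(2nd=(2d+1)(n-1)+3-n\), that is, \(2d=2\), so \(d=1\) and \(k=3\).

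The main obstacle is the bookkeeping that identifies exactly which boundary and degree indices produce \(\ogw_{d,k}\), and the sign of \(\ogw_{1,1}\) in the product sum. We would settle both by comparing directly with the \(n\) odd computation of \cite[Lemma~6.7]{hugtenburg2024examples}, using \cref{rem:even-quadric-basic-values-comparison} to track the sign.
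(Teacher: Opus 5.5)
Your route is the paper's: the same instance of \cref{thm:intro-open-wdvv}, the same extraction of $[T^{d+1}s^k/k!]$, and the same values $4d$ and $-2(k-1)$. Swapping $v$ and $w$ only relabels the mixed product sum, since $\binom{k-1}{k_1}=\binom{k-1}{k_2}$, and your final arithmetic giving $(d,k)=(1,3)$ is fine. The gap is the third assertion, which you state (``all remaining terms\ldots'') without the degree arguments it needs.

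In the closed sum, the terms with $\widehat d=0$ multiply $\ogw_{d+1,k}(\Gamma_{n-a})$, which has degree $d+1$. They vanish only because the closed degree constraint \eqref{lm:even-quadric-degree-constraints} forces $a=n\widehat d-1$, i.e.\ $a=-1$ here. The same constraint leaves only $\widehat d=1$, $a=n-1$, so the whole closed remainder vanishes. Likewise, in the first product sum the case $d_1=0$ pairs $\ogw_{0,k_1+2}$ with the degree-$(d+1)$ invariant $\ogw_{d+1,k_2}(\Gamma_{n/2},\Gamma_{n/2+1})$. It is killed by the zero axiom for a degree-zero factor with no insertion, a case missing from your list.

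More seriously, take the first product sum with $d_1=d$, $d_2=1$. There the factor $\ogw_{d,k_1+2}$ is a degree-$d$ invariant without interior insertions for every admissible $k_2$, not only when $k_1+2=k$. Your appeal to \eqref{eq:even-quadric-cyclic-coefficients} removes $k_2=0$ (and all even $k_2$), but not odd $k_2\ge3$. So your claim that every remaining term has lower degree or an insertion $\Gamma_j$, $j\ge2$, is unjustified there. The missing step is the degree axiom for the partner factor: for $\ogw_{1,k_2}(\Gamma_{n/2},\Gamma_{n/2+1})$ it reads $(n-1)(k_2-1)=0$, forcing $k_2=1$ and $k_1=k-2$.

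This same step is what correctly isolates the open leading term. It should replace your paragraph that first tries $k_2=0$, $k_1=k-1$ (which produces $\ogw_{d,k+1}$, not $\ogw_{d,k}$) and then ``arranges'' the boundary index. The only survivor is $\binom{k-1}{k-2}\ogw_{d,k}\,\ogw_{1,1}(\Gamma_{n/2},\Gamma_{n/2+1})=-2(k-1)\ogw_{d,k}$, which is absent for $k=1$. The value $-2$ comes directly from \cref{thm:even-quadric-basic-values}, so no comparison with the odd-dimensional sign is needed. Finally, the degree-$d$ factors left in the other sums qualify because they carry $\Gamma_{n/2}$ or $\Gamma_{n/2+1}$, and $n/2\ge2$ for $n>2$.
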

\begin{proof}
Follow the proof of \cite[Lemma~6.7]{hugtenburg2024examples},
replacing its WDVV equation~(22) by
\cref{thm:intro-open-wdvv} with
\[
 \partial_u=\mathcal E_s,\qquad v=\Gamma_{n/2+1},\qquad w=\Gamma_{n/2}.
\]
The two interior indices sum to \(n+1\).
Set \(t=0\), extract \([T^{d+1}s^k/k!]\), and divide by \(k\).
Using \eqref{eq:even-quadric-central-coefficient} for the product
signs, separate the summands as in equations~(53)--(57) of the
cited proof:
\begin{align}
0={}&
\frac12\GW_1(\Delta_{n/2},\Delta_{n/2+1},\Delta_{n-1})\,
\ogw_{d,k}(\Gamma_1)
\label{eq:even-quadric-no-interior-closed-leading}
\\
&+\frac12
\sum_{\substack{\widehat d+d_0=d+1\\d_0<d}}
\ \sum_{a=0}^{n}
\GW_{\widehat d}(\Delta_{n/2},\Delta_{n/2+1},\Delta_a)\,
\ogw_{d_0,k}(\Gamma_{n-a})
\label{eq:even-quadric-no-interior-closed-remainder}
\\
&+(k-1)\ogw_{d,k}\,
\ogw_{1,1}(\Gamma_{n/2},\Gamma_{n/2+1})
\label{eq:even-quadric-no-interior-open-leading}
\\
&+\sum_{\substack{d_1+d_2=d+1\\1\le d_1<d,\ 1\le d_2\le d\\
                  k_1+k_2=k-1}}
\binom{k-1}{k_1}
\ogw_{d_1,k_1+2}\,
\ogw_{d_2,k_2}(\Gamma_{n/2},\Gamma_{n/2+1})
\label{eq:even-quadric-no-interior-pair-remainder}
\\
&-\sum_{\substack{d_1+d_2=d+1\\1\le d_1,d_2\le d\\
                  k_1+k_2=k-1}}
\binom{k-1}{k_1}
\ogw_{d_1,k_1+1}(\Gamma_{n/2})\,
\ogw_{d_2,k_2+1}(\Gamma_{n/2+1}).
\notag
\end{align}
Here \(k_1,k_2\ge0\), and empty sums are zero.
The degree-zero vanishing argument following equations~(56)--(57)
in the cited proof applies verbatim: a degree-zero open factor has
no insertion, two insertions, or a single nonunit insertion.
Thus each open degree is at most \(d\).

For line~\eqref{eq:even-quadric-no-interior-closed-remainder}, the
closed degree constraint gives
\[
 \GW_{\widehat d}(\Delta_{n/2},\Delta_{n/2+1},\Delta_a)\ne0
 \quad\Longrightarrow\quad a=n\widehat d-1.
\]
Since \(0\le a\le n\), only \(\widehat d=1\), \(a=n-1\)
can contribute.  Hence that line vanishes, while
line~\eqref{eq:even-quadric-no-interior-closed-leading} is
\[
 \tfrac12\GW_1(\Delta_{n/2},\Delta_{n/2+1},\Delta_{n-1})\,
 \ogw_{d,k}(\Gamma_1)=4d\,\ogw_{d,k}.
\]
This uses \cref{prop:even-quadric-closed-initial-values} and the
divisor axiom.

In the first open product sum, \(d_1=d\) gives \(d_2=1\).
For \(\ogw_{1,k_2}(\Gamma_{n/2},\Gamma_{n/2+1})\), the degree axiom
becomes \((n-1)(k_2-1)=0\).  Hence \(k_2=1\) and \(k_1=k-2\).
The separated term in
line~\eqref{eq:even-quadric-no-interior-open-leading} is
\[
 \binom{k-1}{k-2}\ogw_{d,k}\,
 \ogw_{1,1}(\Gamma_{n/2},\Gamma_{n/2+1})
 =-2(k-1)\ogw_{d,k},
\]
by \cref{thm:even-quadric-basic-values}; it is absent when \(k=1\).
Combining the two separated terms gives
\[
 \bigl(4d-2(k-1)\bigr)\ogw_{d,k}
 =2(2d-k+1)\ogw_{d,k}.
\]
In line~\eqref{eq:even-quadric-no-interior-pair-remainder}, the
factor without interior insertions has degree \(d_1<d\).
All other factors of degree \(d\) in the remaining sums contain
\(\Gamma_{n/2}\) or \(\Gamma_{n/2+1}\), and \(n/2\ge2\).
Finally, as in the cited proof, the no-interior degree constraint
\[
 2nd=k(n-1)+3-n
 \quad\Longrightarrow\quad n(2d-k+1)=3-k
\]
shows that the coefficient vanishes precisely at \((d,k)=(1,3)\).
\end{proof}

\begin{proof}[Proof of \cref{thm:even-quadric-open-wdvv-relations}]
The first two assertions are
\cref{lm:even-quadric-single-insertion-recursion,lm:even-quadric-multiple-insertion-recursion}.
The third follows from \cref{lm:even-quadric-no-interior-recursion},
using the value computed in \cref{thm:even-quadric-basic-values}.
The unit and divisor cases follow from \cref{prop:enhanced-ogw-axioms}.
The exceptional invariant is computed in \cref{lm:even-quadric-seed} below.
\end{proof}

\begin{lm}
\label{lm:even-quadric-seed}
For the orientation, relative spin structure, and cyclic coefficient
normalization fixed above,
\[
 \ogw_{1,3}=2(-1)^{n/2+1}.
\]
\end{lm}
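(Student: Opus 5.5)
The recursion in \cref{thm:even-quadric-open-wdvv-relations} cannot produce \(\ogw_{1,3}\), since its coefficient \(2(2d-k+1)\) vanishes at \((d,k)=(1,3)\). We therefore need a second source of information, and the plan uses the character structure of \cref{subsubsec:boundary-character-extensions}. Write \(\Omega_3:=[s^3/3!]\overline\Omega\). At zero bulk, \eqref{eq:even-quadric-central-coefficient} with \(k=3\) gives \([T\,s^2/2!]c|_{t=0}=-\ogw_{1,3}\). The degree constraint \eqref{lm:even-quadric-degree-constraints} with \(d=1\), \(k=3\), \(l=0\) holds (\(n-3+2n+3=3n\)), so \(\Omega_3|_{t=0}=\ogw_{1,3}\,q\). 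It then suffices to compute \(D|_{t=0}\) independently and to relate it to \(\Omega_3\) by an identity of the form \(D=2\Omega_3\), which is the relation announced in \eqref{eq:intro-primitive-square}.

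\textbf{Step 1: compute \(D\) at zero bulk.} By \eqref{eq:boundary-character-discriminant-coefficient},
\[
D=2(-1)^{n/2}\sum_{l,m}g^{lm}(\partial_lc_0)(\partial_mc_0)
=(-1)^{n/2}\sum_{i=0}^n(\partial_ic_0)(\partial_{n-i}c_0),
\]
using \(g^{i,n-i}=\tfrac12\). At \(t=0\) the factor \(\partial_ic_0|_{t=0}=-\sum_dT^d\ogw_{d,1}(\Gamma_i)\) is nonzero only if \(2nd=2i-2\), which happens for \((i,d)=(1,0)\) with value \(1\) (zero axiom) and \((i,d)=(n,1)\) with value \(-2\) (\cref{thm:even-quadric-basic-values}). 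The \(T^0\) contribution to \(D|_{t=0}\) is therefore zero, because \(\partial_{n-1}c_0|_{t=0}=0\) for \(n>2\). The \(T^1\) contribution comes from the pairs \(i=1,\ n-i=n-1\) and \(i=n,\ n-i=0\). In the first pair \(\partial_{n-1}c_0|_{t=0}=0\), and in the second \(\partial_0c_0|_{t=0}=0\) by the unit axiom. This naive count gives zero, which is suspicious. As a cross-check I would instead compute \(D\) from the quantum product \(p\star p\), using \(p\star p=2(-1)^{n/2}\sum g^{lm}\mathcal L(\Delta_l)\Delta_m+g^{LL}\Phi_{ppp}p\) and the known small quantum cohomology of the quadric. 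There \(p\star p\) equals \((-1)^{n/2}(h^n\)-type terms\()\) plus a \(q\cdot 1\) term. Since \(\mathcal L(h^n)\) and \(\mathcal L(1)=1\) are known, this gives \(D|_{t=0}=\mathcal L(p\star p)|_{t=0}\) up to \(\Phi_{ppp}\). So the plan is to evaluate \(D|_{t=0}=\mathcal L(p\star p)|_{t=0}\) with \(\mathcal L(h)=0\) at \(t=0\), \(\mathcal L(h^n)=2q\), and the quantum relation for \(p\star p\) coming from \(\GW_1\)-type invariants \(\GW_1(p,p,h^{n-1})\) etc., obtained by closed WDVV from \cref{prop:even-quadric-closed-initial-values}. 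I expect a result \(D|_{t=0}=4\varepsilon q\), and I would also show \(\Phi_{ppp}=0\) by degree and parity (the \(p\)-odd part of the quadric's quantum product vanishes by the symmetry \(p\mapsto-p\) from reversing the orientation of \(L\)).

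\textbf{Step 2: relate \(\Omega_3\) to \(D\).} I would use the computation inside the proof of \cref{lm:boundary-three-constraint-equation}, applied to \(\alpha=p\star p-g^{LL}\Phi_{ppp}p\), but now with \(j_{\mathcal E_s}\) extracted in \(s\)-degree \(3\) rather than only differentiated. Concretely, \cref{lm:boundary-euler-block} with \(u=\Delta_i\), combined with \(j_{\mathcal E_s}(1)=[s]\) from \cref{lm:boundary-euler-unit}, determines \([j_{\mathcal E_s}(\alpha)]_{s^3}\) in terms of \([s^2]c\). Comparing this with the \(s\)-equation then yields \(D\cdot\)const\(=\Omega_3\). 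Alternatively, \cref{rem:boundary-three-constraint-proportionality} reduces the claim to a single zero-bulk proportionality constant, which must be pinned down by a direct calculation.

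\textbf{Main obstacle.} The hard step is fixing that proportionality constant, and more specifically its sign, independently of the value being computed. If Step 2 only yields the proportionality, I would instead seek a WDVV equation that does not degenerate at \((1,3)\). The first attempt would be \cref{thm:intro-open-wdvv} with \(\partial_u=\mathcal E_s\), \(v=\Gamma_{n/2}\) and \(w=\Gamma_{n/2}\) (the indices sum to \(n\) rather than \(n+1\)), extracting \([T s^3]\) or \([T^2s^3]\). The aim is an equation in which \(\ogw_{1,3}\) appears multiplied by a known nonzero invariant such as \(\ogw_{1,1}(\Gamma_n)=-2\), with the remaining terms consisting of classical invariants and the values in \cref{thm:even-quadric-basic-values}. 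The sign \((-1)^{n/2+1}\) should then enter through \(g^{LL}\) or \(\GW_0(p,p,1)=2(-1)^{n/2}\).
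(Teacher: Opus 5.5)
There is a genuine gap: Step~2 cannot be carried out, and neither can your fallback. In the paper, the identity $D=2[s^3/3!]\overline\Omega$ of \cref{lm:even-quadric-three-boundary-character} is proved \emph{using} the present lemma. \Cref{lm:boundary-three-constraint-equation} is linear and homogeneous in $\Omega_3=[s^3/3!]\overline\Omega$, so it only gives $\Omega_3=(\Omega_3|_{t=0}/D|_{t=0})\,D$; the factor $1/2$ comes from substituting $\Omega_3|_{t=0}=\ogw_{1,3}\,q$.

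Extracting $j_{\mathcal E_s}$ in $s$-degree three adds nothing. Since $j_u(1_X)=\partial_u\partial_{\Gamma_0}\overline\Omega=0$ for bulk $u$, \cref{lm:boundary-euler-block,lm:boundary-euler-unit} give $j_{\mathcal E_s}(u)=-(\partial_uc)[s]$, which is just the differentiated $s$-equation.

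The obstruction is structural. Consider the substitution $\overline\Omega(t,s)\mapsto\lambda^{-1}\overline\Omega(t,\lambda s)$, $c(t,s)\mapsto c(t,\lambda s)$. It carries solutions of the open WDVV equations of \cref{thm:intro-open-wdvv} to solutions (conjugate the derivations by $s\mapsto\lambda s$; $\mathcal E_s$ and the bulk derivations are fixed). It does the same for the $s$-equation and for the unit, zero, divisor and degree axioms. It fixes $c_0$, hence $\mathcal L$, $D$ and all closed data, but it multiplies $\ogw_{d,k}$ by $\lambda^{k-1}$. So every relation you propose to use is homogeneous of weight $(k-1)/2$. Consequently $\ogw_{1,3}$ can be replaced by any real multiple of itself, including $0$ or a negative multiple, without violating any of them. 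No WDVV equation (with $v=w=\Gamma_{n/2}$ or otherwise) and no character identity can therefore produce either the size or the sign of $\ogw_{1,3}$. This is why \cref{thm:intro-quadric-reconstruction,cor:even-quadric-open-reconstruction} list it as independent initial data. What breaks the symmetry is the normalization $\langle1,b\rangle_L=-s$ of the point-like pair, and it can only enter through a direct evaluation.

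That direct computation is the missing idea, and it is the paper's proof. At zero bulk, $b|_{T^0}=\eta\otimes s$ with $\int_L\eta=1$. The $T^1s^j$ corrections to $b$ with $j\le2$ have negative form degree $1-2n+j(n-1)$, so they do not contribute. Hence $3![T^1s^3]\overline\Omega=2\langle\q_{2,0}^1(\eta,\eta),\eta\rangle_L=2\int_{\M_{3,0}(1)}e_0^*\eta\wedge e_1^*\eta\wedge e_2^*\eta$. Summing over the two cyclic orders turns this into the degree of the evaluation map to $L^3$ from degree-one disks with three labeled boundary points. Through three generic points of $L\simeq S^n$ passes a unique real conic. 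Its two conjugate halves have the same sign by \cite[Proposition~5.1]{solomon2006intersection}, and each has sign $(-1)^{1+n(n-1)/2}$ by the framing computation in \cite[proof of Proposition~A.1]{hugtenburg2024examples}. This gives $2(-1)^{n/2+1}$.

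Separately, Step~1 contains errors. The degree constraint for $\ogw_{d,1}(\Gamma_i)$ is $nd=i$, not $2nd=2i-2$, so the surviving values are $\ogw_{0,1}(\Gamma_0)=1$ and $\ogw_{1,1}(\Gamma_n)=-2$. Thus $\partial_0c_0|_{t=0}=-1$ (the unit axiom does not make it vanish), and $\mathcal L(h^n)|_{t=0}=-2q$, not $2q$. With these corrections, $D|_{t=0}=2(-1)^{n/2}(\partial_0c_0)(\partial_nc_0)|_{t=0}=4\varepsilon q$, as you expected. However, $D$ is invariant under the rescaling above, so it cannot determine $\ogw_{1,3}$.
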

\begin{proof}
We follow \cite[Lemma~6.2 and Appendix~A]{hugtenburg2024examples}.
At zero bulk, grading and \cref{prop:even-quadric-standing-hypotheses}
give \(b|_{T^0}=\eta\otimes s\) with \(\int_L\eta=1\).
The coefficient of \(T^1s^j\) in \(b\) has form degree
\(1-2n+j(n-1)<0\) for \(j\le2\); since \(\q_{0,0}^0=0\),
degree-one corrections cannot contribute to \([T^1s^3]\overline\Omega\).
We extract the \(T^1s^3\)-coefficient using the \(s\)-degree projection
of \cref{lm:s-degree-decomposition}.  Since \(n\) is even, \(|s|\) is odd, and
\begin{align*}
 \ogw_{1,3}
 &\overset{\eqref{eq:intro-open-invariants}}{=}
 3!\,[T^1s^3]\,\overline\Omega\\
 &\overset{\ref{dfn:bg-deformation-qminusone}}{=}
 3!\,[T^1s^3]\sum_{j\ge0}\frac{1}{2j+1}
 \bigl\langle\q_{2j,0}\bigl((\eta s)^{\otimes2j}\bigr),\eta s\bigr\rangle_L\\
 &\overset{\textup{(E1)},\,\ref{dfn:scalar-extension-interval-pairing}}{=}
 3!\,[T^1s^3]\sum_{j\ge0}\frac{(-1)^{j+1}s^{2j+1}}{2j+1}
 \bigl\langle\q_{2j,0}(\eta^{\otimes2j}),\eta\bigr\rangle_L\\
 &=\frac{3!}{3}\bigl\langle\q_{2,0}^{1}(\eta,\eta),\eta\bigr\rangle_L\\
 &=2!\int_{\mathcal M_{3,0}(1)}e_0^*\eta\wedge e_1^*\eta\wedge e_2^*\eta.
\end{align*}
The last equality uses
\cite[Sections~2.2.1--2.2.2]{solomon2016differential} and the signed pairing
of \cref{dfn:pre-open-closed-pair}; here \(e_i\) are boundary evaluations.

Let \(\mathcal M^S_{3,0}(1)\) denote the moduli space with three labeled
boundary marked points, with no requirement that their labels respect
the cyclic order.  It is the disjoint union of \(2!\) copies of
\(\mathcal M_{3,0}(1)\), corresponding to the two cyclic orders.
For a permutation \(\sigma\) of \(\{0,1,2\}\) with \(\sigma(0)=0\), let
\(\varphi_\sigma:\mathcal M_{3,0}(1)\to\mathcal M^S_{3,0}(1)\)
be the embedding that relabels the boundary marked points by \(\sigma\).
Following the orientation convention of
\cite[Definition~3.2]{solomon2006intersection} for \(\dim L\) even,
the maps \(\varphi_\sigma\) preserve orientation.  Since \(|\eta|=n\) is even,
\[
 \ogw_{1,3}
 =\sum_{\sigma(0)=0}\int_{\mathcal M_{3,0}(1)}
 e_{\sigma(0)}^*\eta\wedge e_{\sigma(1)}^*\eta\wedge e_{\sigma(2)}^*\eta
 =\int_{\mathcal M^S_{3,0}(1)}e_0^*\eta\wedge e_1^*\eta\wedge e_2^*\eta.
\]
Since \(\int_L\eta=1\), this is the degree of the evaluation map
\(\mathcal M^S_{3,0}(1)\to L^3\).  Three distinct points of \(L\simeq S^n\)
lie on a unique real conic, whose two conjugate disks give one point
in each cyclic-order component of the evaluation fiber.
By \cite[Proposition~5.1]{solomon2006intersection}, conjugation has sign
\((-1)^\epsilon\), where
\[
 \epsilon=n(2n+1)+2n+3+(n+1)\equiv0\pmod2.
\]
Thus the two disks have the same sign.  The local framing calculation in
\cite[proof of Proposition~A.1]{hugtenburg2024examples} gives each disk sign
\((-1)^{1+n(n-1)/2}\).  Therefore
\[
 \ogw_{1,3}=2(-1)^{1+n(n-1)/2}=2(-1)^{n/2+1}.\qedhere
\]
\end{proof}

\begin{cor}[Determination of the open invariants]
\label{cor:even-quadric-open-reconstruction}
All enhanced invariants \(\ogw_{d,k}(\Gamma_I)\) are uniquely determined
by the closed invariants of \(X\), the axioms of
\cref{prop:enhanced-ogw-axioms}, the recursions of
\cref{thm:even-quadric-open-wdvv-relations}, and the single value
\(\ogw_{1,3}\).  By \cref{lm:even-quadric-seed},
\(\ogw_{1,3}=2(-1)^{n/2+1}\), so these data determine the numerical values of all the invariants.
\end{cor}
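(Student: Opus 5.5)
The proof is an induction on a well-founded order on the data \((d,k,I)\) of an enhanced invariant \(\ogw_{d,k}(\Gamma_I)\). Order first by the Novikov degree \(d\). For fixed \(d\), order next by the multiset of interior indices, counting how far they are from the normal form produced by \cref{lm:even-quadric-multiple-insertion-recursion}. Insertions with no interior constraint come last in this order, since relation \eqref{eq:even-quadric-wdvv-no-interior} expresses them through invariants carrying an insertion \(\Gamma_j\) with \(j\ge2\). The base case \(d=0\) is handled entirely by the zero axiom of \cref{prop:enhanced-ogw-axioms}. Since \(P_K=0\) here, by \cref{prop:even-quadric-standing-hypotheses}, the only nonzero degree-zero invariant is \(\ogw_{0,1}(\Gamma_0)=1\). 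By \eqref{eq:even-quadric-cyclic-coefficients}, invariants with \(k=0\) or \(k\) positive even vanish, so only odd \(k\) need to be treated.

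For \(d>0\), we first use the unit and divisor axioms of \cref{prop:enhanced-ogw-axioms} to remove all insertions \(\Gamma_0\) and \(\Gamma_1\). The unit axiom kills \(\Gamma_0\) when \(d>0\), and the divisor axiom replaces \(\Gamma_1\) by the factor \(\int_d\tilde h=d\). After this, every remaining index satisfies \(2\le j_a\le n\); we sort them. The argument then splits by the number \(l\) of interior insertions.

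\textbf{Case \(l\ge2\).} Apply \eqref{eq:even-quadric-wdvv-multiple}. Every term other than the first involves lower Novikov degree. The first term lowers \(j_1\) by one and raises \(j_2\) by one, or vanishes if \(j_2=n\). Iterating, either the term vanishes, or \(j_1\) drops to \(1\), where the divisor axiom reduces \(l\) by one. The quantity \(\sum_a j_a\) is preserved while \(j_1\) strictly decreases, so this terminates.

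\textbf{Case \(l=1\).} Apply \eqref{eq:even-quadric-wdvv-single}, whose right side involves only degrees below \(d\), by \cref{lm:even-quadric-single-insertion-recursion}. Note that invariants with \(l=0\) appearing on its right, such as \(\ogw_{d_2,k_2+2}\), have \(d_2<d\) whenever the accompanying factor is nonzero, by the degree-zero vanishing. So the order is respected.

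\textbf{Case \(l=0\).} Use \eqref{eq:even-quadric-wdvv-no-interior} together with \cref{lm:even-quadric-no-interior-recursion}. The coefficient of \(\ogw_{d,k}\) is \(2(2d-k+1)\), and every other term has smaller degree or has degree \(d\) with an insertion \(\Gamma_j\), \(j\ge2\). Those terms are already determined by the two previous cases. This is consistent because cases \(l\ge1\) at degree \(d\) only call degree-\(d\) invariants with \(l\ge1\) (in the \(l\ge2\) case), or strictly lower degree. When \(2(2d-k+1)\neq0\) we can divide and solve for \(\ogw_{d,k}\). The degree constraint \eqref{lm:even-quadric-degree-constraints} shows the coefficient vanishes only at \((d,k)=(1,3)\), and that value is supplied as input and computed in \cref{lm:even-quadric-seed}.

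The main obstacle is verifying that the induction order is genuinely well-founded across the three recursions simultaneously. The worry is that the \(l=0\) relation at degree \(d\) might require degree-\(d\) invariants with one insertion, which in turn require degree-\(d\) invariants with no insertion. The plan settles this by checking that \eqref{eq:even-quadric-wdvv-single} has right side strictly of lower degree, as asserted in \cref{lm:even-quadric-single-insertion-recursion}. That lemma also covers the invariant \(\ogw_{1,1}(\Gamma_{n/2},\Gamma_{n/2+1})=-2\) used in the \(l=0\) coefficient, which comes from \cref{thm:even-quadric-basic-values}. Its only degree-\(d\) inputs then come from the \(l\ge2\) reduction, whose termination is the monotone decrease of \(j_1\).
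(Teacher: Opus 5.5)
Your proposal is correct and follows the paper's proof. Both argue by induction on the Novikov degree $d$. Within each degree, both first treat the invariants carrying an insertion $\Gamma_j$ with $j\ge2$, using the single- and multiple-insertion recursions after clearing $\Gamma_0$ and $\Gamma_1$ with the unit and divisor axioms. Both then solve for the no-interior invariants from the relation with coefficient $2(2d-k+1)$, which is invertible except at the seed $(d,k)=(1,3)$. The paper makes one point explicit that you leave implicit: the $l\ge2$ reduction preserves $\sum_a(j_a-1)>0$, so it never lands on a degree-$d$ invariant with no interior insertions. This is immediate from your description, since after a divisor step the surviving index $j_2+1\ge3$ remains.
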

\begin{proof}
Induct on the Novikov degree \(d\); the degree-zero values are fixed
by the zero axiom.  At each positive degree use the following order:
first invariants with an insertion \(\Gamma_j\), \(j\ge2\), then
invariants without interior insertions, then invariants whose interior
insertions are all \(\Gamma_1\).
The closed invariants are already determined by
\cite[Theorem~8 and Lemma~7.5]{hugtenburg2024examples}.

For the first stage, the unit axiom gives zero for invariants with a unit
insertion, and the divisor axiom removes insertions of \(\Gamma_1\).
A single remaining insertion is determined by
\eqref{eq:even-quadric-wdvv-single}, as proved in
\cref{lm:even-quadric-single-insertion-recursion}.
For several insertions, sort their
indices and induct lexicographically on their number and their smallest
index.  By \cref{lm:even-quadric-multiple-insertion-recursion},
relation \eqref{eq:even-quadric-wdvv-multiple} lowers the smallest
index, while its other terms have strictly smaller Novikov degree.  When
that index reaches one, the divisor axiom reduces the number of
insertions.  This is the reduction of
\cite[Lemmas~6.4--6.5]{hugtenburg2024examples}.
The replacements of insertion indices and removal of divisors preserve
\[
 \sum_a(j_a-1)>0.
\]
Thus a nonzero term still has an insertion \(\Gamma_j\) with \(j\ge2\).
The reduction ends at a single \(\Gamma_j\), \(j\ge2\), or at an
out-of-range insertion, which is zero.  This stage never calls on the
still unknown degree-\(d\) invariant without interior insertions.

For the second stage, use
\eqref{eq:even-quadric-wdvv-no-interior}.  As verified in
\cref{lm:even-quadric-no-interior-recursion},
the coefficient of the unknown is \(2(2d-k+1)\), and every other term
has smaller Novikov degree or is an invariant of degree \(d\) with an
insertion \(\Gamma_j\), \(j\ge2\), already determined in the first stage.
By \eqref{lm:even-quadric-degree-constraints}, a nonzero invariant
\(\ogw_{d,k}\) satisfies
\[
 2nd=k(n-1)+3-n,\qquad n(2d-k+1)=3-k.
\]
Consequently the coefficient vanishes only for \((d,k)=(1,3)\).
To list the possible pairs explicitly, write \(k=3+2a\) with
\(a\in\Z\), since \(k\) is odd.  The degree condition becomes
\[
 n(d-1)=a(n-1).
\]
Since \(n\) and \(n-1\) are coprime and \(d>0\), there is an
integer \(m\ge0\) such that
\[
 d=1+m(n-1),\qquad a=mn,\qquad k=3+2mn.
\]
Substitution gives \(2(2d-k+1)=-4m\).
The case \(m=0\) is computed in \cref{lm:even-quadric-seed}; all other
cases are solved by the recursion.  Finally, the divisor axiom recovers
invariants whose interior insertions are all \(\Gamma_1\) from the
values without interior insertions.  This completes the induction.
\end{proof}

\begin{rem}
\label{rem:even-quadric-open-to-closed-reconstruction}
The closed invariants of positive degree required by
\cref{cor:even-quadric-open-reconstruction} can be replaced by the
additional open value \(\ogw_{1,1}(\Gamma_n)=-2\) of
\cref{thm:even-quadric-basic-values}, taken as open geometric input.
Indeed, setting \(d=k=1\) and \(j=n\) in
\eqref{eq:even-quadric-wdvv-single}, the zero axiom leaves only the
closed term paired with \(\ogw_{0,1}(\Gamma_0)=1\).  Hence the
divisor axiom gives
\[
 \GW_1(h^{n-1},h^n)
 =\GW_1(h^{n-1},h,h^n)
 =-2\ogw_{1,1}(\Gamma_n)=4.
\]
Together with the classical cohomology data, this value determines all
genus-zero closed invariants by closed WDVV and the closed axioms
\cite[Theorem~8 and Lemma~7.5]{hugtenburg2024examples}.
Then \cref{cor:even-quadric-open-reconstruction} determines all enhanced
open invariants from the additional initial value \(\ogw_{1,3}=2(-1)^{n/2+1}\).
Thus the two open values \(\ogw_{1,1}(\Gamma_n)\) and
\(\ogw_{1,3}\) suffice to determine all positive-degree open and
closed invariants, without independently supplied closed invariants of positive degree.
\end{rem}

\begin{rem}
There is also a folklore\footnote{This is a folklore result in a
community with one member, but one obtains it by asking an LLM whether results
similar to Sheridan's~\cite{sheridan2016fano} can be derived using the open
WDVV equations. I did not want to include LLM-generated results here.}
result that recovers the calsical result that there are $27$ lines on a cubic surface. The geometric input
consists of Woodward's computation of the number of Maslov-index-two disks
with boundary on a Lagrangian sphere~\cite{woodward2026holomorphic} and the
classical bijection between lines on the cubic surface and degree-two curves
through a general point on it~\cite{salmon1865geometry}.
\end{rem}

\subsection{The boundary character}
\label{subsec:even-quadric-boundary-character}

We apply \cref{subsubsec:boundary-character-extensions} to derive the
quadric characters of \cref{subsec:intro-quadric-characters}.
Observe that
\(\left.(\partial_sc)\right|_{s=0}=0\); together with
\(\kappa^*=\Id\), this verifies the hypotheses of
\cref{thm:open-character} for \(\mathcal L\).
We will extend it to
\(U\otimes Q_W=\mathcal A\oplus Q_Wp\) by computing its value on
\(p\star p\).

The class \(A_\omega=n\Gamma_1\) lies in
\(H^2_{\mathrm{adm}}(X,L;\R)\), lifts \(c_1(TX)=nh\),
and has periods \(nd=\mu_L(d)/2\) by \eqref{lm:even-quadric-classes}.  Hence
\cref{thm:euler-character} applies and gives
\[
 E=nh+\sum_{i=0}^n(1-i)t_i h^i\in\mathcal A,\qquad
 \mathcal L(E)=-c_0.
\]

At zero bulk, \eqref{eq:even-quadric-central-coefficient}
gives
\[
 \left.\mathcal L(h^i)\right|_{t=0}
 =\sum_{d\ge0}q^d\ogw_{d,1}(\Gamma_i).
\]
The degree constraint \eqref{lm:even-quadric-degree-constraints}
reduces to \(nd=i\).  The zero axiom and
\cref{thm:even-quadric-basic-values} therefore imply
\begin{equation}\label{eq:even-quadric-small-character}
 \left.\mathcal L(h^i)\right|_{t=0}
 =\begin{cases}
 1,&i=0,\\
 0,&1\le i<n,\\
 -2q,&i=n.
 \end{cases}
\end{equation}
For comparison, the small closed quantum ring is
\cite[Theorem~5 and Proposition~7.2]{hugtenburg2024examples}
\begin{equation}\label{prop:even-quadric-closed-quantum-ring}
 QH^*(X)=
 \frac{\R[[q]][x,y]}
 {(x^{n+1}-4qx,\ xy,\ y^2-(-1)^{n/2}(x^n-4q))},
 \qquad x=h,\quad y=p.
\end{equation}
In the ordinary cohomology basis,
\cite[proof of Proposition~7.2]{hugtenburg2024examples} gives
\[
 h^{\star n}=h^n+2q\,1_X.
\]
Thus \eqref{eq:even-quadric-small-character} identifies the character at zero bulk
with evaluation at \(x=0\) on
\(\R[[q]][x]/(x^{n+1}-4qx)\).

\begin{lm}\label[lm]{lm:even-quadric-primitive-products}
For \(u\in\mathcal A\), we have
\[
 p\star u=\mathcal L(u)p,\qquad
 p\star p=(-1)^{n/2}\sum_{i=0}^n\mathcal L(h^i)h^{n-i}.
\]
\end{lm}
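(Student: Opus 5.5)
The plan is to specialize \cref{lm:boundary-primitive-products} to the quadric, using the explicit metric \eqref{eq:even-quadric-metric}. The first identity, \(p\star u=\mathcal L(u)p\) for \(u\in\mathcal A\), is exactly the first line of \cref{lm:boundary-primitive-products}. Its hypotheses are already checked in \cref{prop:even-quadric-standing-hypotheses}: \(n\) is even, \(L\simeq S^n\), \(U=H^*(X;\R)\), \(\kappa^*=\Id\), and there is a point-like pair. So nothing new is needed for that part.

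For \(p\star p\), the second line of \cref{lm:boundary-primitive-products} gives
\[
 p\star p=2(-1)^{n/2}\sum_{l,m\in I_L}g^{lm}\mathcal L(\Delta_l)\Delta_m+g^{LL}\Phi_{ppp}p,
\]
with \(I_L=\{0,\ldots,n\}\) and \(\Delta_i=h^i\). By \eqref{eq:even-quadric-metric} we have \(g^{lm}=\tfrac12\delta_{m,n-l}\), so the first sum collapses to \((-1)^{n/2}\sum_i\mathcal L(h^i)h^{n-i}\). It therefore remains to show \(\Phi_{ppp}=0\).

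The main step is this vanishing of \(\Phi_{ppp}=\rho^*\partial_p^3\Phi_U=\langle p\star p,p\rangle_X\), the pulled-back coefficient at \(t_L=0\). I would argue by parity in \(t_L\). Complex conjugation, or more simply an orientation-reversing symmetry of the quadric, sends \(p\mapsto -p\) while fixing each \(h^i\); an example is the reflection \(z_0\mapsto -z_0\), which is holomorphic and preserves \(L\) but reverses its orientation. This symmetry preserves the genus-zero invariants with the relative-spin signs, which are trivial since \(w_{\mathfrak s}=0\). Hence every invariant with an odd number of \(p\) insertions vanishes, so \(\Phi_U\) is even in \(t_L\) and \(\rho^*\partial_p^3\Phi_U=0\).

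An alternative check is the degree argument. A nonzero \(\GW_d(p,p,p,\Delta_I)\) with only \(h^i\) insertions requires \(2n(d+1)-6+2(3+|I|)=3n+\sum 2i_a\). Comparing parity of the \(n\)-multiples modulo \(2\) alone does not force vanishing, so I expect the symmetry argument to be the one that actually works. Its delicate point is confirming that the reflection acts by \(-1\) on \(p\) and trivially on \(h\), which is clear since it reverses the orientation of \(L\) and fixes the hyperplane class.
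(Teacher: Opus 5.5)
Your proposal is correct. The reduction matches the paper's: the identity \(p\star u=\mathcal L(u)p\) and the collapse of the \(I_L\)-sum via \(g^{l,n-l}=\tfrac12\) come directly from \cref{lm:boundary-primitive-products} and \eqref{eq:even-quadric-metric}, and everything hinges on \(\Phi_{ppp}=0\). The difference is in how that vanishing is obtained.

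The paper simply cites \cite[Lemma~7.3]{hugtenburg2024examples}: closed invariants with three primitive insertions and all other insertions ambient vanish. You instead prove the vanishing directly, using the holomorphic isometry \(\phi\colon z_0\mapsto -z_0\). This works, and the ingredients should be stated explicitly:
\begin{enumerate}
\item Since \(\phi\) is unitary and holomorphic, composition with \(\phi\) maps each moduli space \(\M_{l+1}(A)\) to itself preserving the complex orientation.
\item Since \(\phi\) preserves area, \(\phi_*\) is the identity on \(H_2(X;\Z)\cong\Z\), and \(w_{\mathfrak s}=0\). Hence \(\GW_d(\phi^*\alpha_1,\ldots,\phi^*\alpha_k)=\GW_d(\alpha_1,\ldots,\alpha_k)\).
\item Since \(\phi\) preserves the orientation of \(X\) but reverses that of \(L\simeq S^n\), one has \(\phi^*p=-p\) and \(\phi^*h=h\).
\end{enumerate}
Your route is self-contained and proves more: \(\Phi_U\) is even in \(t_L\), not just that the \(ppp\)-coefficients vanish. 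The paper's citation is shorter.

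Your remark about degrees is right. For instance, \(\GW_1(p,p,p,h^{n/2+1})\) passes the degree constraint, so a symmetry or external input is genuinely needed.

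One correction: drop the parenthetical about complex conjugation. It is antiholomorphic and fixes \(L\) pointwise, and for \(n\) even it preserves the orientation of \(X\). It therefore fixes \(p\) and sends \(h\mapsto -h\); it does not give \(p\mapsto -p\). Only the reflection does the job.
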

\begin{proof}
By \cite[Lemma~7.3]{hugtenburg2024examples}, closed invariants with
three primitive insertions and all remaining insertions ambient vanish.
Since $t_L=0$, this gives $\Phi_{ppp}=0$.
The result follows from \cref{lm:boundary-primitive-products} and
\eqref{eq:even-quadric-metric}.  In particular, the coefficient $D$ of
\eqref{eq:boundary-character-discriminant-coefficient} is
\[
 D=\mathcal L(p\star p)
 =(-1)^{n/2}\sum_{i=0}^n(\partial_i c_0)(\partial_{n-i}c_0).
\]
\end{proof}

\begin{lm}\label[lm]{lm:even-quadric-three-boundary-character}
In \(Q_W\), one has
\[
\begin{aligned}
 \mathcal L(p\star p)
 &=2[s^3/3!]\overline\Omega\\
 &=2\sum_{d,l\ge0}\frac{q^d}{l!}
   \sum_{i_1,\ldots,i_l=0}^n
   \ogw_{d,3}(\Gamma_{i_1},\ldots,\Gamma_{i_l})
   t_{i_1}\cdots t_{i_l}.
\end{aligned}
\]
For \(l=0\), the inner summand is \(\ogw_{d,3}\).
\end{lm}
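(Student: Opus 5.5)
The plan is to combine the differential identity of \cref{lm:boundary-three-constraint-equation} with the zero-bulk values, following the argument of \cref{rem:boundary-three-constraint-proportionality}. Write $F:=[s^3/3!]\overline\Omega$. By \cref{lm:even-quadric-primitive-products}, $\Phi_{ppp}=0$, so $D=\mathcal L(p\star p)$. It therefore suffices to show $D=2F$.

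\Cref{lm:boundary-three-constraint-equation} gives
\[
 D\,\partial_iF=F\,\partial_iD\qquad\text{for every } i\in I_L=\{0,\ldots,n\},
\]
so $F/D$ should be constant in the bulk variables. The second equality of the statement is then just the Taylor expansion of $F$ in $t_0,\ldots,t_n$. It uses the definition \eqref{eq:intro-open-invariants} of $\ogw_{d,3}$, with the factor $3!$ absorbed into $s^3/3!$, and $Q_W$-multilinearity.

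\emph{Zero-bulk values.} First I compute $D|_{t=0}$. By \cref{lm:even-quadric-primitive-products},
\[
 D=(-1)^{n/2}\sum_{i=0}^n\mathcal L(h^i)\mathcal L(h^{n-i}).
\]
By \eqref{eq:even-quadric-small-character}, at $t=0$ only $i=0$ and $i=n$ contribute, each giving $1\cdot(-2q)$. Hence
\[
 D|_{t=0}=4(-1)^{n/2+1}q=4\varepsilon q.
\]
Next, $F|_{t=0}=\sum_d q^d\ogw_{d,3}$. The degree constraint \eqref{lm:even-quadric-degree-constraints} with $k=3$ and $l=0$ reads $2nd=2n$, forcing $d=1$. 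By \cref{lm:even-quadric-seed},
\[
 F|_{t=0}=q\,\ogw_{1,3}=2\varepsilon q=\tfrac12D|_{t=0}.
\]

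\emph{Integrating the differential identity.} This is the main obstacle, since $D|_{t=0}=4\varepsilon q$ is not a unit in $Q_W$. I would pass to $\Lambda[q^{-1}]\otimes\R[[t_0,\ldots,t_n]]$, or note directly that $D$ and $F$ are divisible by $q$; injectivity of $Q_W\to\Lambda[q^{-1}][[t]]$ makes either route enough. There $q^{-1}D$ has constant term $4\varepsilon$ and is a unit after completing in the $t$-variables. Hence $G:=F/D$ is defined and satisfies
\[
 \partial_iG=\frac{D\,\partial_iF-F\,\partial_iD}{D^2}=0\qquad\text{for all } i.
\]
Because the $t_i$ are even and the ring is a formal power series ring over $\Lambda[q^{-1}]$ in characteristic zero, vanishing of all partial derivatives makes $G$ constant in $t$. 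Its value is $G=G|_{t=0}=\tfrac12$. So $D=2F$ in the extended ring, and by injectivity also in $Q_W$. The only care needed is that $F$ a priori lies in $R_W^0=Q_W$ here, since $K_L=\R$; the degree bookkeeping of $q^{-1}$ is harmless.
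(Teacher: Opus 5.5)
Your proposal is correct and follows the paper's proof: it combines the identity of \cref{lm:boundary-three-constraint-equation} with the zero-bulk values $D|_{t=0}=4\varepsilon q$ and $[s^3/3!]\overline\Omega|_{t=0}=2\varepsilon q$, and concludes that the quotient is constant with value $\tfrac12$. The only difference is how the non-invertibility of $D$ is handled. The paper uses the zero axiom ($c_0|_{q=0}=-t_0$ and $[s^3/3!]\overline\Omega|_{q=0}=0$) to show that both series are divisible by $q$, so that $D/q$ is a unit in $Q_W$. You instead invert $q$ in $\Lambda[q^{-1}][[t]]$ and use injectivity of $Q_W$ into that ring, which is equally valid and avoids the $q=0$ computation.
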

\begin{proof}
By \cref{lm:boundary-three-constraint-equation},
\[
 D\,\partial_i\bigl([s^3/3!]\overline\Omega\bigr)
 =\bigl([s^3/3!]\overline\Omega\bigr)\partial_iD,
 \qquad 0\le i\le n.
\]
The zero axiom gives $c_0|_{q=0}=-t_0$ and
$\left.([s^3/3!]\overline\Omega)\right|_{q=0}=0$.
Thus both $D$ and $[s^3/3!]\overline\Omega$ are divisible by $q$.
Moreover, \eqref{eq:even-quadric-small-character}, the degree axiom,
and \cref{lm:even-quadric-seed} give
\[
 D|_{t=0}=4(-1)^{n/2+1}q,\qquad
 \left.([s^3/3!]\overline\Omega)\right|_{t=0}=2(-1)^{n/2+1}q.
\]
Hence $D/q$ is a unit in $Q_W$.  After canceling the common factor
$q$, the quotient $([s^3/3!]\overline\Omega)/D$ belongs to $Q_W$.
The differential identity above implies that it is independent of
the bulk variables, and its value at $t=0$ is $1/2$.
Therefore $D=2[s^3/3!]\overline\Omega$.
The second equality follows from the definition of the open invariants
in \eqref{eq:intro-open-invariants}.
\end{proof}

The proof of \cref{lm:even-quadric-three-boundary-character}
establishes the factorization \eqref{eq:intro-primitive-square}.
The formal square root \(\vartheta\) introduced there has the binomial expansion
\[
 \vartheta=\sum_{m\ge0}\binom{1/2}{m}H^m
 =1+\frac12H-\frac18H^2+\cdots\in Q_W
\]
which converges in the coefficient filtration and satisfies
\(\vartheta^2=1+H\) and \(\vartheta|_{t=0}=1\).
In particular, \(\vartheta\) is a unit already in \(Q_W\), and
its expansion uses only nonnegative integer powers of the bulk variables.

\begin{prop}[Extensions of the boundary character]
\label[prop]{prop:even-quadric-character-extensions}
The maps \(\mathcal L_\pm\) of
\cref{thm:intro-quadric-characters} are distinct unital degree-zero
characters extending \(\mathcal L\).  The only root adjoined is
\(\zeta\), with \(\zeta^2=\varepsilon q\).
\end{prop}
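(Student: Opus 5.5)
We reduce the statement to \cref{prop:boundary-character-extensions} and then identify the root explicitly. By \cref{lm:even-quadric-primitive-products} we have $\Phi_{ppp}=0$, so the defining relation of $Q'_W$ becomes $\xi^2=4D$. By \cref{lm:even-quadric-three-boundary-character} and the factorization \eqref{eq:intro-primitive-square}, $D=4\varepsilon q(1+H)=4\varepsilon q\vartheta^2$, where $\vartheta\in Q_W$ is the unit constructed above.

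We next define a $Q_W$-algebra map $Q'_W\to Q_W[\zeta]$ by $\xi\mapsto 4\zeta\vartheta$. It is well defined because
\[
(4\zeta\vartheta)^2=16\varepsilon q\vartheta^2=4D,
\]
and it respects degrees because $|\zeta|=n=|\xi|$ and $|\vartheta|=0$. Composing the characters $\mathcal L_\pm$ of \cref{prop:boundary-character-extensions} with this map gives unital degree-zero characters of $(U\otimes Q_W,\star)$ extending $\mathcal L$. Their value on $p$ is
\[
\tfrac12(0\pm\xi)\mapsto \pm2\zeta\vartheta,
\]
which is exactly the formula $\mathcal L_\pm(\alpha+fp)=\mathcal L(\alpha)\pm2f\zeta\vartheta$. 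Alternatively, and more directly, one can repeat the argument of \cref{prop:boundary-character-extensions} with $B=Q_W[\zeta]$. Multiplicativity reduces to $z^2=D$, and $z=\pm2\zeta\vartheta$ satisfies this. Here one has to check that $Q_W[\zeta]$ is graded commutative. Since $|\zeta|=n$ is even, it is commutative, so the computation of $(\alpha+fp)\star(\beta+gp)$ applies verbatim. This route shows that $\zeta$ is the only adjoined root, because $\vartheta$ already lies in $Q_W$.

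Distinctness: $\mathcal L_+(p)-\mathcal L_-(p)=4\zeta\vartheta$. We need this to be nonzero in $Q_W[\zeta]=Q_W\oplus Q_W\zeta$. This requires $Q_W[\zeta]$ to be free on $\{1,\zeta\}$, which holds because $Q_W[\zeta]$ is presented as $Q_W[\zeta]/(\zeta^2-\varepsilon q)$ and the relation is monic. Since $\vartheta$ is a unit with $\vartheta|_{t=0}=1$, the coefficient $4\vartheta$ of $\zeta$ is nonzero, so the two characters differ. The main point needing care is this freeness together with the identification $D=4\varepsilon q\vartheta^2$. The latter rests on the earlier lemmas and the zero-bulk values $D|_{t=0}=4\varepsilon q$, which in turn rest on the seed $\ogw_{1,3}$ from \cref{lm:even-quadric-seed}.
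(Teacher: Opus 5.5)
Your proposal is correct and follows the paper's proof: the paper likewise combines $\Phi_{ppp}=0$ with $D=4\varepsilon q\vartheta^2=(2\zeta\vartheta)^2$, applies the argument of \cref{prop:boundary-character-extensions} with $z=\pm2\zeta\vartheta$, gets degree zero from $|\vartheta|=0$, and gets distinctness from $\vartheta$ being a unit and $\zeta\neq0$. Your detour through the homomorphism $Q'_W\to Q_W[\zeta]$, $\xi\mapsto4\zeta\vartheta$, and your freeness argument for distinctness are cosmetic variations on the same argument.
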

\begin{proof}
By \cref{lm:even-quadric-primitive-products,lm:even-quadric-three-boundary-character},
\[
 \Phi_{ppp}=0,\qquad D=2[s^3/3!]\overline\Omega
 =4\varepsilon q\vartheta^2=(2\zeta\vartheta)^2.
\]
Thus \cref{prop:boundary-character-extensions} gives the characters
$\mathcal L_\pm(\alpha+fp)=\mathcal L(\alpha)\pm2f\zeta\vartheta$.
Since $|D|=|q|=2n$, we have $|\vartheta|=0$ and
$|2\zeta\vartheta|=n=|p|$, so the maps have degree zero.
Finally, $\vartheta$ is a unit and
$\zeta^2=\varepsilon q\ne0$.  The two maps therefore have opposite
nonzero values on $p$.
\end{proof}

\begin{rem}
\label[rem]{rem:even-quadric-primitive-bulk-extension}
Extending \(\mathcal L_\pm\) while retaining \(t_L\) in the
coefficient algebra involves convergence
issues that are resolvable but outside the scope of this manuscript.
Nevertheless, \cref{prop:even-quadric-character-extensions} gives
unital degree-zero characters
\[
 (U\otimes Q_U,\star_U)
 \xrightarrow{\Id\otimes\rho^*}(U\otimes Q_W,\star)
 \xrightarrow{\mathcal L_\pm}Q_W[\zeta].
\]
These characters factor through the algebra obtained by setting
\(t_L=0\).  We do not assert an extension retaining \(t_L\) here.
\end{rem}

\begin{rem}
\label[rem]{rem:even-quadric-complex-character-geometry}
There is interesting geometry behind the coefficient extension in
\cref{prop:even-quadric-character-extensions}.  In dimension \(n=4\),
the spin and orientation calculation in \cref{lm:even-quadric-seed}
forces the minus sign \(\ogw_{1,3}=-2\).  Consequently, at zero bulk,
\cref{lm:even-quadric-three-boundary-character} gives
\[
 \mathcal L(p\star p)=-4q,\qquad
 \mathcal L_\pm(p)=\pm2i\sqrt q.
\]
Thus, after adjoining and inverting \(\sqrt q\), quantum multiplication
by the normalized primitive class \(p/(2\sqrt q)\) has eigenvalues
\(\pm i\) on these two character factors.  In particular, splitting
this part of quantum cohomology into scalar character factors requires
adjoining complex coefficients.

An independent geometric manifestation occurs in the identification
\(Q^4\cong\operatorname{Gr}(2,4)\).  Nohara--Ueda computed the Floer
cohomology of a Lagrangian \(L'\cong U(2)\cong S^1\times S^3\):
the bounding cochains \(b_\pm=\pm(\pi i/2)e_1\), where
\(e_1\) generates \(H^1(L';\Z)\), have holonomy \(\pm i\),
zero potential, and nonzero Floer cohomology
\cite[Theorem~1.2]{nohara2016floer}.
If these cochains extend to compatible point-like bounding pairs
satisfying the hypotheses of \cref{thm:open-character,thm:euler-character}
on the full quantum cohomology, the resulting characters have value
zero on \(c_1(TQ^4)=4h\).  The ring relation
\eqref{prop:even-quadric-closed-quantum-ring} then forces their values
on \(p\) to be \(\pm2i\sqrt q\), realizing the same complex characters
geometrically.

The Nohara--Ueda Lagrangian also gives a more satisfying geometric
explanation of the square root itself.  Its two basic disk classes
\(\beta_1,\beta_2\) have Maslov index \(4\) and together represent
a line class \(\ell\), whose relative Maslov index is
\(2c_1(TQ^4)[\ell]=8\); see \cite[Section~4.3]{nohara2016floer}.
At the monotone fibre, the Maslov-index--area relation therefore gives
\(\omega(\beta_i)=\tfrac12\omega(\ell)\).
After passing to Novikov weights determined by symplectic area, with
the line assigned weight \(q\), each basic disk has weight \(\sqrt q\).
Together with the holonomies \(\pm i\), this explains geometrically
the factor \(i\sqrt q\) appearing in the character values.

The spin calculation for the Lagrangian sphere and the
\(S^1\times S^3\) calculation therefore give two independent routes,
the latter conditional on this bounding-pair extension, by which the
theory detects the need to adjoin complex values in order to split
quantum cohomology.
\end{rem}

\appendix
\raggedbottom

\section{Filtrations and scalar extension}
\label{app:scalar-extension}

Appendices \ref{app:scalar-extension}--\ref{app:standard-arguments}
extend \cite{solomon2019relative,solomon2016differential} to
non-commutative coefficients, with the conventions of
\cref{rem:conventions-st}.

We follow \cite[Sections~1.3 and~3]{kosloff2025pointlikenoncommutativef}
for filtrations and \cite[Sections~1.3.1 and~2.3]{solomon2019relative}
for scalar extension.  For the pairing, see
\cite[Proposition~2.15 and Lemmas~2.19--2.21]{kosloff2025pointlikenoncommutativef}.

\subsection{Filtrations and completions}\label{subsec:filtrations}

\begin{dfn}[Filtered completions]
\label{dfn:filtered-modules-completions}
For filtered rings and modules, we use
\cite[Definition~2.3]{kosloff2025pointlikenoncommutativef}; in particular,
a filtration \(\varsigma\) on \(M\) satisfies
\(\varsigma(x)=\infty\) if and only if \(x=0\).  Write
\[
 F_{\varsigma}^{a}M:=\{x:\varsigma(x)\geq a\},
 \qquad
 F_{\varsigma}^{>a}M:=\{x:\varsigma(x)>a\}.
\]
The module is \emph{complete} when the natural map
\[
 M\longrightarrow
 \varprojlim_{E\to+\infty}M/F_{\varsigma}^{>E}M
\]
is an isomorphism.  A linear map \(F\colon M\to M'\) is \emph{bounded} if,
for every \(E>0\), there is \(K>0\) such that
\[
 F\bigl(F_{\varsigma_M}^{>K}M\bigr)
 \subset F_{\varsigma_{M'}}^{>E}M'.
\]
Equivalently, \(F\) is continuous.  The tensor-product filtration is
\[
 F_{\varsigma}^{a}(M_1\otimes\cdots\otimes M_r)
 :=
 \sum_{a_1+\cdots+a_r\geq a}
 F_{\varsigma_1}^{a_1}M_1\otimes\cdots\otimes
 F_{\varsigma_r}^{a_r}M_r.
\]
Write \(\widehat\otimes\) for the completed tensor product; direct sums
are completed likewise.  We omit the hats in the main text.
\end{dfn}

\begin{dfn}[Novikov filtration]
\label{dfn:regular-novikov-filtration}
For a nonzero Novikov series, set
\[
 \nu\!\left(\sum_\beta a_\beta T^\beta\right)
 :=\inf_{a_\beta\neq0}\omega(\beta),
 \qquad \nu(0):=\infty.
\]
Use the same filtration on \(\Lambda_c\).  Both rings are complete by \cref{dfn:novikov-coefficient-rings}, and
\(\Lambda_c\hookrightarrow\Lambda\) preserves the filtration.
\end{dfn}

\begin{dfn}[Coefficient filtrations]
\label{dfn:coefficient-filtrations}
On \(\Lambda_c\otimes_\kk P_V\), write
\(f=\sum_{\beta,d}T^\beta f_{\beta,d}\), where \(f_{\beta,d}\) has total
bulk order \(d\), and set
\[
 \nu(f):=\inf_{f_{\beta,d}\neq0}\bigl(\omega(\beta)+d\bigr),
 \qquad \nu(0):=\infty.
\]
Use the same formula on \(K_L\otimes_\kk\Lambda\otimes_\kk P_W\),
with \(K_L\) trivially filtered.  Thus
\[
 \nu(T^\beta t^I)=\omega(\beta)+|I|,
 \qquad \nu(t_i)=1
\]
for every bulk coordinate, including that dual to \(1_X\).
Completion gives \(Q_V,Q_L,R_W^0\) of
\cref{dfn:filtered-coefficient-algebras}.

For \(S=\kk\langle s\rangle\), put on \(R_W^0\otimes_\kk S\)
\[
 \varsigma_1\!\left(\sum_{j=0}^{m}r_js^j\right)
 :=\inf_{r_j\neq0}\nu(r_j).
\]
Its completion is
\begin{equation}\label{eq:restricted-s-completion}
 R_W=
 \left\{
  \sum_{j\geq0}r_js^j
  \ \middle|
  r_j\in R_W^0,\quad
  \#\{j:\nu(r_j)\leq E\}<\infty
  \text{ for every }E>0
 \right\}.
\end{equation}

Give \(S\) the descending filtration \(\nu_S(s^j)=j(n-1)\), as in
\cite[Section~3.8]{kosloff2025pointlikenoncommutativef}, and put
\[
 \varsigma_2:=\nu_S\otimes\nu,
 \qquad
 \varsigma_2\!\left(\sum_{j\geq0}r_js^j\right)
 =\inf_{r_j\neq0}\bigl(\nu(r_j)+j(n-1)\bigr).
\]
On \(Q_V,Q_L,R_W^0\), set \(\varsigma_1=\varsigma_2=\nu\).  Then
\[
 \varsigma_1(t_i)=\varsigma_2(t_i)=1,
 \qquad
 \varsigma_1(s)=0,
 \qquad
 \varsigma_2(s)=n-1,
 \qquad
 \varsigma_2\geq\varsigma_1.
\]
The rings are \(\varsigma_1\)-complete, and
\[
 \mI_{Q_V}\subset F_{\varsigma_1}^{>0}Q_V,
 \qquad
 \mI_{R_W}\subset F_{\varsigma_1}^{>0}R_W,
 \qquad
 (s)+\mI_{R_W}\subset F_{\varsigma_2}^{>0}R_W,
\]
where the last inclusion uses \(n>1\).
\end{dfn}

\begin{dfn}[Pseudo-completeness]
\label{dfn:qkl-pseudo-completeness}
Let \((M_X,\nu)\) be complete, and let \(M_L\) be complete with respect to
\(\varsigma_1\) and carry \(\varsigma_2\geq\varsigma_1\).  For maps
\[
 F_{k,l}\colon M_X^{\otimes l}\otimes M_L^{\otimes k}\longrightarrow M_L,
 \qquad k,l\geq0,
\]
let \(\varsigma_2^\otimes\) be the tensor-product filtration given by
\(\nu\) on \(M_X\) and \(\varsigma_2\) on \(M_L\).  We call the
family \emph{pseudo-complete with respect to \(\varsigma_2\)} if, for every
\(E>0\) and \(D\in\Z\), there is \(K>0\), independent of \(k,l\), such that
every homogeneous input \(\xi\) satisfies
\[
 \varsigma_2^\otimes(\xi)>K,
 \qquad |F_{k,l}(\xi)|\geq D
 \quad\Longrightarrow\quad
 \varsigma_1\bigl(F_{k,l}(\xi)\bigr)>E.
\]
This is the open--closed form of
\cite[Definition~3.1]{kosloff2025pointlikenoncommutativef}.
The target may be replaced by any \(\varsigma_1\)-complete graded module.
\end{dfn}

\begin{rem}\label{rem:pseudo-completeness-continuity}
For each lower bound on the output degree, pseudo-completeness is
continuity from \(\varsigma_2^\otimes\) to \(\varsigma_1\), uniformly in
\(k,l\); see
\cite[Remark~3.2]{kosloff2025pointlikenoncommutativef}.
\end{rem}

Following \cite[Equation~(26) and Notation~4.14]
{kosloff2025pointlikenoncommutativef}, for homogeneous elements
\(\xi_1,\ldots,\xi_m\) of the completed coefficient or cochain modules,
write
\[
 \xi_i=\sum_{j\geq0}a_{ij}\lambda_{ij}s^{d_{ij}},
 \qquad
 \lambda_{ij}=T^{\beta_{ij}}t^{I_{ij}},
 \qquad \nu(\lambda_{ij})\longrightarrow\infty,
\]
where \(a_{ij}\) has no formal variables and \(d_{ij}=0\) for closed inputs.

\begin{notn}[Coefficient restrictions]
\label{dfn:adapted-coefficient-restriction}
Denote by \(A(\xi_1,\ldots,\xi_m)\) the unital \(\kk\)-subalgebra of
\(\Lambda\otimes_\kk P_W\) generated by
\[
 \{\lambda_{ij}\}_{i,j},
 \qquad \{t_1,\ldots,t_N\},
 \qquad \{T^\beta\}_{\beta\in\Pi_{\mathrm{op}}}.
\]
Here \(t_i\) are the bulk coordinates and \(\Pi_{\mathrm{op}}\) consists of
the classes of all nonzero ordinary, closed, and scalar
operations, including interval operations when used.
For closed classes \(B\), use \(\varpi(B)\).
\end{notn}

Put \(A=A(\xi_1,\ldots,\xi_m)\).  We write \(Q_W^\diamond\) for the
\(\nu\)-completion of \(A\cap(\Lambda_c\otimes_\kk P_W)\), and
\((R_W^0)^\diamond\) for the \(\nu\)-completion of
\(K_L\otimes_\kk A\).  The \(\varsigma_1\)-completion of
\((R_W^0)^\diamond\otimes_\kk\kk\langle s\rangle\) is denoted by
\(R_W^\diamond\).  These rings and their completed cochain modules,
with the induced gradings and filtrations, form the \emph{coefficient restriction}.

\begin{lm}\label{lm:adapted-restriction-exists}
The restricted modules contain \(\xi_1,\ldots,\xi_m\).  Any two
restrictions have a common restriction containing both, with
filtration-preserving inclusions.  For each filtration bound, the degrees
of nonzero coefficient monomials in \((R_W^0)^\diamond\) are bounded above.
\end{lm}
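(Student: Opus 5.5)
The plan is to argue directly from the definition of the coefficient restriction in \cref{dfn:adapted-coefficient-restriction}, treating the three assertions in turn.

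\emph{Containment.} The generating set of \(A=A(\xi_1,\ldots,\xi_m)\) contains every monomial \(\lambda_{ij}\) appearing in the expansions \(\xi_i=\sum_j a_{ij}\lambda_{ij}s^{d_{ij}}\). Each partial sum of \(\xi_i\) therefore lies in the uncompleted module \(C\otimes(K_L\otimes_\kk A)\otimes\kk\langle s\rangle\), where \(C\) is \(C_X\) or \(C_L\). Since \(\nu(\lambda_{ij})\to\infty\), these partial sums converge in \(\varsigma_1\), so their limit \(\xi_i\) lies in the completion. For closed inputs, \(d_{ij}=0\) and the \(\lambda_{ij}\) lie in \(\Lambda_c\otimes P_W\), so \(\xi_i\) lands in the \(Q_W^\diamond\)-module.

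\emph{Common restriction.} For two restrictions, built from \((\xi_1,\ldots,\xi_m)\) and \((\xi'_1,\ldots,\xi'_{m'})\), I take the restriction of the concatenated list. Its generating set is the union of the two generating sets, so each subalgebra \(A\) includes into the joint one. Two facts make these inclusions filtration-preserving. First, \(\nu\) is defined monomialwise on the ambient ring \(\Lambda\otimes P_W\). Second, all filtrations on the restricted modules are induced from the ambient ones. Completion is functorial for filtration-preserving maps, so the inclusions extend to the completed modules, and they remain injective because each completion sits inside the ambient completed module.

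\emph{Degree bound.} This is the main point. Fix \(E\). Every monomial of \(A\) is a product of generators, and by \cref{dfn:coefficient-filtrations} \(\nu\) is additive on monomials. The generators fall into three groups:
\begin{itemize}
\item the bulk coordinates \(t_i\), with \(\nu=1\);
\item the \(\lambda_{ij}\), of which only finitely many have \(\nu\le E\) because \(\nu(\lambda_{ij})\to\infty\);
\item the classes \(T^\beta\) with \(\beta\in\Pi_{\mathrm{op}}\), of which only finitely many satisfy \(\omega(\beta)\le E\) by the finiteness clauses of \cref{ax:qkl-energy,ax:qempty-energy,ax:qminus-energy}.
\end{itemize}
The subtlety is that generators of filtration zero could occur to arbitrary powers. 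A generator of filtration zero has \(\omega(\beta)=0\), which forces \(\beta=\beta_0\) by the definition of \(\Lambda\), and it carries no \(t\)-factor. Such generators are therefore scalars of degree \(0\).

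Every other generator has \(\nu\) bounded below by some \(\epsilon>0\) among the finitely many candidates with \(\nu\le E\). Hence a monomial with \(\nu\le E\) is a product of at most \(E/\epsilon\) such factors, each drawn from a finite set. Its degree is then bounded by \((E/\epsilon)\) times the maximum degree of these finitely many generators. Finally, \(K_L\) sits in degree \(0\), so the same bound holds for \((R_W^0)^\diamond\).
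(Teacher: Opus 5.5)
Your proposal is correct and follows essentially the same route as the paper. In both, only finitely many generators have positive filtration below each bound, and those filtrations are bounded away from zero; so a monomial of bounded filtration is a product of boundedly many factors from a finite set, and the union of the two generating sets gives the common restriction.

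Two small points should be made explicit. The paper notes that completion adds no new monomials, which is what lets the bound pass from \(A\) to \((R_W^0)^\diamond\); you leave this tacit. Also, your product bound ``\((E/\epsilon)\) times the maximum degree'' needs that maximum replaced by its positive part, or replaced by the observation that there are only finitely many such monomials. As written it fails if every relevant generator has negative degree.
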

\begin{proof}
The expansions and energy axioms give finitely many nonconstant
generators below each filtration bound, with positive filtrations
bounded away from zero.  Products of bounded filtration have boundedly
many factors from a finite set.  Thus there are finitely many such
monomials, and their degrees are bounded.
Each \(T^{\beta_{ij}}t^{I_{ij}}\) is kept as one generator.
Completion adds no new monomials, and \(K_L\) has degree zero.
Thus \(A\) is sababa in the sense of
\cite[Definition~3.3]{kosloff2025pointlikenoncommutativef}.

The completed modules contain the chosen inputs and inherit the
filtrations.  The structure maps preserve them because their weights lie
in \(A\); closed maps preserve \(Q_W^\diamond\).
The union of two generating sets has the same finiteness property and
gives a common restriction.
\end{proof}

\begin{lm}\label{lm:local-uniform-estimate}
Consider a family \(F_{k,l}\) as in \cref{dfn:qkl-pseudo-completeness} whose target
is a completed scalar extension over a coefficient restriction.
Suppose the target degrees before scalar extension are bounded above and
\begin{equation}\label{eq:local-sigma-two-bound}
 \varsigma_2\bigl(F_{k,l}(\xi)\bigr)\ge\varsigma_2^\otimes(\xi).
\end{equation}
Then the family is pseudo-complete.
\end{lm}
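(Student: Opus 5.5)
The plan is to verify the defining implication of \cref{dfn:qkl-pseudo-completeness} directly. Fix \(E>0\) and \(D\in\Z\). We must find \(K\), independent of \(k,l\), such that every homogeneous \(\xi\) with \(\varsigma_2^\otimes(\xi)>K\) and \(|F_{k,l}(\xi)|\ge D\) satisfies \(\varsigma_1(F_{k,l}(\xi))>E\). By \eqref{eq:local-sigma-two-bound}, such an output already has \(\varsigma_2\)-filtration greater than \(K\). The task is therefore to convert a large \(\varsigma_2\) on a homogeneous element of bounded-below degree into a large \(\varsigma_1\), uniformly in \(k,l\).

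Write the output as \(F_{k,l}(\xi)=\sum_j x_j\otimes r_js^j\). Here \(x_j\) lies in the target before scalar extension, so its degree is at most some \(M\) by hypothesis. The coefficients satisfy \(r_j\in(R_W^0)^\diamond\) and \(\varsigma_2(F)=\inf_j(\nu(r_j)+j(n-1))\). Fix a threshold \(E\) and look at a term with \(\nu(r_j)\le E\), that is, a term that would violate \(\varsigma_1>E\).

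Homogeneity gives \(|x_j|+|r_j|+j(1-n)=|F|\ge D\). By \cref{lm:adapted-restriction-exists}, the degrees of nonzero monomials of \((R_W^0)^\diamond\) with filtration at most \(E\) are bounded above by some \(N_E\). Reading this off each monomial of \(r_j\) of filtration at most \(E\) gives
\[
j(n-1)\le M+N_E-D.
\]
Since \(n>1\), this bounds \(j\) by some \(j_0(E,D)\). Hence any such term has
\[
\nu(r_j)+j(n-1)\le E+j_0(n-1).
\]
Choose \(K:=E+j_0(n-1)\). If \(\varsigma_2^\otimes(\xi)>K\), then \(\varsigma_2(F)>K\), so no term has \(\nu(r_j)\le E\). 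It follows that \(\varsigma_1(F)=\inf_j\nu(r_j)>E\). The constant \(K\) depends only on \(E\), \(D\), \(M\) and the restriction, and not on \(k,l\).

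The main obstacle is the monomial bookkeeping. An element \(r_j\) may be an infinite sum, and the definition of \(\varsigma_1\) takes an infimum over its monomials, so the argument should run monomial by monomial. A monomial of \(r_j\) with filtration at most \(E\) must have degree exactly \(|F|-|x_j|-j(1-n)\), because the completed modules are graded and every summand is homogeneous. The bound from \cref{lm:adapted-restriction-exists} then applies to that monomial. I would also check that completion introduces no degree-unbounded monomials below the threshold, which that lemma asserts. Finally, I would note that \(K_L\) sits in degree zero, so \(K_L\)-coefficients do not disturb the degree count.
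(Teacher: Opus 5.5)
Your proposal is correct and is essentially the paper's proof. Both arguments bound the degree of every coefficient monomial of filtration at most \(E\) via \cref{lm:adapted-restriction-exists} and the target-degree bound, deduce that the \(s\)-exponent satisfies \(j(n-1)\le M+N_E-D\), and take \(K\ge E+M+N_E-D\) (with \(K>0\)), so that \eqref{eq:local-sigma-two-bound} forces every monomial to have \(\nu>E\). The only cosmetic difference is your detour through an explicit bound \(j_0\) on \(j\), which uses \(n>1\); the paper bounds \(d_j(n-1)\) directly and does not need it.
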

\begin{proof}
Let \(E>0\) and \(D\in\Z\), and let \(h\) bound the target degrees
before scalar extension.  By \cref{lm:adapted-restriction-exists}, there
is \(M\) such that every nonzero coefficient monomial \(\lambda\) with
\(\nu(\lambda)\le E\) satisfies \(|\lambda|\le M\).  Choose \(K>0\) with
\(K\ge E+h+M-D\).  It depends only on \(E\) and \(D\).

Let \(\xi\) be a homogeneous input with \(\varsigma_2^\otimes(\xi)>K\) and
\(|F_{k,l}(\xi)|\ge D\), and write
\[
 F_{k,l}(\xi)=\sum_{j\ge0}a_j\lambda_j s^{d_j},
\]
where \(a_j\) has no formal variables and \(\lambda_j\) is a coefficient
monomial.  Each term has degree
\(|a_j|+|\lambda_j|-d_j(n-1)=|F_{k,l}(\xi)|\ge D\).  If
\(\nu(\lambda_j)\le E\), then \(|a_j|\le h\) and \(|\lambda_j|\le M\), so
\[
 \nu(\lambda_j)+d_j(n-1)\le E+h+M-D\le K.
\]
On the other hand, \eqref{eq:local-sigma-two-bound} gives
\[
 \nu(\lambda_j)+d_j(n-1)
 \ge\varsigma_2\bigl(F_{k,l}(\xi)\bigr)
 \ge\varsigma_2^\otimes(\xi)>K.
\]
Hence \(\nu(\lambda_j)>E\) for every \(j\), and so
\(\varsigma_1\bigl(F_{k,l}(\xi)\bigr)>E\).
\end{proof}

\begin{rem}\label{rem:de-rham-sababa-restriction}
After scalar extension, the coefficient of the output of each operation
or pairing is the product of the coefficients of the inputs and a weight
\(T^\beta\) with \(\nu(T^\beta)\ge0\).  Hence each such map \(F\)
satisfies
\[
 \varsigma_i\bigl(F(\xi)\bigr)\ge\varsigma_i^\otimes(\xi),
 \qquad i=1,2.
\]
Before scalar extension, the outputs of \(\q_{k,l}\) have degree at
most \(n\), and the pairing \(\langle\cdot,\cdot\rangle_L\) takes values
of degree \(0\).  So the families \(\q_{k,l}\) and
\(\langle\q_{k,l}(a_1,\ldots,a_k;\eta),a_{k+1}\rangle_L\) are
pseudo-complete by \cref{lm:local-uniform-estimate}.  For the pairing,
we apply the lemma before projecting to \(HH_0(R_W)\), and use that
\(\pi_{HH}\) preserves both filtrations by \cref{lm:hh0-filtered-complete}.
The inputs of \(\q_{\emptyset,l}\) and \(\q_{-1,l}\) lie in
\(C_X^{Q_W}\), which carries only the filtration \(\nu\).  So
\(\varsigma_1^\otimes(\xi)=\varsigma_2^\otimes(\xi)\), and the case
\(i=1\) of the inequality above already shows that these families are
pseudo-complete.  For the interval operations and pairing of the
de~Rham model in \cref{rem:interval-de-rham-model}, the forms live on
\([0,1]\times L\) and \([0,1]\), so the degree bounds are \(n+1\) and
\(1\), and the same argument applies.  This is the argument of
\cite[Proposition~3.8 and Lemma~4.13]{kosloff2025pointlikenoncommutativef}.
\end{rem}

\begin{rem}\label{rem:monotone-coefficient-bound}
If \(\mu_L=\lambda\omega\) with \(\lambda>0\), then
\[
 |T^\beta t^I|\le
 \max\{\lambda,|t_1|,\ldots,|t_N|\}\,\nu(T^\beta t^I).
\]
Thus bounded filtration bounds coefficient degrees on all of \(R_W^0\),
and \cref{lm:local-uniform-estimate} applies without restricting it.
\end{rem}

Give \(HH_0(R_W)\) the quotient filtrations
\[
 F_{\varsigma_i}^{a}HH_0(R_W)
 :=\pi_{HH}\bigl(F_{\varsigma_i}^{a}R_W\bigr),
 \qquad i=1,2,
\]
where \(\pi_{HH}\colon R_W\to HH_0(R_W)\) is the quotient map.

\begin{lm}
\label{lm:hh0-filtered-complete}
The subspace \([R_W,R_W]_{\mathrm{gr}}\) is closed for \(\varsigma_1\).
The quotient \(HH_0(R_W)\) is complete for \(\varsigma_1\), and
\[
 \bigcap_{E>0}F_{\varsigma_2}^{>E}HH_0(R_W)=\{0\}.
\]
The map \(\pi_{HH}\) preserves both filtrations.
Here \([R_W,R_W]_{\mathrm{gr}}\) denotes the algebraic span of commutators.
\end{lm}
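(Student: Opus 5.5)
The plan is to use the explicit structure of $R_W$ as a completed space of sequences $\sum_{j\ge0}r_js^j$ with $r_j\in R_W^0$, as in \eqref{eq:restricted-s-completion}, and to compute the algebraic commutator span directly. Since $R_W^0$ is graded-commutative and lies in the graded center, for homogeneous $r,r'\in R_W^0$ one has
\[
[rs^a,r's^b]_{\mathrm{gr}}=\bigl(1-(-1)^{|s|^2ab}\bigr)rr's^{a+b}
\]
up to a sign. So the commutators are spanned by elements $2rs^m$ where $|s|$ is odd and $m=a+b$ with $a,b$ both odd. First I would show that an element $x=\sum_jr_js^j$ with all $r_j\neq0$ only for positive even $j$ (when $|s|$ is odd) is an algebraic sum of commutators. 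The naive termwise decomposition gives an infinite sum. Instead, I will write
\[
x=\Bigl[s,\tfrac12\sum_{j}r_js^{j-1}\Bigr]_{\mathrm{gr}},
\]
a single commutator. This works because $\sum_jr_js^{j-1}$ still satisfies the finiteness condition of \eqref{eq:restricted-s-completion}, and $[s,r s^{j-1}]_{\mathrm{gr}}=2rs^j$ for $j$ even, since $r$ is central and even (the $s$-free algebras are even). Conversely, every commutator of elements of $R_W$ expands, by bilinearity and continuity of multiplication, into a series supported in positive even $s$-degrees. Hence
\[
[R_W,R_W]_{\mathrm{gr}}=\Bigl\{\sum_{j>0,\ j\text{ even}}r_js^j\in R_W\Bigr\}.
\]
When $|s|$ is even, the commutator span is zero.

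With this description, closedness for $\varsigma_1$ is immediate: the subspace is the kernel of the continuous coefficient projections $x\mapsto r_j$ for $j=0$ and for $j$ odd. These projections are bounded because $\varsigma_1$ is the infimum over $\nu(r_j)$. Completeness of $HH_0(R_W)$ then follows from the general fact that the quotient of a complete filtered module by a closed submodule, with the quotient filtration, is complete. More concretely, $HH_0(R_W)$ is identified with the sub-series supported on $j=0$ and odd $j$, with its restricted filtration, and this is a closed summand of $R_W$. Preservation of filtrations by $\pi_{HH}$ holds by the definition of the quotient filtration.

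For the $\varsigma_2$ statement, I would use the same splitting. The projection onto the complementary summand does not increase $\varsigma_2$, because $\varsigma_2$ is also computed termwise as $\inf(\nu(r_j)+j(n-1))$. So $F^{>E}_{\varsigma_2}HH_0$ corresponds to series with $\nu(r_j)+j(n-1)>E$ for all surviving $j$, and the intersection over all $E$ is zero. The main point to check carefully is that the quotient filtration equals the restricted filtration under this identification, that is, that projecting a representative onto the even-positive-free part does not lower either filtration. This is clear from the termwise formulas, but it is the step where the argument relies on $R_W^0$ being central and even. I would state that assumption explicitly, since it is in force for the remainder of the main text.
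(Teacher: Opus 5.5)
Your route is the same as the paper's. You identify $[R_W,R_W]_{\mathrm{gr}}$ (for $|s|$ odd) with the series supported in positive even $s$-degrees and note that this is the common kernel of the coefficient projections for $j=0$ and $j$ odd. You then read off closedness, completeness and the $\varsigma_2$-separation from the complementary summand of series in $1,s,s^3,\ldots$, using that both filtrations are computed termwise.

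\textbf{The gap} is in your single-commutator step, which assumes $R_W^0$ is concentrated in even degrees. That assumption is imposed only from \cref{sec:noncommutative-relative-quantum} onward. This lemma sits in the appendix and is used in the general setting of \cref{sec:tensor-relative-potentials}: for example through \cref{lm:s-degree-one-detector} in the zero-energy part of \cref{thm:enhanced-superpotential-axioms}, and in the pseudo-completeness of the pairing. There $W$ may contain odd classes, so $R_W^0$ has odd elements. For odd $r$, $|s|$ odd and $j$ even, one gets $[s,rs^{j-1}]_{\mathrm{gr}}=-2rs^j$. So $[s,\tfrac12\sum_j r_js^{j-1}]_{\mathrm{gr}}$ is not $x$ in general.

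\textbf{The fix} is what the paper does ("split coefficients by parity \ldots\ a sum of at most two commutators"). Write $r_j=r_j^{+}+r_j^{-}$ with $r_j^{+}$ even and $r_j^{-}$ odd, and take
\[
 x=\Bigl[s,\tfrac12\sum_j r_j^{+}s^{j-1}\Bigr]_{\mathrm{gr}}
 -\Bigl[s,\tfrac12\sum_j r_j^{-}s^{j-1}\Bigr]_{\mathrm{gr}}.
\]
Both series still satisfy the finiteness condition of \eqref{eq:restricted-s-completion}. If $x$ is homogeneous, all $r_j$ have the parity of $|x|$, so a single commutator suffices up to the sign $(-1)^{|x|}$.

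The rest of your argument needs no parity assumption. Your general formula $[rs^a,r's^b]_{\mathrm{gr}}=\pm\bigl(1-(-1)^{ab|s|}\bigr)rr's^{a+b}$ already gives the reverse inclusion for arbitrary graded-commutative $R_W^0$. Contrary to your closing remark, the comparison of the quotient filtration with the restricted one does not use evenness either. It uses only that $\varsigma_1$ and $\varsigma_2$ are infima over the coefficients, so the projection onto the complementary summand does not lower them. With the sign correction, your proof establishes the lemma in the generality stated.
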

\begin{proof}
By \cite[Corollary~3.29]{kosloff2025pointlikenoncommutativef}, the
commutator subspace is zero if \(|s|\) is even, while if \(|s|\) is odd,
\[
 [R_W,R_W]_{\mathrm{gr}}
 =\widehat{\bigoplus}_{j\geq1}^{\,\varsigma_1}R_W^0s^{2j}.
\]
Indeed, split coefficients by parity and remove one factor of \(s\).
Every series on the right is then a sum of at most two commutators in
\(R_W\), by \eqref{eq:restricted-s-completion}.
This subspace is closed.  Its quotient consists of series in
\(1,s,s^3,\ldots\), with the induced filtrations, proving the claims.
For \(|s|\) even, \(HH_0(R_W)=R_W\).
\end{proof}

\begin{lm}
\label{lm:s-degree-decomposition}
Every \(a\in R_W\) has a unique expansion
\[
    a=\sum_{k\ge0}a_k,\qquad a_k\in R_W^0s^k,
\]
converging for \(\varsigma_1\).  The projections descend to
\(HH_0(R_W)\) and \(\Coker\mathfrak i\); write \([\xi]_{s^k}\) for the
components of a class \(\xi\).  They determine \(\xi\) and sum to it.
The quotient map \(HH_0(R_W)\to\Coker\mathfrak i\) is an isomorphism
in every positive \(s\)-degree.
\end{lm}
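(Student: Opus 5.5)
The plan is to build the expansion first in $R_W$ and then push it to the two quotients. In $R_W$, the completion \eqref{eq:restricted-s-completion} already writes every element as $a=\sum_{j\ge0}r_js^j$ with $r_j\in R_W^0$. For each $E$, only finitely many $r_j$ have $\nu(r_j)\le E$. Setting $a_k:=r_ks^k$, this finiteness is exactly $\varsigma_1$-convergence of $\sum_k a_k$. For uniqueness, suppose $\sum_k r_ks^k=0$ with $r_k\in R_W^0$. Since $s$ is free over $R_W^0$ and $\varsigma_1$ is the infimum over coefficients, each $r_k$ must vanish. So the projections $a\mapsto a_k$ are well defined and continuous: they do not decrease $\varsigma_1$ or $\varsigma_2$.

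Next, $HH_0(R_W)$. By \cref{lm:hh0-filtered-complete}, the commutator subspace is zero when $|s|$ is even. When $|s|$ is odd it is $\widehat\bigoplus_{j\ge1}R_W^0s^{2j}$. In both cases it is a sum of $s$-homogeneous pieces, so each projection maps it into itself and descends to $HH_0(R_W)$. The descended components of a class are the classes of the components of any representative. They converge to the class because $\pi_{HH}$ preserves $\varsigma_1$ and $HH_0(R_W)$ is complete. They determine it because a class all of whose components vanish has a representative lying in the closed commutator subspace.

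For $\Coker\mathfrak i$, I would identify $\operatorname{Im}\mathfrak i$. By definition, $\mathfrak i$ takes values in $Q_L[-n]\subset HH_0(R_W)$, so its image lies entirely in $s$-degree zero. Hence $\operatorname{Im}\mathfrak i$ is preserved by all projections: it is killed by those with $k\ge1$ and fixed by the $k=0$ projection. The projections therefore descend to $\Coker\mathfrak i$, and the determining and summing properties carry over as before. For $k\ge1$, the quotient map $HH_0(R_W)\to\Coker\mathfrak i$ restricted to $s$-degree $k$ is surjective. It is injective because its kernel lies in $\operatorname{Im}\mathfrak i\cap HH_0(R_W)_{s^k}=0$.

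The main obstacle is the completeness and closedness needed for "sums to it" in $\Coker\mathfrak i$, since $\operatorname{Im}\mathfrak i$ need not be closed a priori. I would avoid this by arguing componentwise. The $s$-degree-zero component is handled as a single term. All positive-degree components live in $HH_0(R_W)$, where convergence is already established. So the expansion in $\Coker\mathfrak i$ is a finite sum of the $s^0$ term and the image of a convergent series in $HH_0(R_W)$, and no topology on the cokernel is required.
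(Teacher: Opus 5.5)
Your proposal is correct and follows the same route as the paper. The paper's proof reads the expansion off the explicit completion \eqref{eq:restricted-s-completion}, uses the $s$-homogeneous description of $[R_W,R_W]_{\mathrm{gr}}$ from \cref{lm:hh0-filtered-complete} to descend to $HH_0(R_W)$, and observes that $\operatorname{Im}\mathfrak i\subset Q_L$ lies in $s$-degree zero, so passing to $\Coker\mathfrak i$ changes only that component; your componentwise reading of ``sums to it'' in $\Coker\mathfrak i$, which sidesteps any closedness question for $\operatorname{Im}\mathfrak i$, simply spells out that last observation.
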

\begin{proof}
Use \eqref{eq:restricted-s-completion} and
\cref{lm:hh0-filtered-complete}.  Since
\(\operatorname{Im}\mathfrak i\subset Q_L\) has \(s\)-degree zero,
passing to \(\Coker\mathfrak i\) changes only that component.
\end{proof}

\begin{lm}
\label{lm:s-degree-one-detector}
For \(r\in R_W^0\) and \(k\ge0\),
\begin{equation}\label{eq:s-euler-counts}
 \mathcal E_s(rs^k)=krs^k,
 \qquad
 \left.\mathcal E_s\right|_{HH_0(R_W)_{s^k}}=k\,\Id,
 \qquad
 \left.\mathcal E_s\right|_{(\Coker\mathfrak i)_{s^k}}=k\,\Id.
\end{equation}
In particular, the kernel of \(\mathcal E_s\) on either quotient is its
\(s\)-degree-zero component.  Moreover, for every $k\ge1$ such that
$|s|$ is even or $k$ is odd,
\[
 R_W^0\longrightarrow(\Coker\mathfrak i)_{s^k},
 \qquad q\longmapsto q[s^k],
\]
is an isomorphism.  If $|s|$ is odd and $k\ge2$ is even, then
$(\Coker\mathfrak i)_{s^k}=0$.
\end{lm}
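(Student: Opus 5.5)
The plan is to reduce everything to the explicit description of \(HH_0(R_W)\) from \cref{lm:hh0-filtered-complete} and the \(s\)-degree decomposition of \cref{lm:s-degree-decomposition}.

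\textbf{The Euler formula.} For the first identity, I would check directly that \(\mathcal E_s=\partial_{(0,s)}\) is the degree-zero derivation with \(\mathcal E_s(s)=s\) and \(\mathcal E_s|_{R_W^0}=0\). The Leibniz rule and induction on \(k\) then give \(\mathcal E_s(rs^k)=krs^k\). No signs appear, because \(|\mathcal E_s|=0\). Since \(\mathcal E_s\) is continuous, it respects the convergent decomposition \(a=\sum_k a_k\). The induced operators on \(HH_0(R_W)\) and \(\Coker\mathfrak i\) are defined by \(\delta[r]=[\delta r]\), and the \(s\)-degree projections descend by \cref{lm:s-degree-decomposition}. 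Together these give \(\mathcal E_s=k\,\Id\) on each \(s^k\)-component. Because \(\kk\) has characteristic zero, \(k\) is invertible for \(k\ge1\). Writing a class as the sum of its components then shows that \(\ker\mathcal E_s\) is exactly the \(s\)-degree-zero component.

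\textbf{Structure of the cokernel in positive \(s\)-degree.} For the isomorphism statement, I would split into cases according to the parity of \(|s|\).

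If \(|s|\) is even, the commutator subspace vanishes by \cref{lm:hh0-filtered-complete}, so \(HH_0(R_W)=R_W\). Then \(HH_0(R_W)_{s^k}=R_W^0s^k\), and \(q\mapsto qs^k\) is bijective, since \(R_W\) is free over \(R_W^0\) on the monomials \(s^j\).

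If \(|s|\) is odd, \cref{lm:hh0-filtered-complete} identifies the commutators with the completed sum of the \(R_W^0s^{2j}\), \(j\ge1\). Hence:
\begin{itemize}
\item the even positive components of \(HH_0\) vanish;
\item for odd \(k\), the component \(HH_0(R_W)_{s^k}\) is \(R_W^0[s^k]\), and \(q\mapsto q[s^k]\) is injective because no odd-degree element is a commutator.
\end{itemize}

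Finally, \cref{lm:s-degree-decomposition} says that \(HH_0(R_W)\to\Coker\mathfrak i\) is an isomorphism in positive \(s\)-degree. This is because \(\operatorname{Im}\mathfrak i\subset Q_L\) has \(s\)-degree zero. The statements for \(\Coker\mathfrak i\) then follow from those for \(HH_0(R_W)\).

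\textbf{Main obstacle.} The delicate point is the exact identification of the commutator subspace in the odd case, including its closedness, so that no odd power \(s^k\) with a nonzero \(R_W^0\)-coefficient lies in its closure. I would take this from \cref{lm:hh0-filtered-complete}, which cites \cite[Corollary~3.29]{kosloff2025pointlikenoncommutativef}. As a check, I would use the parity argument: \(R_W^0\) is graded-commutative, so \([rs^a,r's^b]_{\mathrm{gr}}\) equals \(rr's^{a+b}\) times \(1-(-1)^{ab}\), up to sign. This coefficient is nonzero only when \(a\) and \(b\) are both odd, which forces \(a+b\) to be even.
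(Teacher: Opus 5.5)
Your proposal is correct and follows the paper's proof. The Leibniz rule gives \(\mathcal E_s(rs^k)=krs^k\), and \cref{lm:s-degree-decomposition} lets this descend to both quotients. Characteristic zero then identifies the kernel. \cref{lm:hh0-filtered-complete}, together with the positive-degree isomorphism \(HH_0(R_W)\to\Coker\mathfrak i\), shows that each surviving positive component is free over \(R_W^0\) on \([s^k]\). Your parity computation of \([rs^a,r's^b]_{\mathrm{gr}}\) is a good sanity check on the cited description of the commutator subspace, but the argument does not need anything beyond that citation.
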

\begin{proof}
The Leibniz rule gives \(\mathcal E_s(rs^k)=krs^k\).  This descends
to both quotients by \cref{lm:s-degree-decomposition}.
Since \(\operatorname{char}\kk=0\), its kernel consists of the
degree-zero classes.  By \cref{lm:hh0-filtered-complete}, every surviving
positive component is free over \(R_W^0\) with basis \([s^k]\).
\end{proof}

\subsection{Continuity of the coefficient operations}
\label{app:derivation-continuity}

Derivations of \(R_W\) are continuous by assumption.  In particular,
\(\partial_{w_i}\) lowers \(\nu\) by at most one,
\(\partial_s\) preserves \(\varsigma_1\) and lowers
\(\varsigma_2\) by at most \(n-1\), and \(\mathcal E_s\) preserves both
filtrations.  The divisor derivation \(E_A\) preserves \(\nu\).
Continuity is preserved by \(Q_W\)-multiples, brackets, and passage to
\(HH_0(R_W)\).

Give \(C_X,C_L\) the trivial filtration and complete their scalar
extensions.  The extension rules and nonnegative Novikov weights give
\begin{equation}\label{eq:ordinary-main-filtered}
\begin{aligned}
 \varsigma_1\bigl(\q_{k,l}(a_1,\ldots,a_k;\eta_1,\ldots,\eta_l)\bigr)
 &\ge \sum_{i=1}^k\varsigma_1(a_i)+\sum_{j=1}^l\nu(\eta_j),\\
 \varsigma_1\bigl(\langle x,y\rangle_L\bigr)
 &\ge \varsigma_1(x)+\varsigma_1(y).
\end{aligned}
\end{equation}
The same bound holds for closed and scalar operations, with only closed
inputs.  These bounds are uniform in the numbers of inputs.
Write \(\varsigma_1^\otimes\) for the induced filtration on tensors of inputs.
Energy finiteness gives convergence of the sums in
\cref{dfn:novikov-summed-structure-maps} in their completed targets.

\subsection{Scalar extension of the structure maps and pairing}
\label{app:subsec-scalar-extension}

\begin{dfn}[Scalar extension of the structure maps]
\label{dfn:scalar-extension-structure-maps}
The structure maps of \cref{dfn:structure-maps} extend to
bounded maps
\[
\q_{k,l}^\beta:(C_X^{Q_W})^{\otimes l}\otimes (C_L^{R_W})^{\otimes k}\to C_L^{R_W},
\]
\[
\q_{\emptyset,l}^A:(C_X^{Q_W})^{\otimes l}\to C_X^{Q_W},
\]
\[
\q_{-1,l}^\beta:(C_X^{Q_W})^{\otimes l}\to HH_0(R_W),
\]
with \(K_L\subset Q_L\subset HH_0(R_W)\).  For homogeneous
\(r_i\in R_W\), \(b_j\in Q_W\), and \(\eta_j\in C_X\), the extensions are
given by the following rules.  In \textup{(E1)}, \(a_i\in C_L\); in
\textup{(E2)}, \(a_i\in C_L^{R_W}\).
\begin{enumerate}
\item[\textup{(E1)}]
\[
\q_{k,l}^\beta(a_1r_1,\ldots,a_kr_k;\eta_1,\ldots,\eta_l)
=
(-1)^{\sum_i|r_i|\sum_{j>i}(|a_j|-1)}
\q_{k,l}^\beta(a_1,\ldots,a_k;\eta_1,\ldots,\eta_l)\,r_1\cdots r_k,
\]
\item[\textup{(E2)}]
\[
\q_{k,l}^\beta(a_1,\ldots,a_k;\eta_1b_1,\ldots,\eta_lb_l)
=
(-1)^{\sum_i|b_i|\left(\sum_{j>i}|\eta_j|+\sum_j(|a_j|-1)\right)}
\q_{k,l}^\beta(a_1,\ldots,a_k;\eta_1,\ldots,\eta_l)\,b_1\cdots b_l,
\]
\item[\textup{(E3)}]
\[
\q_{\emptyset,l}^A(\eta_1b_1,\ldots,\eta_lb_l)
=
(-1)^{\sum_i|b_i|\sum_{j>i}|\eta_j|}
\q_{\emptyset,l}^A(\eta_1,\ldots,\eta_l)\,b_1\cdots b_l,
\]
\item[\textup{(E4)}]
Multiply left coefficients in \(R_W\) before passing to \(HH_0(R_W)\):
\[
\q_{-1,l}^\beta(b_1\eta_1,\ldots,b_l\eta_l)
=
(-1)^{\sum_i|b_i|\sum_{j<i}|\eta_j|}
\bigl[b_1\cdots b_l\,\q_{-1,l}^\beta(\eta_1,\ldots,\eta_l)\bigr].
\]
\end{enumerate}
Rule \textup{(E1)} is well defined on \(C_L\otimes_{K_L}R_W\), since the
maps of \cref{dfn:structure-maps} are \(K_L\)-multilinear in the \(C_L\)
inputs and \(K_L\subset\mathcal Z(R_W)\).
Extend the period pairing \(Q_W\)-linearly to
\[
(\mathcal D_X^{\mathrm{adm}})^{Q_W}
:=\bigl(\mathcal D_X^{\mathrm{adm}}\widehat\otimes_\kk Q_W\bigr)^2
\subset C_X^{Q_W}.
\]
For \(\theta\in(\mathcal D_X^{\mathrm{adm}})^{Q_W}\), the divisor axiom gives
\[
\q_{k,l}^{\beta}(a_1,\ldots,a_k;\theta,\eta_2,\ldots,\eta_l)
=
\left(\int_\beta\theta\right)
\cdot
\q_{k,l-1}^{\beta}(a_1,\ldots,a_k;\eta_2,\ldots,\eta_l)
\]
for \(k\ge0\) and \(l\ge1\), while the scalar operation satisfies
\[
\q_{-1,l}^{\beta}(\theta,\eta_2,\ldots,\eta_l)
=
\left(\int_\beta\theta\right)
\cdot
\q_{-1,l-1}^{\beta}(\eta_2,\ldots,\eta_l).
\]
The closed operations satisfy
\[
\q_{\emptyset,l}^{A}(\theta,\eta_2,\ldots,\eta_l)
=
\begin{cases}
\theta\wedge\eta_2,
&(A,l)=(A_0,2),\\[2pt]
\left(\int_A\theta\right)
\cdot
\q_{\emptyset,l-1}^{A}(\eta_2,\ldots,\eta_l),
&\text{otherwise.}
\end{cases}
\]
\end{dfn}

\begin{rem}\label{rem:scalar-extension-shifts}
The signs in \textup{(E1)}--\textup{(E4)} are the Koszul signs for
\[
C_X^{Q_W}[2],
\qquad C_L^{R_W}[1],
\qquad HH_0(R_W)[3-n],
\]
and the map
\begin{equation}\label{eq:tensor-to-hh0}
v\otimes r\longmapsto(-1)^{(n+1)|r|}[vr]\in HH_0(R_W)
\end{equation}
for homogeneous \(v\in K_L\) and \(r\in R_W\).
The ordinary open, closed, and scalar component operators have shifted
degrees \(1-\mu_L(\beta)\), \(2-2c_1(A)\), and \(-\mu_L(\beta)\),
respectively.  Use degrees \(|\eta|-2\) and \(|a|-1\) when moving coefficients past
inputs, and multiply coefficients in input order.
For a homogeneous ordinary structure map \(F\) independent of formal
coefficients and a homogeneous coefficient derivation \(\delta\),
\[
\delta_0\bigl(F(z_1,\ldots,z_N)\bigr)
=
\sum_{i=1}^{N}
(-1)^{|\delta|\bigl(|F|+\sum_{h<i}|z_h|_{\mathrm{sh}}\bigr)}
F(z_1,\ldots,\delta_i z_i,\ldots,z_N).
\]
Here \(\delta_0\) acts on the target and \(\delta_i\) on the \(i\)-th
source.  Thus \([\delta,F]=0\) for the graded tensor-product action.

Since \(K_L\) has degree zero, the \(Q_W\)-action on
\(K_L[-m]\otimes_{K_L}R_W\) is
\[
 q\cdot(v\otimes r)=(-1)^{m|q|}v\otimes qr
 \qquad(q\in Q_W\text{ homogeneous})
\]
for \(m\in\mathbb Z\).  Combining this with
\eqref{eq:tensor-to-hh0} gives
\begin{equation}\label{eq:scalar-target-coefficient-actions}
\begin{aligned}
 q\cdot_{[-n]}\xi
 &=(-1)^{|q|}q\xi,\\
 q\cdot_{[-n-1]}\xi
 &=q\xi,
\end{aligned}
\end{equation}
where the products on the right are induced by multiplication in
\(R_W\).  Likewise for \(Q_L\).
\end{rem}

\begin{dfn}
\label{dfn:scalar-extension-interval-pairing}
The pairing extends with degree \(-n\) to
\[
\langle\cdot,\cdot\rangle_L\colon
C_L^{R_W}\otimes C_L^{R_W}\longrightarrow HH_0(R_W),
\]
defined with coefficients on the left by
\[
\langle r_1a,\,r_2b\rangle_L
:=
(-1)^{|r_2|(|a|-1)}
\bigl[r_1r_2\,\langle a,b\rangle_L\bigr],
\]
for homogeneous \(a,b\in C_L\) and \(r_1,r_2\in R_W\), with the signs
of \cref{rem:scalar-extension-shifts} and multiplication in \(R_W\).
\end{dfn}

\section{Deformed operations}
\label{subsec:bg-deformations}

We follow \cite[Section~3.5]{kosloff2025pointlikenoncommutativef},
with the insertion formulas of \cite[Section~3.4]{solomon2019relative}.
Use the filtrations of \cref{dfn:coefficient-filtrations} and fix
\[
 \gamma\in(C_X^{Q_W})^2,\qquad b\in(C_L^{R_W})^1,
 \qquad \nu(\gamma)>0,\qquad\varsigma_2(b)>0.
\]
\begin{lm}[Convergence of insertion sums]\label{lm:insertion-series}
Let \(d\in\Z\), and let \(F_{k,l}\) be a pseudo-complete family of
degree \(d-k-2l\) on a coefficient restriction containing \(\gamma,b\).
For homogeneous inputs \(a_1,\ldots,a_k,\eta_1,\ldots,\eta_l\)
in that restriction, put
\[
\begin{aligned}
 F_{\boldsymbol\ell,p}(a;\eta)
 :=F_{k+|\boldsymbol\ell|,l+p}
 \bigl(b^{\otimes\ell_0},a_1,b^{\otimes\ell_1},\ldots,
 a_k,b^{\otimes\ell_k};\eta_1,\ldots,\eta_l,\gamma^{\otimes p}\bigr),
\end{aligned}
\]
where \(\boldsymbol\ell=(\ell_0,\ldots,\ell_k)\in\Z_{\ge0}^{k+1}\),
\(p\ge0\), and \(|\boldsymbol\ell|=\sum_i\ell_i\).  Then
\begin{equation}\label{eq:insertion-tail}
 \lim_{r\to\infty}\ 
 \min_{|\boldsymbol\ell|+p=r}
 \varsigma_1\bigl(F_{\boldsymbol\ell,p}(a;\eta)\bigr)=\infty.
\end{equation}
\end{lm}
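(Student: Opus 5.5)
The plan is to combine the uniform bound of \cref{dfn:qkl-pseudo-completeness} with the strict positivity of the filtrations of \(\gamma\) and \(b\).

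First, because \(\nu(\gamma)>0\) and \(\varsigma_2(b)>0\), the tensor-product filtration grows linearly in the number of insertions. Set \(\epsilon:=\min\{\nu(\gamma),\varsigma_2(b)\}>0\). Then the input tensor satisfies
\[
 \varsigma_2^\otimes\bigl(b^{\otimes\ell_0}\otimes a_1\otimes\cdots\otimes b^{\otimes\ell_k}\otimes\eta\otimes\gamma^{\otimes p}\bigr)
 \ge r\epsilon+\sum_i\varsigma_2(a_i)+\sum_j\nu(\eta_j),
\]
where \(r=|\boldsymbol\ell|+p\). The fixed inputs contribute a constant \(C\) independent of \(r\), and this constant is finite and nonnegative. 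I would obtain this bound directly from the definition of the tensor filtration in \cref{dfn:filtered-modules-completions}, expanding \(b\) and \(\gamma\) as convergent sums of homogeneous pieces.

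Second, I would control the output degree. The operation \(F_{k+|\boldsymbol\ell|,l+p}\) has degree \(d-(k+|\boldsymbol\ell|)-2(l+p)\). Since \(|b|=1\) and \(|\gamma|=2\), each inserted \(b\) or \(\gamma\) adds back exactly what it removes. Hence
\[
 |F_{\boldsymbol\ell,p}(a;\eta)|=d-k-2l+\textstyle\sum_i|a_i|+\sum_j|\eta_j|=:D_0,
\]
which is independent of \(\boldsymbol\ell\) and \(p\). Because the inputs are homogeneous, so is the output, and it has degree exactly \(D_0\).

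Third, fix \(E>0\) and apply pseudo-completeness with the bound \(D=D_0\). This produces \(K\), independent of \(k,l\), and therefore independent of the number of insertions. Choose \(r_0\) with \(r_0\epsilon>K\). For \(r\ge r_0\), every term \(F_{\boldsymbol\ell,p}\) has input filtration greater than \(K\) and output degree at least \(D_0\), so \(\varsigma_1(F_{\boldsymbol\ell,p})>E\). This holds for every \((\boldsymbol\ell,p)\) with \(|\boldsymbol\ell|+p=r\), which gives \eqref{eq:insertion-tail}.

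I expect the main obstacle to be the first step, because \(b\) and \(\gamma\) are infinite sums of non-homogeneous-in-filtration pieces rather than single homogeneous elements. Pseudo-completeness is stated for homogeneous inputs \(\xi\) of the tensor product, so I would decompose each tensor into coefficient monomials. Each monomial term then has \(\varsigma_2^\otimes>K\) and lands in the same output degree \(D_0\), since the grading is the total degree. Pseudo-completeness applies to each term, and \(\varsigma_1\)-completeness of the target, together with continuity from \cref{rem:pseudo-completeness-continuity}, passes the bound \(\varsigma_1>E\) to the convergent sum. One point to check is that the degrees of \(b\) and \(\gamma\) are the total degrees used in the degree count, which holds by \cref{dfn:bounding-pair}.
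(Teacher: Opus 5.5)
Your proposal is correct and follows the paper's argument: the output degree \(D\) is independent of \((\boldsymbol\ell,p)\), pseudo-completeness for \((E,D)\) gives a \(K\) independent of \(k,l\), and \(\varsigma_2(b),\nu(\gamma)>0\) make the lower bound \(|\boldsymbol\ell|\,\varsigma_2(b)+p\,\nu(\gamma)+\sum_i\varsigma_2(a_i)+\sum_j\nu(\eta_j)\) for \(\varsigma_2^\otimes\) exceed \(K\) once \(|\boldsymbol\ell|+p\) is large. The monomial decomposition in your last paragraph is not needed: ``homogeneous'' in the definition of pseudo-completeness refers to degree, so the concatenated tensor of the homogeneous elements \(b\), \(a_i\), \(\eta_j\), \(\gamma\) is already an admissible input, and the definition of the tensor-product filtration bounds its \(\varsigma_2^\otimes\) from below directly.
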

\begin{proof}
Argue as in
\cite[Lemma~3.9]{kosloff2025pointlikenoncommutativef}.
Every nonzero summand has degree
\[
 D=d-k-2l+\sum_{i=1}^k|a_i|+\sum_{j=1}^l|\eta_j|.
\]
Fix \(E>0\), and let \(K>0\) be given by pseudo-completeness for
\((E,D)\).  Since \(\varsigma_2(b)>0\) and \(\nu(\gamma)>0\), there is
\(r\) such that
\[
 |\boldsymbol\ell|\,\varsigma_2(b)+p\,\nu(\gamma)
 +\sum_{i=1}^k\varsigma_2(a_i)+\sum_{j=1}^l\nu(\eta_j)>K
\]
whenever \(|\boldsymbol\ell|+p\ge r\).  The left side is a lower bound
for \(\varsigma_2^\otimes\) of the input of \(F_{\boldsymbol\ell,p}(a;\eta)\),
so \(\varsigma_1\bigl(F_{\boldsymbol\ell,p}(a;\eta)\bigr)>E\) for
\(|\boldsymbol\ell|+p\ge r\).
\end{proof}

\begin{dfn}[Open deformation]\label{dfn:bg-deformation-qkl}
On homogeneous inputs, set
\[
\begin{aligned}
&\q_{k,l}^{\gamma,b}
(a_1,\ldots,a_k;\eta_1,\ldots,\eta_l)\\
&\quad:=
\sum_{\substack{\ell_0,\ldots,\ell_k\ge0\\m\ge l}}
\frac{1}{(m-l)!}\,
\q_{k+\ell_0+\cdots+\ell_k,m}\bigl(
b^{\otimes\ell_0},a_1,b^{\otimes\ell_1},\ldots,
a_k,b^{\otimes\ell_k};
\eta_1,\ldots,\eta_l,\gamma^{\otimes(m-l)}
\bigr).
\end{aligned}
\]
The \emph{open deformation} is the bounded map of degree \(2-k-2l\),
\[
 \q_{k,l}^{\gamma,b}\colon
 (C_X^{Q_W})^{\otimes l}\otimes(C_L^{R_W})^{\otimes k}
 \longrightarrow C_L^{R_W},
\]
whose restriction to homogeneous inputs is the displayed sum.
\end{dfn}

\begin{dfn}[Closed deformation]\label{dfn:bg-deformation-qempty}
On homogeneous inputs, set
\[
\q_{\emptyset,l}^{\gamma}(\eta_1,\ldots,\eta_l)
:=
\sum_{m\ge l}\frac{1}{(m-l)!}\,
\q_{\emptyset,m}
(\eta_1,\ldots,\eta_l,\gamma^{\otimes(m-l)}).
\]
The \emph{closed deformation} is the bounded map of degree \(4-2l\),
\[
 \q_{\emptyset,l}^{\gamma}\colon
 (C_X^{Q_W})^{\otimes l}\longrightarrow C_X^{Q_W},
\]
whose restriction to homogeneous inputs is the displayed sum.
\end{dfn}

\begin{dfn}[Scalar deformation]\label{dfn:bg-deformation-qminusone}
On homogeneous inputs, set
\[
\begin{aligned}
\q_{-1,l}^{\gamma,b}(\eta_1,\ldots,\eta_l)
:={}&
\sum_{\substack{k\ge0\\m\ge l}}
\frac{
\left\langle
\q_{k,m}(b^{\otimes k};
\eta_1,\ldots,\eta_l,\gamma^{\otimes(m-l)}),b
\right\rangle_L
}{(m-l)!(k+1)}
\\
&+
\sum_{m\ge l}\frac{1}{(m-l)!}\,
\q_{-1,m}(\eta_1,\ldots,\eta_l,\gamma^{\otimes(m-l)}).
\end{aligned}
\]
The \emph{scalar deformation} is the bounded map of degree \(3-n-2l\),
\[
 \q_{-1,l}^{\gamma,b}\colon
 (C_X^{Q_W})^{\otimes l}\longrightarrow HH_0(R_W),
\]
whose restriction to homogeneous inputs is the displayed sum.
\end{dfn}

\begin{lm}\label{lm:qkl-bg-insertions}
\label{lm:bg-deformation-qkl}
\label{lm:bg-deformation-qminusone}
The deformations above exist uniquely.  Their insertion formulas hold
for all completed inputs, independently of the coefficient restriction,
and satisfy
\[
 \varsigma_1\bigl(F^{\gamma,b}(\xi)\bigr)
 \ge\varsigma_1^\otimes(\xi).
\]
\end{lm}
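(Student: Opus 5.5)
The plan is to show that, for homogeneous completed inputs, each insertion series in \cref{dfn:bg-deformation-qkl,dfn:bg-deformation-qempty,dfn:bg-deformation-qminusone} converges in its \(\varsigma_1\)-complete target. Uniqueness then follows because bounded maps are determined by their values on homogeneous inputs, and completed inputs are limits of these.

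The first step is to fix a coefficient restriction containing \(\gamma,b\) and the given homogeneous inputs; \cref{lm:adapted-restriction-exists} supplies one. On it, the families \(\q_{k,l}\), \(\q_{\emptyset,l}\), \(\q_{-1,l}\), and \(\langle\q_{k,l}(\cdots),\cdot\rangle_L\) are pseudo-complete by \cref{rem:de-rham-sababa-restriction}. In the axiomatic setting one instead uses \cref{lm:local-uniform-estimate}, with the degree bound \(C_L^j=0\) for \(j>n\) and \eqref{eq:ordinary-main-filtered}. For the closed and scalar families, \(\varsigma_1^\otimes=\varsigma_2^\otimes\), so no degree bound is needed there. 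Next, apply \cref{lm:insertion-series}. It gives \eqref{eq:insertion-tail}, so for each \(E\) only finitely many values of \((\boldsymbol\ell,p)\) produce terms of filtration \(\le E\). The factors \(1/(m-l)!\) and \(1/(k+1)\) are harmless. The sums therefore converge in the completed target, which for the scalar operation is \(HH_0(R_W)\), complete by \cref{lm:hh0-filtered-complete}.

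The second step is independence of the restriction. Given two restrictions, \cref{lm:adapted-restriction-exists} gives a common one with filtration-preserving inclusions. The partial sums agree termwise there, so the limits agree. This defines the deformations on all homogeneous completed inputs, and extending additively gives the maps on the full completed modules. The filtration estimate follows termwise. By \eqref{eq:ordinary-main-filtered}, each summand has \(\varsigma_1\) at least the sum over inputs, and the inserted copies of \(b\) and \(\gamma\) only add nonnegative filtration. Passing to the limit, since filtration levels are closed, gives \(\varsigma_1(F^{\gamma,b}(\xi))\ge\varsigma_1^\otimes(\xi)\). This bound is uniform, which yields boundedness and hence continuity, and so the unique extension to completed inputs.

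The main obstacle is the interplay between the two filtrations. Convergence uses \(\varsigma_2(b)>0\), since \(b\) may contain \(s\) with \(\varsigma_1(s)=0\). This is converted into \(\varsigma_1\)-smallness only through the degree bounds inside a restriction. For a general non-homogeneous completed input, no single restriction contains all of its components. I would handle this by decomposing the input into homogeneous components, applying the result to each, and then using the uniform \(\varsigma_1\)-bound to sum the outputs in the complete target.
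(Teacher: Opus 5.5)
Your proposal is correct and follows essentially the same route as the paper. The paper likewise proves convergence on homogeneous inputs inside a coefficient restriction, using pseudo-completeness and the insertion-series tail estimate, then uses the uniform partial-sum bound $\varsigma_1(S_N(\xi))\ge\varsigma_1^\otimes(\xi)$ to get a unique bounded extension, and obtains independence of the restriction from a common restriction. The one difference is that you carry out the extension to non-homogeneous completed inputs yourself, by summing homogeneous components, whereas the paper defers this step to the corresponding lemmas of Kosloff--Solomon.
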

\begin{proof}
For homogeneous inputs, choose the restriction of
\cref{dfn:adapted-coefficient-restriction} containing them and
\(\gamma,b\).  By \cref{rem:de-rham-sababa-restriction},
\cref{lm:insertion-series} applies to \(\q_{k,l}\) and to
\[
 F_{k+1,l}(a_1,\ldots,a_{k+1};\eta)
 :=\langle\q_{k,l}(a_1,\ldots,a_k;\eta),a_{k+1}\rangle_L,
\]
of degrees \(2-k-2l\) and \(3-n-(k+1)-2l\), respectively.
For the closed and direct scalar sums, each additional \(\gamma\)
increases the input filtration by \(\nu(\gamma)>0\).
Hence all displayed sums converge on homogeneous inputs.

Let \(S_N\) be the partial sum with at most \(N\) insertions.  Since
\(\varsigma_1(b),\nu(\gamma)\ge0\),
\[
 \varsigma_1\bigl(S_N(\xi)\bigr)\ge\varsigma_1^\otimes(\xi)
 \qquad(N\ge0).
\]
As in the proofs of Kosloff--Solomon
\cite[Lemmas~3.11--3.12]{kosloff2025pointlikenoncommutativef}, this
uniform bound, completeness, give a unique bounded
extension to completed modules, with the same insertion formula.
Finally, two restrictions lie in a common one by
\cref{lm:adapted-restriction-exists}; their partial sums, and hence
their limits, agree.
\end{proof}

\begin{rem}\label{app:bounding-pair-well-definedness}
In particular, \(\q_{0,0}^{\gamma,b}\in(C_L^{R_W})^2\), so the
Maurer--Cartan equation is defined in \(C_L^{R_W}\).
\end{rem}

\begin{lm}\label{lm:bg-deformed-axioms}
Assume \(d_X\gamma=0\).  The families
\[
\{\q_{k,l}^{\gamma,b}\},\qquad
\{\q_{\emptyset,l}^{\gamma}\},\qquad
\{\q_{-1,l}^{\gamma,b}\}
\]
satisfy the structure, symmetry, unit, fundamental-class, and cyclic
identities after inserting \(\gamma,b\) in the axioms.
\end{lm}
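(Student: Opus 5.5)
The plan is to deduce each deformed identity from the corresponding undeformed axiom in \cref{dfn:structure-maps}. First extend the axiom to scalar-extended inputs by rules \textup{(E1)}--\textup{(E4)}. Then substitute the insertion series of \cref{dfn:bg-deformation-qkl,dfn:bg-deformation-qempty,dfn:bg-deformation-qminusone}, and regroup the resulting sums using the convergence and uniqueness of \cref{lm:qkl-bg-insertions}.

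The first step is to check that the axioms survive scalar extension. For homogeneous inputs of the form \(a_ir_i\) and \(\eta_jb_j\), both sides of each identity are the undeformed expression times the same coefficient product \(r_1\cdots r_k\,b_1\cdots b_l\), with Koszul signs. These are precisely the signs for the shifted modules \(C_X^{Q_W}[2]\), \(C_L^{R_W}[1]\) and \(HH_0(R_W)[3-n]\) of \cref{rem:scalar-extension-shifts}. The point that needs care is that the coefficients are multiplied in input order. In the structure equation \eqref{eq:qkl-structure}, the inner operation consumes a consecutive block \(a_i,\ldots,a_{i+k_2-1}\), so the coefficient products of the two sides agree without any commutation in \(R_W\); only the closed coefficients \(b_j\), which lie in \(Q_W\subset\mathcal Z(R_W)\), must be reordered, and the partition \(I\sqcup J\) does exactly that.

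For the cyclic axiom \ref{ax:qkl-cyclic-symmetry} and the scalar structure equation \eqref{eq:qminus-structure}, the coefficient products are cyclically permuted. Such products agree only modulo graded commutators, which is exactly why these operations take values in \(HH_0(R_W)\) (\cref{dfn:scalar-extension-interval-pairing}, \textup{(E4)}). The unit axiom \ref{ax:qkl-unit} needs \(K_L\subset\mathcal Z(R_W)\). Continuity, given by the bounds \eqref{eq:ordinary-main-filtered}, then extends every identity to completed inputs.

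The second step inserts \(b\) and \(\gamma\). For the open structure equation, I will apply the extended \eqref{eq:qkl-structure} to the input string \((b^{\otimes\ell_0},a_1,\ldots,a_k,b^{\otimes\ell_k};\eta,\gamma^{\otimes p})\) and sum over \(\boldsymbol\ell\) and \(p\) with weight \(1/p!\). Each splitting into an inner and an outer operation regroups into \(\q^{\gamma,b}_{k_1,|I|}(\ldots,\q^{\gamma,b}_{k_2,|J|}(\ldots),\ldots)\). The \(b\)'s adjacent to the inner block are distributed between the two operations. The factor \(1/p!\) splits as \(\binom{p}{p_1}/p!=1/(p_1!\,p_2!)\) by graded symmetry \ref{ax:qkl-closed-symmetry}, and there is no Koszul cost because \(|\gamma|=2\) and \(|b|=1\) are even in the shifted grading.

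The terms containing \(d_X\gamma\) vanish by the assumption \(d_X\gamma=0\). Rearranging the sums is justified by \eqref{eq:insertion-tail}: in each filtration window only finitely many terms contribute.

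The closed chain-map identity \ref{ax:qempty-chain-map} and the closed symmetry follow in the same way, and more easily. The scalar equation \eqref{eq:qminus-structure} is handled the same way as well, but its \(\langle\q_{k,m}(b^{\otimes k};\ldots),b\rangle_L/(k+1)\) terms require an extra averaging. I would expand \(d\) of the pairing, use the extended cyclic symmetry to move each \(b\) into the output slot (this accounts for the factor \(k+1\)), and then apply the open structure equation. I expect this to be the main obstacle: the combinatorics of \(1/(k+1)\), the signs, and the passage to \(HH_0\) must be matched term by term, following \cite[Section~3.4]{solomon2019relative} and \cite[Section~3.5]{kosloff2025pointlikenoncommutativef}.

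Finally, the deformed unit, fundamental-class and cyclic identities are handled by the same substitution. For the unit, every summand with an inserted \(b\) or \(\gamma\) and an input from \(K_L\cdot1_L\) vanishes by \ref{ax:qkl-unit}, except the \(\q_{2,0}^{\beta_0}\) term. For the fundamental class, \ref{ax:qkl-fundamental-class} applies with \(\gamma\) in the remaining slots. For cyclicity, the inserted strings of \(b\)'s rotate along with the inputs, so the rotated and unrotated sums match term by term.
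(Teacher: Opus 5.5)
Your plan is correct and is essentially the paper's proof. Both expand the insertion series, regroup, and apply the undeformed axioms, whose combinatorics are those of \cite[Lemma~3.17]{solomon2019relative} and go through because $|b|-1=|\gamma|-2=0$ and $d_X\gamma=0$; both then pass to completed inputs by continuity. The paper puts its weight on the regrouping you compress into one line: it truncates the inner series with \cref{lm:insertion-series}, discards that tail inside the outer operation via \eqref{eq:ordinary-main-filtered}, makes this uniform over Novikov classes, and regroups in the complete norm $e^{-\varsigma_1}$. You instead put yours on the scalar-extension signs and the binomial splitting. One small correction concerns the deformed unit identity with $k=1$. There the two surviving terms $\q_{2,0}^{\beta_0}(f,b)$ and $\q_{2,0}^{\beta_0}(b,f)$ do not vanish individually; they cancel because $|b|$ is odd, and this cancellation is what gives $\q_{1,0}^{\gamma,b}(f)=0$.
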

\begin{proof}
Fix homogeneous inputs, a coefficient restriction containing them and
\(\gamma,b\), and \(E>0\).  For the indices in
\eqref{eq:qkl-structure}, expand the compositions of deformed operations
using \cref{dfn:bg-deformation-qkl}.
By \cref{lm:insertion-series}, only finitely many summands of
\(\q_{k_2,|J|}^{\gamma,b}\) are nonzero modulo
\(F_{\varsigma_1}^{>E}\).  The bound \eqref{eq:ordinary-main-filtered}
allows us to omit the others from the composition, since all other inputs
have nonnegative first filtration.
For each remaining homogeneous summand, \cref{lm:insertion-series} makes the
sum defining \(\q_{k_1,|I|}^{\gamma,b}\) finite modulo the same filtration.
Pairings satisfy the same assertion by \eqref{eq:ordinary-main-filtered};
closed and scalar operations use \cref{lm:qkl-bg-insertions}.
By \cref{lm:local-uniform-estimate}, the estimates hold uniformly for
\(T^\beta\q_{k,l}^\beta\) and their pairings.  For closed and direct scalar sums, use \(\nu(\gamma)>0\).
Thus the same truncation applies before summing over classes, and
energy finiteness leaves only finitely many classes.

Consequently, only finitely many summands are nonzero modulo
\(F_{\varsigma_1}^{>E}\), for every \(E>0\).
Applying
\cite[Section~1.1.8, Proposition~2 and Corollary~3]{bosch1984nonarchimedean}
to the complete norm \(x\mapsto e^{-\varsigma_1(x)}\), we may regroup
and interchange the insertion and Novikov sums, then apply the
undeformed identities.
Since \(|b|-1=|\gamma|-2=0\) and \(d_X\gamma=0\), the calculation of
\cite[Lemma~3.17]{solomon2019relative} applies with the coefficient signs
of Appendix~\ref{app:scalar-extension}.
The identities extend to completed inputs by boundedness and density.
\end{proof}

\begin{rem}\label{rem:local-restriction-compatibility}
For a continuous coefficient derivation \(\delta\) and the partial sums
\(S_N\) above,
\[
 \delta\bigl(F^{\gamma,b}(\xi)\bigr)
 =\lim_{N\to\infty}\delta\bigl(S_N(\xi)\bigr).
\]
If \(\delta\in\mathfrak g_{\mathrm{for}}\), the Leibniz rule acts on
the inputs and on the \(b,\gamma\) insertions, with the signs of
\cref{rem:scalar-extension-shifts}.  Apply \cref{lm:insertion-series}
in a restriction containing these elements and their derivatives.
Repeated differentiation is treated in the same way.

The inclusion \(R_W^\diamond\subset R_W\) induces
\(HH_0(R_W^\diamond)\hookrightarrow HH_0(R_W)\) by
\cref{lm:hh0-filtered-complete}.  For every continuous linear map \(L\),
\[
 L\!\left(\sum_j u_j\right)=\sum_j L(u_j).
\]
This applies to traces, cone maps, bulk pullbacks, endpoint restriction,
and interval integration.  Pairings commute with the sums by their
filtration bound \eqref{eq:ordinary-main-filtered}.
\end{rem}

\subsection{Interval operations}
\label{subsec:interval-data}

The pseudoisotopies of \cref{app:pseudoisotopy-gauge-invariance} live on
interval DGAs carrying a pairing and structure maps.  We axiomatize these
data below; \cref{rem:interval-de-rham-model} describes the de~Rham model
of \cite[Section~2.4]{solomon2019relative} and
\cite[Section~4.2]{kosloff2025pointlikenoncommutativef}.

\begin{dfn}[Interval DGAs]
\label{dfn:interval-dgas}
Interval DGAs are DGAs \(\widetilde C_X,\widetilde C_L\) and a
commutative DGA \(\mathfrak K\), together with maps
\[
 p^*\colon C_X^{Q_W}\to\widetilde C_X,
 \qquad
 j_r^*,\,p_*\colon\widetilde C_X\to C_X^{Q_W}
 \quad(r=0,1),
\]
and likewise between \(C_L^{R_W}\) and \(\widetilde C_L\) and between
\(\kk\) and \(\mathfrak K\), called pullback, endpoint restriction, and
integration.  As for \(C_X^{Q_W}\) and \(C_L^{R_W}\), the DGAs
\(\widetilde C_X\) and \(\widetilde C_L\) are completed scalar extensions
by \(Q_W\) and \(R_W\), with the induced filtrations, and their
differentials and the maps above are scalar extensions.
Pullback and endpoint restriction are unital cochain maps,
integration has degree \(-1\), and
\begin{equation}\label{eq:interval-stokes-data}
j_r^*p^*=\Id,\qquad p_*p^*=0,\qquad
dp_*+p_*d=j_1^*-j_0^*.
\end{equation}
The restriction \(i^*\) extends to the interval DGAs and commutes with
\(j_r^*,p^*,p_*\).

Give \(HH_0(R_W)\) the zero differential, and extend the maps from
\(\mathfrak K\) to \(\mathfrak K\widehat\otimes HH_0(R_W)\): for
homogeneous \(\kappa\in\mathfrak K\) and \(\xi\in HH_0(R_W)\),
\[
\begin{aligned}
 p^*\xi&=1\otimes\xi,&
 j_r^*(\kappa\otimes\xi)&=(j_r^*\kappa)\xi,\\
 d(\kappa\otimes\xi)&=d\kappa\otimes\xi,&
 p_*(\kappa\otimes\xi)&=(p_*\kappa)\xi,
\end{aligned}
\]
Then \eqref{eq:interval-stokes-data} holds there as well.
\end{dfn}

\begin{dfn}[Interval pairing]
\label{dfn:interval-pairing-data}
An interval pairing is a map
\[
\langle\!\langle\cdot,\cdot\rangle\!\rangle\colon
\widetilde C_L\otimes\widetilde C_L
\longrightarrow\mathfrak K\widehat\otimes HH_0(R_W)
\]
of degree \(-n\), with coefficient signs as in
\cref{dfn:scalar-extension-interval-pairing}, such that for homogeneous
\(a,b\in\widetilde C_L\) and \(x,y\in C_L^{R_W}\),
\begin{gather}
\langle\!\langle a,b\rangle\!\rangle
=-(-1)^{(|a|+1)(|b|+1)}\langle\!\langle b,a\rangle\!\rangle,
\label{eq:interval-pairing-skew}\\
j_r^*\langle\!\langle a,b\rangle\!\rangle
=\langle j_r^*a,j_r^*b\rangle_L,
\qquad
\langle\!\langle p^*x,p^*y\rangle\!\rangle
=p^*\langle x,y\rangle_L,
\label{eq:interval-pairing-endpoints}\\
p_*\langle\!\langle p^*x,b\rangle\!\rangle
=(-1)^{|x|+1}\langle x,p_*b\rangle_L.
\label{eq:interval-pairing-mixed}
\end{gather}
Coefficients \(z\in\mathcal Z(R_W)\) act on the pairing as in
\cite[Definition~2.11\textup{(6)}]{kosloff2025pointlikenoncommutativef},
and on the scalar target by
\[
 z\cdot(\kappa\otimes\xi)
 =(-1)^{|z||\kappa|}\kappa\otimes z\xi.
\]
For homogeneous \(z\) and \(a\), \(p_*(za)=(-1)^{|z|}zp_*a\).
\end{dfn}

\begin{dfn}[Interval operations]
\label{dfn:interval-structure-maps}
Interval operations are maps
\[
\begin{aligned}
 \widetilde\q_{k,l}&\colon
 \widetilde C_X^{\otimes l}\otimes\widetilde C_L^{\otimes k}
 \longrightarrow\widetilde C_L,\\
 \widetilde\q_{\emptyset,l}&\colon
 \widetilde C_X^{\otimes l}\longrightarrow\widetilde C_X,\\
 \widetilde\q_{-1,l}&\colon
 \widetilde C_X^{\otimes l}
 \longrightarrow\mathfrak K\widehat\otimes HH_0(R_W),
\end{aligned}
\]
of degrees \(2-k-2l\), \(4-2l\), and \(3-n-2l\), respectively, which are
Novikov sums of component maps \(\widetilde\q_{k,l}^{\beta}\),
\(\widetilde\q_{\emptyset,l}^{A}\), \(\widetilde\q_{-1,l}^{\beta}\) as in
\cref{dfn:novikov-summed-structure-maps}, such that:
\begin{enumerate}[label=\textup{(\roman*)},ref=\textup{(\roman*)}]
\item The component maps satisfy the axioms of \cref{dfn:structure-maps},
with the differentials, products, \(i^*\), and pairing of
\cref{dfn:interval-dgas,dfn:interval-pairing-data}, except for three
changes:
\begin{enumerate}[label=\textup{(\alph*)}]
\item For \((\beta,k,l)=(\beta_0,1,0)\), the cyclic identity
\ref{ax:qkl-cyclic-symmetry} becomes
\[
\langle\!\langle da,b\rangle\!\rangle
=(-1)^{(|a|+1)(|b|+1)}\langle\!\langle db,a\rangle\!\rangle
+d\langle\!\langle a,b\rangle\!\rangle.
\]
\item The right-hand side of the scalar equation
\eqref{eq:qminus-structure} has the additional term
\(d\widetilde\q_{-1,l}^{\beta}(\eta_1,\ldots,\eta_l)\).
\item The exceptions in \ref{ax:qkl-top-degree} also include
\((\beta,k,l)=(\beta_0,1,0)\).
\end{enumerate}
\item Endpoint restriction intertwines the interval and ordinary
operations:
\[
 j_r^*\widetilde\q_{k,l}(a_1,\ldots,a_k;\eta_1,\ldots,\eta_l)
 =\q_{k,l}(j_r^*a_1,\ldots,j_r^*a_k;
 j_r^*\eta_1,\ldots,j_r^*\eta_l),
\]
and likewise for \(\widetilde\q_{\emptyset,l}\) and
\(\widetilde\q_{-1,l}\).
\item\label{item:interval-filtrations}
Every finite set of homogeneous interval inputs lies in a coefficient
restriction as in \cref{dfn:adapted-coefficient-restriction}, with the
induced filtrations.  On it, the families \(\widetilde\q_{k,l}\) and
\(\langle\!\langle\widetilde\q_{k,l}(a_1,\ldots,a_k;\eta),
a_{k+1}\rangle\!\rangle\) are pseudo-complete.  If \(F\) is one of these
families, an interval operation, or the pairing, then
\[
 \varsigma_i\bigl(F(\xi)\bigr)\ge\varsigma_i^\otimes(\xi),
 \qquad i=1,2.
\]
\end{enumerate}
\end{dfn}

\begin{rem}[de~Rham model]
\label{rem:interval-de-rham-model}
In \cite[Section~2.4]{solomon2019relative} and
\cite[Section~4.2]{kosloff2025pointlikenoncommutativef}, the interval
DGAs are built from differential forms on \([0,1]\times M\), where
\(M\) is \(X\), \(L\), or a point:
\(p^*\) is pullback along the projection \(\pi\colon[0,1]\times M\to M\),
and \(j_r^*\) is restriction to the endpoint \(r\).  We take
\[
 p_*(\alpha+dt\wedge\beta)=\int_0^1\beta\,dt,
\]
where \(\alpha\) and \(\beta\) contain no \(dt\).  This is not the
push-forward \(\pi_*\) of \cite[Section~2.1]{solomon2019relative}: by
\cite[Proposition~2.1]{solomon2019relative}, for homogeneous \(\xi\),
\[
 p_*\xi=(-1)^{\dim M+|\xi|+1}\pi_*\xi.
\]
In terms of \(\pi_*\), Stokes' theorem
\cite[Proposition~2.2]{solomon2019relative} reads
\[
 d\pi_*\xi=\pi_*d\xi+(-1)^{\dim M+|\xi|+1}(j_1^*\xi-j_0^*\xi);
\]
multiplying by \((-1)^{\dim M+|\xi|+1}\) gives
\eqref{eq:interval-stokes-data}.  For example, the sign
\((-1)^{|a|+|b|+n}\) in
\cite[Lemma~4.10]{kosloff2025pointlikenoncommutativef} is this sign for
\(M\) a point and \(\xi=d\langle\!\langle a,b\rangle\!\rangle\).
For the pairing identities, see
\cite[Definitions~2.11\textup{(8)} and~4.6, and
Lemma~4.10]{kosloff2025pointlikenoncommutativef} and
\cite[Proposition~2.1\textup{(c)} and Lemma~4.6]{solomon2019relative};
for the axioms of \cref{dfn:interval-structure-maps}, see
\cite[Definitions~2.11\textup{(9)} and~4.6, and
Proposition~4.8]{kosloff2025pointlikenoncommutativef} and
\cite[Propositions~2.19--2.20 and Lemma~2.22]{solomon2019relative}.
The filtration conditions hold by \cref{rem:de-rham-sababa-restriction}.
\end{rem}

\begin{lm}\label{lm:interval-deformed-scalar-identities}
Let \(\widetilde\gamma\in\widetilde C_X^2\) and
\(\widetilde b\in\widetilde C_L^1\) satisfy
\[
 \nu(\widetilde\gamma)>0,\qquad
 \varsigma_1(\widetilde b)\ge0,\qquad
 \varsigma_2(\widetilde b)>0,\qquad
 d\widetilde\gamma=0.
\]
The insertion formulas of
\cref{dfn:bg-deformation-qkl,dfn:bg-deformation-qempty,dfn:bg-deformation-qminusone},
with the interval pairing in the scalar formula, define deformed interval
operations \(\widetilde\q_{k,l}^{\widetilde\gamma,\widetilde b}\),
\(\widetilde\q_{\emptyset,l}^{\widetilde\gamma}\), and
\(\widetilde\q_{-1,l}^{\widetilde\gamma,\widetilde b}\).  They satisfy
\cref{lm:qkl-bg-insertions} and the axioms of
\cref{dfn:interval-structure-maps} with \(\widetilde\gamma,\widetilde b\)
inserted.  Omitting the superscripts,
\begin{align}
d\widetilde\q_{-1,0}
+\tfrac12\langle\!\langle\widetilde\q_{0,0},
\widetilde\q_{0,0}\rangle\!\rangle
-\langle\!\langle1,i^*\widetilde\q_{\emptyset,0}\rangle\!\rangle
&=0,
\label{eq:interval-deformed-scalar-zero}\\
d\widetilde\q_{-1,1}(\eta)-\widetilde\q_{-1,1}(d\eta)
+\langle\!\langle\widetilde\q_{0,0},
\widetilde\q_{0,1}(\eta)\rangle\!\rangle
-\langle\!\langle1,i^*\widetilde\q_{\emptyset,1}(\eta)\rangle\!\rangle
&=0.
\label{eq:interval-deformed-scalar-one}
\end{align}
\end{lm}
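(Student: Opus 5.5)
The plan is to follow \cref{lm:bg-deformed-axioms} closely, working in the interval setting. The argument has three stages.

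First, existence and the insertion estimate. I will fix homogeneous interval inputs together with \(\widetilde\gamma,\widetilde b\). By \cref{dfn:interval-structure-maps}\ref{item:interval-filtrations}, these lie in a common coefficient restriction. On that restriction, \(\widetilde\q_{k,l}\) and the paired family \(\langle\!\langle\widetilde\q_{k,l}(\cdots),a_{k+1}\rangle\!\rangle\) are pseudo-complete. I can then apply \cref{lm:insertion-series} verbatim, using \(\varsigma_2(\widetilde b)>0\) and \(\nu(\widetilde\gamma)>0\), so the open, closed and scalar insertion series converge.

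The hypothesis \(\varsigma_1(\widetilde b)\ge0\) supplies the uniform partial-sum bound \(\varsigma_1(S_N(\xi))\ge\varsigma_1^\otimes(\xi)\). With it, the proof of \cref{lm:qkl-bg-insertions} goes through unchanged: it gives unique bounded extensions and independence of the restriction. It also gives the filtration inequality, since \(\varsigma_i(F(\xi))\ge\varsigma_i^\otimes(\xi)\) is part of the interval axioms.

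Second, the deformed axioms. As in \cref{lm:bg-deformed-axioms}, I expand each composite of deformed operations. The insertion estimates leave only finitely many summands modulo \(F^{>E}_{\varsigma_1}\), and energy finiteness leaves finitely many classes. I then regroup by the non-Archimedean summation results and apply the undeformed interval axioms termwise. Because \(|\widetilde b|-1=|\widetilde\gamma|-2=0\) and \(d\widetilde\gamma=0\), the combinatorics of \cite[Lemma~3.17]{solomon2019relative} applies with the Koszul signs of \cref{rem:scalar-extension-shifts}.

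The three modifications (a)--(c) in \cref{dfn:interval-structure-maps} pass through the deformation as follows. For (b), the extra term \(d\widetilde\q^\beta_{-1,l}\) is linear, and \(d\) commutes with insertions of the closed elements \(\widetilde\gamma\) up to the \(d\widetilde b\) terms. For (a), those \(d\widetilde b\) terms are exactly what the modified cyclic identity for \(\q_{1,0}^{\beta_0}=d\) absorbs, producing the total derivative in the scalar sum. Endpoint compatibility (ii) holds termwise, and it passes to the limit because \(j_r^*\) is continuous, by \cref{rem:local-restriction-compatibility}.

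Third, the two displayed identities, which are the \(l=0\) and \(l=1\) cases of the deformed scalar structure equation. I will derive them directly. I apply \(d\) to \(\widetilde\q_{-1,0}=\sum\frac{1}{(k+1)m!}\langle\!\langle\widetilde\q_{k,m}(\widetilde b^{\otimes k};\widetilde\gamma^{\otimes m}),\widetilde b\rangle\!\rangle+\sum\frac1{m!}\widetilde\q_{-1,m}(\widetilde\gamma^{\otimes m})\) and proceed in three steps.

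\begin{enumerate}
\item Move \(d\) across the pairing using the modified cyclic identity (a).
\item Use the undeformed structure equation \ref{ax:qkl-structure-equation} to replace \(d\widetilde\q_{k,m}\) by compositions.
\item Use cyclic symmetry to collect the \(k+1\) positions into a single term \(-\tfrac12\langle\!\langle\widetilde\q_{0,0},\widetilde\q_{0,0}\rangle\!\rangle\). The unordered pairs account for the \(\tfrac12\), via the skew symmetry \eqref{eq:interval-pairing-skew}.
\end{enumerate}

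The direct scalar term contributes, by the modified (b), the term \(d\widetilde\q_{-1,m}\) together with the closed term \(\langle\!\langle1,i^*\widetilde\q_{\emptyset,0}\rangle\!\rangle\) and the half-pairings of \(\widetilde\q_{0,*}\)'s. For \eqref{eq:interval-deformed-scalar-one} I differentiate in a formal bulk direction, or equivalently insert \(\eta\) once. The two half-pairings then combine into \(\langle\!\langle\widetilde\q_{0,0},\widetilde\q_{0,1}(\eta)\rangle\!\rangle\) by skew symmetry, since \(|\widetilde\q_{0,0}|=2\), exactly as in \cref{lm:tensor-potential-second-component}.

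I expect the main obstacle to be sign and bookkeeping control in this third stage. The difficulty is checking that the extra \(d\langle\!\langle a,b\rangle\!\rangle\) terms from (a) telescope into precisely \(d\widetilde\q_{-1,0}\) and leave no residue. It is also delicate to confirm that the non-commutative coefficient ordering of \cref{dfn:scalar-extension-interval-pairing} is compatible with the cyclic rotation once one passes to \(HH_0(R_W)\). I would handle the ordering by working on homogeneous pure tensors and using that \(HH_0\) identifies graded cyclic rotations of coefficient words.
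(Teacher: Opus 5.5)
Your first two stages match the paper, which simply says that the proofs of \cref{lm:qkl-bg-insertions,lm:bg-deformed-axioms} go through using \ref{item:interval-filtrations} of \cref{dfn:interval-structure-maps}. For the two displayed identities, however, you take a genuinely different route.

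\emph{The paper's route.} It follows \cite[Section~7.1]{kosloff2025pointlikenoncommutativef} and replaces $\widetilde b$ by $\rho\widetilde b$ for real $\rho\in[0,1]$. It then uses $\int_0^1\rho^k\,d\rho=1/(k+1)$ to write $\widetilde\q^{1}_{-1,l}=\widetilde\q^{0}_{-1,l}+\int_0^1\langle\!\langle\widetilde\q^{\rho}_{0,l},\widetilde b\rangle\!\rangle\,d\rho$. After that it needs only the \emph{deformed} $k=1$ structure equation and the deformed $(1,0)$ and $(1,1)$ cyclic identities. These give $d\langle\!\langle\widetilde\q^{\rho}_{0,0},\widetilde b\rangle\!\rangle=-\tfrac12\partial_\rho\langle\!\langle\widetilde\q^{\rho}_{0,0},\widetilde\q^{\rho}_{0,0}\rangle\!\rangle$ and its $l=1$ analogue. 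The case $\rho=0$ is the $\widetilde\gamma$-deformed scalar axiom, and integrating over $\rho$ finishes.

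\emph{Your route.} You work at the undeformed level. You pair \ref{ax:qkl-structure-equation} on $\widetilde b^{\otimes N}$ with $\widetilde b$, weight by $1/(N+1)$, and rotate each inner output into the paired slot. Modification \textup{(a)} produces $d\langle\!\langle x_N,\widetilde b\rangle\!\rangle$ exactly when the outer operation is $d$, where $x_j:=\widetilde\q_{j,0}(\widetilde b^{\otimes j})$ with $\widetilde\gamma$-insertions suppressed.

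The count is not $k+1$ copies of a single composition. The term $\langle\!\langle x_{k_1},x_{k_2}\rangle\!\rangle$ arises $k_1$ times from the composition with outer $k_1$, and $k_2$ times from the one with outer $k_2$. Then $\frac{k_1}{k_1+k_2}+\frac{k_2}{k_1+k_2}=1$, together with the symmetry of the pairing in degree $2$ from \eqref{eq:interval-pairing-skew}, produces the $\tfrac12$. The exception is $(k_1,k_2)=(0,0)$, which, as you say, comes from the $\widetilde\gamma$-only scalar equation.

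This is a valid and purely algebraic argument, with no auxiliary parameter and no termwise differentiation or integration in $\rho$. The price is that you must redo, term by term, the rotations of coefficient words in $HH_0(R_W)$, the Koszul signs, and the regrouping of several infinite sums. The paper's trick pushes all of that into the deformed axioms already established.

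One correction concerns \eqref{eq:interval-deformed-scalar-one}. Differentiating the $l=0$ identity in a formal bulk direction is not equivalent to a single insertion of $\eta$. Your $l=0$ computation uses $d\widetilde\gamma=0$, so the shortcut only yields the identity for closed $\eta$. The identity is needed for arbitrary $\eta$, since it is applied to $p^*\eta$ in the homotopy of \cref{thm:tensor-potential-gauge-invariance}. You therefore have to carry out the direct insertion. There, the $d\eta$ terms of \ref{ax:qkl-structure-equation}, together with the first sum in \ref{ax:qminus-structure-equation}, produce the term $-\widetilde\q_{-1,1}(d\eta)$.
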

\begin{proof}
The proofs of \cref{lm:qkl-bg-insertions,lm:bg-deformed-axioms} apply,
using \ref{item:interval-filtrations} of
\cref{dfn:interval-structure-maps}.

For the two identities, follow
\cite[Section~7.1]{kosloff2025pointlikenoncommutativef}.  For a real
scalar \(\rho\in[0,1]\), put
\[
\widetilde\q_{k,l}^{\rho}
:=\widetilde\q_{k,l}^{\widetilde\gamma,\rho\widetilde b}.
\]
Since \(\int_0^1\rho^k\,d\rho=1/(k+1)\), the scalar insertion formula
gives, for \(l=0,1\),
\[
\widetilde\q_{-1,l}^{1}(\eta)
=\widetilde\q_{-1,l}^{0}(\eta)
+\int_0^1
\langle\!\langle\widetilde\q_{0,l}^{\rho}(\eta),\widetilde b
\rangle\!\rangle\,d\rho,
\]
where \(\eta\) is omitted for \(l=0\).  Differentiating the insertion
formulas termwise gives
\[
\partial_\rho\widetilde\q_{0,0}^{\rho}
=\widetilde\q_{1,0}^{\rho}(\widetilde b),
\qquad
\partial_\rho\widetilde\q_{0,1}^{\rho}(\eta)
=\widetilde\q_{1,1}^{\rho}(\widetilde b;\eta).
\]
By \cref{lm:insertion-series}, termwise integration and differentiation
are justified as in
\cite[Lemmas~3.9 and~3.12]{kosloff2025pointlikenoncommutativef}.

For \(l=0\), the open structure equation gives
\(\widetilde\q_{1,0}^{\rho}(\widetilde\q_{0,0}^{\rho})=0\).  The cyclic
identity for \((k,l)=(1,0)\), applied to
\(\widetilde\q_{0,0}^{\rho}\) and \(\widetilde b\), then gives
\[
\begin{aligned}
d\langle\!\langle\widetilde\q_{0,0}^{\rho},\widetilde b\rangle\!\rangle
&=-\langle\!\langle\widetilde\q_{1,0}^{\rho}(\widetilde b),
\widetilde\q_{0,0}^{\rho}\rangle\!\rangle\\
&=-\tfrac12\partial_\rho
\langle\!\langle\widetilde\q_{0,0}^{\rho},
\widetilde\q_{0,0}^{\rho}\rangle\!\rangle.
\end{aligned}
\]
The last equality uses \(|\widetilde\q_{0,0}^{\rho}|=2\) and
\eqref{eq:interval-pairing-skew}; see also
\cite[Proposition~2.14]{kosloff2025pointlikenoncommutativef}.

For \(l=1\), the open structure equation is
\[
\widetilde\q_{1,0}^{\rho}(\widetilde\q_{0,1}^{\rho}(\eta))
+(-1)^{|\eta|}\widetilde\q_{1,1}^{\rho}
(\widetilde\q_{0,0}^{\rho};\eta)
=\widetilde\q_{0,1}^{\rho}(d\eta).
\]
Pairing with \(\widetilde b\) and using the cyclic identities for
\((k,l)=(1,0),(1,1)\) and \(|\widetilde b|=1\) gives
\[
\begin{aligned}
&d\langle\!\langle\widetilde\q_{0,1}^{\rho}(\eta),
\widetilde b\rangle\!\rangle
-\langle\!\langle\widetilde\q_{0,1}^{\rho}(d\eta),
\widetilde b\rangle\!\rangle\\
&\quad=-(-1)^{|\eta|}
\langle\!\langle\widetilde\q_{1,1}^{\rho}(\widetilde b;\eta),
\widetilde\q_{0,0}^{\rho}\rangle\!\rangle
-\langle\!\langle\widetilde\q_{1,0}^{\rho}(\widetilde b),
\widetilde\q_{0,1}^{\rho}(\eta)\rangle\!\rangle\\
&\quad=-\partial_\rho
\langle\!\langle\widetilde\q_{0,0}^{\rho},
\widetilde\q_{0,1}^{\rho}(\eta)\rangle\!\rangle.
\end{aligned}
\]
Here \eqref{eq:interval-pairing-skew} moves
\(\widetilde\q_{0,0}^{\rho}\), of degree two, to the first argument
with sign \((-1)^{|\eta|}\).

At \(\rho=0\) only \(\widetilde\gamma\) is inserted, and both identities
are the scalar equation of \cref{dfn:interval-structure-maps} with
\(\widetilde\gamma\) inserted; for \(l=1\), its two quadratic terms
combine by \eqref{eq:interval-pairing-skew}.  Integrating the two
calculations above over \(\rho\in[0,1]\) and substituting into the
formula for \(\widetilde\q_{-1,l}^{1}\) gives the identities at
\(\rho=1\).
\end{proof}

\section{Degree and gauge invariance of the tensor potential}
\label{app:tensor-potential-lemmas}

\begin{lm}\label{lm:tensor-potential-degree}
The map
\[
\nun^{\gamma,b}\colon\Cone{\mathfrak{i}}\to\Cone{\mathfrak{i}}
\]
is homogeneous of degree \(2\).
\end{lm}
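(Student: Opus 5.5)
The plan is a direct degree count on each component of \(\nun^{\gamma,b}\), carried out on homogeneous cone elements \((\eta,\xi)\). Recall that \(\Cone{\mathfrak i}=C_X^{Q_W}\oplus HH_0(R_W)[-n-1]\). So a homogeneous element of degree \(r\) has \(|\eta|=r\) in \(C_X^{Q_W}\), while \(\xi\) has unshifted degree \(r-n-1\) in \(HH_0(R_W)\). We must show that both components of \(\nun^{\gamma,b}(\eta,\xi)\) have cone degree \(r+2\).

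First I will handle the closed component. By \cref{dfn:bg-deformation-qempty} with \(l=1\), the map \(\q_{\emptyset,1}^{\gamma}\) has degree \(4-2=2\). This uses three facts: each summand \(\q_{\emptyset,m}\) has degree \(4-2m\) after Novikov weighting, \(|\gamma|=2\), and \(|T^{\varpi(A)}|=2c_1(A)\) by \cref{dfn:novikov-coefficient-rings}. Hence \(|\q_{\emptyset,1}^{\gamma}(\eta)|=r+2\).

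Next I will handle the second component, which has two terms.
\begin{itemize}
\item \(-\q_{-1,1}^{\gamma,b}(\eta)\). By \cref{dfn:bg-deformation-qminusone}, this map has degree \(3-n-2=1-n\) into unshifted \(HH_0(R_W)\). I will check this on each type of summand, as in the proof of \cref{thm:enhanced-superpotential-axioms}\ref{item:enhanced-degree}. A paired summand \(\langle\q_{k,m}(b^{\otimes k};\eta,\gamma^{\otimes m-1}),b\rangle_L\) has degree
\[
(2-k-2m)+k+|\eta|+2(m-1)+1-n=|\eta|+1-n,
\]
using \(|b|=1\) and the degree \(-n\) of the pairing from \cref{dfn:scalar-extension-interval-pairing}. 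A direct summand \(\q_{-1,m}\) gives \((3-n-2m)+|\eta|+2(m-1)=|\eta|+1-n\). Its unshifted degree is therefore \(r+1-n\), which is cone degree \(r+1-n+n+1=r+2\).
\item \(-c\cdot\xi\). Since \(|c|=2\) by \cref{dfn:bounding-pair}, this has unshifted degree \(r-n-1+2\), which is cone degree \(r+2\).
\end{itemize}
One must also check that the shifted \(Q_W\)-action signs of \eqref{eq:scalar-target-coefficient-actions} affect only signs and not degrees. This is immediate, and \(c\in\mathcal Z(R_W)\) makes the product well defined on \(HH_0(R_W)\).

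The only potential obstacle is bookkeeping of the shift conventions, namely whether the pairing and \(\q_{-1,l}\) degrees in the source are stated in unshifted \(HH_0\). I will fix the convention by comparing with \cref{lm:tensor-potential-second-component}. That lemma equates \(-\q_{-1,1}^{\gamma,b}(d_X\eta)-c\cdot\mathfrak i(\eta)\) with \(\mathfrak i(\q_{\emptyset,1}^{\gamma}(\eta))\), where \(\mathfrak i\) has degree one into the shifted target, and the degrees above are consistent with that identity. This consistency check confirms the count.
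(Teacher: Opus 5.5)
Your proposal is correct and is essentially the paper's own proof: you give the same summand-by-summand degree count for the closed, paired, and direct scalar terms, use the shift \(HH_0(R_W)[-n-1]\) to turn the unshifted degree \(|\eta|+1-n\) into cone degree \(|\eta|+2\), and use \(|c|=2\) with \(c\) central for the \(-c\cdot\xi\) term. The closing consistency check against \cref{lm:tensor-potential-second-component} is not needed for the argument, but it does no harm.
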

\begin{proof}
Let \(\eta\) be homogeneous.  Since \(|\gamma|=2\) and \(|b|=1\),
the closed, paired, and scalar summands in
\cref{dfn:bg-deformation-qempty,dfn:bg-deformation-qminusone}
have degrees, respectively,
\[
\begin{aligned}
&(4-2m)+|\eta|+2(m-1)=|\eta|+2,\\
&(2-k-2m)+k+|\eta|+2(m-1)+1-n=|\eta|+1-n,\\
&(3-n-2m)+|\eta|+2(m-1)=|\eta|+1-n.
\end{aligned}
\]
The shift \(HH_0(R_W)[-n-1]\) therefore gives
\(|\nun^{\gamma,b}(\eta,0)|=|\eta|+2\).
On the second summand, the map \((0,\xi)\mapsto(0,-c\xi)\)
also has degree \(2\), since \(c\) is central and \(|c|=2\).
\end{proof}

\subsection{Pseudoisotopy and gauge invariance}
\label{app:pseudoisotopy-gauge-invariance}

\begin{dfn}[Gauge equivalence of bounding pairs]
\label{dfn:gauge-equivalence-bounding-pairs}
Two bounding pairs \((\gamma,b)\), \((\gamma',b')\) are
\emph{gauge equivalent} if there are interval data as in
\cref{dfn:interval-dgas,dfn:interval-pairing-data,dfn:interval-structure-maps} and
\[
\widetilde\gamma\in(\mI_{Q_W}\widetilde C_X)^2,
\qquad
\widetilde b\in(((s)+\mI_{R_W})\widetilde C_L)^1,
\qquad d\widetilde\gamma=0,
\]
with convergent insertion series, endpoints
\[
 (j_0^*\widetilde\gamma,j_0^*\widetilde b)=(\gamma,b),
 \qquad
 (j_1^*\widetilde\gamma,j_1^*\widetilde b)=(\gamma',b'),
\]
and
\begin{equation}\label{eq:gauge-equivalence-interval-mc}
 \widetilde\q_{0,0}^{\widetilde\gamma,\widetilde b}
 =c\cdot1_L,
 \qquad
 c\in\bigl(\mI_{R_W}\cap\mathcal Z(R_W)\bigr)^2,
\end{equation}
where \(c\) is constant.  We call \((\widetilde\gamma,\widetilde b)\)
a \emph{pseudoisotopy} between the two pairs.
For the de~Rham complex of the interval, the right-hand side of
\eqref{eq:gauge-equivalence-interval-mc} has no \(dt\)-component, as in
\cite[Definition~2.23]{solomon2019relative}; compare
\cite[Definition~4.20]{kosloff2025pointlikenoncommutativef}.
\end{dfn}

\begin{rem}
\label{rem:gauge-equivalence-no-dtau}
In the de~Rham model, \cite[Definition~4.20]{kosloff2025pointlikenoncommutativef}
allows a closed coefficient \(c+dt\,h_t\) in
\eqref{eq:gauge-equivalence-interval-mc}.  Our bulk variables have even
degree, so \(h_t\in\mathcal Z(R_W)^{\mathrm{odd}}=0\).
The forms \(\widetilde\gamma\) and \(\widetilde b\) may still have
\(dt\)-components.
\end{rem}

\begin{thm}\label{thm:relative-potential-gauge-invariance}
If $(\gamma, b)$ is gauge equivalent to
\(\left(\gamma^{\prime}, b^{\prime}\right)\), then
\(\Psi(\gamma, b)=\Psi\left(\gamma^{\prime}, b^{\prime}\right)\).
\end{thm}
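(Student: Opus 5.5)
The plan is the standard pseudoisotopy argument of \cite[Lemma~4.8 and Theorem~4]{solomon2019relative}, transported to the interval data of \cref{dfn:interval-dgas,dfn:interval-pairing-data,dfn:interval-structure-maps}. Let \((\widetilde\gamma,\widetilde b)\) be a pseudoisotopy with constant Maurer--Cartan coefficient \(c\). We form the interval cochain
\[
\widetilde\psi:=\bigl(\widetilde\q_{\emptyset,0}^{\widetilde\gamma},\,-\widetilde\q_{-1,0}^{\widetilde\gamma,\widetilde b}\bigr)\in\widetilde C_X\oplus\mathfrak K\widehat\otimes HH_0(R_W)[-n-1].
\]
Since endpoint restriction intertwines interval and ordinary operations and commutes with the insertion sums (\cref{rem:local-restriction-compatibility}), we get \(j_0^*\widetilde\psi=\psi(\gamma,b)\) and \(j_1^*\widetilde\psi=\psi(\gamma',b')\). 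The candidate primitive is
\[
\theta:=\bigl(p_*\widetilde\q_{\emptyset,0}^{\widetilde\gamma},\,\pm p_*\widetilde\q_{-1,0}^{\widetilde\gamma,\widetilde b}\bigr)\in\Cone{\mathfrak i},
\]
with the sign to be fixed in the next step. The aim is the identity \(d_{\Cone}\theta=\psi(\gamma',b')-\psi(\gamma,b)\).

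For the first component, the closed chain-map axiom \ref{ax:qempty-chain-map} with \(\widetilde\gamma\) inserted and \(d\widetilde\gamma=0\) gives \(d\widetilde\q_{\emptyset,0}^{\widetilde\gamma}=0\). Stokes \eqref{eq:interval-stokes-data} then yields
\[
d_Xp_*\widetilde\q_{\emptyset,0}=j_1^*\widetilde\q_{\emptyset,0}-j_0^*\widetilde\q_{\emptyset,0}.
\]
For the second component, we must show
\[
\mathfrak i\bigl(p_*\widetilde\q_{\emptyset,0}\bigr)\pm 0=-\bigl(j_1^*-j_0^*\bigr)\widetilde\q_{-1,0},
\]
since \(HH_0(R_W)\) has zero differential. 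Apply Stokes to \(\widetilde\q_{-1,0}\), which gives \(p_*d\widetilde\q_{-1,0}=(j_1^*-j_0^*)\widetilde\q_{-1,0}\). Then substitute \eqref{eq:interval-deformed-scalar-zero} of \cref{lm:interval-deformed-scalar-identities}:
\[
d\widetilde\q_{-1,0}=-\tfrac12\langle\!\langle\widetilde\q_{0,0},\widetilde\q_{0,0}\rangle\!\rangle+\langle\!\langle1,i^*\widetilde\q_{\emptyset,0}\rangle\!\rangle.
\]
The Maurer--Cartan condition \eqref{eq:gauge-equivalence-interval-mc} makes the quadratic term \(\tfrac12c^2\langle\!\langle1,1\rangle\!\rangle\), which is \(c^2p^*\langle1,1\rangle_L\) up to a factor by \eqref{eq:interval-pairing-endpoints}. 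It vanishes because \(\langle1,1\rangle_L=0\) for degree reasons (\(n>1\)), and \(p_*p^*=0\) kills it anyway. The remaining term is handled by \eqref{eq:interval-pairing-mixed} with \(x=1\): we get \(p_*\langle\!\langle p^*1,i^*\widetilde\q_{\emptyset,0}\rangle\!\rangle=-\langle1,i^*p_*\widetilde\q_{\emptyset,0}\rangle_L\), using that \(i^*\) commutes with \(p_*\). This equals \(-\mathfrak i(p_*\widetilde\q_{\emptyset,0})\) after matching the \(Q_W\)-sign conventions \eqref{eq:scalar-target-coefficient-actions}. Combining these fixes the sign in \(\theta\) and gives the desired identity, so \(\Psi(\gamma,b)=\Psi(\gamma',b')\).

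The main obstacle is bookkeeping rather than structure. First, the signs: the interval integration \(p_*\) has degree \(-1\), and it interacts with the shift \([-n-1]\) in the cone and with the Koszul conventions of \cref{rem:scalar-extension-shifts}, so the sign of the second component of \(\theta\) must be chosen consistently. I would fix it by checking the constant pseudoisotopy \(\widetilde\gamma=p^*\gamma\), \(\widetilde b=p^*b\), where \(\theta\) should be zero. Second, the analysis: \(p_*\), \(j_r^*\) and \(i^*\) must commute with the completed insertion sums. This is supplied by \cref{lm:interval-deformed-scalar-identities} and \cref{rem:local-restriction-compatibility}. Finally, the hypothesis that \(c\) is constant with no \(dt\)-component (\cref{rem:gauge-equivalence-no-dtau}) is what makes the quadratic term drop out after applying \(p_*\).
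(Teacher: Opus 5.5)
Your proposal is correct and is essentially the paper's argument: closedness of $\widetilde\q_{\emptyset,0}$ plus Stokes for the first component, and Stokes applied to $\widetilde\q_{-1,0}$ combined with \eqref{eq:interval-deformed-scalar-zero}, the vanishing of $\langle\!\langle c1,c1\rangle\!\rangle$, and \eqref{eq:interval-pairing-mixed} with $x=1$ for the second. The second component of your $\theta$ is superfluous, because $d_{\Cone}(\eta,\xi)=(d\eta,\mathfrak i(\eta))$ does not depend on $\xi$. The paper therefore takes $(p_*\widetilde\q_{\emptyset,0},0)$ as the primitive, and no sign needs to be fixed via a constant pseudoisotopy.
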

\begin{proof}
We follow \cite[proof of Theorem~1]{solomon2019relative}.
Choose a pseudoisotopy \((\widetilde\gamma,\widetilde b)\) and omit its
superscripts from the deformed operations.  By
\cref{lm:interval-deformed-scalar-identities},
\(d\widetilde\q_{\emptyset,0}=0\).
Stokes' identity~\eqref{eq:interval-stokes-data} gives
\[
\q_{\emptyset,0}^{\gamma'}-\q_{\emptyset,0}^{\gamma}
=(j_1^*-j_0^*)\widetilde\q_{\emptyset,0}
=d_Xp_*\widetilde\q_{\emptyset,0}.
\]
For the scalar component,
\eqref{eq:interval-deformed-scalar-zero} gives
\[
d\widetilde\q_{-1,0}
=\langle\!\langle1,i^*\widetilde\q_{\emptyset,0}\rangle\!\rangle
-\tfrac12\langle\!\langle c1,c1\rangle\!\rangle.
\]
Since \(n>1\),
\[
\langle\!\langle c1,c1\rangle\!\rangle
=c^2\langle\!\langle1,1\rangle\!\rangle
=c^2p^*\langle1,1\rangle_L=0.
\]
By \eqref{eq:interval-stokes-data}, \eqref{eq:interval-pairing-mixed},
and \(i^*p_*=p_*i^*\),
\[
\begin{aligned}
-\q_{-1,0}^{\gamma',b'}+\q_{-1,0}^{\gamma,b}
&=-p_*d\widetilde\q_{-1,0}\\
&=-p_*\langle\!\langle1,i^*\widetilde\q_{\emptyset,0}\rangle\!\rangle\\
&=\langle1,i^*p_*\widetilde\q_{\emptyset,0}\rangle_L
=\mathfrak i(p_*\widetilde\q_{\emptyset,0}).
\end{aligned}
\]
Thus
\[
\psi(\gamma',b')-\psi(\gamma,b)
=d_{\Cone}\bigl(p_*\widetilde\q_{\emptyset,0},0\bigr).
\]
Hence \(\Psi(\gamma',b')=\Psi(\gamma,b)\).
\end{proof}

\begin{thm}[Gauge invariance of the tensor potential]
\label{thm:tensor-potential-gauge-invariance}
If \((\gamma,b)\) and \((\gamma',b')\) are gauge equivalent, then
\(\nun^{\gamma,b}\) and \(\nun^{\gamma',b'}\) are chain homotopic:
there is a degree \(1\) map
\(H\colon\Cone{\mathfrak i}\to\Cone{\mathfrak i}\) such that
\[
\nun^{\gamma',b'}-\nun^{\gamma,b}
=d_{\Cone}H+Hd_{\Cone}.
\]
Consequently, the tensor potential induced on
\(H^*(\Cone{\mathfrak i})\) is invariant under gauge equivalence.
\end{thm}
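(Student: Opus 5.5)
The plan is to build $H$ from the pseudoisotopy $(\widetilde\gamma,\widetilde b)$ by integrating the interval tensor potential over the interval, in the same way that the proof of \cref{thm:relative-potential-gauge-invariance} integrates $\widetilde\q_{\emptyset,0}$. Fix a pseudoisotopy with constant Maurer--Cartan coefficient $c$ as in \eqref{eq:gauge-equivalence-interval-mc}, and suppress the superscripts on the deformed interval operations of \cref{lm:interval-deformed-scalar-identities}. Define
\[
 H(\eta,\xi):=\Bigl(p_*\widetilde\q_{\emptyset,1}(p^*\eta),\,
 -p_*\widetilde\q_{-1,1}(p^*\eta)\Bigr),
\]
possibly up to a global sign $(-1)^{|\eta|}$ that will be fixed by the computation. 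Since $p_*$ has degree $-1$ and $\nun$ has degree $2$, the map $H$ has degree $1$. Note that $c\cdot\xi$ contributes nothing to $H$, because $c$ is constant and $p_*p^*=0$. The two endpoint potentials $\nun^{\gamma,b}$ and $\nun^{\gamma',b'}$ share the same $c$, so the $-c\cdot\xi$ terms cancel in their difference.

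For the closed component, I will use that $\widetilde\q_{\emptyset,1}$ is a chain map, which is the deformed \ref{ax:qempty-chain-map} from \cref{lm:interval-deformed-scalar-identities}. Combining this with Stokes \eqref{eq:interval-stokes-data} and $j_r^*p^*=\Id$ together with endpoint intertwining gives
\[
 \q_{\emptyset,1}^{\gamma'}(\eta)-\q_{\emptyset,1}^{\gamma}(\eta)
 =(j_1^*-j_0^*)\widetilde\q_{\emptyset,1}(p^*\eta)
 =d_Xp_*\widetilde\q_{\emptyset,1}(p^*\eta)+p_*\widetilde\q_{\emptyset,1}(p^*d_X\eta).
\]
This is the first component of $d_{\Cone}H+Hd_{\Cone}$ applied to $(\eta,\xi)$.

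For the second component, I will apply $p_*$ to \eqref{eq:interval-deformed-scalar-one} with input $p^*\eta$. The interval Maurer--Cartan equation gives $\widetilde\q_{0,0}=c\cdot1$. As in \cref{lm:tensor-potential-second-component}, the top-degree axiom and \ref{ax:qkl-energy} reduce $\langle\!\langle c1,\widetilde\q_{0,1}(p^*\eta)\rangle\!\rangle$ to $-c\,p^*\mathfrak i(\eta)$, and $p_*$ of this vanishes. The remaining terms are
\[
 -(\q_{-1,1}^{\gamma',b'}-\q_{-1,1}^{\gamma,b})(\eta)
 =-p_*\widetilde\q_{-1,1}(p^*d\eta)-d\text{-terms}
 +p_*\langle\!\langle1,i^*\widetilde\q_{\emptyset,1}(p^*\eta)\rangle\!\rangle .
\]
By \eqref{eq:interval-pairing-mixed} and $i^*p_*=p_*i^*$, the last term equals $\pm\mathfrak i(p_*\widetilde\q_{\emptyset,1}(p^*\eta))$. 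This is exactly the $\mathfrak i$-entry of $d_{\Cone}H$. The term $-p_*\widetilde\q_{-1,1}(p^*d\eta)$ is the second entry of $Hd_{\Cone}$, and $p_*d$ becomes $j_1^*-j_0^*$ by Stokes, which produces the endpoint difference.

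The main obstacle is sign bookkeeping. There are the Koszul signs of $p_*$ through the $Q_W$-actions (\cref{dfn:interval-pairing-data}), the sign in \eqref{eq:interval-pairing-mixed} relative to the convention $\mathfrak i=\langle1,i^*\cdot\rangle_L$, and the shift $[-n-1]$ in the cone. Together these decide whether $H$ needs the factor $(-1)^{|\eta|}$ or a sign flip on one component. I will first fix the sign of $H$ so that the closed component works, and then check that the $\mathfrak i$-entry matches with the same choice. I will justify convergence and the termwise use of $p_*$ by \cref{lm:interval-deformed-scalar-identities} and \cref{rem:local-restriction-compatibility}. Once the homotopy exists, invariance on $H^*(\Cone{\mathfrak i})$ follows immediately.
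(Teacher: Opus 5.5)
Your proposal is correct and is essentially the paper's proof. It uses the same homotopy $H(\eta,\xi)=\bigl(p_*\widetilde\q_{\emptyset,1}(p^*\eta),-p_*\widetilde\q_{-1,1}(p^*\eta)\bigr)$. It handles the closed component with the deformed chain-map identity and Stokes, and the scalar component by applying $p_*$ to \eqref{eq:interval-deformed-scalar-one}, killing the $c$-term with $p_*p^*=0$ and converting the pairing term with \eqref{eq:interval-pairing-mixed}.

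For the signs you deferred, no factor $(-1)^{|\eta|}$ is needed. The term $p_*d\widetilde\q_{-1,1}(p^*\eta)$ is exactly the endpoint difference; there are no other $d$-terms, since $HH_0(R_W)$ has zero differential. The pairing term enters as $-p_*\langle\!\langle1,i^*\widetilde\q_{\emptyset,1}(p^*\eta)\rangle\!\rangle$, not with a plus sign. Then \eqref{eq:interval-pairing-mixed} with $x=1$ turns it into $+\mathfrak i\bigl(p_*\widetilde\q_{\emptyset,1}(p^*\eta)\bigr)$, which is the $\mathfrak i$-entry of $d_{\Cone}H$.
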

\begin{proof}
Following \cite[Lemma~4.7]{solomon2019relative}, choose a pseudoisotopy
and omit its superscripts.  Put
\[
H(\eta,\xi)
:=\left(
p_*\widetilde\q_{\emptyset,1}(p^*\eta),
-p_*\widetilde\q_{-1,1}(p^*\eta)
\right).
\]
By \cref{lm:tensor-potential-degree} and \(|p_*|=-1\), we have \(|H|=1\).

The deformed closed chain-map identity of
\cref{lm:interval-deformed-scalar-identities} gives
\(d\widetilde\q_{\emptyset,1}(p^*\eta)
=\widetilde\q_{\emptyset,1}(p^*d_X\eta)\).
Thus \eqref{eq:interval-stokes-data} gives
\[
\begin{aligned}
&d_Xp_*\widetilde\q_{\emptyset,1}(p^*\eta)
+p_*\widetilde\q_{\emptyset,1}(p^*d_X\eta)\\
&\qquad=(j_1^*-j_0^*)\widetilde\q_{\emptyset,1}(p^*\eta)
=\q_{\emptyset,1}^{\gamma'}(\eta)-\q_{\emptyset,1}^{\gamma}(\eta).
\end{aligned}
\]
The top-degree and zero-energy axioms give
\[
\langle\!\langle1,\widetilde\q_{0,1}(p^*\eta)\rangle\!\rangle
=-\langle\!\langle1,i^*p^*\eta\rangle\!\rangle,
\]
since every summand has an interior input and only
\((\beta,k,m)=(\beta_0,0,1)\) survives; see
\cite[Lemma~2.21]{solomon2019relative}.
With \(\widetilde\q_{0,0}=c1\),
\eqref{eq:interval-deformed-scalar-one} becomes
\[
\begin{aligned}
d\widetilde\q_{-1,1}(p^*\eta)
-\widetilde\q_{-1,1}(p^*d_X\eta)
={}&c\langle\!\langle1,i^*p^*\eta\rangle\!\rangle
+\langle\!\langle1,i^*\widetilde\q_{\emptyset,1}(p^*\eta)
\rangle\!\rangle.
\end{aligned}
\]
Since \(c\) is constant and \(|c|=2\),
\[
p_*\bigl(c\langle\!\langle1,i^*p^*\eta\rangle\!\rangle\bigr)
=cp_*p^*\langle1,i^*\eta\rangle_L=0.
\]
Applying \(p_*\) and using \eqref{eq:interval-stokes-data}
and \eqref{eq:interval-pairing-mixed}, we get
\[
\begin{aligned}
&-\q_{-1,1}^{\gamma',b'}(\eta)+\q_{-1,1}^{\gamma,b}(\eta)\\
&\qquad=
-p_*\widetilde\q_{-1,1}(p^*d_X\eta)
-p_*\langle\!\langle1,i^*\widetilde\q_{\emptyset,1}(p^*\eta)
\rangle\!\rangle\\
&\qquad=
\mathfrak i\bigl(p_*\widetilde\q_{\emptyset,1}(p^*\eta)\bigr)
-p_*\widetilde\q_{-1,1}(p^*d_X\eta).
\end{aligned}
\]
These are the two components of \((d_{\Cone}H+Hd_{\Cone})(\eta,\xi)\)
for the cone in \cref{rem:conventions-st}.
The terms \(-c\xi\) cancel because both pairs have the same constant
\(c\).  This proves the homotopy identity.
\end{proof}

\section{Closedness and flatness}
\label{app:standard-arguments}

\subsection{Closedness of the relative potential}

By \cref{lm:bg-deformed-axioms}, our operations satisfy the identities
used in \cite[Lemma~4.8]{solomon2019relative}: the deformed \(l=0\)
scalar equation, the Maurer--Cartan equation, and the unit and top-degree
properties.  Its proof applies in \(HH_0(R_W)\), since the only scalar
multiplication is by the central coefficient \(c\).
The chain isomorphism \(F_n\) of \cref{rem:conventions-st} then gives
\cref{lm:relative-potential-closed}.

\subsection{Flatness}

The proof of \cite[Theorem~5]{solomon2019relative} builds the homotopy
from deformed geodesic operators and uses their structure equations
\cite[Lemma~3.17]{solomon2019relative}.  The geodesic operators extend to
our coefficients by the rules \textup{(E1)}--\textup{(E4)} of
\cref{dfn:scalar-extension-structure-maps}, with one change: their degree
is one higher than that of the ordinary operators, so in
\textup{(E2)}--\textup{(E3)} moving the coefficient \(b_i\) of a closed
input out of the operator gives an extra sign \((-1)^{|b_i|}\).
Once their classes are included in \(\Pi_{\mathrm{op}}\), the proofs of
\cref{lm:qkl-bg-insertions,lm:bg-deformed-axioms} apply to them unchanged.
This gives the deformed geodesic operators and their structure equations.

Each \(\delta\in\mathfrak g_{\mathrm{for}}\) is \(\Lambda\)-linear,
since it is a derivation vanishing on \(\Lambda\).  Hence, when \(\delta\)
is applied to a value of a deformed operator, it passes through the
Novikov weights and acts either on an input or on one copy of \(\gamma\)
or \(b\).  The convention \eqref{eq:operator-derivative-convention}
removes the terms in which \(\delta\) acts on an input.  Thus the
derivative of a deformed operator is the sum of the terms in which one
copy of \(\gamma\) or \(b\) is replaced by \(\delta\gamma\) or
\(\delta b\).  The proof in \cite{solomon2019relative} uses only that
\(\partial_u\) is a \(\Lambda\)-linear derivation, so we may replace
\(\partial_u\gamma,\partial_u b\) by \(\delta\gamma,\delta b\), and
similarly for \(\epsilon\).

The scalar geodesic operations and the pairing \(\langle\cdot,\cdot\rangle_L\)
take values in \(HH_0(R_W)\), where \([ab]=(-1)^{|a||b|}[ba]\).
Hence the symmetry of the pairing and the cyclic symmetry of the
operations used in the proof still hold with coefficients in \(R_W\):
a cyclic rotation of the inputs rotates the product of their
coefficients, which does not change its class.  The proof multiplies
these values only by \(c,\delta c,\epsilon c\), which lie in
\(\mathcal Z(R_W)\) by \cref{lm:mc-coefficient-actions-coker}, so these
products are well defined on \(HH_0(R_W)\).

After scalar extension, conjugation by \(F_n\) gives the homotopy on
our cone; see \cref{rem:conventions-st}.  In the notation of
\cite{solomon2019relative}, the homotopy that replaces \(H_{uv}\) in the
proof of \cite[Theorem~5]{solomon2019relative} is
\[
\begin{aligned}
H_{\delta,\epsilon}(\eta,\xi)
=\Bigg(&
(-1)^{1+|\eta|+|\delta|+|\epsilon|+|\eta||\epsilon|}
\q_{\emptyset,3;\chi_0}^{\gamma}
(\delta\gamma,\eta,\epsilon\gamma),
\\
&
(-1)^{n+|\eta|+|\delta|+|\epsilon|+|\eta||\epsilon|}
\q_{-1,3;0,3}^{\gamma,b}
(\delta\gamma,\eta,\epsilon\gamma)
\\
&\quad+
(-1)^{n+(|\eta|+1)(|\delta|+|\epsilon|+1)}
\left\langle
\q_{0,2;1_0,2}^{\gamma,b}(\eta,\delta\gamma),
\epsilon b
\right\rangle_L
\\
&\quad+
(-1)^{n+|\eta|+(|\eta|+1)(|\delta|+|\epsilon|)
               +|\delta||\epsilon|}
\left\langle
\q_{0,2;1_0,2}^{\gamma,b}(\eta,\epsilon\gamma),
\delta b
\right\rangle_L
\\
&\quad+
(-1)^{n+|\eta|+(|\eta|+1)(|\delta|+|\epsilon|)
               +|\delta||\epsilon|}
\left\langle
\q_{1,1;2_{0,1},1}^{\gamma,b}(\epsilon b;\eta),
\delta b
\right\rangle_L
\Bigg).
\end{aligned}
\]
The factors \((-1)^n\) in the second component come from \(F_n\).
Finally,
\cref{app:derivation-continuity,lm:qkl-bg-insertions,lm:bg-deformed-axioms}
give continuity and convergence, so the coefficientwise calculation
extends to the completions.  This proves \cref{thm:flatness-relation}.

\bibliographystyle{amsabbrvcnobysame}
\bibliography{refs}
\end{document}